\newcommand{\scal}[2]{\langle #1,#2\rangle}
\newcommand{\cc}[1]{\mathbf C^{#1}}
\newcommand{\nn}[1]{\mathbf N^{#1}}
\newcommand{\rr}[1]{\mathbf R^{#1}}
\newcommand{\ro}{\mathbf R}
\newcommand{\no}{\mathbf N}
\newcommand{\nm}[2]{\Vert #1\Vert _{#2}}
\newcommand{\op}{\operatorname{Op}}
\newcommand{\sets}[2]{\{ \, #1\, ;\, #2\, \} }
\newcommand{\Sets}[2]{\left \{ \, #1\, ;\, #2\, \right \} }
\newcommand{\ep}{\varepsilon}
\newcommand{\cdo}{\, \cdot \, }
\newcommand{\wpr}{{\text{\footnotesize $\#$}}}
\newcommand{\eabs}[1]{\langle #1\rangle}     
\newcommand{\vrum}{\vspace{0.2cm}}
\newcommand{\Sh}{\operatorname{Sh}}
\newcommand{\repart}{{\operatorname{Re}}}
\newcommand{\impart}{{\operatorname{Im}}}
\newcommand{\aw}{\operatorname{aw}}
\newcommand{\maclA}{\mathcal A}
\newcommand{\maclH}{\mathcal H}
\newcommand{\maclS}{\mathcal S}
\newcommand{\mascF}{\mathscr F}
\newcommand{\mascP}{\mathscr P}
\newcommand{\mascS}{\mathscr S}
\newcommand{\cT}{\mathcal{T}}
\newcommand{\fka}{\mathfrak a}
\newcommand{\fkb}{\mathfrak b}
\newcommand{\fkc}{\mathfrak c}
\newcommand{\cS}{\mathscr{S}}
\newcommand{\leqs}{\leqslant}
\newcommand{\geqs}{\geqslant}
\newcommand{\la}{\langle}
\newcommand{\ra}{\rangle}
\numberwithin{equation}{section}          
\newtheorem{thm}{Theorem}
\numberwithin{thm}{section}
\newtheorem{prop}[thm]{Proposition}
\newtheorem{cor}[thm]{Corollary}
\newtheorem{lemma}[thm]{Lemma}
\newcommand{\rubrik}{}
\theoremstyle{definition}
\newtheorem{defn}[thm]{Definition}
\newtheorem{example}[thm]{Example}
\theoremstyle{remark}
\newtheorem{rem}[thm]{Remark}
\title{Pseudo-differential operators with isotropic symbols,
Wick and anti-Wick operators, and hypoellipticity}
\author{Nenad Teofanov}
\address{Department of Mathematics and Informatics,
University of Novi Sad, Novi Sad, Serbia}
\email{nenad.teofanov@dmi.uns.ac.rs}
\author{Joachim Toft}
\address{Department of Computer science, Mathematics and Physics,
Linn{\ae}us University, V{\"a}xj{\"o}, Sweden}
\email{joachim.toft@lnu.se}
\author{Patrik Wahlberg}
\address{Department of Computer science, Mathematics and Physics,
Linn{\ae}us University, V{\"a}xj{\"o}, Sweden}
\email{patrik.wahlberg@lnu.se}
\keywords{pseudo-differential operators, Shubin symbols, isotropic symbols,
operators of infinite order, Bargmann transform,
Wick operators, anti-Wick operators, hypoellipticity, ellipticity,
Gelfand-Shilov spaces}
\subjclass[2010]{Primary: 32W25, 35S05, 32A17, 46F05, 42B35
\quad Secondary: 32A25, 32A05}
\begin{document}

\maketitle


\begin{abstract}
We study the link between $\Psi$dos
and Wick operators via the Bargmann transform. 
We deduce a formula for the symbol of the Wick operator
in terms of the short-time Fourier transform of the Weyl symbol. 
This gives characterizations of Wick symbols of
$\Psi$dos of Shubin type and of infinite order,
and results on composition. 
We prove a series expansion of Wick operators in
anti-Wick 
operators
which leads to a sharp G{\aa}rding inequality and transition of
hypoellipticity between Wick and and Shubin symbols.  
Finally we show continuity results for anti-Wick operators, and
estimates for the Wick symbols of anti-Wick operators. 
\end{abstract}

\par

\section{Introduction}\label{sec0}

\par

In the paper we investigate conjugation with the Bargmann
transformation of pseudo-differential and Toeplitz 
operators on $\rr d$ with isotropic symbols, and we 
explore relations between Wick and anti-Wick operators. 
Particularly we consider
Shubin operators and operators of infinite order.
This gives rise to analytic type pseudo-differential operators on $\cc d$
that are called Wick or Berezin operators
because of the fundamental contributions
by F. Berezin \cite{Berezin71,Berezin72}, which in turns goes back to
some ideas in \cite{Wi} by G. C. Wick. 

\par

Let $a$ be a suitable locally bounded function on $\cc {2d}$ such that
$z\mapsto a(z,w)$ is analytic, $z,w\in \cc d$. Then the Wick operator
$\op _{\mathfrak V}(a)$ with symbol $a$ is the operator which takes an
appropriate entire function $F$ on $\cc d$ into the entire function
\begin{equation}\label{Eq:AnalPseudoIntro}
\op _{\mathfrak V}(a)F (z) = \pi ^{-d}
\int _{\cc d} a(z,w)F(w)e^{(z-w,w)}\, d\lambda (w),
\end{equation}
where $d\lambda$ is the Lebesgue measure
and $(\cdo ,\cdo )$ is the scalar product on $\cc d$.
(See \cite{Ho1} and Section \ref{sec1}
for notation.) Wick operators appear naturally in
several problems in analysis and its applications, e.{\,}g.
in quantum mechanics. 
For example, the harmonic oscillator, the creation
and annihilation operators take the simple forms
$$
F\mapsto \scal z{\nabla _z}F +cF,
\quad
F\mapsto z_jF
\quad \text{and}\quad
F\mapsto \partial _{z_j}F,
$$
respectively, for some constant $c$, in the Wick formulation (see \cite{B1}).

\par

An advantage of the Wick calculus compared to corresponding
operators on functions and distributions defined on $\rr d$
is that in almost all situations, the involved functions are entire, 
which admits the use of the powerful techniques of complex analysis. 
(A more general approach is studied in \cite{Teofanov2}, where the
Wick calculus is formulated in terms of spaces of
formal power series expansions instead of spaces of entire functions.)
The possible lack of analyticity of $a(z,w)$
in \eqref{Eq:AnalPseudoIntro} with respect to the $w$ variable
is removable in the sense
that for any Wick symbol $a$, there is a unique $a_0$ such that
$(z,w)\mapsto a_0(z,\overline w)$ is entire, and
$\op _{\mathfrak V}(a)=\op _{\mathfrak V}(a_0)$. Consequently it is no restriction
to assume that $a(z,w)$ in \eqref{Eq:AnalPseudoIntro} is analytic in $z$ and
conjugate analytic in $w$, which we do in the introduction henceforth. 
Any linear and continuous operator from the Schwartz space,
a Fourier invariant Gelfand-Shilov space or Pilipovi{\'c} space, to
the corresponding distribution spaces, respectively, is in a unique way transformed
into a Wick operator by the Bargmann transform (see \cite{Teofanov2}).

\par

Several operators in quantum mechanics are so-called Shubin operators, i.{\,}e. 
pseudo-differential operators 
$$
\op (\fka )f(x) = (2\pi )^{-\frac d2}\int _{\rr d}\fka (x,\xi )
\widehat f(\xi )e^{i\scal x\xi}\, d\xi ,
\qquad f\in \mascS (\rr d),
$$
where the symbol $\fka$ belongs to the Shubin class
$\Sh _\rho ^{(\omega )}(\rr {2d})$, the set of all $\fka \in C^\infty (\rr {2d})$
such that
$$
|\partial _x^\alpha \partial _\xi ^\beta \fka (x,\xi )|
\lesssim
\omega (x,\xi )(1+|x|+|\xi |)^{-\rho |\alpha +\beta |},\quad \alpha ,\beta \in \nn d.
$$
Here $\omega$ is a suitable weight function on $\rr {2d}$ and $0 \le \rho \le 1$. 
Partial differential operators with polynomial coefficients,
e.{\,}g. the creation and annihilation operators or the harmonic oscillator
mentioned above, are examples of Shubin operators.
In Section \ref{sec2} we prove that the Bargmann image of Shubin
operators with symbols in
$\Sh _\rho ^{(\omega )}(\rr {2d})$ is the set of all
Wick operators in \eqref{Eq:AnalPseudoIntro} such that
$a$ belongs to $\wideparen \maclA _{\Sh ,\rho}^{(\omega )}(\cc {2d})$. 
This means that $\cc {2d} \ni (z,w)\mapsto a(z,\overline w)$
is an entire function that satisfies
\begin{equation}\label{Eq:AnalShubinIntro}
|\partial _z^\beta \overline \partial _w^\gamma a(z,w)|
\lesssim
e^{\frac 12|z-w|^2}\omega (\sqrt 2 \overline z)
\eabs {z+w}^{-\rho |\beta +\gamma |} \eabs {z-w}^{-N}
\end{equation}
for every $N\ge 0$. 

\par

An important subclass of Wick operators are the
anti-Wick operators, which are 
Wick operators where the symbol
$a(z,w)$ does not depend on $z$. That is, for an appropriate
measurable function $a_0$ on $\cc d$, its anti-Wick operator is given by
\begin{equation}\tag*{(\ref{Eq:AnalPseudoIntro})$'$}
\op _{\mathfrak V}^{\aw}(a_0)F (z) = \pi ^{-d}
\int _{\cc d} a_0(w)F(w)e^{(z-w,w)}\, d\lambda (w).
\end{equation}
Again $F$ is a suitable entire function on $\cc d$. The anti-Wick
operators can also be described as the Bargmann image of
Toeplitz operators on $\rr d$. (See e.{\,}g. \cite{LieSol,Shubin1,To11} for the definition
of Toeplitz operators.)

\par

A feature of Toeplitz operators and anti-Wick operators, useful for energy
estimates in quantum mechanics and
time-frequency analysis, is that non-negative symbols give rise to
non-negative operators. (Cf. e.{\,}g. \cite{LieSol,Lieb1,Lieb2}.) 
An operator $T=\op _{\mathfrak V}(a)$ 
with $a$ satisfying \eqref{Eq:AnalShubinIntro} for every $N\ge 0$, is called
\emph{positive} (\emph{non-negative}), if there is a constant
$C> 0$ ($C\ge 0$) such that 
$$
(TF,F)_{A^2} \ge C\nm F{A^2}^2,
$$
for every analytic polynomial $F$ on $\cc d$, where $(\cdo ,\cdo )_{A^2}$
is the scalar product induced by the Hilbert norm
$$
\nm F{A^2} =\pi ^{-\frac d2}
\left ( \int _{\cc d} |F(z)|^2e^{-|z|^2}\, d\lambda (z) \right )^{\frac 12}.
$$

\par

The implication from non-negative symbols to non-negative operators 
is not relevant for Wick operators in \eqref{Eq:AnalPseudoIntro}
when $a(z,w)$ is not constant with respect to $z$, since
the analyticity of the map $z\mapsto a(z,w)$ implies that
$a(z,w)$ is non-real almost everywhere. For such symbols it is
instead natural to check whether positivity of the map
$w\mapsto a(w,w)$ leads to positive operators (see e.{\,}g.
\cite{Berezin71,Berezin72,Fo}). By choosing
$$
d=1,\quad a(z,w) = 1-2z\overline w +2z^2\overline w^2
\quad \text{and}\quad
F(z)=z
$$
we obtain
$$
a(w,w)=(1-|w|^2)^2+|w|^4> 0
\quad \text{but}\quad
(\op _{\mathfrak V}(a)F,F)_{A^2} =-1<0.
$$
Consequently 
$\op _{\mathfrak V}(a)$
may fail to be a non-negative operator even though $a(w,w)$ is positive.

\par

On the other hand, for certain conditions on $a$, we deduce in Section \ref{sec3}
a weaker positivity result for Wick operators, which is equivalent to the 
sharp G{\aa}rding inequality in isotropic pseudo-differential calculus on $\rr d$
(see Theorem 18.6.7 and the proof of Theorem 18.6.8 in \cite{Ho1}). That is
for $a\in \wideparen \maclA _{\Sh ,\rho}^{(\omega )}(\cc {2d})$
with
$\omega (z)=\eabs z^{2\rho}$ and $\rho > 0$ we prove
\begin{align}
\repart  (\op _{\mathfrak V}(a)F,F)_{A^2}
&\ge -C\nm F{A^2}^2
\label{Eq:SharpGardingIntro}
\intertext{and}
|\impart  (\op _{\mathfrak V}(a)F,F)_{A^2}|
&\le C\nm F{A^2}^2,
\quad \text{when}\quad
a(w,w) \ge 0
\label{Eq:SharpGardingIntroImPart}
\end{align}
(cf. Theorem \ref{Thm:ShGarding}).
In particular we obtain energy estimates also for
Wick operators with symbols that are non-negative on the diagonal.

\par

The latter result is obtained by approximating Wick operators by
anti-Wick operators, using for the Wick operator \eqref{Eq:AnalPseudoIntro} with
$a\in \wideparen \maclA _{\Sh ,\rho}^{(\omega )}(\cc {2d})$
the remarkable identity
\begin{equation}\label{Eq:WickToAntiWickIntro}
\op _{\mathfrak V}(a)
=
\sum _{|\alpha |<N} \frac {(-1)^{|\alpha |}}{\alpha !}
\op _{\mathfrak V}^{\aw}(b_\alpha ) + \op _{\mathfrak V}(c_N)
\quad \text{where} \quad
b_\alpha (w)
=
\partial _z^\alpha \overline \partial _{w}^\alpha a(w,w),
\end{equation}
for some $c_N\in \maclA _{\Sh ,\rho}^{(\omega _N)}(\cc {2d})$
with $\omega _N(z)= \omega (z)\eabs z^{-2N\rho}$. 
Here we again assume $\rho > 0$. 
The decay conditions on $b_\alpha$ and $c_N$ are, respectively,
\begin{align}
|\partial _w^\beta \overline \partial _w^\gamma b_\alpha (w)|
& \lesssim
\omega (\sqrt 2 \overline w)\eabs {w}^{-\rho |2\alpha +\beta +\gamma |},
\qquad
\alpha, \beta, \gamma  \in \nn d,
\label{Eq:WickAntiWickbalphaIntro}
\intertext{and}
|\partial _z^\beta \overline \partial _w^\gamma c_N(z,w)|
& \lesssim
e^{\frac 12|z-w|^2}\omega (\sqrt 2 \overline z)\eabs z^{-2N\rho}
\eabs {z+w}^{-\rho |\beta +\gamma |} \eabs {z-w}^{-N}.
\end{align}
Consequently, many Wick operators can essentially be expressed
as linear combinations of anti-Wick operators. The expansion
\eqref{Eq:WickToAntiWickIntro} is deduced in Section \ref{sec3} using
Taylor expansion and integration by parts, see Proposition
\ref{Prop:WickToAntiWick} and Remark \ref{Rem:WickToAntiWick2}.

\par

The conditions on $b_\alpha$ are the same as the conditions on $a$
\eqref{Eq:AnalShubinIntro}, restricted to the diagonal $z=w$, and with improved decay.
On the diagonal, the growth term $e^{\frac 12|z-w|^2}$ disappears, which dominates in 
\eqref{Eq:AnalShubinIntro} when $|z-w|\gtrsim |z|$ or $|z-w|\gtrsim |w|$. 
The right-hand side of \eqref{Eq:WickAntiWickbalphaIntro}
becomes as large as possible when
$\alpha =\beta =\gamma =0$, that is $b_0$ 
is the dominating term in the sum \eqref{Eq:WickToAntiWickIntro}.

\par

The conditions on $c_N$ are the same as the estimates \eqref{Eq:AnalShubinIntro}
again with improved decay due to the factor $\eabs z^{-2N\rho}$. 

\par
 
For polynomial symbols, \eqref{Eq:WickToAntiWickIntro}
agree with the integral formula \cite[Theorem 3]{Berezin71} due to Berezin
which carry over Wick operators into anti-Wick operators. For
the general case, \eqref{Eq:WickToAntiWickIntro} is analogous to the
approximation technique of pseudo-differential operators on $\rr d$
in terms of Toeplitz operators given in \cite[Theorem 24.1]{Shubin1}
and its proof, by Shubin.

\par

The anti-Wick symbols in \eqref{Eq:WickToAntiWickIntro}
$b_\alpha (w)=\partial _z^\alpha \overline \partial _w^\alpha a (w,w)$ extend to have the property that
$\partial _z^\alpha \overline \partial _w^\alpha a (z,w)$ is entire in
$z$ and conjugate entire in $w$.
Note that restriction to the diagonal also appears in the positivity condition
\eqref{Eq:SharpGardingIntro} on Wick symbols.

\par

The sharp G{\aa}rding inequality \eqref{Eq:SharpGardingIntro} is 
reached by using the fact that $\op _{\mathfrak V}^{\aw}(b_0)$
is non-negative, 
and that
if $T$ is either $\op ^{\aw}_{\mathfrak V}(b_\alpha )$
or $\op _{\mathfrak V}(c_N)$ for $\alpha \neq 0$, then
$\nm {TF}{A^2}\lesssim \nm F{A^2}$ when $F\in A(\cc d)$ is a polynomial.

\par

In Section \ref{sec5} we deduce links concerning ellipticity, hypoellipticity
(in Shubin's sense) and weak ellipticity between Shubin and Wick symbols. 
The notion of hypoelliptic symbol resembles hypoelliptic symbols
in Shubin's sense (see \cite{Shubin1}).
More specifically, we say that the symbol $\fka \in \Sh _\rho ^{(\omega )}(\rr {2d})$ is
hypoelliptic of order $\rho _0\ge 0$, whenever there is an $R>0$ such that
$$
|\fka (x,\xi )|\gtrsim \omega (x,\xi )\eabs {(x,\xi )}^{-\rho _0}
\quad \text{and}\quad |\partial ^\alpha \fka (x,\xi )|
\lesssim |\fka (x,\xi )|\eabs {(x,\xi )}^{-\rho |\alpha |}
$$
when  $|(x,\xi )|\ge R$.

\par

A linear operator $T$ from $\mascS '(\rr d)$ to $\mascS '(\rr d)$
is called globally hypoelliptic if
$$
Tf=g,\ f\in \mascS '(\rr d),\ g\in \mascS (\rr d)
\quad \Rightarrow \quad
f\in \mascS (\rr d).
$$
(See e.{\,}g. \cite{BogBuzRod}.)
It can be proved that a pseudo-differential operator with hypoelliptic
symbol in Shubin's sense is globally hypoelliptic as operator (see e.{\,}g.
\cite[Corollary 25.1]{Shubin1}).

\par

We show, similarly to our investigations of
the sharp G{\aa}rding
inequality and for expansion \eqref{Eq:WickToAntiWickIntro}, that ellipticity, hypoellipticity
and to some degree weak ellipticity for the Shubin symbol $\fka$
can be characterized by certain conditions for the corresponding Wick symbol
$a(z,w)$ along the diagonal $z=w$. For example, let
$\fka$ be a polynomial on $\rr d$ with principal symbol $\fka _p$, and let $a(z,w)$
be a polynomial in $z, \overline w \in \cc d$ with principal part
$a_p$. Then $\fka$ is elliptic means that
$\fka _p(x,\xi )\neq 0$ when $(x,\xi )\neq (0,0)$, and 
$a$ is elliptic means that $a_p(z,z)\neq 0$ when $z\neq 0$.
For such $\fka$ we prove
$$
\fka \ \text{is elliptic}
\quad \Leftrightarrow \quad
a\ \text{is elliptic},
$$
when $a(z,w)$ is the Wick symbol corresponding to $\fka$
(which must be a polynomial in $z$ and $\overline w$).

\medspace

Our investigations include the Bargmann transform of certain
operators of infinite order, i.{\,}e. pseudo-differential operators
with ultra-differentiable symbols that are permitted to grow faster
than polynomially at infinity together with their derivatives. Particularly
we consider Wick operators of infinite order, i.{\,}e. the Bargmann
images $\op _{\mathfrak V}(a)$ of operators $\op (\fka )$ of infinite order
in \cite{AbCaTo},
and characterize their images under the  Bargmann transform
(see Theorem \ref{Thm:GevreySymbolsBargmTransfer}). 
Then we deduce in Subsections
\ref{subsec3.2} and \ref{subsec3.3} continuity results for anti-Wick operators
which holds for the symbols $b_\alpha$ in \eqref{Eq:WickToAntiWickIntro} when
$\op _{\mathfrak V}(a)$ is the Bargmann image of an 
operator of infinite order.

\par

In fact, in Subsection \ref{subsec3.2}
we show that $\op _{\mathfrak V}^{\aw}(b_\alpha )$ possess several
other continuity properties than what is valid for $\op _{\mathfrak V}(a)$
in the expansion \eqref{Eq:WickToAntiWickIntro}
(see Propositions \ref {prop:contAW} and \ref{prop:contAW2}). In Subsection
\ref{subsec3.3} we deduce estimates of the Wick symbol $b_{\alpha}^{\aw}$
to the anti-Wick operator $\op _{\mathfrak V}^{\aw}(b_\alpha )$, i.{\,}e. the unique element
$b_{\alpha}^{\aw}\in \wideparen A(\cc {2d})$ such that $\op _{\mathfrak V}(b_{\alpha}^{\aw})
=\op _{\mathfrak V}^{\aw}(b_\alpha )$. We show that usually, $b_{\alpha}^{\aw}$
satisfies stronger conditions than $a$ when $\op _{\mathfrak V}(a)$
is a Wick operator of infinite order (see Theorems \ref{Thm:WickSymbolAntiWickOpFolland},
\ref{Thm:WickSymbolAntiWickOpGS} and \ref{Thm:WickSymbolAntiWickOpSOmega}).

\par

The paper is organized as follows. In Section \ref{sec1} we
recall useful properties for weight functions, Gelfand-Shilov spaces, the Bargmann
transform, pseudo-differential operators, Wick and anti-Wick operators.
Thereafter we characterize in Section \ref{sec2} Shubin operators
and operators of infinite order in terms of appropriate classes of
Wick operators on the Bargmann side.
These considerations are based on a formula for the Wick symbol expressed
in terms of a short-time Fourier transform of the Weyl symbol, and
admits characterization of the Wick symbols corresponding to
Shubin Weyl symbols and symbols for operators of infinite order
(see Proposition \ref{Prop:SBaTaRel}).

\par

In Section \ref{sec2} we also study composition and show for example 
that the well-known closure under composition of Shubin operators 
and operators of infinite orders have simple and natural proofs on the Wick symbol side. 

\par

In Section \ref{sec3} we deduce series expansions of Wick operators
in terms of anti-Wick operators, and between Wick symbols and symbols
to corresponding Shubin operators. 
We also consider anti-Wick operators, and
show continuity results for them. We show
that the upper bounds for the Wick symbols of anti-Wick operators 
are stricter than for general Wick symbols. 

\par

In Section \ref{sec4} we discuss lower bounds for Wick operators
and deduce the sharp G{\aa}rding's inequality.
Section \ref{sec5} concerns ellipticity, hypoellipticity and weak ellipticity. 

\par

Finally we observe in Section \ref{sec6} that a polynomial bound of a Wick symbol implies
that the symbol is a polynomial. For pseudo-differential operators this
corresponds to partial differential operators with polynomial coefficients. 
This gives a characterization of such operators as those having
polynomially bounded Wick symbols. 

\par

\section{Preliminaries}\label{sec1}

\par

In this section we recall some facts on function and distribution
spaces as well as on pseudo-differential operators, Wick and anti-Wick operators.
Subsection \ref{subsec1.1} concerns weight functions and
Subsection \ref{subsec1.2} treats Gelfand-Shilov spaces. 
In Subsection \ref{subsec1.3} we introduce the Bargmann transform and
topological spaces of entire functions on $\cc d$, and 
in Subsection \ref{subsec1.4} we recall the definitions
and some facts on pseudo-differential operators on $\rr d$ as well
as Wick and anti-Wick operators on $\cc d$. 
Subsection \ref{subsec1.5} defines certain symbol classes for pseudo-differential
operators on $\rr d$.

\par

\subsection{Weight functions}\label{subsec1.1}
A \emph{weight} on $\rr d$ is a positive function $\omega \in  L^\infty _{loc}(\rr d)$
such that $1/\omega \in  L^\infty _{loc}(\rr d)$. The weight $\omega$
is called \emph{moderate} if there is a positive locally bounded function
$v$ such that
\begin{equation}\label{eq:2}
\omega(x+y)\le C\omega(x)v(y),\quad x,y\in\rr{d},
\end{equation}
for some constant $C\ge 1$. If $\omega$ and $v$ are weights such
that \eqref{eq:2} holds, then $\omega$ is also called \emph{$v$-moderate}.
The set of all moderate weights on $\rr d$ is denoted by $\mascP _E(\rr d)$.
The set $\mascP (\rr d)$ consists of weights that are $v$-moderate for 
a polynomially bounded weight, that is a weight of the form $v(x) = {\eabs x}^s$
where $\eabs x = (1+|x|^2)^{\frac 12}$ and $s \ge 0$. 
The bracket notation is also used for complex arguments as 
$\eabs z=(1+|z|^2)^{\frac 12}$ when $z\in \cc d$. 
If $s\in \mathbf R$ then $x\mapsto \eabs x^s$ belongs to $\mascP (\rr d)$,
due to Peetre's inequality
\begin{equation}\label{eq:Peetre}
\eabs{x+y}^s \leqs 2^{|s|} \eabs{x}^s\eabs{y}^{|s|}\qquad x,y\in\rr{d},
\qquad s \in \ro. 
\end{equation}

\par

The weight $v$ is called \emph{submultiplicative}
if it is even and \eqref{eq:2}
holds for $\omega =v$.
If \eqref{eq:2} holds and $v$ is submultiplicative then 
\begin{equation}\label{eq:2Next}
\begin{gathered}
\frac {\omega (x)}{v(y)} \lesssim \omega(x+y) \lesssim \omega(x)v(y),
\\[1ex]
\quad
v(x+y) \lesssim v(x)v(y)
\quad \text{and}\quad v(x)=v(-x),
\quad x,y\in\rr{d}.
\end{gathered}
\end{equation}
The notation
$A(\theta )\lesssim B(\theta )$, $\theta \in \Omega$,
means that there is a constant $c>0$ such that $A(\theta )\le cB(\theta )$
for all $\theta \in \Omega$.

\par

If $\omega$ is a moderate weight then by \cite{To11} there is a
submultiplicative weight
$v$ such that \eqref{eq:2} and \eqref{eq:2Next}
hold. If $v$ is submultiplicative then
\begin{equation}\label{Eq:CondSubWeights}
1\lesssim v(x) \lesssim e^{r|x|}
\end{equation}
for some constant $r>0$ (cf. \cite{Groch}). In particular, if $\omega$ is moderate, then
\begin{equation}\label{Eq:ModWeightProp}
\omega (x+y)\lesssim \omega (x)e^{r|y|}
\quad \text{and}\quad
e^{-r|x|}\lesssim \omega (x)\lesssim e^{r|x|},\quad
x,y\in \rr d
\end{equation}
for some $r>0$.
If not otherwise specified the symbol $v$ always denote a submultiplicative weight. 

\par

\subsection{Gelfand-Shilov spaces}\label{subsec1.2}
Let $s ,\sigma > 0$. The 
Gelfand-Shilov space $\maclS _s^\sigma (\rr d)$ ($\Sigma _s^\sigma (\rr d)$) of
Roumieu (Beurling) type consists of all $f\in C^\infty (\rr d)$
such that
\begin{equation}\label{gfseminorm}
\nm f{\maclS _{s,h}^\sigma}\equiv \sup \frac {|x^\alpha \partial ^\beta
f(x)|}{h^{|\alpha  + \beta |}\alpha !^s \, \beta !^\sigma}
\end{equation}
is finite for some (every) $h>0$. The supremum refers to all
$\alpha ,\beta \in \mathbf N^d$ and $x\in \rr d$. 
The seminorms
$\nm \cdo {\maclS _{s,h}^\sigma}$ induce an inductive limit topology for the
space $\maclS _s^\sigma (\rr d)$ and a projective limit topology for
$\Sigma _s^\sigma (\rr d)$. The latter space is a Fr{\'e}chet space under this topology.
The space $\maclS _s^\sigma (\rr d)\neq \{ 0\}$ ($\Sigma _s^\sigma (\rr d)\neq \{0\}$),
if and only if $s+\sigma \ge 1$ ($s+\sigma \ge 1$ and $(s,\sigma) \neq (\frac 12, \frac 12)$).
We write $\maclS _s (\rr d) = \maclS _s^s (\rr d)$ and $\Sigma _s (\rr d) = \Sigma _s^s (\rr d)$. 

\par

The \emph{Gelfand-Shilov distribution spaces} $(\maclS _s^\sigma )'(\rr d)$
and $(\Sigma _s^\sigma )'(\rr d)$ are the dual spaces of $\maclS _s^\sigma (\rr d)$
and $\Sigma _s^\sigma (\rr d)$, respectively.

\par

The embeddings
\begin{multline}\label{GSembeddings}
\maclS _{s_1}^{\sigma _1} (\rr d) \hookrightarrow \Sigma _{s_2}^{\sigma _2}(\rr d)
\hookrightarrow
\maclS _{s_2}^{\sigma _2} (\rr d)
\hookrightarrow
\mascS (\rr d)
\\[1ex]
\hookrightarrow \mascS '(\rr d) 
\hookrightarrow  (\maclS _{s_2}^{\sigma _2})'(\rr d)
\hookrightarrow  (\Sigma _{s_2}^{\sigma _2})'(\rr d)
\hookrightarrow (\maclS _{s_1}^{\sigma _1}) '(\rr d),
\\[1ex]
s_1+\sigma _1 \ge 1,\ s_1<s_2,\ \sigma _1<\sigma _2,
\end{multline}
are dense. 
For topological spaces $A$ and $B$, $A\hookrightarrow B$ means that the inclusion $A\subseteq B$ is continuous.

\par

The spaces $\maclS _s$ and $\Sigma_s$, and their duals spaces, admit
characterizations in terms of  coefficients with respect to expansions with respect to
the Hermite functions
$$
h_\alpha (x) = \pi ^{-\frac d4}(-1)^{|\alpha |}
(2^{|\alpha |}\alpha !)^{-\frac 12}e^{\frac {|x|^2}2}
(\partial ^\alpha e^{-|x|^2}),\quad \alpha \in \nn d. 
$$
The set of Hermite functions on $\rr d$ is an orthonormal basis for
$L^2(\rr d)$. 
We use $\maclH _0(\rr d)$ to denote the space of finite linear combinations
of Hermite functions.
Then $\maclH _0(\rr d)$ is dense in the Schwartz space $\mascS (\rr d)$, 
as well as in $\mascS '(\rr d)$, with respect to its weak$^*$ topology. 
The same conclusion is true for $\Sigma _s(\rr d)$ when $s>\frac 12$,
$\maclS _s(\rr d)$ when $s\ge \frac 12$ and their distribution dual
spaces $\Sigma _s'(\rr d)$ and $\maclS _s'(\rr d)$. 
An $f$ in any of these spaces possess an expansion of the form
\begin{equation}\label{Eq:HermiteExpansions}
f=\sum _{\alpha \in \nn d}c(f,\alpha )h_\alpha ,
\quad
c (f,\alpha )=(f,h_\alpha ), \quad \alpha \in \nn d.
\end{equation}
Here $(\cdo , \cdo )$ denotes the unique extensions of the $L^2$ form, 
which is linear in the first variable and conjugate linear in the second variable, 
from $\maclH _0(\rr d)\times \maclH _0(\rr d)$ to
$\maclS _s'(\rr d)\times \maclS _s(\rr d)$ or
$\Sigma _s'(\rr d)\times \Sigma _s(\rr d)$.
We recall that (cf. \cite[Chapter V.3 ]{RS})
\begin{equation}
\begin{alignedat}{5}
f&\in \mascS (\rr d) & \quad &\Leftrightarrow &\quad
|c(f,\alpha )| &\lesssim \eabs \alpha ^{-N} &
\ &\text{for every}& \ N&\ge 0 ,
\\[1ex]
f&\in \mascS '(\rr d) & \quad &\Leftrightarrow &\quad
|c(f,\alpha )| &\lesssim \eabs \alpha ^{N} &
\ &\text{for some}& \ N&\ge 0.
\end{alignedat}
\end{equation}
The topology on $\mascS (\rr d)$ is equivalent to the Fr\'echet space
topology defined by the sequence space seminorms
\begin{equation*}
\mascS (\rr d) \ni f \mapsto \sum_{\alpha \in \nn d}
\eabs \alpha ^{2N} |c(f,\alpha )|^2, \quad N \ge 0. 
\end{equation*}
For $f \in \mascS '(\rr d)$ the sum in \eqref{Eq:HermiteExpansions}
converges in the weak$^*$ topology. 

\par

The Hermite functions are eigenfunctions to the harmonic
oscillator $H=H_d\equiv |x|^2-\Delta$ and to the Fourier transform
$\mathscr F$, given by
$$
\mathscr Ff (\xi )= \widehat f(\xi ) \equiv (2\pi )^{-\frac d2}\int _{\rr
{d}} f(x)e^{-i\scal  x\xi }\, dx, \quad \xi \in \rr d,
$$
when $f\in L^1(\rr d)$. Here $\scal \cdo \cdo$ denotes the 
scalar product on $\rr d$. In fact
$$
H_dh_\alpha = (2|\alpha |+d)h_\alpha, \quad \alpha \in \nn d.
$$

\par

The Fourier transform $\mathscr F$ extends
uniquely to homeomorphisms on $\mathscr S'(\rr d)$,
from $(\maclS _s^\sigma )'(\rr d)$ to $(\maclS ^s_\sigma )'(\rr d)$ and
from $(\Sigma _s^\sigma )'(\rr d)$ to $(\Sigma ^s_\sigma )'(\rr d)$. 
It also restricts to
homeomorphisms on $\mathscr S(\rr d)$, from
$\maclS _s^\sigma (\rr d)$ to $\maclS ^s_\sigma (\rr d)$, from
$\Sigma _s^\sigma (\rr d)$ to $\Sigma ^s_\sigma (\rr d)$,
and to a unitary operator on $L^2(\rr d)$. Similar facts hold true
when the Fourier transform is replaced by a partial
Fourier transform.

\par

Let $\phi \in \mascS  (\rr d) \setminus 0$ be
fixed. We use the transform 
\begin{equation}\label{eq:cTdef}
\begin{aligned}
\cT_\phi  f(x,\xi) &= (2\pi)^{-\frac d2}e^{i \langle x, \xi \rangle}
(f,e^{i\scal \cdo \xi}\phi (\cdo -x))
\\[1ex]
&=
e^{i \langle x, \xi \rangle}\mascF (f\cdot \overline {\phi (\cdo -x)})(\xi )
=
\mascF (f(\cdo +x)\overline \phi )(\xi ), \quad x, \xi \in \rr d,
\end{aligned}
\end{equation}
where $f \in \mascS '(\rr d)$ and $\phi  \in \mascS (\rr d) \setminus 0$
(cf. \cite{Cappiello1}). If $f ,\phi \in \mascS (\rr d)$ then
\begin{align*}
\cT _\phi f(x,\xi )
&=
(2\pi )^{-\frac d2}e^{i \langle x, \xi \rangle}
\int _{\rr d} f(y)\overline {\phi
(y-x)}e^{-i\scal y\xi}\, dy
\\[1ex]
&=
(2\pi )^{-\frac d2}
\int _{\rr d} f(y+x)\overline {\phi
(y)}e^{-i\scal y\xi}\, dy, \quad x,\xi \in \rr d.
\end{align*}

\par

We notice that the short-time Fourier transform $V_{\phi}f$
of $f$ is given by
\begin{equation}\label{Eq:LinkSTFTandTmap}
V_{\phi}f(x,\xi) = e^{-i \langle x, \xi \rangle} \cT_\phi  f(x,\xi).
\end{equation}
Thus by \cite[Theorem 2.3]{To11} it follows that the definition of the map
$(f,\phi)\mapsto \cT _{\phi} f$ from $\mascS (\rr d) \times \mascS (\rr d)$
to $\mascS(\rr {2d})$ is uniquely extendable to a continuous map from
$\maclS _s'(\rr d)\times \maclS_s'(\rr d)$
to $\maclS_s'(\rr {2d})$, and restricts to a continuous map
from $\maclS _s (\rr d)\times \maclS _s (\rr d)$
to $\maclS _s(\rr {2d})$.
The same conclusion holds with $\Sigma _s$ in place of
$\maclS_s$, at each place.

\par

The adjoint $\cT _\phi ^*$ is given by
$$
(\cT _\phi^* F, g)_{L^2(\rr d)}
=
(F, \cT _\phi g)_{L^2(\rr {2d})}
$$
for $F \in \maclS _s'(\rr {2d})$ and $g \in \maclS _s(\rr d)$,
and similarly with $\Sigma _s$ or with $\mascS$ in place of
$\maclS _s$ at each occurrence. 
When $F$ is a polynomially bounded measurable function we
write
\begin{equation}\label{Eq:Moyalsformula}
\cT _\phi^* F(y) = (2\pi)^{-\frac d2} \iint _{\rr {2d}} 
F(x,\xi) \, e^{i\scal {y-x}\xi}\phi (y-x)
\, d x d \xi ,
\end{equation}
where the integral is defined weakly so that
$(\cT _\phi^* F, g)_{L^2(\rr d)}
=
(F, \cT _\phi g)_{L^2(\rr {2d})}$
for $g \in \cS(\rr d)$. 
The identity \eqref{Eq:Moyalsformula} is called Moyal's formula. 

\par

We have
\begin{equation}\label{eq:reproducing}
(\cT_\psi ^*\circ \cT _\phi )f
=(\psi ,\phi ) f, \qquad f \in \maclS _s'(\rr d),\ \phi ,\psi \in \maclS _s(\rr d),
\end{equation}
and similarly with $\Sigma _s$ or with $\mascS$ in place of
$\maclS _s$ at each occurrence. 

\par

Two important features of $\cT _\phi$ which distinguish
it from the short-time Fourier transform are the differential identities
\begin{align}
\label{eq:diffident}
\partial_x^\alpha \cT _\phi f (x,\xi) & = \cT _\phi (\partial^\alpha f) (x,\xi),
\qquad \alpha \in \nn d
\intertext{and}
\label{eq:diffidentstar}
D_{\xi}^\beta \cT _\phi f (x,\xi) & = \cT_{g_\beta} f(x,\xi), \qquad \beta \in \nn d, \qquad
\phi _\beta (x) = (-x)^\beta \phi (x).
\end{align}

\par

By \eqref{Eq:LinkSTFTandTmap} it follows that
characterizations of Gelfand-Shilov spaces and their distribution spaces
in terms of estimates of their short-time Fourier transforms
carry over to estimates on $\cT _\phi$
in place of $V_\phi$. For example we have
the following (see e.{\,}g. \cite{GZ,Teof} for the proof of (1)
and \cite{Toft18} for the proof of (2)).
See also \cite{CPRT10} for related results.

\par

\begin{prop}\label{stftGelfand2}
Let $s,\sigma >0$, 
$\phi \in \maclS _s^\sigma (\rr d)\setminus 0$
($\phi \in \Sigma _s^\sigma (\rr d)\setminus 0$) and let
$f\in (\maclS _s^\sigma )'(\rr d)$ ($f\in (\Sigma _s^\sigma )'(\rr d)$). 
Then the following is true:
\begin{enumerate}
\item $f\in \maclS _s^\sigma (\rr d)$ ($f\in \Sigma _s^\sigma (\rr d)$) if and only if
\begin{equation}\label{stftexpest2}
|\cT _\phi f(x,\xi )| \lesssim  e^{-r (|x|^{\frac 1s}+|\xi |^{\frac 1\sigma})}, \quad x,\xi \in \rr d,
\end{equation}
for some (every) $r > 0$.
\item $f\in (\maclS _s^\sigma )'(\rr d)$ ($f\in (\Sigma _s^\sigma )'(\rr d)$) if and only if
\begin{equation}\label{stftexpest2Dist}
|\cT _\phi f(x,\xi )| \lesssim  e^{r(|x|^{\frac 1s}+|\xi |^{\frac 1\sigma})}, \quad
x,\xi \in \rr d,
\end{equation}
for every (some) $r > 0$.
\end{enumerate}
\end{prop}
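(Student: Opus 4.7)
The plan is to reduce Proposition \ref{stftGelfand2} to the corresponding known characterizations of $\maclS _s^\sigma (\rr d)$, $\Sigma _s^\sigma (\rr d)$ and their dual spaces in terms of the short-time Fourier transform $V_\phi$, and then transport these to $\cT _\phi$ via the pointwise identity \eqref{Eq:LinkSTFTandTmap}. The key observation is simply that $|e^{-i\scal x\xi }|=1$, so that the oscillatory factor in \eqref{Eq:LinkSTFTandTmap} is irrelevant for the modulus estimates in \eqref{stftexpest2} and \eqref{stftexpest2Dist}.

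First, I would invoke the STFT characterization of Gelfand-Shilov spaces. Under the hypotheses on $\phi$ and $f$ in the proposition, one has $f\in \maclS _s^\sigma (\rr d)$ (respectively $f\in \Sigma _s^\sigma (\rr d)$) if and only if
\begin{equation*}
|V_\phi f(x,\xi )|\lesssim e^{-r(|x|^{\frac 1s}+|\xi |^{\frac 1\sigma })},\qquad x,\xi \in \rr d,
\end{equation*}
for some (every) $r>0$; and $f\in (\maclS _s^\sigma )'(\rr d)$ (respectively $f\in (\Sigma _s^\sigma )'(\rr d)$) if and only if the analogous estimate holds with the exponent $+r(|x|^{\frac 1s}+|\xi |^{\frac 1\sigma })$ for every (some) $r>0$. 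These equivalences are established in \cite{GZ,Teof} for statement (1) and in \cite{Toft18} for statement (2), with closely related material in \cite{CPRT10}.

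Second, from \eqref{Eq:LinkSTFTandTmap} we obtain the trivial pointwise equality
\begin{equation*}
|\cT _\phi f(x,\xi )|=|V_\phi f(x,\xi )|,\qquad x,\xi \in \rr d.
\end{equation*}
Inserting this identity into the STFT characterizations above immediately converts \eqref{stftexpest2} and \eqref{stftexpest2Dist} into the STFT estimates recalled in the previous paragraph, and yields both directions of (1) and (2) in one stroke.

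The only point that requires some bookkeeping is that \eqref{stftexpest2} and \eqref{stftexpest2Dist} are pointwise estimates, while $f$ is a priori only a distribution. Here I would rely on the continuity of the map $(f,\phi )\mapsto \cT _\phi f$, extended to the $\maclS _s^\sigma$ and $\Sigma _s^\sigma$ settings along the lines recalled after \eqref{eq:cTdef}, together with the differential identities \eqref{eq:diffident}--\eqref{eq:diffidentstar}, which guarantee that $\cT _\phi f$ is a smooth function so that the pointwise estimates make sense. The reconstruction formula \eqref{eq:reproducing} supplies the converse direction, recovering $f$ in the claimed space from the bound on $\cT _\phi f$. I do not expect any substantive obstacle beyond this: the content of the proposition is essentially the one-line identity $|\cT _\phi f|=|V_\phi f|$ combined with the STFT calculus on Gelfand-Shilov spaces.
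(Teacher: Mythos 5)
Your proposal is correct and follows exactly the same route the paper takes: the paper itself only observes (in the sentences preceding the proposition) that the known STFT characterizations of Gelfand-Shilov spaces and their duals from \cite{GZ,Teof,Toft18,CPRT10} carry over to $\cT_\phi$ via \eqref{Eq:LinkSTFTandTmap}, since $|V_\phi f|=|\cT_\phi f|$. Nothing is missing; your extra remark about smoothness of $\cT_\phi f$ is sound but not strictly needed beyond what the cited references already supply.
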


\par

\subsection{The Bargmann transform and spaces of analytic
functions}\label{subsec1.3}

\par

If $\Omega \subseteq \cc d$ is open then $A(\Omega)$
consists of all (complex-valued) analytic functions on $\Omega$.
Complex derivatives are denoted, with $z = x+iy \in \Omega$,
\begin{equation*}
\partial_{z_j} = \frac{1}{2} \left( \partial_{x_j} - i \partial_{y_j} \right), \quad 
\overline{\partial}_{z_j} = \frac{1}{2} \left( \partial_{x_j} + i \partial_{y_j} \right)
\end{equation*}
for $1 \le j \le d$, which admits the Cauchy-Riemann equations to be written as 
$\overline{\partial}_{z_j} f = 0$, $1 \le j \le d$. 

\par

The Bargmann kernel is defined by
$$
\mathfrak A_d(z,y)=\pi ^{-\frac d4} \exp \Big ( -\frac 12(\scal
zz+|y|^2)+2^{1/2}\scal zy\Big ), \quad z \in \cc d, \quad y \in \rr d,
$$
where
$$
\scal zw = \sum _{j=1}^dz_jw_j\quad \text{and} \quad
(z,w)= \scal z{\overline w}
$$
when
$$
z=(z_1,\dots ,z_d) \in \cc d\quad  \text{and} \quad w=(w_1,\dots ,w_d)\in \cc d.
$$
Sometimes $\scal \cdo \cdo$ denotes the duality between a test function
space and its dual. The context precludes confusion between its double use.  
The Bargmann transform $\mathfrak V_df$ of $f\in \maclS _{1/2}'(\rr d)$
is the entire function
\begin{equation}\label{bargdistrform}
\mathfrak V_d f (z) =\scal f{\mathfrak A_d(z,\cdo )},
\quad z \in \cc d.
\end{equation}
The right-hand side is a well defined element in $A(\cc d)$,
since $y\mapsto \mathfrak A_d(z,y)$ belongs to
$\maclS _{1/2} (\rr d)$ for $z \in \cc d$ fixed, and
$\mathfrak A_d(\cdo,y)$ is entire for all $y \in \rr d$.
Let $p\in [1,\infty ]$ and $\omega \in \mascP _E(\rr d)$. Then
$L^p_{(\omega )}(\rr d)$ consists of all $f\in L^1_{loc}(\rr d)$ such
that $\nm f{L^p_{(\omega )}}\equiv \nm {f\cdot \omega }{L^p}$
is finite. If $f\in L^p_{(\omega )}(\rr d)$, then
\begin{multline}
\mathfrak V_d f (z) =\int_{\rr d} \mathfrak A_d(z,y)f(y)\, dy
\\[1ex]
=
\pi ^{-\frac d4}\int _{\rr d}\exp \Big ( -\frac 12(\scal
z z+|y|^2)+2^{1/2}\scal zy \Big )f(y)\, dy,\quad z \in \cc d.
\end{multline}
(Cf. \cite{B1,B2,To11,Toft18}.)

\par

For  $p\in (0,\infty]$, $\omega \in \mascP _E(\cc d)$ and
$\omega _0(z)=\omega (\sqrt 2\overline z)$, let
$A^p_{(\omega )}(\cc d)$ be the set of all $F\in A(\cc d)$
such that
\begin{equation*}
\nm F{A^p_{(\omega )}}
\equiv \pi ^{-\frac dp}
\nm {F\cdot e^{-\frac 12|\cdo |^2}\cdot \omega _0}{L^p}, 
\end{equation*}
and set $A^p=A^p_{(\omega )}$ when $\omega =1$.
It was proved by Bargmann \cite{B1} that
\begin{equation}\label{Eq:L2A2Isometry}
\mathfrak V_d : L^2(\rr d) \to A^2(\cc d)
\end{equation}
is bijective and isometric. The space $A^2(\cc d)$ is the Hilbert space 
of entire functions with scalar product 
\begin{equation*}
(F,G)_{A^2}\equiv  \int _{\cc d} F(z)\overline {G(z)}\, d\mu (z),\quad F,G\in A^2(\cc d), 
\end{equation*}
where $d\mu (z)=\pi ^{-d} e^{-|z|^2}\, d\lambda (z)$ and $d\lambda (z)$ is
the Lebesgue measure on $\cc d$. The space $A^2(\cc d)$ is known as the
Fock space in quantum mechanics (see\cite{Fo}).

\par

In \cite{B1} it was proved that the Bargmann transform maps the Hermite functions
to monomials as
\begin{equation}\label{Eq:BargmannHermiteMap}
\mathfrak V_dh_\alpha = e_\alpha ,\qquad e_\alpha (z)= \frac {z^\alpha}{\alpha !^{\frac 12}},
\quad z\in \cc d,\quad \alpha \in \nn d.
\end{equation}
The orthonormal basis
$\{ h_\alpha \}_{\alpha \in \nn d} \subseteq L^2(\rr d)$ 
is thus mapped to the orthonormal basis
$\{ e_\alpha \} _{\alpha \in \nn d}\subseteq A^2(\cc d)$. 
Bargmann also proved that there is a reproducing formula for
$A^2(\cc d)$. Let $\Pi _A$ be the operator from $L^2(d\mu )$
to $A(\cc d)$, given by
\begin{equation}\label{eq:projection}
\Pi _A F (z)= \int _{\cc d} F(w)e^{(z,w)}\, d\mu (w),
\quad z \in \cc d.
\end{equation}
Then $\Pi _A$ is the orthogonal projection from
$L^2(d\mu)$ to
$A^2(\cc d)$ (cf. \cite{B1}).

\par

When we discuss extensions and restrictions of the
Bargmann transform to Gelfand-Shilov spaces and their
distribution spaces, we use
\begin{equation}\label{Eq:GevreyQuasinormMIxed}
|z|_{s,\sigma} = |\operatorname{Re} z|^{\frac 1s}
+
|\operatorname{Im} z|^{\frac 1\sigma},
\qquad
z\in \cc d,
\end{equation}
and consider the seminorms
\begin{alignat*}{2}
\nm F{\maclA _{\mascS ;r}}
&\equiv
\nm {F\cdot e^{-\frac 12|\cdo |^2}\eabs \cdo 
^{r}}{L^\infty},&
\quad
\nm F{\maclA _{\mascS ;r}'}
&\equiv
\nm {F\cdot e^{-\frac 12|\cdo |^2}\eabs \cdo 
^{-r}}{L^\infty}
\intertext{and}
\nm F{\maclA _{\maclS _{s;r}^\sigma}}
&\equiv
\nm {F\cdot e^{-\frac 12|\cdo |^2+r|\cdo 
|_{s,\sigma}}}{L^\infty}, &
\quad
\nm F{\maclA _{\maclS _{s;r}^\sigma}'}
&\equiv
\nm {F\cdot e^{-\frac 12|\cdo |^2-r|\cdo 
|_{s,\sigma}}}{L^\infty}
\end{alignat*}
when $F\in A(\cc d)$, $r>0$ and $s,\sigma \ge  \frac 12$. Then
$\maclA _{0,s}^\sigma (\cc d)$ for
$s, \sigma > \frac 12$,
$\maclA _{\mascS} (\cc d)$ and
$(\maclA _s^\sigma )' (\cc d)$ for
$s,\sigma \ge \frac 12$
are the sets of all
$F\in A(\cc d)$ such that
\begin{equation*}
\nm F{\maclA _{\maclS _{s;r}^\sigma}}<\infty,
\quad
\nm F{\maclA _{\mascS ;r}}<\infty 
\quad \text{and}\quad
\nm F{\maclA _{\maclS _{s;r}^\sigma}'}<\infty,
\end{equation*}
respectively, for every $r>0$.
The spaces are equipped with the
projective limit topology with respect to $r>0$,
defined by each class of seminorms, respectively. 

\par

In the same way we
let $\maclA _{s}^\sigma (\cc d)$ for
$s,\sigma \ge \frac 12$,
$\maclA _{\mascS}' (\cc d)$ and
$(\maclA _{0,s}^\sigma )' (\cc d)$ for $s,\sigma >\frac 12$
be the sets of all
$F\in A(\cc d)$ such that
\begin{equation*}
\nm F{\maclA _{\maclS _{s;r}^\sigma}}<\infty, 
\quad
\nm F{\maclA _{\mascS ;r}'}<\infty
\quad \text{and}\quad
\nm F{\maclA _{\maclS _{s;r}^\sigma}'}<\infty ,
\end{equation*}
respectively, for some $r>0$. 
Their topologies are the inductive limit
topologies with respect to $r>0$,
defined by each class of seminorms, respectively. 
We also set
$$
\maclA _{0,s} =\maclA _{0,s}^s
\quad \text{and}\quad
\maclA _{s} =\maclA _{s}^s.
$$
Then
\begin{alignat*}{5}
{\mathfrak V}_d \, &: &\,
\mascS (\rr d) &\to \maclA _{\mascS}(\cc d), &
\qquad
{\mathfrak V}_d \, &: &\,
\mascS '(\rr d) &\to \maclA _{\mascS}'(\cc d), & &
\\[1ex]
\mathfrak V_d \, &: &\,
\maclS _s^\sigma (\rr d) &\to \maclA _s^\sigma (\cc d), &
\qquad
\mathfrak V_d \, &: &\,
(\maclS _s^\sigma )'(\rr d)
&\to (\maclA _s^\sigma )' (\cc d) &
\quad
s,\sigma &\ge \frac 12
\intertext{and}
\mathfrak V_d \, &: &\,
\Sigma _s^\sigma (\rr d)
&\to
\maclA _{0,s}^\sigma (\cc d), &
\qquad
\mathfrak V_d \, &: &\,
(\Sigma _s^\sigma )'(\rr d)
&\to (\maclA _{0,s}^\sigma )' (\cc d), &
\quad
s,\sigma &> \frac 12
\end{alignat*}
are homeomorphisms \cite{Toft18}.

\par

From these homeomorphisms, the fact that the map
\eqref{Eq:L2A2Isometry} is a homeomorphism and
duality properties for Gelfand-Shilov spaces, it follows that
$(\cdo ,\cdo )_{A^2}$ on $\maclA _{1/2}(\cc d)\times
\maclA _{1/2}(\cc d)$ is uniquely extendable to
a continuous sesqui-linear form on
$(\maclA _s^\sigma )'(\cc d)\times \maclA _s^\sigma (\cc d)$.
The dual of $\maclA _s^\sigma (\cc d)$
can be identified with $(\maclA _s^\sigma )'(\cc d)$ through
this form. Similar facts hold for $\maclA _{0,s}^\sigma$
in place of $\maclA _s^\sigma$ at each occurrence.
(Cf. e.{\,}g. \cite{To11,Toft18}.)

\par

Finally let $\maclA _{\flat _1;r}(\cc d)$ and $\maclA _{\flat _\infty ;r}(\cc d)$
for $r > 0$ be the Banach spaces which consist
of all $F\in A(\cc d)$ such that
$$
\nm F{\maclA _{\flat _1;r}}\equiv \nm {F\cdot e^{-r|\cdo |}}{L^\infty}
\quad \text{respectively}\quad
\nm F{\maclA _{\flat _\infty ;r}}\equiv \nm {F\cdot e^{-r|\cdo |^2}}{L^\infty}
$$
is finite, and let $\maclA _{\flat _1}(\cc d)$ be the inductive limit
of $\maclA _{\flat _1;r}(\cc d)$ with respect to $r>0$. Also let 
$\maclA _{0,\flat _\infty}(\cc d)$ and $\maclA _{0,\flat _\infty}'(\cc d)$
be the projective respectively inductive limit topologies of
$\maclA _{\flat _\infty ;r}(\cc d)$ with respect to $r>0$.

\par

It is evident that
$\maclA _{\flat _1}(\cc d)$ is densely embedded in $\maclA _s^\sigma (\cc d)$
for every $s,\sigma \ge \frac 12$, as well as in $\maclA _{0,s}^\sigma (\cc d)$
for every $s,\sigma > \frac 12$. The form
$(\cdo ,\cdo )_{A^2}$ on $\maclA _{\flat _1}(\cc d)\times
\maclA _{\flat _1}(\cc d)$ is uniquely extendable to
a continuous sesqui-linear form on
$A(\cc d)\times \maclA _{\flat _1} (\cc d)$ and the dual
of $\maclA _{\flat _1} (\cc d)$ can be identified with $A(\cc d)$. 
The Fr{\'e}chet space topology of $A(\cc d)$
can be defined by the seminorms
$$
F\mapsto  \sup _{|z|\le N}|F(z)|,\qquad N=1,2,\dots .
$$
(Cf. \cite{Toft18}.)

\par

\begin{rem}\label{Rem:BargmannPilipovicSpaces}
The spaces $\maclA _{\flat _1}(\cc d)$ and $\maclA _{0,\flat _\infty}(\cc d)$
are examples of Bargmann images of special Pilipovi{\'c} spaces, a family
of Fourier invariant topological vector spaces which are smaller than any Fourier
invariant Gelfand-Shilov space, and which were introduced
and investigated in \cite{Toft18}. For any $\sigma >0$, the Bargmann image
of the Pilipovi{\'c} spaces $\maclH _{\flat _\sigma}(\rr d)$ and
$\maclH _{0,\flat _\sigma}(\rr d)$ are given by
\begin{align*}
\maclA _{\flat _\sigma}(\cc d)
& \equiv
\sets {F\in A(\cc d)}{|F(z)|\lesssim e^{r|z|^{\frac {2\sigma}{\sigma +1}}}
\ \text {for some}\ r>0 }
\intertext{respectively}
\maclA _{0,\flat _\sigma}(\cc d)
& \equiv
\sets {F\in A(\cc d)}{|F(z)|\lesssim e^{r|z|^{\frac {2\sigma}{\sigma +1}}}
\ \text {for every}\ r>0 }.
\end{align*}

\par

If $\sigma >1$, then the (strong) duals of $\maclA _{\flat _\sigma}(\cc d)$ and
$\maclA _{0,\flat _\sigma}(\cc d)$ are given by
\begin{align*}
\maclA _{\flat _\sigma}'(\cc d)
& \equiv
\sets {F\in A(\cc d)}{|F(z)|\lesssim e^{r|z|^{\frac {2\sigma}{\sigma -1}}}
\ \text {for every}\ r>0 }
\intertext{respectively}
\maclA _{0,\flat _\sigma}'(\cc d)
& \equiv
\sets {F\in A(\cc d)}{|F(z)|\lesssim e^{r|z|^{\frac {2\sigma}{\sigma -1}}}
\ \text {for some}\ r>0 }
\end{align*}
through a unique extension of the $A^2$ scalar product on
$\maclA _{\flat _1}(\cc d)\times \maclA _{\flat _1}(\cc d)$.
In particular, if $\sigma$ tends to $\infty$, it follows that some of
these conditions tend to
\begin{align*}
\maclA _{0,\flat _\infty}(\cc d)
& \equiv
\sets {F\in A(\cc d)}{|F(z)|\lesssim e^{r|z|^2}
\ \text {for every}\ r>0 }
\intertext{respectively}
\maclA _{0,\flat _\infty}'(\cc d)
& \equiv
\sets {F\in A(\cc d)}{|F(z)|\lesssim e^{r|z|^2}
\ \text {for some}\ r>0 }.
\end{align*}
Note that in \cite{Toft18,Teofanov2}, the set $\maclA _{0,\flat _\infty}(\cc d)$
is denoted by $\maclA _{0,\frac 12}(\cc d)$, and its dual
$\maclA _{0,\flat _\infty}'(\cc d)$ is denoted by $\maclA _{0,\frac 12}'(\cc d)$.
\end{rem}

\par

At many places it will be crucial to use the Gaussian window
\begin{equation}\label{Eq:phidef}
\phi (x)=\pi ^{-\frac d4}e^{-\frac 12|x|^2}, \quad x\in \rr d,
\end{equation}
in the transform $\cT_\phi$. 
For this $\phi$ the relationship between
the Bargmann transform and $\cT _\phi$ is
\begin{equation}\label{bargstft1}
\mathfrak V_d = U_{\mathfrak V}\circ \cT _\phi ,\quad \text{and}\quad
U_{\mathfrak V}^{-1} \circ \mathfrak V_d =  \cT _\phi ,
\end{equation}
where $U_{\mathfrak V}$ is the linear, continuous and bijective operator on
$\mathscr D'(\rr {2d})\simeq \mathscr D'(\cc d)$, given by
\begin{equation}\label{UVdef}
U_{\mathfrak V} F (x+i\xi ) = (2\pi )^{\frac d2} e^{\frac 12(|x|^2+|\xi |^2)}e^{i\scal x\xi}
F(\sqrt 2\, x,-\sqrt 2\, \xi ), \quad x,\xi \in \rr d,
\end{equation}
cf. \cite{To11} in combination with \eqref{Eq:LinkSTFTandTmap}.

\par

In analytic operator theory we need subspaces of
$$
\wideparen A(\cc {2d})
\equiv
\Sets {\Theta K}
{K\in A(\cc {2d})},
$$
where the semi-conjugation operator is 
\begin{equation}\label{Eq:ThetaOpDef}
(\Theta K) (z,w) = K(z,\overline w),\qquad z,w\in \cc d.
\end{equation}
If $T$ is a linear and continuous operator
from $\maclS _{1/2}(\rr d)$ to $\maclS _{1/2}'(\rr d)$,
then there is a unique $K\in \wideparen A(\cc {2d})$
such that $\Theta K\in \maclA _{1/2}'(\cc {2d})$
and $\mathfrak V_d \circ T \circ \mathfrak V_d^{-1}$
is given by
\begin{equation}\label{Eq:CompIntOp}
F(z)\mapsto \int _{\cc d}K(z,w)F(w)\, d\mu (w).
\end{equation}
(See e.{\,}g. \cite{Teofanov2}.) For these reasons we let
$$
\wideparen \maclA _{0,s}(\cc {2d}),
\quad
\wideparen \maclA _s(\cc {2d}),
\quad
\wideparen \maclA _{\mascS}(\cc {2d}),
\quad
\wideparen \maclA _{\mascS}'(\cc {2d}),
\quad
\wideparen \maclA _s'(\cc {2d})
\quad \text{and}\quad
\wideparen \maclA _{0,s}'(\cc {2d}) 
$$
be the images of
$$
\maclA _{0,s}(\cc {2d}),
\quad
\maclA _s(\cc {2d}),
\quad
\maclA _{\mascS}(\cc {2d}),
\quad
\maclA _{\mascS}'(\cc {2d}),
\quad
\maclA _s'(\cc {2d})
\quad \text{and}\quad
\maclA _{0,s}'(\cc {2d}) 
$$
respectively, under the map $\Theta$. We also let $\wideparen A^p(\cc {2d})$
and $\wideparen \maclA _{\flat _1}(\cc {2d})$
be the images of $A^p(\cc {2d})$ and $\maclA _{\flat _1}(\cc {2d})$,
respectively, under the map $\Theta$.
The topologies of the former spaces are
inherited from the corresponding latter spaces.

\par

The semi-conjugated
Bargmann (SCB) transform is defined as
$$
\mathfrak V_{\Theta ,d}
=
\Theta \circ \mathfrak V_{2d}.
$$
All properties of the Bargmann transform
carry over naturally to analogous properties for the SCB transform. 

\par

\subsection{Pseudo-differential operators}\label{subsec1.4}

\par

Let $A$ be a real $d\times d$ matrix.
The \emph{pseudo-differential operator}
$\op _A(\fka)$ with \emph{symbol}
$\fka \in \maclS _{1/2} (\rr {2d})$ is the linear and continuous operator on
$\maclS _{1/2} (\rr d)$ given by
\begin{equation}\label{e0.5}
\op _A(\fka )f (x)
=
(2\pi  ) ^{-d}\iint \fka (x-A(x-y),\xi )
f(y)e^{i\scal {x-y}\xi }\,
dyd\xi, \quad x\in \rr d.
\end{equation}
For $\fka \in \maclS _{1/2}'(\rr {2d})$ the
pseudo-differential operator $\op _A(\fka )$
is defined as the continuous
operator from $\maclS _{1/2}(\rr d)$ to $\maclS _{1/2}'(\rr d)$ with
distribution kernel
\begin{equation}\label{atkernel}
K_{\fka ,A}(x,y)
=
(2\pi )^{-\frac d2} \mascF _2^{-1}\fka (x-A(x-y),x-y), \quad 
x,y \in \rr d, 
\end{equation}
where $\mascF _2F$ is the partial Fourier
transform of $F(x,y)\in
\maclS _{1/2}'(\rr {2d})$ with respect to the $y$
variable. This definition makes sense since the
mappings
\begin{equation}\label{homeoF2tmap}
\mascF _2\quad \text{and}\quad F(x,y)\mapsto F(x,x-y)
\end{equation}
are homeomorphisms on $\maclS _{1/2}'(\rr {2d})$.
The map $a\mapsto K_{\fka ,A}$ is
hence a homeomorphism on $\maclS _{1/2}'(\rr {2d})$.

\par

If $A$ and $B$ are real $d\times d$ matrices and
$\fka \in \maclS _{1/2}'(\rr {2d})$,
then there is a unique $\fkb \in \maclS _{1/2}'(\rr {2d})$ such that
$\op _A(\fka )= \op _B(\fkb )$, and that $\fkb$ can be obtained by
\begin{equation}\label{Eq:RealPseudoCalculiTransfer}
\op _A(\fka )= \op _B(\fkb )
\quad \Leftrightarrow \quad
e^{i\scal {AD_\xi }{D_x}}\fka (x,\xi )
= 
e^{i\scal {BD_\xi }{D_x}}\fkb (x,\xi )
\end{equation}
(see \cite{Ho1,CaTo}).

\par

\begin{rem}\label{BijKernelsOps}
By Fourier's inversion formula, \eqref{atkernel} and the 
kernel theorem
\cite[Theorem 2.2]{LozPer}, \cite[Theorem 2.5]{Teof2} for 
operators from
Gelfand-Shilov spaces to their duals,
it follows that the map $\fka \mapsto \op _A(\fka )$
is bijective from $\maclS _{1/2}'(\rr {2d})$
to the set of all linear and continuous operators from 
$\maclS _{1/2}(\rr d)$ to $\maclS _{1/2}'(\rr {2d})$.
\end{rem}

\par

If $A=0$ then
$\op _A(\fka ) = \op _0(\fka ) = \op (\fka ) = \fka (x,D)$
is the Kohn-Nirenberg or standard representation. 
If $A=\frac 12 I_d$
where $I_d$ is the $d\times d$ identity matrix
then $\op _A(\fka ) = \op ^w(\fka )$ is the Weyl quantization. 
In this paper we use mainly the Weyl quantization and we put 
\begin{equation*}
K_{\fka }^w = K_{\fka ,I_d/2} \ .
\end{equation*}

\par

The Weyl product $\fka  \wpr \fkb$ of two Weyl symbols $\fka ,\fkb
\in \maclS _{1/2}(\rr {2d})$ is defined
as the product of symbols corresponding to operator composition. Thus
\begin{equation*}
\op ^w(\fka \wpr \fkb ) = \op ^w(\fka ) \circ \op ^w(\fkb )
\end{equation*}
and the Weyl product can be extended to larger spaces as long as
composition is well defined. 

\medspace

Next we recall the definition of Wick operators. Suppose that
$a\in \wideparen A(\cc {2d})$ satisfies
\begin{equation}\label{Eq:AntiWickL1Cond}
w\mapsto a(z,w)e^{r|w|-|w|^2} \in L^1(\cc d)
\end{equation}
locally uniformly with respect to $z\in \cc d$ for every $r>0$. Then the
\emph{analytic pseudo-differential operator},
or \emph{Wick operator} $\op _{\mathfrak V}(a)$ with symbol $a$
and acting on $F\in \maclA _{\flat _1}(\cc d)$, is defined by
\begin{align}
\op _{\mathfrak V}(a)F(z) &= \int _{\cc d} a(z,w)F(w)e^{(z,w)}\, d\mu (w),
\quad z \in \cc d.
\label{Eq:AnalPseudo}
\end{align}
(Cf. e.{\,}g. \cite{Berezin71,Fo,Teofanov2,To11, Toft18}.) 
The condition \eqref{Eq:AntiWickL1Cond} and
$F\in \maclA _{\flat _1}(\cc d)$ imply that the integrand on
the right-hand side of \eqref{Eq:AnalPseudo} is well defined. 
The locally uniform
condition \eqref{Eq:AntiWickL1Cond} with respect to $z\in \cc d$ 
implies that $\op _{\mathfrak V}(a)F \in A(\cc d)$.

\par

In \cite{Teofanov2} several extensions and restrictions of
$\op _{\mathfrak V}(a)$ are given. 
The following result follows
from \cite[Theorems 2.7 and 2.8]{Teofanov2}. Here
$\mathcal L(\maclA _{\flat _1}(\cc d),A(\cc d))$
is the space of all linear and continuous operators from
$\maclA _{\flat _1}(\cc d)$ to $A(\cc d)$.

\par

\begin{prop}\label{Prop:ContAnalPseudo}
The map $a\mapsto \op _{\mathfrak V}(a)$
from $\wideparen \maclA _{\flat _1}(\cc {2d})$ to
$\mathcal L(\maclA _{\flat _1}(\cc d),A(\cc d))$ is uniquely
extendable to a bijective map from
$\wideparen A(\cc {2d})$ to 
$\mathcal L(\maclA _{\flat _1}(\cc d),A(\cc d))$.
\end{prop}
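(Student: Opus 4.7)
The plan is to reduce the proposition to a kernel-theorem statement by exploiting the relation that $\op _{\mathfrak V}(a)$ corresponds to the integral kernel $K(z,w) = a(z,w)\, e^{(z,w)}$ against the measure $d\mu$ in \eqref{Eq:CompIntOp}. Since multiplication by the entire function $(z,w)\mapsto e^{(z,\overline w)}=e^{\scal zw}$ is a homeomorphism on $\wideparen A(\cc {2d})$ (and restricts to one on $\wideparen \maclA _{\flat _1}(\cc {2d})$), the correspondence $a\leftrightarrow K$ is a bijection between $\wideparen A(\cc {2d})$ and itself. It therefore suffices to show that the map $K\mapsto (F\mapsto \int K(z,w)F(w)\, d\mu (w))$ is a bijection from $\wideparen A(\cc {2d})$ onto $\mathcal L(\maclA _{\flat _1}(\cc d),A(\cc d))$, where the integral is interpreted via the sesquilinear $A^2$-pairing between $A(\cc d)$ and $\maclA _{\flat _1}(\cc d)$ recalled in Subsection \ref{subsec1.3}.

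First I would verify, for $a\in \wideparen \maclA _{\flat _1}(\cc {2d})$, that $\op _{\mathfrak V}(a)\in \mathcal L(\maclA _{\flat _1}(\cc d),A(\cc d))$ directly from \eqref{Eq:AnalPseudo}: the pointwise bound on $a$, combined with $|F(w)|\lesssim e^{r|w|}$ for $F\in \maclA _{\flat _1}(\cc d)$ and the Gaussian factor in $d\mu$, gives absolute convergence locally uniformly in $z$, so $\op _{\mathfrak V}(a)F\in A(\cc d)$ by Morera's theorem, and continuity in the inductive limit topologies follows by matching the exponential growth parameters. Next I would establish surjectivity onto $\mathcal L(\maclA _{\flat _1}(\cc d),A(\cc d))$ through a kernel theorem: since $A(\cc d)$ is identified with the strong dual of $\maclA _{\flat _1}(\cc d)$ via $(\cdo ,\cdo )_{A^2}$, every $T\in \mathcal L(\maclA _{\flat _1}(\cc d),A(\cc d))$ defines a separately continuous sesquilinear form on $\maclA _{\flat _1}(\cc d)\times \maclA _{\flat _1}(\cc d)$, which, transported back to $\rr d$ through the Bargmann transform, becomes a continuous form on the corresponding Pilipovi\'c space times itself, whose kernel theorem in \cite{Toft18,Teofanov2} produces a unique $K\in \wideparen A(\cc {2d})$. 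Setting $a(z,w)=K(z,w)e^{-(z,w)}\in \wideparen A(\cc {2d})$ then defines the sought preimage. Uniqueness of the extension follows because any candidate must coincide with the original definition on $\wideparen \maclA _{\flat _1}(\cc {2d})$, which is dense in $\wideparen A(\cc {2d})$ under the inherited Fr\'echet topology, while the kernel $K$ attached to a given $T$ is unique.

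The main obstacle is the kernel-theorem input: one needs the statement that continuous operators from $\maclA _{\flat _1}(\cc d)$ to $A(\cc d)$ are represented by kernels in $\wideparen A(\cc {2d})$, which in turn rests on identifying $\maclA _{\flat _1}(\cc d)\,\widehat \otimes \, \maclA _{\flat _1}(\cc d)$ with (a semi-conjugate of) $\maclA _{\flat _1}(\cc {2d})$ so that its strong dual is the full $\wideparen A(\cc {2d})$. Since these analytic-side kernel facts are exactly those recorded as Theorems 2.7 and 2.8 in \cite{Teofanov2}, the plan is to cite that input for the hardest step and present the remaining verification of the bijection $a\leftrightarrow K$ and the continuity of $\op _{\mathfrak V}(a)$ on $\wideparen \maclA _{\flat _1}(\cc {2d})$ as the elementary companion argument.
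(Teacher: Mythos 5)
Your approach is essentially the same as the paper's: the paper gives no proof of its own, simply stating that the result ``follows from \cite[Theorems 2.7 and 2.8]{Teofanov2}'', which is precisely the kernel-theoretic input you isolate and cite for the hardest step. Your surrounding discussion (relating $a$ to the kernel $K(z,w)=a(z,w)e^{(z,w)}$, using the identification of $A(\cc d)$ with the strong dual of $\maclA _{\flat _1}(\cc d)$, and obtaining uniqueness of the extension from the bijectivity of the kernel correspondence) correctly fleshes out how those cited results are applied.

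One parenthetical claim in your write-up is false, though it is not actually used in the argument: multiplication by $(z,w)\mapsto e^{\scal zw}$ does \emph{not} restrict to a homeomorphism of $\wideparen \maclA _{\flat _1}(\cc {2d})$. Elements of $\maclA _{\flat _1}(\cc {2d})$ obey bounds $|F(z,w)|\lesssim e^{r(|z|+|w|)}$ for some $r>0$, whereas $|e^{\scal zw}|$ can grow like $e^{|z||w|}$, which is quadratic along $|z|=|w|$. What is true (and what your argument really needs) is only that this multiplication is a homeomorphism on the full space $\wideparen A(\cc {2d})$ of semi-conjugate entire functions with the Fr\'echet topology of locally uniform convergence, so that $a\leftrightarrow K$ is a bijection there; the restricted statement about $\wideparen \maclA _{\flat _1}$ should simply be dropped.
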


\par

Let $L_A(\cc {2d})$ be the set of all $a \in L^1_{\rm loc}(\cc {2d})$ such that
$z\mapsto a(z,w)$ is entire for almost every $w\in \cc d$ and
\begin{equation}\label{Eq:LACond}
w \mapsto
\sup _{\alpha \in \nn d}
\left |
\frac { \partial _z^\alpha a(z,w)\cdot e^{r|w|-|w|^2}}{h^{|\alpha |}\alpha !}
\right | 
\in L^1(\cc d)
\end{equation}
for every $h,r>0$ and $z\in \cc d$. 
If $a\in \wideparen A(\cc {2d})$ satisfies \eqref{Eq:AntiWickL1Cond} then
$a\in L_A(\cc {2d})$ as a consequence of Cauchy's integral formula. 
Thus $L_A(\cc {2d})$ is a relaxation of the former condition. 

\par

If $a\in L_A(\cc {2d})$ then $\op _{\mathfrak V}(a):
\maclA _{\flat _1}(\cc d) \to \maclA _{\flat _1}'(\cc d)=A(\cc d)$
is continuous.
Hence the following result is a straight-forward consequence of Proposition 
\ref{Prop:ContAnalPseudo} and the fact that
$\wideparen {\maclA} _{\flat _1}'(\cc {2d})=\wideparen A(\cc {2d})$.

\par

\begin{prop}\label{Prop:LAOpsIdent}
Let $a\in L_A(\cc {2d})$. Then there is a unique $a_0\in \wideparen A(\cc {2d})$
such that $\op _{\mathfrak V}(a) = \op _{\mathfrak V}(a_0)$ as mappings
from $\maclA _{\flat _1}(\cc d)$ to $\maclA _{\flat _1}'(\cc d)$. It holds
\begin{multline}\label{Eq:AntiWickAnalPseudoRel}
\op _{\mathfrak V}(a) = \op _{\mathfrak V}(a_0)
\\[1ex]
\text{where}\quad
a_0(z,w) = \pi ^{-d} \int _{\cc d}a(z,w_1)e^{-(z-w_1,w-w_1)}\, d\lambda (w_1).
\end{multline}
\end{prop}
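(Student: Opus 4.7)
The plan is to reduce the proposition to verification of the explicit formula, since existence and uniqueness of $a_0\in \wideparen A(\cc {2d})$ with $\op _{\mathfrak V}(a) = \op _{\mathfrak V}(a_0)$ are a consequence of Proposition \ref{Prop:ContAnalPseudo}: the hypothesis $a\in L_A(\cc {2d})$ places $\op _{\mathfrak V}(a)$ in $\mathcal L(\maclA _{\flat _1}(\cc d),A(\cc d))$, and the bijectivity asserted in Proposition \ref{Prop:ContAnalPseudo} supplies a unique representative in $\wideparen A(\cc {2d})$. I would therefore introduce $\tilde a_0$ defined by the right-hand side of \eqref{Eq:AntiWickAnalPseudoRel}, and verify (a) $\tilde a_0\in \wideparen A(\cc {2d})$ and (b) $\op _{\mathfrak V}(\tilde a_0) = \op _{\mathfrak V}(a)$ on $\maclA _{\flat _1}(\cc d)$; uniqueness will then force $\tilde a_0 = a_0$.

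For (a), I would observe that for $w_1\in \cc d$ fixed the map $(z,w)\mapsto -(z-w_1,\overline w-w_1)$ is an affine polynomial in $(z,w)\in \cc {2d}$, so the exponential kernel in the formula for $\tilde a_0(z,\overline w)$ is jointly entire in $(z,w)$. Since $z\mapsto a(z,w_1)$ is also entire, the full integrand is jointly entire in $(z,w)$ for almost every $w_1$. The hypothesis \eqref{Eq:LACond}, together with the bound $|e^{-(z-w_1,\overline w-w_1)}|\lesssim e^{C|w_1|-|w_1|^2}$ uniform on compact sets in $(z,w)$, furnishes $L^1(d\lambda (w_1))$ majorants for the integrand and for all of its $(z,w)$-derivatives. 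Differentiation under the integral then yields joint analyticity of $(z,w)\mapsto \tilde a_0(z,\overline w)$, giving $\tilde a_0\in \wideparen A(\cc {2d})$.

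For (b), I would take $F\in \maclA _{\flat _1}(\cc d)$, substitute the formula for $\tilde a_0$ into $\op _{\mathfrak V}(\tilde a_0)F(z)$ and apply Fubini (justified by \eqref{Eq:LACond} combined with $|F(w)|\lesssim e^{r|w|}$), obtaining
\begin{equation*}
\op _{\mathfrak V}(\tilde a_0)F(z) = \pi ^{-d}\int _{\cc d} a(z,w_1)\left ( \int _{\cc d} F(w)\, e^{(z,w)-(z-w_1,w-w_1)}\, d\mu (w)\right ) d\lambda (w_1).
\end{equation*}
The inner exponent simplifies to $(z,w_1) + (w_1,w) - |w_1|^2$, so the reproducing identity $F(w_1) = \int F(w)\, e^{(w_1,w)}\, d\mu (w)$ for $A^2(\cc d)$ (cf.\ \eqref{eq:projection}) collapses the inner integral to $F(w_1)\, e^{(z,w_1)-|w_1|^2}$. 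The remaining $w_1$-integral then assembles into $\int a(z,w_1)F(w_1)\, e^{(z,w_1)}\, d\mu (w_1) = \op _{\mathfrak V}(a)F(z)$, establishing the formula.

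The main technical obstacle is the rigorous justification of Fubini and of differentiation under the integral sign. Both rest on the strong $L^1$-type decay built into \eqref{Eq:LACond}, which was tailored precisely to dominate the exponential weights $e^{r|w_1|}$ arising from elements of $\maclA _{\flat _1}(\cc d)$ and from the Gaussian factor in the measure $d\mu$; once this is in place, the reproducing kernel computation does the rest of the work.
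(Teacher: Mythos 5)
Your proposal is correct and follows essentially the same route as the paper: uniqueness is obtained from Proposition \ref{Prop:ContAnalPseudo}, analyticity of the right-hand side of \eqref{Eq:AntiWickAnalPseudoRel} is checked from \eqref{Eq:LACond}, and the operator identity comes from a Fubini swap together with the reproducing property of $\Pi_A$. The only cosmetic difference is the direction of the calculation (you expand $\op_{\mathfrak V}(\tilde a_0)F$ and collapse the inner integral via the reproducing kernel applied to $F$, while the paper expands $\op_{\mathfrak V}(a)\circ\Pi_A$ and recognizes the inner integral as $a_0(z,w)e^{(z,w)}$) and that the paper phrases the identity weakly against $G\in\maclA_{\flat_1}(\cc d)$, whereas you argue pointwise; both variants rest on the same reproducing identity and the same $L^1$-decay built into $L_A(\cc{2d})$.
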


%
%

\par

\begin{proof}
The operator $\Pi _A$ defined in \eqref{eq:projection} is the orthogonal
projection from $L^2(d\mu)$ to $A^2(\cc d)$ which is uniquely extendable
to a continuous
map from
\begin{equation}\label{Eq:AntiWickL1SymbClass}
L_{0,A}(\cc d) \equiv
\sets {a_0 \in L^1_{\rm loc}(\cc d)}{w \mapsto a_0(w)e^{r|w|-|w|^2} \in L^1(\cc d)
\ \text{for every}\ r>0}
\end{equation}
to $A(\cc d)$ (see e.{\,}g. \cite{To11}). Hence, if $F,G\in \maclA _{\flat _1}(\cc d)$ and $a_0$ is
given by \eqref{Eq:AntiWickAnalPseudoRel} then
\begin{multline*}
(\op _{\mathfrak V}(a)F,G)_{A^2}
=
((\op _{\mathfrak V}(a)\circ \Pi _A)F,G)_{A^2}
\\[1ex]
= \left (
\int _{\cc d} \left ( \int _{\cc d}
a(\cdo ,w_1)e^{(\cdo ,w_1)}e^{(w_1,w)}\, d\mu (w_1)
\right )
F(w)\, d\mu (w),G \right )_{\!\! A^2}
\\[1ex]
=
\left (
\int _{\cc d} a_0(\cdo ,w)e^{(\cdo ,w)} F(w)\, d\mu (w),G
\right )_{\!\! A^2}
=
(\op _{\mathfrak V}(a_0)F,G)_{A^2},
\end{multline*}
and thus $\op _{\mathfrak V}(a) = \op _{\mathfrak V}(a_0)$ follows. The assertion
now follows from Proposition \ref{Prop:ContAnalPseudo} and the fact that $a_0$ in
the integral formula of \eqref{Eq:AntiWickAnalPseudoRel} defines an element in
$\wideparen A(\cc {2d})$.
\end{proof}

\par

We will also consider \emph{anti-Wick operators} \cite{Fo,Berezin71,Berezin72}
defined by
\begin{equation}\label{Eq:AntiWick}
\op _{\mathfrak V}^{\aw}(a_0)F(z) = \int _{\cc d} a_0(w)F(w)e^{(z,w)}
\, d\mu (w),\quad z \in \cc d,
\end{equation}
when $a_0\in L_{0,A}(\cc d)$ and $F$ belongs to $\maclA _0(\cc d)$, the space of
analytic polynomials on $\cc d$. 
Then $a_0\in L_{0,A}(\cc d)$ if and only if $a(z,w)\equiv a_0(w)$ belongs to
$L_A(\cc {2d})$, and then $\op _{\mathfrak V}^{\aw}(a_0) = \op _{\mathfrak V}(a)$.
Consequently, all results for Wick operators with symbols in
$L_A(\cc {2d})$ hold for anti-Wick operators. In particular, if
$a_0\in L_{0,A}(\cc d)$, then $\op _{\mathfrak V}^{\aw}(a_0):
\maclA _{\flat _1}(\cc d) \to A(\cc d)$ is continuous. 
We denote
the Wick symbol of the anti-Wick operator $\op _{\mathfrak V}^{\aw}(a_0)$
by $a_0^{\aw}$. Then \eqref{Eq:AntiWickAnalPseudoRel} takes the form
\begin{multline}\tag*{(\ref{Eq:AntiWickAnalPseudoRel})$'$}
\op _{\mathfrak V}^{\aw}(a_0) = \op _{\mathfrak V}(a_0^{\aw})
\\[1ex]
\text{where}\quad
a_0^{\aw} (z,w) = \pi ^{-d} \int _{\cc d}a_0(w_1)e^{-(z-w_1,w-w_1)}\, d\lambda (w_1).
\end{multline}

\par

Pseudo-differential operators on $\rr d$ may be transferred to
Wick operators on $\cc d$ by means of the Bargmann transform. 

\par

\begin{defn}\label{Def:AnalPseudo}
Let $\fka \in \maclS _{1/2}'(\rr {2d})$.
\begin{enumerate}
\item the \emph{Bargmann assignment} $\mathsf S_{\mathfrak V}\fka$ of $\fka$ is
the unique element $a\in \wideparen A (\cc {2d})$ which fulfills
\begin{equation}\label{Eq:BargmannAssignment}
\op _{\mathfrak V}(a) = \mathfrak V_d \circ \op^w (\fka )\circ \mathfrak V_d^*
\quad \Leftrightarrow \quad a = \mathsf S_{\mathfrak V}\fka  \text ;
\end{equation}

\vrum

\item the \emph{Bargmann kernel assignment} $K_{\mathfrak V,\fka}$ of $\fka$ is
the unique element $K \in \wideparen A (\cc {2d})$, which is the kernel of
the map $\mathfrak V_d \circ \op^w (\fka )\circ \mathfrak V_d^*$ with respect to
the sesquilinear $A^2$ form.
\end{enumerate}
\end{defn}

\par

By the definitions we have
\begin{equation}\label{SVandSVK}
K_{\mathfrak V,\fka}(z,w) = e^{(z,w)}\mathsf S_{\mathfrak V}\fka (z,w).
\end{equation}

\par

\begin{example}\label{Ex:BargAssignment}
The creation and annihilation operators
$$
2^{-\frac 12}(x_j-\partial _{x_j})
\quad \text{and}\quad
2^{-\frac 12}(x_j+\partial _{x_j}),
$$
are transfered to the operators
\begin{equation}\label{Eq:LinearWickOps}
F\mapsto z_jF
\quad \text{and}\quad
F\mapsto \partial _{z_j}F,
\end{equation}
by the Bargmann transform (see \cite{B1}). 
The Wick symbols of the operators in \eqref{Eq:LinearWickOps}
are $z_j$ and $\overline w_j$, respectively \cite{Berezin71,To11}. By combining these identities
with the fact that the Weyl symbol of $i^{-1}\partial _{x_j}$ equals $\xi _j$
we get
\begin{equation}\label{Eq:BargmannAssignBasicMaps}
\begin{gathered}
\mathsf S_{\mathfrak V}(2^{-\frac 12}(x_j-i\xi _j)) = z_j, 
\qquad
\mathsf S_{\mathfrak V}(2^{-\frac 12}(x_j+i\xi _j)) = \overline w_j,
\\[1ex]
\mathsf S_{\mathfrak V}(x_j) = 2^{-\frac 12}(z_j+\overline w_j) 
\qquad \text{and}\qquad
\mathsf S_{\mathfrak V}(\xi _j) = 2^{-\frac 12}i(z_j-\overline w_j).
\end{gathered}
\end{equation}
\end{example}

\par

We need to compare $K_{\fka}^w$ and $K_{\mathfrak V,\fka}$. 
On the one hand we have for $f,g \in \mascS (\rr d)$
\begin{equation*}
(\op^w (\fka ) f, g)_{L^2(\rr d)} 
= (K_{\fka}^w, g \otimes \overline f)_{L^2(\rr {2d})}
= (\mathfrak V_{2d} K_{\fka}^w, \mathfrak V_{2d} (g \otimes \overline f)_{A^2(\cc {2d})}
\end{equation*}
and on the other hand
\begin{align*}
(\op^w (\fka ) f, g)_{L^2(\rr d)} 
& = (\op _{\mathfrak V}(a) \mathfrak V_d f, \mathfrak V_d g)_{A^2(\cc d)}
\\[1ex]
& = ( K_{\mathfrak V,\fka}, \mathfrak V_d g \otimes \overline{\mathfrak V_d f})_{A^2 (\cc {2d})}
\\[1ex]
& = ( \Theta K_{\mathfrak V,\fka},
\Theta (\mathfrak V_d g \otimes \overline{\mathfrak V_d f} ))_{A^2 (\cc {2d})}. 
\end{align*}
Since 
\begin{align*}
\Theta (\mathfrak V_d g \otimes \overline{\mathfrak V_d f} )(z,w)
= \mathfrak V_d g(z) \overline{\mathfrak V_d f(\overline w)}
= \mathfrak V_{2d} (g \otimes \overline f) (z,w)
\end{align*}
we obtain 
\begin{equation}\label{eq:kernelreal2complex}
K_{\mathfrak V,\fka} = \Theta \mathfrak V_{2d} K_{\fka}^w
= \mathfrak V_{\Theta,d} K_{\fka}^w. 
\end{equation}

\par

\subsection{Symbol classes for pseudo-differential
operators on $\rr d$}\label{subsec1.5}

\par

In order to define a generalized family of Shubin symbol classes
\cite{Shubin1}, we
need to add a restriction of the involved weights. Let $\rho \in [0,1]$, and let
$\mascP _{\Sh ,\rho }(\rr d)$
be the set of all $\omega \in \mascP (\rr d)\cap C^\infty (\rr d)$ such that
for every multi-index $\alpha \in \nn d$,
$$
|\partial ^\alpha \omega (x)|\lesssim \omega (x)\eabs x^{-\rho |\alpha |},
\quad x\in \rr d.
$$
For $\omega \in \mascP _{\Sh ,\rho }(\rr {d})$ the Shubin symbol class
$\Sh _\rho ^{(\omega )}(\rr {d})$ is the set of all $f \in C^\infty (\rr {d})$
such that for every $\alpha \in \nn {2d}$,
$$
|\partial ^\alpha f(x)|\lesssim \omega (x) \eabs x^{-\rho |\alpha |},
\qquad x\in \rr {d}.
$$

\par

Let $\rho \in [0,1]$, $\omega \in \mascP _{\Sh ,\rho }(\rr {2d})$ and
$A$ be a real $d\times d$ matrix.
Then it follows from \cite{Shubin1} or \cite[Section 18.5]{Ho1}
that $e^{i\scal {AD_\xi }{D_x}}$ is a homeomorphism on 
$\Sh _\rho ^{(\omega )}(\rr {2d})$, which implies that the set
$$
\sets {\op _A(\fka )}{\fka \in \Sh _\rho ^{(\omega )}(\rr {2d})}
$$
is independent of the choice of $A$, in view of
\eqref{Eq:RealPseudoCalculiTransfer}. If $B$ is another
real $d\times d$ matrix and
$\fka ,\fkb \in \Sh _\rho ^{(\omega )}(\rr {2d})$ satisfy
\eqref{Eq:RealPseudoCalculiTransfer}, then it follows from
\cite[Section 18.5]{Ho1} that
\begin{gather}
\fka - \fkb \in \Sh _\rho ^{(\omega _\rho)}(\rr {2d}),
\quad \text{where}\quad
\omega _\rho (x,\xi )= \omega (x,\xi )\eabs {(x,\xi )}^{-2\rho}.
\label{Eq:DifferencesShubinSymbols}
\intertext{In particular}
|\fka (x,\xi ) - \fkb (x,\xi ) |
\lesssim \omega (x,\xi )\eabs {(x,\xi )}^{-2\rho}.
\label{Eq:DifferencesShubinSymbols2}
\end{gather}

\par

We also need the symbol classes defined in \cite[Definition~1.8]{AbCaTo}
with symbols satisfying estimates of the form 
\begin{equation}\label{eq:symbolgevrey}
|\partial _x^\alpha \partial _\xi^\beta  \fka (x,\xi )|
\lesssim h^{|\alpha +\beta |} \alpha !^\sigma \beta !^s
e^{r (|x|^{\frac1{s}}+|\xi |^{\frac1{\sigma}})}, \qquad x,\xi  \in \rr d. 
\end{equation}
(See also \cite{CaTo} for the restricted case when $s=\sigma$.)

\par

\begin{defn}\label{Def:GevreyGSSymbols}
Let $s,\sigma >0$. Then
\begin{enumerate}
\item $\Gamma ^{\sigma ,s;0}_{s,\sigma}(\rr {2d})$ consists of all
$\fka \in C^\infty (\rr d)$ such that
\eqref{eq:symbolgevrey} holds for every $h > 0$ and some $r > 0$;

\vrum

\item $\Gamma ^{\sigma ,s}_{s,\sigma ;0}(\rr {2d})$ consists of all
$\fka \in C^\infty (\rr d)$ such that
\eqref{eq:symbolgevrey} holds for some $h > 0$ and every $r > 0$; 

\vrum

\item  $\Gamma ^{\sigma ,s}_{s,\sigma}(\rr {2d})$ consists of all
$\fka \in C^\infty (\rr d)$ such that
\eqref{eq:symbolgevrey} holds for some $h > 0$ and some $r > 0$.
\end{enumerate}
\end{defn}

\par

\begin{rem}
The symbol classes $\Sh _\rho ^{(\omega )}(\rr {2d})$ have isotropic
behaviour with respect to phase space $T^* \rr d\simeq \rr {2d}$, 
and the same holds for the symbol classes in Definition
\ref{Def:GevreyGSSymbols} when $\sigma = s$. 
See also \cite{CaTo} for the restricted case when $s=\sigma$,
and \cite{AbCoTe} for a bilinear extension.
Important classes similar to those given by Definition
\ref{Def:GevreyGSSymbols} are considered in \cite{Prangoski}.
\end{rem}

\par

Pseudo-differential operators with symbols in the classes in
Definition \ref{Def:GevreyGSSymbols} are examples of so called
operators of infinite order.  
These operators are continuous on appropriate Gelfand-Shilov (distribution)
spaces \cite{AbCaTo,CaTo}.
The next result characterizes the symbol classes in
Definition \ref{Def:GevreyGSSymbols} by means of estimates of form
\begin{equation}\label{Eq:STFTgevrey}
|\cT_\psi \fka (x,\xi , \eta ,y)|
\lesssim
e^{r_1(|x|^{\frac1{s}}+|\xi|^{\frac1{\sigma}}) - r_2(|\eta |^{\frac1{\sigma}} + |y|^{\frac1{s}}) },
\quad x,\xi ,y, \eta \in \rr d.
\end{equation}
We omit the proof since the result is a special case of
\cite[Proposition~2.1$'$]{AbCaTo}. 
We refer to 
\cite[Subsection~1.1]{AbCaTo} for the definition of the Gelfand-Shilov spaces
$\maclS _{s,\sigma}^{\sigma ,s}(\rr {2d})$,
$\Sigma _{s,\sigma}^{\sigma ,s}(\rr {2d})$ and their distribution spaces.

\par

\begin{prop}\label{Prop:GSclassesChar}
Let $s,\sigma >0$ and let $\fka \in C^\infty (\rr {2d})$.  
Then the following is true:
\begin{enumerate}
\item if $\psi \in \maclS _{s,\sigma}^{\sigma ,s} (\rr {2d})\setminus 0$, then
$\fka  \in \Gamma ^{\sigma ,s}_{s,\sigma ;0}(\rr {2d})$
if and only if \eqref{Eq:STFTgevrey} holds for every
$r_1 > 0$ and some $r_2 > 0$;

\vrum

\item if $\psi \in \Sigma _{s,\sigma}^{\sigma ,s} (\rr {2d})\setminus 0$, then
$\fka  \in \Gamma ^{\sigma ,s;0}_{s,\sigma }(\rr {2d})$ if and only if
\eqref{Eq:STFTgevrey} holds for some $r_1 > 0$ and all $r_2 > 0$;

\vrum

\item if $\psi \in \Sigma _{s,\sigma}^{\sigma ,s} (\rr {2d})\setminus 0$, then
$\fka \in \Gamma ^{\sigma ,s}_{s,\sigma}(\rr {2d})$
if and only if \eqref{Eq:STFTgevrey} holds for some $r_1 > 0$ and some $r_2 > 0$.
\end{enumerate}
\end{prop}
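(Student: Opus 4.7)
The plan is to handle all three cases through the identity
$$
\cT_\psi \fka(X,\Xi) = \mascF\bigl(\fka(\cdot + X)\overline \psi\bigr)(\Xi), \qquad X=(x,\xi),\ \Xi=(\eta,y)\in \rr{2d},
$$
which follows from \eqref{eq:cTdef}, and to exploit the interplay between the symbol estimate \eqref{eq:symbolgevrey} for $\fka$ and the Gevrey-Gelfand-Shilov decay of $\psi$ under Fourier transform. Both directions are then standard Gevrey-type estimates in the spirit of Proposition \ref{stftGelfand2}, but with the added complication that the symbol $\fka$ itself has exponential growth of Gelfand-Shilov type, so the quantifier matching requires some care.

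For the \emph{forward direction} (symbol estimate implies STFT estimate), I would show that for each fixed $X$, the function $z\mapsto \fka(z+X)\overline\psi(z)$ belongs to the appropriate Gelfand-Shilov class on $\rr{2d}$ with seminorms depending explicitly on $X$. Applying Leibniz to any multi-derivative $\partial^\gamma\bigl(\fka(\cdot +X)\overline\psi\bigr)$ and using \eqref{eq:symbolgevrey}, each term splits into a symbol derivative (contributing $\alpha!^\sigma\beta!^s \, e^{r(|x|^{1/s}+|\xi|^{1/\sigma})}$ with a shift in $X$ that is absorbed via Peetre-type estimates of $e^{r|\cdot|^{1/s}}$ as submultiplicative weight) times a window derivative (contributing $\delta!^\sigma \varepsilon!^s$ times exponential decay of $\psi$). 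After summing and applying the Gevrey inversion inequality
$$
|\Xi|^{|\gamma|} \bigl|\cT_\psi \fka(X,\Xi)\bigr|\le C_X h^{|\gamma|}\gamma!^{s+\sigma},
$$
optimizing in $\gamma$ yields the exponential decay $e^{-r_2(|\eta|^{1/\sigma}+|y|^{1/s})}$, while the growth in $X$ is precisely controlled by $e^{r_1(|x|^{1/s}+|\xi|^{1/\sigma})}$.

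For the \emph{reverse direction} (STFT estimate implies symbol estimate), I would invoke the reproducing formula \eqref{eq:reproducing} with $\phi=\psi$,
$$
\fka = \nmm{\psi}^{-2}\,\cT_\psi^* \circ \cT_\psi\fka,
$$
combined with Moyal's formula \eqref{Eq:Moyalsformula} applied on $\rr{2d}$. Then $\partial_X^\gamma \fka(X)$ decomposes into a sum of integrals of $\cT_\psi\fka(X',\Xi)$ against derivatives of the oscillatory kernel $e^{i(X-X')\cdot \Xi}\psi(X-X')$. Derivatives landing on the exponential produce polynomial factors in $\Xi$, which are absorbed by the hypothesised decay $e^{-r_2(|\eta|^{1/\sigma}+|y|^{1/s})}$ at the cost of factorials $\gamma!^{\sigma}\gamma!^{s}$; derivatives landing on $\psi(X-X')$ produce Gevrey-type factorials of the same order from the Gelfand-Shilov regularity of $\psi$. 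Integrating out $X'$ against the growth factor $e^{r_1(|x'|^{1/s}+|\xi'|^{1/\sigma})}$ and using that $\psi$ provides the localisation near $X'=X$ then recovers \eqref{eq:symbolgevrey}.

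The three cases are separated by the \textbf{quantifier bookkeeping}, which I expect to be the main technical obstacle. In case (1) the condition ``for every $h$'' in the symbol forces ``for every $r_1$'' in the STFT via the Gevrey inversion, while the Roumieu-type window $\psi\in\maclS_{s,\sigma}^{\sigma,s}$ only guarantees a fixed decay rate and thus corresponds to ``some $r_2$''. In cases (2) and (3) the Beurling-type window $\psi\in \Sigma_{s,\sigma}^{\sigma,s}$ admits arbitrarily strong exponential decay of its derivatives, which produces ``every $r_2$'' in (2) and ``some $r_2$'' in (3), matching ``some $h$'' on the symbol side. Making the choice of optimization constants uniform across these quantifier patterns is exactly what is done in \cite[Proposition 2.1$'$]{AbCaTo}, of which this is a special case, so no further ideas beyond those outlined above are required.
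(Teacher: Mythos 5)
The paper itself gives no argument for this proposition: the sentence immediately preceding it reads ``We omit the proof since the result is a special case of \cite[Proposition~2.1$'$]{AbCaTo}'', and you reach the same conclusion at the end of your sketch, so there is no in-paper route to compare your proposal against. Your two-step plan --- forward direction via Leibniz plus Gevrey inversion applied to $\mascF\bigl(\fka(\cdot+X)\overline\psi\bigr)$, reverse direction via the reproducing identity \eqref{eq:reproducing} and Moyal's formula \eqref{Eq:Moyalsformula} --- is the standard strategy and is consistent with what \cite{AbCaTo} does, so the plan is sound in outline.

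Two concrete gaps would need repair before this became a proof. First, the displayed inversion inequality $|\Xi|^{|\gamma|}\,|\cT_\psi\fka(X,\Xi)|\le C_X h^{|\gamma|}\gamma!^{s+\sigma}$ is isotropic; optimizing it over $|\gamma|$ yields decay of the form $e^{-r_2|\Xi|^{1/(s+\sigma)}}$ and cannot produce the anisotropic bound $e^{-r_2(|\eta|^{1/\sigma}+|y|^{1/s})}$ required by \eqref{Eq:STFTgevrey}. You must split $\Xi=(\eta,y)$ and $\gamma=(\gamma',\gamma'')$, establish $|\eta^{\gamma'} y^{\gamma''}\cT_\psi\fka(X,\Xi)|\le C_X h^{|\gamma|}\,\gamma'!^\sigma\,\gamma''!^s$ by tracking the anisotropic derivative estimates of $\fka$ in \eqref{eq:symbolgevrey} and of $\psi$ in $\maclS_{s,\sigma}^{\sigma,s}$ (resp.\ $\Sigma_{s,\sigma}^{\sigma,s}$) separately for each block, and then optimize over $\gamma'$ and $\gamma''$ independently. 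Second --- and more seriously, since you single out the quantifier matching as the main technical obstacle --- your final paragraph gets that matching wrong. The correct dictionary is: the Gevrey parameter $h$ in \eqref{eq:symbolgevrey} governs the decay rate $r_2$ in \eqref{Eq:STFTgevrey}, while the growth rate $r$ governs $r_1$. Checking Definition~\ref{Def:GevreyGSSymbols} against the proposition, case~(1) concerns $\Gamma^{\sigma,s}_{s,\sigma;0}$, which is ``some $h$, every $r$'' (not ``for every $h$'' as you write), and the equivalence pairs ``some $h$'' with ``some $r_2$'' and ``every $r$'' with ``every $r_1$''; case~(2) concerns $\Gamma^{\sigma,s;0}_{s,\sigma}$, which is ``every $h$, some $r$'' (not ``some $h$'' as you write), paired with ``every $r_2$, some $r_1$''. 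Your proposed coupling of $h$ with $r_1$, and your mislabelling of which $\Gamma$-class carries which quantifier pattern, would send the argument in the wrong direction in both cases~(1) and~(2); only case~(3), being ``some/some'' on both sides, escapes. The choice of $\maclS$ versus $\Sigma$ for the window then serves to make the $h$--$r_2$ correspondence two-sided with the quantifier pattern demanded by each case, which is finer than the ``fixed versus arbitrary decay'' heuristic you give.
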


\par

\subsection{Elliptic, weakly elliptic and hypoelliptic elements in $\Sh ^{(\omega )}_\rho (\rr d)$}

\par

Let $\rho \ge 0$ and $\omega \in \mascP _{\Sh ,\rho}(\rr d)$. Then
$f\in \Sh ^{(\omega )}_\rho (\rr d)$ is called 
\emph{weakly elliptic} of order $\rho _0\ge 0$,
(in $\Sh ^{(\omega )}_\rho (\rr d)$),
or \emph{$\rho _0$-weakly elliptic},
if there is an $R>0$ such that
$$
|f(x)| \gtrsim \eabs x^{-\rho _0}\omega (x),\qquad |x|\ge R.
$$
A weakly elliptic function of order $0$ is called \emph{elliptic}.

\par

Let $A$ and $B$ be real $d\times d$ matrices, $\rho >0$, $\rho _0\in [0,2\rho )$,
$\omega \in \mascP _{\Sh ,\rho}(\rr {2d})$ and suppose that $\fka ,\fkb \in
\Sh _\rho ^{(\omega )}(\rr {2d})$ satisfy \eqref{Eq:RealPseudoCalculiTransfer}.
It follows from \eqref{Eq:DifferencesShubinSymbols}
that $\fka$ is weakly elliptic of order $\rho _0$, if and only if
$\fkb$ is weakly elliptic of order $\rho _0$. In particular,
$\fka$ is elliptic, if and only if $\fkb$ is elliptic.

\par


Next we define Shubin hypoelliptic symbols (cf. Definitions 5.1 and 25.1 in \cite{Shubin1}).

\par

\begin{defn}\label{Def:Hypoelliptic}
Let $\rho >0$, $\rho _0 \ge 0$, $\omega _0\in \mascP _{\Sh ,\rho}(\rr {d})$
and $f\in \Sh _\rho ^{(\omega _0)}(\rr {d})$. Then
$f$ is called \emph{hypoelliptic} (in \emph{Shubin's sense}
in $\Sh _\rho ^{(\omega _0)}(\rr {d})$) of order $\rho _0$,
if there is an $R>0$
such that for every $\alpha \in \nn d$, it holds
\begin{alignat*}{2}
|\partial ^\alpha f(x)| &\lesssim |f(x)|\eabs x^{-\rho |\alpha |}, &
\qquad |x|&\ge R,
\intertext{and}
|f(x)| &\gtrsim \omega _0(x)\eabs x^{-\rho _0}, &
\qquad |x| &\ge R .
\end{alignat*}
\end{defn}

\par

Elliptic and hypoelliptic symbols are important since they give rise to parametrices. 
For $\rho$, $\omega$ as above and $\fka \in \Sh ^{(\omega )}_\rho (\rr {2d})$ elliptic,
there is an elliptic symbol $\fkb \in \Sh ^{(1/\omega )}_\rho (\rr {2d})$
such that
$$
\op _A(\fka )\circ \op _A(\fkb ) =I+\op _A(\fkc _1)
\quad \text{and}\quad
\op _A(\fkb )\circ \op _A(\fka ) =I+\op _A(\fkc _2)
$$
for some $\fkc _1,\fkc _2\in \mascS (\rr {2d})$.
An operator $\op (\fkc )$ with $c\in \mascS (\rr {2d})$
is regularizing in the sense that $\op (\fkc )$ is continuous from $\mascS '(\rr d)$
to $\mascS (\rr d)$. (Cf. e.{\,}g. \cite{BonChe,Shubin1}.)

\par

\section{Reformulation of pseudo-differential calculus
using the Bargmann transform}\label{sec2}

\par

In this section we characterize the Bargmann assignment of
pseudo-diffe\-ren\-ti\-al operator symbols from Subsection \ref{subsec1.5}, 
using estimates of complex derivatives. 
In Subsection \ref{subsec2.1}
we show how pseudo-diffe\-ren\-ti\-al operators on $\rr d$ with Shubin symbols
are transformed to Wick operators by the Bargmann transform. 
In Subsection \ref{subsec2.2}
we deduce similar links between pseudo-diffe\-ren\-ti\-al operators
of infinite order, given in the second part of Subsection \ref{subsec1.5},
and suitable classes of Wick operators.
Subsection \ref{subsec2.3} treats composition formulae for symbols
of Wick operators, which leads to algebraic
properties for operators in Subsection \ref{subsec2.1} and \ref{subsec2.2}.
As an application we obtain short proofs of composition results for
pseudo-differential operators on $\rr d$ from Subsection \ref{subsec1.5}. 

\par

\subsection{Wick symbols of Shubin pseudo-differential
operators}\label{subsec2.1}

\par

The following proposition is essential in the characterization of Shubin
type pseudo-differential operators on $\rr d$ by means of the corresponding
Wick symbols. The Shubin classes can be characterized using the
transform $\cT_\phi$ by means of estimates of the form
\begin{align}
|\partial_x^\alpha \partial_\xi^\beta \cT _\phi f(x,\xi)|
&\lesssim
\omega(x) \eabs{x}^{- \rho |\alpha|} \eabs{\xi}^{-N},
\label{eq:Gineq1}
\\[1ex]
|\partial_x^\alpha\cT _\phi f(x,\xi)|
&\lesssim
\omega(x) \eabs{x}^{- \rho |\alpha|} \eabs{\xi}^{-N}
\label{eq:Gineq2}
\intertext{and}
|\cT _\phi f(x,\xi)|
&\lesssim
\omega(x) \eabs{\xi}^{-N}. 
\label{eq:Gineq3}
\end{align}

The proof of the following result is similar to the proof of
\cite[Proposition~3.2]{Cappiello1}. 

\par

\begin{prop}
\label{prop:symbchar1}
Let $0 \le \rho \leqs 1$, let $\omega \in \mascP _{\Sh ,\rho }(\rr {d})$, 
and suppose $f\in \cS '(\rr d)$ and $\phi \in\cS (\rr d) \setminus 0$. 
The following conditions are equivalent:
\begin{enumerate}
\item $f \in \Sh _\rho^{(\omega )}(\rr d)$,

\vrum

\item \eqref{eq:Gineq1} holds true for any $N \geqs 0$ and
$\alpha, \beta \in \nn d$,

\vrum

\item \eqref{eq:Gineq2} holds true for any $N \geqs 0$ and
$\alpha \in \nn d$, 
\end{enumerate}

\medspace

and the following conditions are equivalent:
\begin{enumerate}
\item[{(1)$'$}] $f \in \Sh _0^{(\omega )}(\rr d)$,

\vrum

\item[{(2)$'$}] \eqref{eq:Gineq3} holds true for any $N \geqs 0$.
\end{enumerate}
\end{prop}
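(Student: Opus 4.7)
The plan is to prove the cycle (1) $\Rightarrow$ (2) $\Rightarrow$ (3) $\Rightarrow$ (1), and then argue (1)$'$ $\Leftrightarrow$ (2)$'$ by a simplified version of the same argument. The two main tools I would use throughout are the differential identities \eqref{eq:diffident}--\eqref{eq:diffidentstar} combined with Moyal's reproducing formula \eqref{eq:reproducing} (in the integral form \eqref{Eq:Moyalsformula}), together with the moderate weight estimate $\omega(x+y)\le C\omega(x)v(y)$ from \eqref{eq:2} and Peetre's inequality \eqref{eq:Peetre}.

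For (1) $\Rightarrow$ (2), I would first use \eqref{eq:diffident} and \eqref{eq:diffidentstar} to rewrite $\partial_x^\alpha D_\xi^\beta \cT_\phi f(x,\xi) = \cT_{\phi_\beta}(\partial^\alpha f)(x,\xi)$, where $\phi_\beta(y)=(-y)^\beta \phi(y)\in\mascS(\rr d)$. Expanding the definition \eqref{eq:cTdef} gives an integral in $y$ of $\partial^\alpha f(y+x)\,\overline{\phi_\beta(y)}\,e^{-i\scal y\xi}$, and repeated integration by parts in $y$ produces, for any $\gamma\in\nn d$,
\begin{equation*}
\xi^\gamma \partial_x^\alpha D_\xi^\beta \cT_\phi f(x,\xi)
= (2\pi)^{-d/2}\int_{\rr d} D_y^\gamma\!\bigl[\partial^\alpha f(y+x)\overline{\phi_\beta(y)}\bigr]\,e^{-i\scal y\xi}\,dy.
\end{equation*}
Expanding by Leibniz and using the Shubin hypothesis $|\partial^{\alpha+\delta}f(y+x)|\lesssim \omega(y+x)\eabs{y+x}^{-\rho|\alpha|}$ (since $\rho|\alpha+\delta|\ge \rho|\alpha|$), together with \eqref{eq:2} and Peetre's inequality to split off the translation by $x$, yields a factor $\omega(x)\eabs{x}^{-\rho|\alpha|}$ times an integrand bounded by $v(y)\eabs{y}^{\rho|\alpha|}|D_y^{\gamma-\delta}\phi_\beta(y)|$, which is integrable in $y$ because $\phi_\beta\in\mascS$ and $v$ grows at most exponentially. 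Choosing $\gamma$ arbitrarily large then yields the desired decay $\eabs\xi^{-N}$.

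The implication (2) $\Rightarrow$ (3) is immediate by taking $\beta=0$. For (3) $\Rightarrow$ (1), I would apply Moyal's formula \eqref{Eq:Moyalsformula}, together with \eqref{eq:reproducing} and \eqref{eq:diffident}, to reconstruct
\begin{equation*}
\partial^\alpha f(y) = (\phi,\phi)^{-1}(2\pi)^{-d/2}\iint_{\rr{2d}} \bigl(\partial_x^\alpha \cT_\phi f\bigr)(x,\xi)\,e^{i\scal{y-x}\xi}\phi(y-x)\,dx\,d\xi.
\end{equation*}
Estimating by (3), integrating in $\xi$ (using the $\eabs\xi^{-N}$ decay for $N>d$), and then using \eqref{eq:2} and Peetre's inequality to replace $\omega(x)\eabs x^{-\rho|\alpha|}$ by $\omega(y)\eabs y^{-\rho|\alpha|}$ at the cost of a factor $v(y-x)\eabs{y-x}^{\rho|\alpha|}$ that is absorbed by the rapid decay of $\phi(y-x)$, gives $|\partial^\alpha f(y)|\lesssim \omega(y)\eabs y^{-\rho|\alpha|}$, which is (1).

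The primed equivalence (1)$'$ $\Leftrightarrow$ (2)$'$ follows by the same two-sided scheme but restricted to $\alpha=0$, so no Peetre factor $\eabs x^{-\rho|\alpha|}$ appears and the argument simplifies. I expect the main technical obstacle to be the bookkeeping in (1) $\Rightarrow$ (2): after Leibniz expansion one must verify that every term is dominated uniformly, which requires compatibility between the negative power $\eabs{y+x}^{-\rho|\alpha|}$ (with $\rho\le 1$) and the window's decay, and that the submultiplicative weight $v$ controlling $\omega$ via \eqref{Eq:ModWeightProp} does not spoil integrability — a point handled by the inclusion $\phi_\beta, D^{\gamma-\delta}\phi_\beta\in\mascS(\rr d)$ and the bound \eqref{Eq:CondSubWeights}.
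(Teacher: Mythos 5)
Your treatment of the cycle (1) $\Rightarrow$ (2) $\Rightarrow$ (3) $\Rightarrow$ (1) matches the paper's proof: the forward direction uses \eqref{eq:diffident}--\eqref{eq:diffidentstar} to reduce to estimating $\cT_{\phi_\beta}(\partial^\alpha f)$, then integration by parts, Leibniz, the Shubin bound, and Peetre together with moderateness of $\omega$; the converse uses the reproducing formula \eqref{eq:reproducing} in the form \eqref{Eq:Moyalsformula}, and transfers $\partial^\alpha$ onto $\cT_\phi f$ via $\partial_y^\alpha\bigl(e^{i\scal{\xi}{y-x}}\phi(y-x)\bigr) = (-1)^{|\alpha|}\partial_x^\alpha(\cdots)$ and integration by parts in $x$, exactly as in the paper.

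There is, however, a genuine gap in (2)$'$ $\Rightarrow$ (1)$'$, which you dispatch with ``the same two-sided scheme but restricted to $\alpha=0$.'' In (3) $\Rightarrow$ (1) the integration by parts that moves the derivative onto $\cT_\phi f$ only helps because the hypothesis \eqref{eq:Gineq2} controls $\partial_x^\alpha\cT_\phi f$ for all $\alpha$. By contrast, (2)$'$ gives only \eqref{eq:Gineq3}, i.e.\ a bound on $\cT_\phi f$ itself with no control on $x$-derivatives — and indeed, since $\partial_x^\alpha\cT_\phi f = \cT_\phi(\partial^\alpha f)$, assuming such control would be circular. So you cannot transfer $\partial_x^\alpha$ onto $\cT_\phi f$ by parts, and ``restricting to $\alpha=0$'' gives no estimate on $\partial^\alpha f$ for $\alpha\neq 0$. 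The correct route (used by the paper) is to leave the derivative on the kernel and expand $\partial_x^\alpha\bigl(e^{i\scal{\xi}{x-y}}\phi(x-y)\bigr)$ by Leibniz, which produces factors $\eabs{\xi}^{|\beta|}$ with $|\beta|\le|\alpha|$. These are absorbed by taking $N$ in \eqref{eq:Gineq3} large enough (depending on $|\alpha|$), after which moderateness of $\omega$ and the rapid decay of $\partial^{\alpha-\beta}\phi$ close the estimate and give $|\partial^\alpha f(x)|\lesssim\omega(x)$.
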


\par

\begin{proof}
First we prove that (1) implies (2).
Suppose $f \in \Sh _\rho^{(\omega )}(\rr d)$ and let
$\alpha,\beta, \gamma \in \nn d$ be arbitrary. We will show
\begin{equation*}
|\xi^\gamma \partial_x^\alpha \partial_\xi^\beta \cT _\phi f(x,\xi)|
\lesssim
\omega(x) \eabs{x}^{-\rho|\alpha|}. 
\end{equation*}

\par

By \eqref{eq:diffident}, \eqref{eq:diffidentstar} and integration by parts we get
\begin{multline*}
|\xi^\gamma \partial_x^\alpha \partial_\xi^\beta \cT _\phi f(x,\xi)|
 =
\left
| \xi^\gamma \cT_{\phi _\beta}(\partial^\alpha f)(x,\xi)
\right |
\\[1ex]
 =
(2 \pi)^{-\frac d2} \left| \int_{\rr d} \left((i\partial_{y})^\gamma e^{- i \la \xi,y \ra }\right)
\overline{\phi _\beta(y)} \, \partial^\alpha f(x+y)\, dy \right |
\\[1ex]
\lesssim
\int_{\rr{d}} \left| \partial_{y}^\gamma \left[\overline{\phi _\beta(y)} \,
\partial^\alpha f(x+y)\right] \right|\, dy
\\[1ex]
 =
\int_{\rr{d}} \left| \sum_{\kappa \leqs \gamma} \binom{\gamma}{\kappa}
\partial^{\gamma-\kappa}
\overline{\phi _\beta(y)} \, \partial^{\alpha+\kappa} f(x+y)\right|\, dy 
\\[1ex]
 \lesssim
\sum_{\kappa \leqs \gamma} \binom{\gamma}{\kappa}  \int_{\rr d} \left |
\partial^{\gamma-\kappa} 
\phi _\beta(y) \right| \, \omega(x+y) \eabs{x+y}^{-\rho|\alpha+\kappa|}\, dy.
\end{multline*}
Since $\omega$ is polynomially moderate, Peetre's inequality
\eqref{eq:Peetre} and the fact that $\phi \in \cS$ give
\begin{multline*}
|\xi^\gamma \partial_x^\alpha \partial_\xi^\beta \cT _\phi f(x,\xi)|
\\[1ex]
\lesssim
\omega(x) \eabs{x}^{-\rho |\alpha |} \sum_{\kappa \leqs \gamma}
\binom{\gamma}{\kappa}
\int_{\rr d} \left| \partial^{\gamma-\kappa} \phi _\beta(y) \right| \, \omega (y)
\, \eabs{y}^{|m|+\rho |\alpha+\kappa|}\, dy
\\[1ex]
\asymp \omega(x) \eabs{x}^{- \rho |\alpha|}. 
\end{multline*}
Thus $f\in \Sh _\rho^{(\omega )}(\rr d)$ implies 
implies \eqref{eq:Gineq1}, and as a special case \eqref{eq:Gineq2}, 
and $f \in \Sh _0^{(\omega )}(\rr d)$ implies \eqref{eq:Gineq3}.  
We have proved that (1) implies (2) which in turn implies (3),
and that (1)$'$ implies (2)$'$. 

\par

Conversely, suppose (3), that is $f \in \cS'(\rr d)$ and \eqref{eq:Gineq2}
holds for all $ N \ge 0$ and all $\alpha \in \nn d$, 
which is a weaker assumption than (2). 
We obtain from \eqref{eq:reproducing}
\begin{align*}
f(x)
& = \|\phi \|_{L^2}^{-2} \, \cT _\phi^* \cT _\phi f(x)
\\[1ex]
& = \|\phi \|_{L^2}^{-2} \, (2 \pi)^{-\frac d2} \iint _{\rr {2d}}
\cT _\phi f(y,\xi) \, e^{i \la \xi,x-y \ra} \, \phi (x-y) \, dyd\xi
\end{align*}
which is an absolutely convergent integral due to \eqref{eq:Gineq2}
and the fact that $\phi \in\cS(\rr d)$.  We may differentiate under the
integral, so integration by parts, \eqref{eq:Gineq2} and Peetre's
inequality give for some $N_0 \ge 0$, any $\alpha \in \nn d$ and any $x \in \rr d$
\begin{multline*}
\left |
\partial^\alpha f(x)
\right |
 =
\|\phi \|_{L^2}^{-2} \, (2 \pi)^{-\frac d2}  \left| \iint _{\rr {2d}} \cT _\phi f(y,\xi)
\, \partial_y^\alpha
\left ( e^{i \la \xi,x-y \ra} \, \phi (x-y) \right ) \, dyd\xi
\right |
\\[1ex]
 =
\|\phi \|_{L^2}^{-2} \, (2 \pi)^{-\frac d2}  \left| \iint _{\rr {2d}} \partial_y^\alpha
\cT _\phi f(y,\xi) \, e^{i \la \xi,x-y \ra} \, \phi (x-y) \, dyd\xi \right |
\\[1ex]
 =
\|\phi \|_{L^2}^{-2} \, (2 \pi)^{-\frac d2}  \left| \iint _{\rr {2d}} \partial_y^\alpha
\cT _\phi f(x-y,\xi) \, e^{i \la \xi,y \ra} \, \phi (y) \, dyd\xi \right |
\\[1ex]
 \lesssim
\iint _{\rr {2d}}  \omega(x-y) \eabs{x-y}^{- \rho |\alpha|} \,
\eabs{\xi}^{-d-1} \, |\phi (y)| \, dyd\xi
\\[1ex]
 \lesssim
\omega (x) \eabs {x}^{- \rho |\alpha|} \iint _{\rr {2d}}  \eabs{\xi}^{-d-1}
\eabs{y}^{N_0+\rho |\alpha|} \, |\phi (y)| \, dyd\xi
\\[1ex]
\asymp
\omega(x) \eabs{x}^{- \rho |\alpha|}.  
\end{multline*}
Thus $f \in \Sh _\rho^{(\omega )}(\rr d)$ and we have proved the
equivalence of (1), (2) and (3). 

\par

It remains to show that (2)$'$ implies (1)$'$, that is
\eqref{eq:Gineq3} for all $N \ge 0$ implies
$f \in \Sh _0^{(\omega )}(\rr d)$. 
We have for some $N_0 \geqs 0$, any $\alpha \in \nn d$, $x \in \rr d$ and $N \geqs 0$,
\begin{align*}
\left |
\partial^\alpha f(x)
\right |
& =
\|\phi \|_{L^2}^{-2} \, (2 \pi)^{-\frac d2}  \left| \iint _{\rr {2d}} \cT _\phi f(y,\xi) \, \partial _x^\alpha
\left ( e^{i \la \xi,x-y \ra} \, \phi (x-y) \right ) \, dyd\xi
\right |
\\[1ex]
& \lesssim
\sum_{\beta \leqs \alpha}
\binom{\alpha}{\beta}
\iint _{\rr {2d}} \left|  \cT _\phi f(y,\xi) \right| \eabs{\xi}^{|\beta|} 
\left| \partial^{\alpha-\beta} \phi (x-y) \right| \, dyd\xi
\\[1ex]
& \lesssim
\sum_{\beta \leqs \alpha}
\binom{\alpha}{\beta}
\iint _{\rr {2d}} \omega(y) \eabs{\xi}^{|\alpha|-N} 
\left| \partial^{\alpha-\beta} \phi (x-y) \right| \, dyd\xi
\\[1ex]
& \lesssim 
\omega (x) \sum_{\beta \leqs \alpha}
\binom{\alpha}{\beta}
\iint _{\rr {2d}} \eabs{\xi}^{|\alpha|-N} 
\eabs{x-y}^{N_0} \left| \partial^{\alpha-\beta} \phi (x-y) \right| \, dyd\xi
\\[1ex]
& \lesssim
\omega(x)
\end{align*}
provided $N$ is sufficiently large, since $\phi \in \cS$. 
This shows that $f \in \Sh _0^{(\omega )}(\rr d)$. 
\end{proof}

\par

We may now characterize the Shubin classes $\Sh _\rho^{(\omega )}(\rr {2d})$
by estimates on their Bargmann (kernel) assignments of the forms
\begin{align}
\big | 
( \partial_z  + \overline \partial _w)^\alpha (\partial _z- \overline \partial _w)
^\beta & {\mathsf S}_{\mathfrak V} \fka (z,w) 
\big |
\notag
\\[1ex]
&\lesssim
e^{\frac1{2} |z-w|^2}\omega (\sqrt 2\, \overline z)
\eabs{z+w}^{- \rho |\alpha + \beta|} \eabs{z-w}^{-N} ,
\label{eq:characShubinBargmann}
\\[1ex]
\left| \partial_z^\alpha \overline \partial _w
^\beta \mathsf S_{\mathfrak V} \fka (z,w) \right |
&\lesssim
e^{ \frac1{2} |z-w|^2}\omega (\sqrt 2\, \overline z)\eabs{z+w}^{- \rho |\alpha + \beta|}
\eabs{z-w}^{-N} ,
\label{eq:characShubinBargmann2}
\\[1ex]
\left | \mathsf S_{\mathfrak V} \fka (z,w) \right|
&\lesssim
e^{ \frac1{2} |z-w|^2} \omega (\sqrt 2\, \overline z)
\eabs{z-w}^{-N}
\label{eq:characShubinBargmann0}
\intertext{and}
\left | K_{\mathfrak V,\fka} (z,w) \right|
&\lesssim
\omega (\sqrt 2\, \overline z)
\eabs{z-w}^{-N} e^{ \frac1{2} \left( |z|^2 + |w|^2 \right)}. 
\label{eq:characShubinBargmannkernel0}
\end{align}

\par

\begin{thm}\label{Thm:ShubinAnalChar}
Let $0 \le \rho \leqs 1$, $\omega \in \mascP _{\Sh ,\rho}(\rr {2d})$ and
$\fka \in \mascS '(\rr {2d})$. 
The following conditions are equivalent:
\begin{enumerate}
\item $\fka \in \Sh ^{(\omega )}_\rho (\rr {2d})$,

\vrum

\item \eqref{eq:characShubinBargmann} holds true for every $N\ge 0$, $z,w\in \cc d$ and $\alpha,\beta \in \nn d$, 

\vrum

\item  \eqref{eq:characShubinBargmann2} holds true for every $N\ge 0$, $z,w\in \cc d$ and $\alpha,\beta \in \nn d$, 
\end{enumerate}

\medspace

and the following conditions are equivalent:
\begin{enumerate}
\item[(1)$'$] $\fka \in \Sh ^{(\omega )}_0 (\rr {2d})$,

\vrum

\item[(2)$'$] \eqref{eq:characShubinBargmann0} holds true for any $N \in \no$
and $z,w\in \cc d$,

\vrum

\item[(3)$'$]  \eqref{eq:characShubinBargmannkernel0} holds true for any $N \in \no$
and $z,w\in \cc d$.
\end{enumerate}
\end{thm}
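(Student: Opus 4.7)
The plan is to reduce everything to Proposition~\ref{prop:symbchar1} via the promised relation between the Wick symbol and the short-time Fourier transform of the Weyl symbol (Proposition~\ref{Prop:SBaTaRel}).

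First I would handle the equivalence $(2)'\Leftrightarrow (3)'$, which is purely algebraic and does not use Shubin theory. By \eqref{SVandSVK} we have $K_{\mathfrak V,\fka}(z,w)=e^{(z,w)}\mathsf S_{\mathfrak V}\fka(z,w)$, and the identity
\begin{equation*}
|z-w|^2 = |z|^2 + |w|^2 - 2\repart(z,w)
\end{equation*}
shows that $e^{\frac 12|z-w|^2}$ and $e^{\frac 12(|z|^2+|w|^2)}|e^{-(z,w)}|$ agree. Hence \eqref{eq:characShubinBargmann0} and \eqref{eq:characShubinBargmannkernel0} are equivalent pointwise, and the implication $(2)\Leftrightarrow (3)$ via $(2)'\Leftrightarrow (3)'$ type calculations also holds after applying the Cauchy--Riemann decompositions $\partial_z$, $\overline\partial_w$ to both sides.

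Second, for $(1)\Leftrightarrow (3)$ (and $(1)'\Leftrightarrow (2)'$) I would use Proposition~\ref{Prop:SBaTaRel}, which expresses $\mathsf S_{\mathfrak V}\fka(z,w)$ as $e^{\frac 12|z-w|^2}$ times an evaluation of $\cT_\Phi\fka$ at a point $(X(z,w),\Xi(z,w))$, where $X$ is an affine combination of $z+\overline w$ (the ``phase space variable'') and $\Xi$ is an affine combination of $z-\overline w$ (the ``frequency variable''), and $\Phi$ is an appropriate Gaussian window on $\rr{2d}$. Under this parametrization the phase-space point equals $\sqrt 2\overline z$ up to a correction of size $|z-w|$, which by polynomial moderateness of $\omega$ can be absorbed into the factor $\eabs{z-w}^{-N}$ (after enlarging $N$) so that the weight $\omega(\sqrt 2\overline z)$ appears correctly. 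The holomorphic differentiations $\partial_z^\alpha\overline\partial_w^\beta$ pull back under this affine change of variables to real derivatives in the $(X,\Xi)$ directions, which lets me translate estimates on $\cT_\Phi\fka$ directly into estimates \eqref{eq:characShubinBargmann2} and conversely. With this dictionary, Proposition~\ref{prop:symbchar1}(2) on $\rr{2d}$ becomes precisely \eqref{eq:characShubinBargmann2}, and Proposition~\ref{prop:symbchar1}(2)$'$ becomes \eqref{eq:characShubinBargmann0}.

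Third, to show $(2)\Leftrightarrow (3)$ directly (bypassing $(1)$), note that $(3)\Rightarrow (2)$ is trivial since the operators $\partial_z+\overline\partial_w$ and $\partial_z-\overline\partial_w$ are linear combinations of $\partial_z$ and $\overline\partial_w$, so by the Leibniz rule every derivative on the left-hand side of \eqref{eq:characShubinBargmann} is a sum of terms appearing on the left-hand side of \eqref{eq:characShubinBargmann2}. For $(2)\Rightarrow (3)$, inverting the linear map $(\alpha,\beta)\mapsto(\alpha+\beta,\alpha-\beta)$ on derivatives requires the same algebra; the decay factor $\eabs{z-w}^{-N}$ is insensitive to which linear combination is used, while the main weight $\eabs{z+w}^{-\rho|\alpha+\beta|}$ is preserved because the total order of derivatives is unchanged.

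The main obstacle I expect is bookkeeping: making the affine change of variables between $\cT_\Phi\fka$ and $\mathsf S_{\mathfrak V}\fka$ completely explicit, verifying that the phase-space coordinate is truly $\sqrt 2\overline z$ (and not $\sqrt 2 z$ or $\frac{1}{\sqrt 2}(z+\overline w)$ or similar), and keeping track of the absorption of polynomial factors of $\eabs{z-w}$ into the arbitrary $N$. The $\rho=0$ case requires no derivatives and is immediate from the $N=0$ component of the argument, which is why the three statements $(1)'$, $(2)'$, $(3)'$ are a genuine strengthening only in the pure decay (and not in the derivative-gaining) direction.
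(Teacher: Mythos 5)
Your overall strategy matches the paper's: reduce to Proposition~\ref{prop:symbchar1} via Proposition~\ref{Prop:SBaTaRel}, handle $(2)'\Leftrightarrow(3)'$ by the identity $|z-w|^2=|z|^2+|w|^2-2\repart(z,w)$, and pass between \eqref{eq:characShubinBargmann} and \eqref{eq:characShubinBargmann2} by binomial algebra. The $(2)'\Leftrightarrow(3)'$, the $(2)\Leftrightarrow(3)$, and the $\rho=0$ parts are fine.

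However, there is a genuine gap in your second step, where you claim that "the holomorphic differentiations $\partial_z^\alpha\overline\partial_w^\beta$ pull back under this affine change of variables to real derivatives in the $(X,\Xi)$ directions, which lets me translate estimates on $\cT_\Phi\fka$ directly into estimates \eqref{eq:characShubinBargmann2}." This would make Proposition~\ref{prop:symbchar1}(2) become \eqref{eq:characShubinBargmann2} directly, but that is not what happens. The formula \eqref{eq:BargmannsymbolSTFT} represents $\mathsf S_{\mathfrak V}\fka(z,w)$ as $e^{\frac12|z-w|^2}$ times $\cT_\psi\fka$ evaluated at $(X,\Xi)$ with $X=\frac{\overline{z+w}}{\sqrt 2}$ (not $z+\overline w$) and $|\Xi|=\sqrt 2\,|z-w|$. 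Writing $z=x+i\xi$, $w=y+i\eta$, the operator $\partial_z$ equals $\partial_x$ on this conjugate-analytic function; but $\partial_x e^{\frac12|z-w|^2}=(x-y)e^{\frac12|z-w|^2}\neq 0$, and moreover $\partial_x$ pulled back to the $(X,\Xi)$ coordinates is a \emph{mixture} of an $X$-derivative and a $\Xi$-derivative. In Proposition~\ref{prop:symbchar1} only the $X$-derivatives gain $\eabs{X}^{-\rho}$; the $\Xi$-derivatives do not. So the term involving the $\Xi$-derivative of $\cT_\psi\fka$, together with the term coming from differentiating the Gaussian, is not bounded by $\omega(\sqrt 2\,\overline z)\eabs{z+w}^{-\rho}\eabs{z-w}^{-N}$ on its own; you need the hidden cancellation between them that comes from analyticity of $\mathsf S_{\mathfrak V}\fka$ in $(z,\overline w)$.

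The paper's key observation, which your proposal omits, is that the specific combinations $\partial_z+\overline\partial_w=\partial_x+\partial_y$ and $\partial_z-\overline\partial_w=-i(\partial_\xi+\partial_\eta)$ simultaneously (a) annihilate the factor $e^{\frac12|z-w|^2}=e^{-\frac12(|x-y|^2+|\xi-\eta|^2)}$ (their derivatives of this factor are zero) and (b) pull back to \emph{pure} $X$-derivatives of $\cT_\psi\fka$ (the $\Xi$-contributions cancel). It is precisely these combinations that turn Proposition~\ref{prop:symbchar1}(2) into \eqref{eq:characShubinBargmann}, which is condition (2), not (3). Your binomial step $(2)\Leftrightarrow(3)$ then finishes the argument; so the proof can be repaired by routing $(1)\Leftrightarrow(2)\Leftrightarrow(3)$, but the direct dictionary $(1)\Leftrightarrow(3)$ as stated does not hold without inserting that intermediate step.
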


\par

For the proof we need the following proposition of independent interest. Here
we recall that $\mathsf S_{\mathfrak V}$ is bijective from $\maclS _{1/2}'(\rr {2d})$
to the set
\begin{equation}\label{Eq:SBGelfandShilovLargeDist}
\sets{a\in \wideparen A(\cc {2d})}
{|a(z,w)|\lesssim e^{(\frac 12+r)|z-w|^2}\ \text{for every}\ r>0 }.
\end{equation}

\par

\begin{prop}\label{Prop:SBaTaRel}
Let $\psi (x,\xi ) = (\frac 2\pi )^{\frac d2} e^{-(|x|^2+|\xi |^2)}$, $x,\xi \in \rr d$,
$\fka \in \maclS _{1/2}'(\rr {2d})$ and $a$ belongs to the set in
\eqref{Eq:SBGelfandShilovLargeDist}. Then
\begin{align}
\mathsf S_{\mathfrak V} \fka (z,w)
&=
(2 \pi)^{\frac d2} e^{\frac1{2} |z-w|^2} 
\cT_\psi \fka \left( \frac{x+y}{\sqrt{2}}, - \frac{\xi+\eta}{\sqrt{2}},
\sqrt{2}(\eta-\xi),  \sqrt{2} (y-x) \right),
\label{eq:BargmannsymbolSTFT}
\intertext{and}
(\mathsf S_{\mathfrak V}^{-1}a)(x,-\xi )
&=
\left( \frac{2}{\pi} \right)^d \int _{\cc {d}}
a \left( \frac z{\sqrt 2} - w, \frac z{\sqrt 2} + w \right)
e^{- 2 |w|^2}\, d\lambda (w),
\label{eq:InvBargmannsymbolSTFT}
\end{align}
with $z=x+i\xi $, $w=y+i\eta$ and $x,y,\xi ,\eta \in \rr d$.
\end{prop}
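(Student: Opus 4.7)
The plan is to start from the identities \eqref{SVandSVK} and \eqref{eq:kernelreal2complex}, which together give
$$
\mathsf S_{\mathfrak V}\fka (z,w)
= e^{-(z,w)}\, \Theta \mathfrak V_{2d} K_\fka ^w(z,w),
$$
and to evaluate the right-hand side via the $\cT _\Phi$ formulation using $\mathfrak V_{2d}=U_{\mathfrak V}\circ \cT _\Phi$ from \eqref{bargstft1}, with $\Phi (x,y)=\pi ^{-\frac d2}e^{-\frac 12(|x|^2+|y|^2)}$ on $\rr {2d}$. The idea is that the symplectic change of variables $(x,y)\mapsto ((x+y)/2,x-y)$ relating $K_\fka ^w$ to $\fka$ is the same change that converts $\cT _\Phi K_\fka ^w$ into $\cT _\psi \fka$ with the prescribed Gaussian $\psi$, after carrying out a Gaussian $v$-integral.

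Concretely, I would first reduce to $\fka \in \mascS (\rr {2d})$ by density (using that $\mathsf S_{\mathfrak V}$ is bijective onto \eqref{Eq:SBGelfandShilovLargeDist}), and then insert
$$
K_\fka ^w(x,y)=(2\pi )^{-d}\intrd \fka \Bigl (\frac {x+y}2,\zeta \Bigr )e^{i\scal {x-y}\zeta}\, d\zeta
$$
into the integral defining $\cT _\Phi K_\fka ^w(x_0,y_0,\xi _0,\eta _0)$. After the change of variables $u=(x+y)/2$, $v=x-y$ the Gaussian window factors as
$$
|u+v/2-x_0|^2+|u-v/2-y_0|^2=2\Bigl | u-\tfrac {x_0+y_0}2\Bigr |^2+\tfrac 12|v-(x_0-y_0)|^2,
$$
and the phase rearranges as $-i\scal u{\xi _0+\eta _0}+i\scal v{\zeta -(\xi _0-\eta _0)/2}$. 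The $v$-integral is then a Gaussian Fourier transform, producing a factor $e^{-|\zeta -(\xi _0-\eta _0)/2|^2}$ and leaving precisely the STFT of $\fka$ with window $\psi $ in the form
$$
\cT _\psi \fka \Bigl ( \tfrac {x_0+y_0}2,-\tfrac {\xi _0+\eta _0}2,\eta _0-\xi _0,y_0-x_0\Bigr ).
$$
Up to constants this is $\cT _\Phi K_\fka ^w$, and applying $U_{\mathfrak V}$ from \eqref{UVdef} and the semi-conjugation $\Theta$ with the substitution $(x_0,y_0)\leftrightarrow (\sqrt 2\, x,\sqrt 2\, y)$, $(\xi _0,\eta _0)\leftrightarrow (-\sqrt 2\, \xi ,\sqrt 2\, \eta )$ supplies the exponential factor $e^{(z,w)}e^{\frac 12|z-w|^2}$. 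Cancelling $e^{(z,w)}$ in $\mathsf S_{\mathfrak V}\fka = e^{-(z,w)}K_{\mathfrak V,\fka}$ gives \eqref{eq:BargmannsymbolSTFT}.

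For \eqref{eq:InvBargmannsymbolSTFT}, I would read \eqref{eq:BargmannsymbolSTFT} as saying that $\cT _\psi \fka$ at a specific real argument is a constant multiple of $e^{-\frac 12|z-w|^2}\mathsf S_{\mathfrak V}\fka (z,w)$, and then use the reproducing formula \eqref{eq:reproducing} applied with $\psi$ in place of $\phi$ and $\| \psi \| _{L^2}^2$ explicitly computed. Parametrising the real $\cT _\psi \fka$ integration variables by $w\in \cc d$ through $z/\sqrt 2\pm w$ as suggested by the inverse of the substitution used above, and collecting the Gaussian weights, yields the claimed integral over $\cc d$ against $e^{-2|w|^2}$ with the constant $(2/\pi )^d$.

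The main obstacle is bookkeeping: tracking the multiple linear substitutions among $(x,y,\xi ,\eta )$, $(u,v,\zeta )$, and $(z,w)=(x+i\xi ,y+i\eta )$, checking that the Jacobians, the signs in the semi-conjugation $\Theta$, and the scaling $\sqrt 2$ in $U_{\mathfrak V}$ conspire to produce exactly the shifts and signs displayed in \eqref{eq:BargmannsymbolSTFT}; once the Schwartz case is nailed down, the extension to $\fka \in \maclS _{1/2}'(\rr {2d})$ is automatic from continuity of $\mathsf S_{\mathfrak V}$ and $\cT _\psi$ between the relevant Gelfand-Shilov type spaces.
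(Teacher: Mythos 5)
Your plan coincides with the paper's own proof of \eqref{eq:BargmannsymbolSTFT}: the paper likewise reduces $\mathsf S_{\mathfrak V}\fka = e^{-(z,w)}\Theta\mathfrak V_{2d}K_\fka^w$ to $\cT_\phi K_\fka^w$ via \eqref{bargstft1} and then converts $\cT_\phi K_\fka^w$ (window $\phi (x,y)=\pi^{-d/2}e^{-\frac12(|x|^2+|y|^2)}$) into $\cT_\psi\fka$ using that $\psi = \mascF_2(\phi\circ\kappa)$ with $\kappa(x,y)=(x+y/2,x-y/2)$; the only presentational difference is that the paper outsources this change-of-variables-plus-Gaussian-$v$-integral to a citation of \cite[Lemma~4.1]{Cappiello1}, whereas you propose to carry it out by hand, which is the same computation. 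Likewise, for \eqref{eq:InvBargmannsymbolSTFT} the paper also uses Moyal's formula and \eqref{eq:reproducing} with the Gaussian window and then reparametrises by $z/\sqrt 2\pm w$, exactly as you describe.

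One concrete slip in your intermediate step: with the $(u,v,\zeta)$-computation you set up (quadratic form $2|u-\tfrac{x_0+y_0}{2}|^2+\tfrac12|v-(x_0-y_0)|^2$, phase $-i\scal u{\xi_0+\eta_0}+i\scal v{\zeta-(\xi_0-\eta_0)/2}$, producing the Gaussian $e^{-|\zeta-(\xi_0-\eta_0)/2|^2}$), the $\zeta$-slot is shifted by $(\xi_0-\eta_0)/2$ and the $u$-slot has modulation $\xi_0+\eta_0$, so the resulting transform is $\cT_\psi\fka(\tfrac{x_0+y_0}{2},\tfrac{\xi_0-\eta_0}{2},\xi_0+\eta_0,y_0-x_0)$, not the $\cT_\psi\fka(\tfrac{x_0+y_0}{2},-\tfrac{\xi_0+\eta_0}{2},\eta_0-\xi_0,y_0-x_0)$ you wrote. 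With your stated substitution $(x_0,y_0,\xi_0,\eta_0)=(\sqrt 2\,x,\sqrt 2\,y,-\sqrt 2\,\xi,\sqrt 2\,\eta)$ the corrected intermediate form does reproduce the arguments $(\tfrac{x+y}{\sqrt 2},-\tfrac{\xi+\eta}{\sqrt 2},\sqrt 2(\eta-\xi),\sqrt 2(y-x))$ of \eqref{eq:BargmannsymbolSTFT}, whereas your written intermediate form would not; this is precisely the sort of sign bookkeeping you warn about, and it should be tidied to make the chain compose.
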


\par

\begin{proof}
Let 
$\phi (x,y) = \pi^{-\frac d2} e^{-\frac1{2}(|x|^2+|y|^2)}$ for $x,y \in \rr {d}$,
and let $K_{\fka}^w$ be the kernel of $\op^w (\fka )$. Then
$\psi =\mascF _2(\phi \circ \kappa)$, where $\kappa (x,y)=(x+y/2,x-y/2)$.
By \eqref{bargstft1} (or \cite[Eq.~(1.35)]{Teofanov2})
and \cite[Lemma~4.1]{Cappiello1}
we have 
\begin{align*}
& \mathfrak V _{\Theta ,d} K_{\fka}^w (z,w) 
= \mathfrak V _{2d} K_{\fka}^w (z,\overline w) 
= \mathfrak V _{2d} K_{\fka}^w ( (x,y) + i (\xi,-\eta))
\\[1ex]
& = (2 \pi)^d e^{\frac1{2} \left( |z|^2 + |w|^2 \right) + i \left( \langle x, \xi \rangle
- \langle y, \eta \rangle \right)} 
\cT_\phi K_{\fka}^w \left( \sqrt{2} (x,y), - \sqrt{2} (\xi,-\eta) \right)
\\[1ex]
& = (2 \pi)^{\frac d2} e^{\frac1{2} \left( |z|^2 + |w|^2 \right) + i \left( \langle y, \xi \rangle -
\langle x, \eta \rangle \right)} 
\cT_\psi \fka \left( \frac{x+y}{\sqrt{2}}, - \frac{\xi+\eta}{\sqrt{2}}, \sqrt{2}(\eta-\xi),
\sqrt{2} (y-x) \right),
\\[1ex]
& = (2 \pi)^{\frac d2} e^{\frac1{2} \left( |z|^2 + |w|^2 \right) + i \, \impart (z,w)} 
\cT_\psi \fka \left( \frac{x+y}{\sqrt{2}}, - \frac{\xi+\eta}{\sqrt{2}}, \sqrt{2}(\eta-\xi),
\sqrt{2} (y-x) \right).
\end{align*}
Together with the identity
$$
|z|^2 + |w|^2 + 2i \, \impart (z,w) = |z-w|^2 +2(z,w)
$$
this gives
\begin{multline}
\mathfrak V _{\Theta ,d} K_{\fka}^w (z,w)
\\[1ex]
= (2 \pi)^{\frac d2} e^{\frac1{2}|z-w|^2 + (z,w)} 
\cT_\psi \fka \left( \frac{x+y}{\sqrt{2}}, - \frac{\xi+\eta}{\sqrt{2}}, \sqrt{2}(\eta-\xi),
\sqrt{2} (y-x) \right ).
\end{multline}
A combination of this identity with \eqref{SVandSVK} and \eqref{eq:kernelreal2complex}
gives \eqref{eq:BargmannsymbolSTFT}.

\par

In order to prove \eqref{eq:InvBargmannsymbolSTFT}, we use
Moyal's formula \eqref{Eq:Moyalsformula}, \eqref{eq:reproducing} and the fact that $\nm \psi {L^2}=1$. 
This implies that the inverse of $\cT _\psi$ is given by
\begin{multline*}
(\cT _\psi ^{-1}F) (x,\xi ) = (\cT _\psi ^*F) (x,\xi )
\\[1ex]
=
(2\pi )^{-d}\iiiint _{\rr {4d}}
F(x_1,\xi _1,\eta _1,y_1)\psi (x-x_1,\xi -\xi _1)
e^{i(\scal {x-x_1}{\eta _1}+\scal {y_1}{\xi -\xi _1}}\, dx_1d\xi _1d\eta _1dy_1.
\end{multline*}
Writing
$$
G(z,w) = F(x,\xi ,\eta ,y),\qquad z=x+i\xi ,\ w=y+i\eta, 
$$
we obtain
\begin{equation}\label{Eq:TInvOp1}
\cT _\psi ^*F(x,\xi )
 =
2^d(2\pi )^{-\frac {3d}2}\iint _{\cc {2d}} G(w_1,w_2)e^{-|z-w_1|^2}
e^{i\impart \scal {z-w_1}{w_2}}\, d\lambda (w_1)d\lambda (w_2). 
\end{equation}

\par

If $\fka = \cT _\psi ^*F$ and $a=\mathsf S_{\mathfrak V} \fka$,
then \eqref{eq:BargmannsymbolSTFT} shows that
$$
a(z,w)
=
(2\pi )^{\frac d2}e^{\frac 12|z-w|^2}G \left( \frac {\overline z+\overline w}{\sqrt 2},\sqrt 2(w-z) \right)
$$
which gives
$$
G(z,w)
=
(2\pi )^{-\frac d2}e^{-\frac 14|w|^2}
a \left( \frac {2\overline z -w}{2\sqrt 2},\frac {2\overline z +w}{2\sqrt 2} \right).
$$

\par

Inserting this into \eqref{Eq:TInvOp1} we get
\begin{multline*}
\cT _\psi ^*F (x,-\xi )
\\[1ex]
=
\frac 1{2^d\pi ^{2d}}\iint _{\cc {2d}}
a \left( \frac {2\overline w_1 -w_2}{2\sqrt 2},\frac {2\overline w_1 +w_2}{2\sqrt 2} \right)
e^{-|\overline z-w_1|^2}e^{-\frac 14|w_2|^2}
e^{i\impart \scal {\overline z-w_1}{w_2}}\, d\lambda (w_1)d\lambda (w_2),
\end{multline*}
and by taking
$$
\frac {2\overline w_1 -w_2}{2\sqrt 2}-\frac z{\sqrt 2}
\quad \text{and}\quad
\frac {2\overline w_1 +w_2}{2\sqrt 2}-\frac z{\sqrt 2}
$$
as new variables of integration, we obtain using \eqref{eq:projection}
\begin{align*}
& \cT _\psi ^*F (x,-\xi )
\\[1ex]
& =
\frac {2^d}{\pi ^{2d}}\iint _{\cc {2d}}
a \left( w_1+\frac z{\sqrt 2},w_2+\frac z{\sqrt 2} \right)
e^{-(|w_1|^2+|w_2|^2)}
e^{2i\impart ({w_1},{w_2})}\, d\lambda (w_1)d\lambda (w_2)
\\[1ex]
& =
2^d \iint _{\cc {2d}}
a \left( w_1+\frac z{\sqrt 2},w_2+\frac z{\sqrt 2} \right)
e^{2i\impart ({w_1},{w_2})}\, d\mu (w_1)d\mu (w_2)
\\[1ex]
& =
2^d \int _{\cc {d}} \left( \int _{\cc {d}}
a \left( w_1+\frac z{\sqrt 2},w_2+\frac z{\sqrt 2} \right)
e^{ (w_1,w_2)} \, e^{ (-w_2,w_1)} \, d\mu (w_1) \right) d\mu (w_2)
\\[1ex]
& =
2^d \int _{\cc {d}} 
a \left( -w_2+\frac z{\sqrt 2},w_2+\frac z{\sqrt 2} \right)
e^{ -|w_2|^2}  \, d\mu (w_2)
\\[1ex]
& =
\left( \frac{2}{\pi} \right)^d \int _{\cc {d}} 
a \left(\frac z{\sqrt 2} - w,\frac z{\sqrt 2} + w \right)
e^{ -2 |w|^2}  \, d\lambda (w). 
\end{align*}
\end{proof}

\par

\begin{proof}[Proof of Theorem \ref{Thm:ShubinAnalChar}]
Combining Propositions \ref{prop:symbchar1} and \ref{Prop:SBaTaRel},
writing $z + w = 2 z + w - z$, we obtain that 
$\fka \in \Sh _\rho^{(\omega )}(\rr {2d})$ if and only if for all $\alpha,\beta \in \nn d$ 
and $N \in \no$ we have
\begin{multline*}
\left | (\partial_x + \partial_y)^\alpha (\partial_\xi + \partial_\eta)^\beta
\left( e^{ - \frac1{2} |z-w|^2} \mathsf S_{\mathfrak V} \fka (z,w) \right)\right |
\\[1ex]
\lesssim
\omega \left( \frac{\overline{z+w}}{\sqrt{2}} \right)
\eabs{z+w}^{- \rho |\alpha + \beta|} \eabs{z-w}^{-N}
\\[1ex]
\lesssim
\omega (\sqrt 2\, \overline z)\eabs{z+w}^{- \rho |\alpha + \beta|} \eabs{z-w}^{-N+k}
\end{multline*}
for some $k \in \no$ that can be absorbed into $N$. 

\par

Note that multi-index powers of the differential operators $\partial_x + \partial_y$ and
$\partial_\xi + \partial_\eta$ acting on the factor $e^{ - \frac1{2} |z-w|^2} = e^{- \frac1{2}
\left( |x-y|^2 + |\xi-\eta|^2 \right)}$ are zero. 
Thus we obtain the equivalent condition
\begin{multline*}
\left| (\partial_x + \partial_y)^\alpha (\partial_\xi + \partial_\eta)^\beta
\mathsf S_{\mathfrak V} \fka (z,w) \right |
\\[1ex]
\lesssim
\omega (\sqrt 2\, \overline z)\eabs{z+w}^{- \rho |\alpha + \beta|}
\eabs{z-w}^{-N} e^{ \frac1{2} |z-w|^2} .
\end{multline*}

\par

Using the (conjugate) analyticity of $\mathsf S_{\mathfrak V} \fka (z,w)$
with respect to $z \in \cc d$ ($w \in \cc d$)
we can formulate this as \eqref{eq:characShubinBargmann}. 
We have now shown the equivalence between (1) and (2). 

\par

The equivalence between (2) and (3)
follows from the binomial formulae
\begin{align*}
(\partial _z+ t\overline \partial _w)^\alpha
&=
\sum _{\gamma \le \alpha}
{\alpha \choose \gamma}t^{|\gamma|}\partial _z^{\alpha -\gamma}
\overline \partial _w^{\gamma},\qquad t\in \{ -1,1\} ,
\\[1ex]
\partial _z^\alpha
&=
2^{-|\alpha |}
\sum _{\gamma \le \alpha}
{\alpha \choose \gamma}
(\partial _z+  \overline \partial _w)^{\alpha -\gamma}
(\partial _z-  \overline \partial _w)^{\gamma}
\intertext{and}
\overline \partial _w^\beta
&=
2^{-|\beta |}
\sum _{\gamma \le \beta}
{\beta \choose \gamma} (-1)^{|\gamma |}
(\partial _z+  \overline \partial _w)^{\beta -\gamma}
(\partial _z-  \overline \partial _w)^{\gamma}. 
\end{align*}

\par

It remains to consider the case $\rho = 0$.
We obtain from 
Propositions \ref{prop:symbchar1} and \ref{Prop:SBaTaRel}
that $\fka \in \Sh _0^{(\omega )}(\rr {2d})$ if and only if for all $N \in \no$ we have
\begin{equation*}
\left | \mathsf S_{\mathfrak V} \fka (z,w) \right|
\lesssim
\omega (\sqrt 2\, \overline z)
\eabs{z-w}^{-N} e^{ \frac1{2} |z-w|^2}, \quad z, \zeta \in \cc d. 
\end{equation*}
%
This shows the equivalence between (1)$'$ and (2)$'$. 

\par

Finally the equivalence of (2)$'$ and (3)$'$
is an immediate consequence of 
\eqref{SVandSVK} and 
$$
|e^{(|z|^2+|w|^2)/2} e^{-(z,w)}| = e^{(|z|^2-2 \, \repart (z,w)+|w|^2)/2}= e^{|z-w|^2/2}.
\qquad \qedhere
$$
\end{proof}

\par

Let $\wideparen \maclA _{\Sh ,\rho}^{(\omega )}(\cc {2d})$,
be the set of all $a\in \wideparen A(\cc {2d})$ such that
\begin{equation}\label{Eq:ShubinWickEstimates}
\left| \partial_z^\alpha \overline \partial _w
^\beta a(z,w) \right |
\le C
e^{ \frac1{2} |z-w|^2}\omega (\sqrt 2\, \overline z)\eabs{z+w}^{- \rho |\alpha + \beta|}
\eabs{z-w}^{-N} ,\quad N\ge 0.
\end{equation}
The smallest constant $C\ge 0$ defines a semi-norm parameterized by $\alpha$,
$\beta$ and $N$,
and we equip $\wideparen \maclA _{\Sh ,\rho}^{(\omega )}(\cc {2d})$ with
the Fr{\'e}chet space topology defined by these semi-norms.
The following result is an immediate consequence of Theorem
\ref{Thm:ShubinAnalChar} and its proof.

\par

\begin{prop}
Let $0 \le \rho \leqs 1$ and $\omega \in \mascP _{\Sh ,\rho}(\rr {2d})$. Then
${\mathsf S}_{\mathfrak V}$ is a homeomorphism from
$\Sh ^{(\omega )}_\rho (\rr {2d})$ to $\wideparen
\maclA _{\Sh ,\rho}^{(\omega )}(\cc {2d})$.
\end{prop}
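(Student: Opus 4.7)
The plan is to deduce this directly from Theorem~\ref{Thm:ShubinAnalChar}, which has already done the substantive analytic work; only the topological packaging remains.

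First I would address bijectivity. Recall that $\mathsf S_{\mathfrak V}$ is bijective from $\maclS _{1/2}'(\rr {2d})$ onto the set \eqref{Eq:SBGelfandShilovLargeDist}, and $\wideparen \maclA _{\Sh ,\rho}^{(\omega )}(\cc {2d})$ is contained in that set since its defining estimate (with $\alpha=\beta=0$) forces $|a(z,w)|\lesssim e^{\frac12|z-w|^2}\omega(\sqrt2\,\overline z)\eabs{z-w}^{-N}$ for every $N$, in particular the growth bound required by \eqref{Eq:SBGelfandShilovLargeDist}. The equivalence (1) $\Leftrightarrow$ (3) of Theorem~\ref{Thm:ShubinAnalChar} then states precisely that $\mathsf S_{\mathfrak V}$ restricts to a bijection between $\Sh ^{(\omega )}_\rho (\rr {2d})$ and $\wideparen \maclA _{\Sh ,\rho}^{(\omega )}(\cc {2d})$.

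Next I would identify the Fr{\'e}chet topologies on both sides and compare them. The Shubin space $\Sh ^{(\omega )}_\rho (\rr {2d})$ carries the standard seminorms $p_{\alpha}(\fka) = \sup _{(x,\xi)} \omega(x,\xi)^{-1} \eabs{(x,\xi)}^{\rho|\alpha|} |\partial^\alpha \fka(x,\xi)|$, while $\wideparen \maclA _{\Sh ,\rho}^{(\omega )}(\cc {2d})$ is equipped with the seminorms coming from the best constants $C$ in \eqref{Eq:ShubinWickEstimates}, indexed by $(\alpha,\beta,N)$. Completeness of the latter is routine: each seminorm dominates local uniform convergence, and a locally uniform limit of semi-conjugate analytic functions is again semi-conjugate analytic. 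Continuity of $\mathsf S_{\mathfrak V}$ from $\Sh ^{(\omega )}_\rho (\rr {2d})$ to $\wideparen \maclA _{\Sh ,\rho}^{(\omega )}(\cc {2d})$ is already embedded in the proof of Theorem~\ref{Thm:ShubinAnalChar}: the bounds derived there for $\xi^\gamma \partial^\alpha\partial^\beta \cT _\psi \fka$ via integration by parts and Peetre's inequality are linear combinations of finitely many $p_\alpha(\fka)$, and the conversion formula \eqref{eq:BargmannsymbolSTFT} together with the binomial identities recorded in that proof transport these bounds into exactly \eqref{Eq:ShubinWickEstimates} with constants depending continuously on those seminorms.

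Finally, for the reverse continuity $\mathsf S_{\mathfrak V}^{-1}\colon \wideparen \maclA _{\Sh ,\rho}^{(\omega )}(\cc {2d}) \to \Sh ^{(\omega )}_\rho (\rr {2d})$ there are two routes. One may track constants through the converse part of the proof of Proposition~\ref{prop:symbchar1} combined with the inversion formula \eqref{eq:InvBargmannsymbolSTFT}, bounding each $p_\alpha(\fka)$ by finitely many seminorms of $a=\mathsf S_{\mathfrak V}\fka$; alternatively, with both spaces Fr{\'e}chet, bijectivity and continuity in one direction already force continuity of the inverse by the open mapping theorem. The main (mild) obstacle is the bookkeeping needed to confirm that completeness and the constants are uniform in the parameters $\alpha,\beta,N$, but this is precisely what the quantitative arguments of Theorem~\ref{Thm:ShubinAnalChar} already deliver.
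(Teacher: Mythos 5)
Your proposal matches the paper's approach: the paper states the proposition as an immediate consequence of Theorem \ref{Thm:ShubinAnalChar} and its proof, and you supply exactly the topological packaging (Fr\'echet structure on both sides, tracking of constants through the equivalence, and the open-mapping shortcut) that makes this explicit. One small caveat: the $\alpha=\beta=0$ estimate gives $|a(z,z)|\lesssim\omega(\sqrt2\,\overline z)$ on the diagonal, which grows when $\omega$ does, so it does not literally yield membership in the set as displayed in \eqref{Eq:SBGelfandShilovLargeDist}; the surjectivity of $\mathsf S_{\mathfrak V}$ onto $\wideparen\maclA_{\Sh,\rho}^{(\omega)}(\cc{2d})$ nevertheless holds, since $|a(z,w)e^{(z,w)}|\lesssim e^{\frac12(|z|^2+|w|^2)}\eabs{(z,w)}^{M}$ for some $M$ places the Bargmann kernel in $\wideparen\maclA_{\mascS}'(\cc{2d})$ and hence $a=\mathsf S_{\mathfrak V}\fka$ for some $\fka\in\mascS'(\rr{2d})$, after which Theorem \ref{Thm:ShubinAnalChar} applies as you say.
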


\par

\subsection{Wick operators corresponding to Gevrey
type pseudo-differential operators}\label{subsec2.2}

\par

Using \eqref{eq:BargmannsymbolSTFT} and \eqref{Eq:GevreyQuasinormMIxed}
we obtain the following theorem expressed with estimates of the form 
\begin{equation}\label{Eq:BargmannSTFT}
|a(z,w)|
\lesssim \exp \left( \frac1{2} |z-w|^2  + r_1 |z+w|_{s,\sigma} - r_2 |z-w|_{s,\sigma}  \right)
\end{equation}
(cf. Definition \ref{Def:GevreyGSSymbols}). The verification is left for the reader.

\par

\begin{thm}\label{Thm:GevreySymbolsBargmTransfer}
The following is true:
\begin{enumerate}
\item if $s,\sigma \ge \frac 12$, then $\mathsf S_{\mathfrak V}$ is homeomorphic
from $\Gamma ^{\sigma ,s}_{s,\sigma ;0}(\rr {2d})$ to the set of all
$a\in \wideparen A(\cc {2d})$ such that \eqref{Eq:BargmannSTFT} holds for all
$r_1 > 0$ and some $r_2 > 0$;

\vrum

\item if $s,\sigma > \frac 12$, then $\mathsf S_{\mathfrak V}$ is homeomorphic
from $\Gamma ^{\sigma ,s ;0}_{s,\sigma}(\rr {2d})$ to the set of all
$a\in \wideparen A(\cc {2d})$ such that 
\eqref{Eq:BargmannSTFT} holds for some $r_1 > 0$ and every $r_2 > 0$;

\vrum

\item if $s,\sigma >\frac 12$, then  $\mathsf S_{\mathfrak V}$ is homeomorphic
from $\Gamma ^{\sigma ,s}_{s,\sigma}(\rr {2d})$ to the set of all
$a\in \wideparen A(\cc {2d})$ such that 
\eqref{Eq:BargmannSTFT} holds for some $r_1 > 0$ and some $r_2 > 0$. 
\end{enumerate}
\end{thm}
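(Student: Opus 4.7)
The plan is to reduce the theorem to a combination of the two propositions already at hand: Proposition \ref{Prop:SBaTaRel}, which gives the explicit identity \eqref{eq:BargmannsymbolSTFT} between $\mathsf S_{\mathfrak V}\fka$ and the short-time Fourier transform $\cT_\psi \fka$ with Gaussian window, and Proposition \ref{Prop:GSclassesChar}, which characterizes each of the three symbol classes $\Gamma^{\sigma,s}_{s,\sigma;0}$, $\Gamma^{\sigma,s;0}_{s,\sigma}$ and $\Gamma^{\sigma,s}_{s,\sigma}$ in terms of STFT estimates of the form \eqref{Eq:STFTgevrey}. Once these are both available, the theorem is essentially a change of variables in the exponent.

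First, I would check that the Gaussian window $\psi(x,\xi) = (2/\pi)^{d/2} e^{-(|x|^2+|\xi|^2)}$ from Proposition \ref{Prop:SBaTaRel} is admissible in Proposition \ref{Prop:GSclassesChar} for each of the three cases, i.e.\ that $\psi \in \maclS_{s,\sigma}^{\sigma,s}(\rr{2d})\setminus 0$ when $s,\sigma \ge \tfrac 12$ and $\psi \in \Sigma_{s,\sigma}^{\sigma,s}(\rr{2d})\setminus 0$ when $s,\sigma > \tfrac 12$. Since $\psi$ is a tensor product of Gaussians on $\rr d$, this is immediate. Thus, by Proposition \ref{Prop:GSclassesChar}, membership of $\fka$ in the three Gevrey classes is characterized by \eqref{Eq:STFTgevrey} with the appropriate quantifiers on $r_1, r_2$.

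The main computation is then to translate the variables. Writing $z = x+i\xi$, $w = y+i\eta$ in \eqref{eq:BargmannsymbolSTFT}, the four arguments of $\cT_\psi \fka$ split into the ``phase space'' pair $\bigl(\tfrac{x+y}{\sqrt 2}, -\tfrac{\xi+\eta}{\sqrt 2}\bigr)$ and the ``frequency'' pair $\bigl(\sqrt 2(\eta-\xi), \sqrt 2(y-x)\bigr)$. Using the definition \eqref{Eq:GevreyQuasinormMIxed}, a direct calculation gives
\begin{align*}
\Bigl| \tfrac{x+y}{\sqrt 2} \Bigr|^{1/s} + \Bigl| \tfrac{\xi+\eta}{\sqrt 2} \Bigr|^{1/\sigma}
& = 2^{-1/(2s)}|x+y|^{1/s} + 2^{-1/(2\sigma)}|\xi+\eta|^{1/\sigma} \asymp |z+w|_{s,\sigma}, \\[1ex]
\bigl| \sqrt 2(\eta-\xi) \bigr|^{1/\sigma} + \bigl| \sqrt 2(y-x) \bigr|^{1/s}
& \asymp |z-w|_{s,\sigma},
\end{align*}
where the constants of equivalence depend only on $s, \sigma$ and can therefore be absorbed into $r_1, r_2$. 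Substituting these equivalences into \eqref{Eq:STFTgevrey} and multiplying by the factor $(2\pi)^{d/2} e^{|z-w|^2/2}$ coming from \eqref{eq:BargmannsymbolSTFT} converts \eqref{Eq:STFTgevrey} exactly into \eqref{Eq:BargmannSTFT} with the matching quantifier pattern on $r_1, r_2$ in each of the three cases.

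For the reverse direction and the homeomorphism assertion, I would invoke \eqref{eq:InvBargmannsymbolSTFT} together with Remark \ref{BijKernelsOps}, which guarantees that $\mathsf S_{\mathfrak V}$ is a bijection on the level of $\maclS_{1/2}'$. The inverse formula \eqref{eq:InvBargmannsymbolSTFT} represents $\mathsf S_{\mathfrak V}^{-1}a$ as a Gaussian integral against $a$ on a diagonally-shifted domain; bounding it pointwise with the estimate \eqref{Eq:BargmannSTFT} and the dominated growth $e^{|z-w|^2/2}$ leaves a rapidly-convergent Gaussian integral, which yields \eqref{eq:symbolgevrey} on $\fka = \mathsf S_{\mathfrak V}^{-1}a$ with the correct quantifier pattern. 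Both directions preserve the defining seminorms continuously, whence $\mathsf S_{\mathfrak V}$ is a homeomorphism in each of the three cases. The main obstacle I anticipate is bookkeeping: making sure the quantifier patterns ``for every $r_1$, for some $r_2$'', ``for some $r_1$, for every $r_2$'' and ``for some $r_1$, for some $r_2$'' survive the change of variables and the inversion uniformly, but since the substitution is linear and the equivalences above involve only $s, \sigma$-dependent constants, this is routine rather than delicate.
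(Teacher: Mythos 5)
Your plan — combining Proposition \ref{Prop:SBaTaRel} with Proposition \ref{Prop:GSclassesChar}, checking that the Gaussian window is admissible, and translating the arguments of $\cT_\psi\fka$ in \eqref{eq:BargmannsymbolSTFT} via \eqref{Eq:GevreyQuasinormMIxed} — is exactly the route the paper intends (and explicitly leaves to the reader), and your equivalences $|\tfrac{x+y}{\sqrt 2}|^{1/s}+|\tfrac{\xi+\eta}{\sqrt 2}|^{1/\sigma}\asymp |z+w|_{s,\sigma}$ and $|\sqrt 2(\eta-\xi)|^{1/\sigma}+|\sqrt 2(y-x)|^{1/s}\asymp |z-w|_{s,\sigma}$ are correct. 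Note however that the substitution $(x,\xi,y,\eta)\mapsto\bigl(\tfrac{x+y}{\sqrt 2},-\tfrac{\xi+\eta}{\sqrt 2},\sqrt 2(\eta-\xi),\sqrt 2(y-x)\bigr)$ is a linear bijection of $\rr{4d}$, so \eqref{eq:BargmannsymbolSTFT} together with these equivalences already gives a two-way pointwise identification between \eqref{Eq:STFTgevrey} and \eqref{Eq:BargmannSTFT}; the reverse implication then follows at once by reading Proposition \ref{Prop:GSclassesChar} in the other direction, and no separate inversion argument is needed.

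The reverse-direction paragraph you appended is therefore redundant, and as written it is also incomplete: bounding the Gaussian integral in \eqref{eq:InvBargmannsymbolSTFT} pointwise using \eqref{Eq:BargmannSTFT} produces only the $\alpha=\beta=0$ case of \eqref{eq:symbolgevrey}, whereas membership in $\Gamma^{\sigma,s}_{s,\sigma;0}(\rr{2d})$ and its siblings requires the full family of derivative bounds $|\partial_x^\alpha\partial_\xi^\beta\fka|\lesssim h^{|\alpha+\beta|}\alpha!^\sigma\beta!^s e^{r(|x|^{1/s}+|\xi|^{1/\sigma})}$, which a single pointwise estimate of the inverse formula does not deliver — you would have to differentiate under the integral and bring in Cauchy estimates on $a$, at which point you are essentially redoing Proposition \ref{Prop:GSclassesChar} by hand. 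The cleaner path is to drop that paragraph and rely on the bijective change of variables as above; your closing observation that the substitution and all equivalence constants depend only on $s$ and $\sigma$ is then what settles the homeomorphism claim.
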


\par

\begin{rem}
The restrictions on $s$ and $\sigma$ in Theorem
\ref{Thm:GevreySymbolsBargmTransfer} are needed since
we must choose $\psi$ in \eqref{Eq:STFTgevrey} as 
the Gauss function in Proposition \ref{Prop:SBaTaRel}.
According to the proof of Theorem \ref{Thm:ShubinAnalChar}
this is necessary for the use of 
the formula \eqref{bargstft1}
that relates $\cT _\phi K_{\fka}^w$ and the Bargmann transform
$\mathfrak V _{2d} K_{\fka}^w$.
For this $\psi$ we have $\psi \in \maclS _s^\sigma (\rr d)$
($\psi \in \Sigma _s^\sigma (\rr d)$),
if and only if $s,\sigma \ge \frac 12$ ($s,\sigma > \frac 12$).
\end{rem}

\par

Theorem \ref{Thm:GevreySymbolsBargmTransfer} can
be combined
with continuity results in \cite{AbCaTo} to deduce
continuity of Wick operators 
acting on the Bargmann images
of $\Sigma _s^\sigma (\rr d)$, $\maclS _s^\sigma (\rr d)$,
$(\maclS _s^\sigma )'(\rr d)$
and $(\Sigma _s^\sigma )'(\rr d)$, respectively.
The following result follows by a straight-forward combination of
Theorems 3.8, 3.15 and 3.16 in \cite{AbCaTo},  
\eqref{Eq:BargmannAssignment} and Theorem
\ref{Thm:GevreySymbolsBargmTransfer}.

\par

\begin{prop}
Let $a\in \wideparen A(\cc {2d})$. Then the following
is true:
\begin{enumerate}
\item if $s,\sigma \ge \frac 12$ and
\eqref{Eq:BargmannSTFT} holds for all
$r_1 > 0$ and some $r_2 > 0$, then
$\op _{\mathfrak V}(a)$ is continuous
on $\maclA _s^\sigma (\cc d)$ and on
$(\maclA _s^\sigma )'(\cc d)$;

\vrum

\item if $s,\sigma > \frac 12$ and \eqref{Eq:BargmannSTFT} 
holds for
some $r_1 > 0$ and all $r_2 > 0$, then
$\op _{\mathfrak V}(a)$ is continuous
on $\maclA _{0,s}^\sigma (\cc d)$ and on $(\maclA _{0,s}^\sigma )'(\cc d)$;

\vrum

\item if $s,\sigma >\frac 12$ and \eqref{Eq:BargmannSTFT} holds for some
$r_1 > 0$ and some $r_2 > 0$, then $\op _{\mathfrak V}(a)$ is continuous
from $\maclA _{0,s}^\sigma (\cc d)$ to $\maclA _{s}^\sigma (\cc d)$, and from
$(\maclA _{s}^\sigma )'(\cc d)$ to $(\maclA _{0,s}^\sigma )'(\cc d)$. 
\end{enumerate}
\end{prop}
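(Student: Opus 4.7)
The plan is to transfer the problem to the real side via the inverse Bargmann transform, where known continuity results for pseudo-differential operators of infinite order with Gevrey symbols can be invoked, and then push the conclusions back to the analytic side.

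First, given $a \in \wideparen A(\cc {2d})$ satisfying the bound \eqref{Eq:BargmannSTFT} in case (1), (2), or (3), Theorem \ref{Thm:GevreySymbolsBargmTransfer} will provide a unique Weyl symbol $\fka$ in the corresponding Gevrey class: $\fka \in \Gamma^{\sigma,s}_{s,\sigma;0}(\rr{2d})$ in case (1), $\fka \in \Gamma^{\sigma,s;0}_{s,\sigma}(\rr{2d})$ in case (2), and $\fka \in \Gamma^{\sigma,s}_{s,\sigma}(\rr{2d})$ in case (3), with $a = \mathsf S_{\mathfrak V}\fka$. The definition \eqref{Eq:BargmannAssignment} of the Bargmann assignment will then give the factorization
$$
\op_{\mathfrak V}(a) = \mathfrak V_d \circ \op^w(\fka) \circ \mathfrak V_d^*,
$$
reducing continuity of $\op_{\mathfrak V}(a)$ on the analytic side to continuity of $\op^w(\fka)$ on the real side, provided one knows that $\mathfrak V_d$ and $\mathfrak V_d^*$ are topological isomorphisms between the relevant spaces.

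Next, I will invoke the continuity results from \cite{AbCaTo}: Theorem~3.8 there gives that $\op^w(\fka)$ with $\fka \in \Gamma^{\sigma,s}_{s,\sigma;0}(\rr{2d})$ is continuous on $\maclS_s^\sigma(\rr d)$ and on $(\maclS_s^\sigma)'(\rr d)$; Theorem~3.15 gives that $\op^w(\fka)$ with $\fka \in \Gamma^{\sigma,s;0}_{s,\sigma}(\rr{2d})$ is continuous on $\Sigma_s^\sigma(\rr d)$ and on $(\Sigma_s^\sigma)'(\rr d)$; and Theorem~3.16 gives that $\op^w(\fka)$ with $\fka \in \Gamma^{\sigma,s}_{s,\sigma}(\rr{2d})$ is continuous from $\Sigma_s^\sigma(\rr d)$ to $\maclS_s^\sigma(\rr d)$ and from $(\maclS_s^\sigma)'(\rr d)$ to $(\Sigma_s^\sigma)'(\rr d)$. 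By the homeomorphism properties of $\mathfrak V_d$ recalled in Subsection \ref{subsec1.3}, namely that $\mathfrak V_d$ is a homeomorphism from $\maclS_s^\sigma(\rr d)$ to $\maclA_s^\sigma(\cc d)$ and from $\Sigma_s^\sigma(\rr d)$ to $\maclA_{0,s}^\sigma(\cc d)$ (and similarly on the distribution duals), and the same holds for $\mathfrak V_d^*$ in the reverse direction, these continuity statements transfer one-to-one to the asserted continuity statements (1), (2) and (3) for $\op_{\mathfrak V}(a)$.

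There is no real obstacle here; the argument is essentially a diagram chase. The only bookkeeping is to match the two alternative quantifier patterns (``every $r_1$ and some $r_2$'' versus ``some $r_1$ and every $r_2$'' versus ``some $r_1$ and some $r_2$'') with the three Gevrey symbol classes of Definition \ref{Def:GevreyGSSymbols} and, in turn, with the correct Bargmann images $\maclA_s^\sigma(\cc d)$ versus $\maclA_{0,s}^\sigma(\cc d)$ and their duals. This matching is precisely the content of Theorem \ref{Thm:GevreySymbolsBargmTransfer}, so once one has stated the factorization and invoked the corresponding theorem from \cite{AbCaTo}, the conclusion follows.
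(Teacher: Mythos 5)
Your proposal is correct and follows exactly the route indicated in the paper, which states that the result is a straight-forward combination of Theorems 3.8, 3.15 and 3.16 in \cite{AbCaTo}, the factorization \eqref{Eq:BargmannAssignment}, and Theorem \ref{Thm:GevreySymbolsBargmTransfer}. Your matching of the three quantifier patterns in \eqref{Eq:BargmannSTFT} to the three Gevrey symbol classes and then to the pairs $(\maclS_s^\sigma,\maclA_s^\sigma)$, $(\Sigma_s^\sigma,\maclA_{0,s}^\sigma)$ and their duals is precisely the intended bookkeeping.
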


\par

\subsection{Composition of Wick operators}\label{subsec2.3}

\par

Let $a_1,a_2 \in \wideparen A(\cc {2d})$. If composition is well defined then
the complex twisted product $a_1 \wpr _{\mathfrak V}a_2$ is defined by
$$
\op _{\mathfrak V}(a_1)\circ \op _{\mathfrak V}(a_2) =
\op _{\mathfrak V}(a_1 \wpr _{\mathfrak V}a_2).
$$
By straight-forward computations it follows that
the product $\wpr _{\mathfrak V}$ is given by
\begin{equation}\label{Eq:DefCompTwistProd}
a_1 \wpr _{\mathfrak V} a_2 (z,w)
= \pi ^{-d}\int_{\cc d} a_1(z,u) a_2 (u,w) e^{-(z-u,w-u)}\, d \lambda (u),
\quad z,w \in \cc d,
\end{equation}
provided the integral is well defined. Inserting derivatives,
\eqref{Eq:DefCompTwistProd} takes the form
\begin{multline}\tag*{(\ref{Eq:DefCompTwistProd})$'$}
(\partial _z^{\alpha _1} \overline \partial _w^{\beta _1} a_1) \wpr _{\mathfrak V}
(\partial _z^{\alpha _2} \overline \partial _w^{\beta _2}a_2) (z,w)
\\[1ex]
= \pi ^{-d}\int_{\cc d} (\partial _z^{\alpha _1} \overline \partial _u^{\beta _1} a_1)(z,u)
(\partial _u^{\alpha _2} \overline \partial _w^{\beta _2} a_2)(u,w) e^{-(z-u,w-u)}
\, d \lambda (u), \quad z,w \in \cc d.
\end{multline}

\par

The following lemma is a product rule 
for the complex twisted product. 

\par

\begin{lemma}\label{Lemma:DiffCompTwistProd}
Let $a_1,a_2\in \wideparen A(\cc {2d})$ and suppose 
the integral in \eqref{Eq:DefCompTwistProd}$'$ is well defined for all $z,w\in \cc d$
and all $\alpha _1,\alpha _2,\beta _1,\beta _2\in \nn d$ such that
$$
|\alpha _1+\alpha _2+\beta _1+\beta _2|\le 1.
$$
Suppose also that the integrand in \eqref{Eq:DefCompTwistProd} is zero at infinity. 
Then
\begin{alignat}{2}
\partial _{z_j}(a_1 \wpr _{\mathfrak V}a_2)
&=
(\partial _{z_j}a_1) \wpr _{\mathfrak V}a_2
+
a_1 \wpr _{\mathfrak V}(\partial _{z_j}a_2), &
\quad
j &= 1,\dots ,d
\label{Eq:DiffCompTwistProd1}
\intertext{and}
\overline \partial _{w_j}(a_1 \wpr _{\mathfrak V}a_2)
&=
(\overline \partial _{w_j}a_1) \wpr _{\mathfrak V}a_2
+
a_1 \wpr _{\mathfrak V}(\overline \partial _{w_j}a_2), &
\quad
j &= 1,\dots ,d.
\label{Eq:DiffCompTwistProd2}
\end{alignat}
\end{lemma}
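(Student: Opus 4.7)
The plan is to differentiate under the integral sign in \eqref{Eq:DefCompTwistProd}, and then to move the differentiation from the kernel $e^{-(z-u,w-u)}$ onto one of the factors $a_1,a_2$ by integration by parts in the integration variable $u$. The hypothesis that the integrand in \eqref{Eq:DefCompTwistProd}$'$ is well defined for all multi-indices of order $\le 1$ justifies differentiating under the integral, and the hypothesis that it vanishes at infinity justifies discarding boundary terms in the subsequent integration by parts.

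For \eqref{Eq:DiffCompTwistProd1}, I would first compute with Wirtinger calculus in $u$, using that
$(z-u,w-u) = \sum_k (z_k-u_k)\overline{(w_k-u_k)}$, that
$$
\partial _{z_j}e^{-(z-u,w-u)} = -\overline{(w_j-u_j)}\,e^{-(z-u,w-u)},
\qquad
\partial _{u_j}e^{-(z-u,w-u)} = \overline{(w_j-u_j)}\,e^{-(z-u,w-u)}.
$$
Differentiating \eqref{Eq:DefCompTwistProd} in $z_j$ therefore produces $(\partial _{z_j}a_1)\wpr _{\mathfrak V}a_2$ plus an extra term with factor $-\overline{(w_j-u_j)}$ in the integrand. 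The goal is then to identify this extra term with $a_1\wpr _{\mathfrak V}(\partial _{z_j}a_2)$.

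For this identification, I would integrate by parts in the Wirtinger derivative $\partial _{u_j}$. Since $a_1\in \wideparen A(\cc{2d})$ means $u\mapsto a_1(z,u)$ is conjugate analytic, we have $\partial _{u_j}a_1(z,u)=0$; and since $a_2\in \wideparen A(\cc{2d})$ means $u\mapsto a_2(u,w)$ is analytic, $\partial _{u_j}a_2(u,w)$ coincides with $(\partial _z a_2)(u,w)$. Integration by parts in the $\partial _{u_j}$ direction (which reduces to the standard real integration by parts applied to $\partial _{x_j}$ and $\partial _{y_j}$, $u_j=x_j+iy_j$) hence gives
$$
a_1\wpr _{\mathfrak V}(\partial _{z_j}a_2)(z,w) = -\pi^{-d}\int_{\cc d}a_1(z,u)a_2(u,w)\overline{(w_j-u_j)}e^{-(z-u,w-u)}\,d\lambda(u),
$$
which is exactly the extra term above. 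This proves \eqref{Eq:DiffCompTwistProd1}.

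The identity \eqref{Eq:DiffCompTwistProd2} is strictly analogous: differentiating the kernel with respect to $\overline w_j$ produces a factor $-(z_j-u_j)$, and integration by parts in $\overline \partial _{u_j}$ transfers this to a derivative of $a_1$, using that $\overline \partial _{u_j}a_2(u,w)=0$ and $\overline \partial _{u_j}a_1(z,u)=(\overline \partial _w a_1)(z,u)$. The only substantive issue is the justification of differentiation under the integral and of integration by parts; both are guaranteed by the standing hypotheses, so no real obstacle arises. I expect the bookkeeping of Wirtinger derivatives and the sign of the identity $\partial _{u_j}e^{-(z-u,w-u)} = -\partial _{z_j}e^{-(z-u,w-u)}$ to be the only point where one must proceed carefully.
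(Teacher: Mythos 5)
Your proof is correct and takes essentially the same approach as the paper: differentiate under the integral, then integrate by parts in $u$ using the conjugate analyticity of $u\mapsto a_1(z,u)$ and the analyticity of $u\mapsto a_2(u,w)$ together with $\partial_{u_j}e^{-(z-u,w-u)}=-\partial_{z_j}e^{-(z-u,w-u)}$ to pass the derivative onto $a_2$ (and the mirror statement for $\overline\partial_{w_j}$). The only difference is cosmetic: the paper factors out the Gaussian $e^{-|u|^2}$ from the kernel and integrates by parts against it, which splits the computation into three terms $b_1,b_2,b_3$ with a cancellation, whereas you keep the kernel unsplit and arrive at the identity in one step.
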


\par

\begin{proof}
If
$$
F_{a_1,a_2}(z,w,u)
=
a_1(z,u) a_2 (u,w) e^{(z,u-w) + (u,w)}
$$
then
$$
\pi ^d(a_1 \wpr _{\mathfrak V}a_2)(z,w)
= \int_{\cc d}F_{a_1,a_2}(z,w,u)e^{-|u|^2}\, d\lambda (u).
$$
This gives
$$
\pi ^d \partial _{z_j}(a_1 \wpr _{\mathfrak V}a_2) (z,w) = b_1(z,w)
+b_2(z,w)-b_3(z,w),
$$
where
\begin{align*}
b_1(z,w)
&=
\int_{\cc d} F_{\partial _{z_j}a_1,a_2}(z,w,u)e^{-|u|^2}\, d\lambda (u),
\\[1ex]
b_2(z,w)
&=
\int_{\cc d} F_{a_1,a_2}(z,w,u)\overline u_je^{-|u|^2}\, d\lambda (u)
\intertext{and}
b_3(z,w)
&=
\overline w_j\int_{\cc d} F_{a_1,a_2}(z,w,u)e^{-|u|^2}\, d\lambda (u)
\\[1ex]
&=
\overline w_j \pi^d (a_1 \wpr _{\mathfrak V}a_2)(z,w).
\end{align*}

\par

The conjugate analyticity of $u \mapsto a_1(z,u)$ and $u \mapsto e^{(z,u-w)}$ implies 
$\partial _{u_j}a_1(z,u) = \partial _{u_j}e^{(z,u-w)} = 0$ which gives
\begin{multline*}
\partial _{u_j}F_{a_1,a_2}(z,w,u)
\\[1ex]
=
\left( a_1(z,u)  \partial _{u_j}a_2 (u,w) 
+
\overline w_ja_1(z,u) a_2 (u,w) \right) e^{(z,u-w) + (u,w)}
\\[1ex]
=
F_{a_1,\partial _{z_j}a_2}(z,w,u) + \overline w_jF_{a_1,a_2}(z,w,u).
\end{multline*}
Consider $b_2(z,w)$. Integration by parts gives
\begin{multline*}
b_2(z,w)
=
\int_{\cc d} F_{a_1,a_2}(z,w,u)\overline u_je^{-|u|^2}\, d\lambda (u)
\\[1ex]
= - \int_{\cc d} F_{a_1,a_2}(z,w,u) \, \partial _{u_j}e^{-|u|^2} \, d\lambda (u)
\\[1ex]
=
\int_{\cc d} \partial _{u_j}F_{a_1,a_2}(z,w,u) e^{-|u|^2}\, d\lambda (u)
\\[1ex]
=
\int_{\cc d} F_{a_1,\partial _{z_j}a_2}(z,w,u)
e^{-|u|^2}\, d\lambda (u)
+
\overline w_j \int_{\cc d} F_{a_1,a_2}(z,w,u)
e^{-|u|^2}\, d\lambda (u)
\\[1ex]
=
\int_{\cc d} F_{a_1,\partial _{z_j}a_2}(z,w,u)
e^{-|u|^2}\, d\lambda (u)
+
b_3(z,w).
\end{multline*}

\par

A combination of these identities now gives
\begin{multline*}
\pi ^d \partial _{z_j} (a_1 \wpr _{\mathfrak V}a_2) (z,w)
\\[1ex]
=
\int_{\cc d} (
F_{\partial _{z_j}a_1,a_2}(z,w,u)
+
F_{a_1,\partial _{z_j}a_2}(z,w,u)
)e^{-|u|^2}\, d\lambda (u)
\\[1ex]
=\pi ^d(\partial _{z_j}a_1) \wpr _{\mathfrak V}a_2(z,w)
+
\pi ^d a_1 \wpr _{\mathfrak V}(\partial _{z_j}a_2) (z,w),
\end{multline*}
and \eqref{Eq:DiffCompTwistProd1} follows.

\par

The assertion \eqref{Eq:DiffCompTwistProd2} is proved by similar arguments. 
\end{proof}

\par

The characterization 
in Theorem \ref{Thm:ShubinAnalChar} (3)
can be applied
to prove the following composition result,  
which is a generalization of \cite[Theorem~23.6]{Shubin1} to include
the case when $\rho = 0$. 

\par

\begin{prop}\label{prop:composition}
Let $0 \le \rho \leqs 1$ and $\omega_j \in \mascP _{\Sh ,\rho}(\rr {2d})$ for $j = 1,2$. 
If $\fka _j \in \Sh_\rho^{(\omega _j)}(\rr {2d})$ for $j = 1,2$, then $\fka _1 \wpr \fka _2
\in \Sh _\rho^{(\omega _1\omega _2)}(\rr {2d})$. 
\end{prop}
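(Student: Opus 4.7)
The plan is to push the composition through the Bargmann assignment and work it out on the Wick side, where the product takes the explicit integral form \eqref{Eq:DefCompTwistProd}. Setting $a_j = \mathsf S_{\mathfrak V} \fka _j$, Theorem \ref{Thm:ShubinAnalChar} tells me that the hypothesis $\fka _j\in \Sh ^{(\omega _j)}_\rho (\rr {2d})$ is equivalent to $a_j$ satisfying an estimate of the form \eqref{Eq:ShubinWickEstimates} with weight $\omega _j$; and by \eqref{Eq:BargmannAssignment} we have $\mathsf S_{\mathfrak V}(\fka _1\wpr \fka _2)=a_1\wpr _{\mathfrak V}a_2$. So the whole task reduces to verifying \eqref{Eq:ShubinWickEstimates} for $a_1\wpr _{\mathfrak V}a_2$ with weight $\omega _1\omega _2$, after which Theorem \ref{Thm:ShubinAnalChar} delivers $\fka _1\wpr \fka _2\in \Sh ^{(\omega _1\omega _2)}_\rho (\rr {2d})$.

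To control derivatives, I would use Lemma \ref{Lemma:DiffCompTwistProd}. Because $a_1(z,u)$ is independent of $w$ and $a_2(u,w)$ is independent of $z$, the iterated product rule collapses to the clean formula
\begin{equation*}
\partial _z^\alpha \overline \partial _w^\beta (a_1\wpr _{\mathfrak V}a_2)
=(\partial _z^\alpha a_1)\wpr _{\mathfrak V}(\overline \partial _w^\beta a_2),
\end{equation*}
so it suffices to estimate a single integral of the shape \eqref{Eq:DefCompTwistProd} with differentiated factors. The central algebraic step is the polarization identity
$2\repart (z-u,w-u)=|z-u|^2+|w-u|^2-|z-w|^2$,
which gives
\begin{equation*}
|e^{-(z-u,w-u)}|
=e^{\frac 12 |z-w|^2}\,e^{-\frac 12|z-u|^2}\,e^{-\frac 12|w-u|^2}.
\end{equation*}
The two negative Gaussian factors cancel exactly against the growth factors $e^{\frac 12|z-u|^2}$ and $e^{\frac 12|u-w|^2}$ coming from the bounds on $\partial _z^\alpha a_1(z,u)$ and $\overline \partial _w^\beta a_2(u,w)$, leaving precisely the target factor $e^{\frac 12|z-w|^2}$ that appears on the right-hand side of \eqref{Eq:ShubinWickEstimates}.

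After this cancellation, only polynomial factors and the weight $\omega _2(\sqrt 2\overline u)$ remain inside the integral. Moderateness of $\omega _2$ gives $\omega _2(\sqrt 2\overline u)\lesssim \omega _2(\sqrt 2\overline z)\eabs{z-u}^k$ for some $k$, and Peetre's inequality \eqref{eq:Peetre} transfers $\eabs{z+u}^{-\rho |\alpha |}$ and $\eabs{u+w}^{-\rho |\beta |}$ to $\eabs{z+w}^{-\rho |\alpha +\beta |}$ at the cost of factors $\eabs{u-w}^{\rho |\alpha |}$ and $\eabs{z-u}^{\rho |\beta |}$. What remains is an integral of the form
\begin{equation*}
\int _{\cc d}\eabs{z-u}^{-N+k+\rho |\beta |}\eabs{u-w}^{-N+\rho |\alpha |}\,d\lambda (u),
\end{equation*}
and the main obstacle I anticipate is verifying that this is bounded by $\eabs{z-w}^{-N'}$ for any prescribed $N'$ once the original decay exponent $N$ in \eqref{Eq:ShubinWickEstimates} is taken sufficiently large. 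This is the standard convolution estimate for Japanese brackets on $\rr{2d}$; the bookkeeping amounts to checking that $N$ can absorb the losses $k$, $\rho |\alpha |$, $\rho |\beta |$ and the volume exponent $2d$, but no substantive difficulty appears. Once that is done, \eqref{Eq:ShubinWickEstimates} for $a_1\wpr _{\mathfrak V}a_2$ with weight $\omega _1\omega _2$ is established and the proposition follows.
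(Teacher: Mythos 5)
The overall strategy is exactly the paper's: push everything through $\mathsf S_{\mathfrak V}$, reduce to verifying \eqref{Eq:ShubinWickEstimates} for the twisted product via Theorem \ref{Thm:ShubinAnalChar}, exploit the polarization cancellation for $e^{-(z-u,w-u)}$, and finish with moderateness, Peetre's inequality and a large-$N$ convolution bound. However, the step where you claim the iterated Leibniz rule ``collapses to the clean formula''
\begin{equation*}
\partial_z^\alpha\overline\partial_w^\beta(a_1\wpr_{\mathfrak V}a_2)
=(\partial_z^\alpha a_1)\wpr_{\mathfrak V}(\overline\partial_w^\beta a_2)
\end{equation*}
is wrong, and this is a genuine gap. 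Lemma \ref{Lemma:DiffCompTwistProd} really does produce two terms per single derivative: the kernel $e^{-(z-u,w-u)}$ itself depends on $z$ and $w$, and its derivative is precisely what the integration by parts in the lemma's proof converts into the second product-rule term $a_1\wpr_{\mathfrak V}(\partial_{z_j}a_2)$ (resp.\ $(\overline\partial_{w_j}a_1)\wpr_{\mathfrak V}a_2$). The independence of $a_1(z,u)$ from $w$ and of $a_2(u,w)$ from $z$ does not kill those terms. A concrete counterexample with $d=1$: take $a_1(z,w)=a_2(z,w)=z$. Then $\op_{\mathfrak V}(z)$ is multiplication by $z$, so $a_1\wpr_{\mathfrak V}a_2=z^2$ and $\partial_z(a_1\wpr_{\mathfrak V}a_2)=2z$, while your formula gives $(\partial_z a_1)\wpr_{\mathfrak V}a_2=1\wpr_{\mathfrak V}z=z$.

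Iterating Lemma \ref{Lemma:DiffCompTwistProd} correctly yields the binomial expansion
\begin{equation*}
\partial_z^\alpha\overline\partial_w^\beta(a_1\wpr_{\mathfrak V}a_2)
=\sum_{\gamma\le\alpha}\sum_{\kappa\le\beta}\binom{\alpha}{\gamma}\binom{\beta}{\kappa}
(\partial_z^{\alpha-\gamma}\overline\partial_w^{\beta-\kappa}a_1)\wpr_{\mathfrak V}(\partial_z^\gamma\overline\partial_w^\kappa a_2),
\end{equation*}
which is what the paper's proof estimates. The good news is that your polarization cancellation and Peetre transfer go through term by term: for a general $(\gamma,\kappa)$ the decay factors $\eabs{z+u}^{-\rho|\alpha+\beta-\gamma-\kappa|}\eabs{u+w}^{-\rho|\gamma+\kappa|}$ still combine (via $z+u-(u+w)=z-w$ and further applications of \eqref{eq:Peetre}) to give the target $\eabs{z+w}^{-\rho|\alpha+\beta|}$ at the cost of $\eabs{z-u}$, $\eabs{u-w}$ powers that are then absorbed by choosing $N_1$, $N_2$ large enough. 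So after replacing your single-term formula by the full sum, the argument completes along the lines you sketched.
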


\par

\begin{proof}
If $\fka _0 = \fka _1 \wpr \fka _2$ and $a_j = \mathsf S_{\mathfrak V} \fka _j$, $j=0,1,2$, then 
$a_0 = a_1 \wpr _{\mathfrak V} a_2$. 
From Lemma \ref{Lemma:DiffCompTwistProd} and \eqref{Eq:DefCompTwistProd}
we obtain for $\alpha, \beta \in \nn d$,
\begin{align*}
& \partial_z  ^\alpha \overline \partial _w
^\beta a_0 (z,w)
\\[1ex]
& = \sum_{\gamma \leqs \alpha} \sum_{\kappa \leqs \beta}
\binom{\alpha}{\gamma} \binom{\beta}{\kappa}
\left( (\partial _z^{\alpha - \gamma} \overline \partial _w^{\beta-\kappa}
a_1) \wpr _{\mathfrak V} 
(\partial _z^{\gamma} \overline \partial _w^{\kappa}a_2) \right)(z,w)
\\[1ex]
& = \pi^{-d} \sum_{\gamma \leqs \alpha} \sum_{\kappa \leqs \beta}
\binom{\alpha}{\gamma} \binom{\beta}{\kappa}
\int_{\cc d} \partial _z^{\alpha-\gamma} {\overline \partial} _u^{\beta-\kappa} a_1(z,u)
\partial _u^{\gamma} \overline \partial _w^{\kappa} a_2 (u,w) e^{(z,u-w) + (u,w)} d \mu (u). 
\end{align*}

\par

Since $\omega _2 \in \mascP (\rr {2d})\simeq \mascP (\cc {d})$ is moderate, 
Theorem \ref{Thm:ShubinAnalChar} gives for some $N_0 \ge 0$ and any $N_1, N_2 \geqs 0$
\begin{align*}
|\partial _u^{\alpha-\gamma} \overline \partial _w^{\beta-\kappa} a_1(z,u)|
& \lesssim
\omega _1(\sqrt 2\, \overline z)\eabs {z+u}^{-\rho |\alpha+\beta-\gamma -\kappa |}
\eabs {z-u}^{-N_1} e^{\frac1{2} | z - u |^2}
\intertext{and}
|\partial _u^\gamma \overline \partial _w^\kappa a_2(u,w)|
& \lesssim
\omega _2(\sqrt 2\, \overline z)\eabs {z-u}^{N_0}\eabs {u+w}^{-\rho |\gamma +\kappa |}
\eabs {u-w}^{-N_2} e^{\frac1{2} | u - w |^2}. 
\end{align*}
%

\par

This gives
\begin{multline}\label{Eq:Derb0Est}
\left | \partial _z^\alpha \overline \partial _w
^\beta a_0 (z,w) \right|
\\[1ex]
\lesssim
\omega _1(\sqrt 2\, \overline z)\omega _2(\sqrt 2\, \overline z) e^{\frac 12|z-w|^2}
\int_{\cc d} 
F(z,w,u)e^{\Phi (z,w,u)}\, d \lambda (u)
\end{multline}
where for any $N_1 \geqs 0$
\begin{align*}
F(z,w,u)
&=
\eabs{z+u}^{-\rho|\alpha+\beta-\gamma-\kappa|} \eabs{z-u}^{N_0-N_1}
\eabs{u+w}^{-\rho|\gamma+\kappa|} \eabs{u-w}^{-N_2}
\intertext{and}
\Phi (z,w,u)
&=
- \frac1{2} |z-w|^2 + \frac1{2} |z-u|^2 + \frac1{2} |u-w|^2 - |u|^2
\\[1ex]
& \qquad \qquad \qquad \qquad \qquad \qquad \qquad
+ \repart (z,u-w) + \repart (u,w) = 0.
\end{align*}
By Peetre's inequality and the facts that $\gamma \le \alpha$ and $\kappa \le \beta$
we get
\begin{multline*}
\eabs {z+u}^{\rho |\gamma +\kappa|}\eabs{u+w}^{-\rho|\gamma+\kappa|}
\lesssim
\eabs {z-w}^{\rho |\gamma +\kappa|}
\\[1ex]
\lesssim
\eabs {z-u}^{\rho |\gamma +\kappa|}\eabs {u-w}^{\rho |\gamma +\kappa|}
\\[1ex]
\le 
\eabs {z-u}^{\rho |\alpha +\beta|}\eabs {u-w}^{\rho |\alpha +\beta|}
\end{multline*}
and
\begin{equation*}
\eabs {z+u}^{-\rho |\alpha +\beta |}
\lesssim
\eabs {z+w}^{-\rho |\alpha +\beta |}\eabs {u-w}^{\rho |\alpha +\beta |}
\end{equation*}
wherefrom
\begin{equation}\label{Eq:FuncFEst}
F(z,w,u)
\le 
\eabs{z+w}^{-\rho |\alpha +\beta|}
\eabs{z-u}^{\rho |\alpha +\beta| +N_0-N_1}
\eabs{u-w}^{2\rho |\alpha +\beta |-N_2}.
\end{equation}
Hence a combination of \eqref{Eq:Derb0Est} and \eqref{Eq:FuncFEst}
gives for any $N \geqs 0$
\begin{align*}
& (\omega _1(\sqrt 2\, \overline z)\omega _2(\sqrt 2\, \overline z))^{-1}
\eabs{z+w}^{\rho|\alpha+\beta|}
\left| \partial _z^\alpha \overline \partial _w
^\beta a_0 (z,w) \right|
\\[1ex]
& \lesssim
e^{\frac1{2} |z-w|^2}
\int_{\cc d} \eabs{z-u}^{\rho|\alpha+\beta| + N_0 - N_1} \eabs{u-w}
^{2\rho|\alpha+\beta| - N_2}\, d \lambda (u)
\\[1ex]
& \lesssim
 \eabs{z-w}^{-N} e^{\frac1{2} |z-w|^2}
\int_{\cc d} \eabs{z-u}^{\rho|\alpha+\beta| + N_0+N - N_1} \eabs{u-w}
^{2\rho|\alpha+\beta| + N - N_2}\, d \lambda (u). 
\end{align*}
By letting
$$
N_1 \geqs \rho|\alpha+\beta| + N_0 + N
\quad \text{and}\quad 
N_2 > 2\rho|\alpha+\beta| + N  + 2 d
$$
we obtain
\begin{equation*}
\left| \partial _z^\alpha \overline \partial _w^\beta
a_0 (z,w) \right| 
\lesssim
\omega _1(\sqrt 2\, \overline z)\omega _2(\sqrt 2\, \overline z)
\eabs{z+w}^{-\rho|\alpha+\beta|}
\eabs{z-w}^{-N} e^{\frac1{2} |z-w|^2}.
\end{equation*}
According to 
Theorem \ref{Thm:ShubinAnalChar} (3)
this estimate
implies that $\fka _0 \in \Sh _\rho^{(\omega _1\omega _2)}(\rr {2d})$. 
\end{proof}

\par

\begin{rem}
Eq. \eqref{Eq:DefCompTwistProd}
combined with Theorem \ref{Thm:GevreySymbolsBargmTransfer}
can be used to show composition results for pseudo-differential
operators with symbols in $\Gamma ^{\sigma ,s}_{s,\sigma ;0}(\rr {2d})$.
In fact we may use an argument similar to the proof of Proposition
\ref{prop:composition}, but simpler since derivatives can be avoided.  
We obtain
\begin{alignat*}{3}
\fka _1 \wpr \fka _2 \in \Gamma ^{\sigma ,s}_{s,\sigma ;0}(\rr {2d})
\quad \text{when}\quad
\fka _1,\fka _2 \in \Gamma ^{\sigma ,s}_{s,\sigma ;0}(\rr {2d}),
\quad s,\sigma \ge \frac 12,
\end{alignat*}
and similarly with $\Gamma ^{\sigma ,s;0}_{s,\sigma}$
in place of
$\Gamma ^{\sigma ,s}_{s,\sigma ;0}$, provided $\sigma >\frac 12$.
Thereby we regain parts of \cite[Theorem~3.18]{AbCaTo} for certain
restrictions on $s$ and $\sigma$.
\end{rem}

\par


\section{Relations and estimates for Wick and
anti-Wick operators}\label{sec3}

\par

In this section we first show how to approximate a Wick operator by means of 
a sum of anti-Wick operators. 
Then we prove continuity results for anti-Wick operators with symbols having
exponential type bounds.  Finally we deduce estimates for the Wick symbol
of these anti-Wick operators. 


\subsection{Expansion of Shubin type Wick operators with respect to 
anti-Wick operators}\label{subsec3.1}

\par

The first result can be stated for semi-conjugate analytic symbols on $\cc {2d}$. 

\begin{prop}\label{Prop:WickToAntiWick}
Suppose $s\ge \frac 12$, $a \in \wideparen \maclA _s'(\cc {2d})$,
let $N\ge 0$ be an integer, and let
\begin{align*}
a_\alpha (w) &= \partial _z^\alpha \overline \partial _w^\alpha a (w,w), \quad \alpha \in \nn d, 
\intertext{and}
b_\alpha (z,w) &= |\alpha | 
\int _0^1 (1-t)^{|\alpha |-1} \partial _z^\alpha \overline \partial _w^\alpha a
(w+t(z-w),w)\, dt, \quad \alpha \in \nn d\setminus 0.
\end{align*}
Then
\begin{equation}\label{Eq:WickToAntiWick}
\op _{\mathfrak V}(a)
=
\sum _{|\alpha |\le N}
\frac {(-1)^{|\alpha |}\op _{\mathfrak V}^{\aw}(a_\alpha )}{\alpha !}
+
\sum _{|\alpha | = N+1}\frac {(-1)^{|\alpha |}\op _{\mathfrak V}(b_\alpha )}{\alpha !}.
\end{equation}
\end{prop}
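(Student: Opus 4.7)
The plan is to combine a complex Taylor expansion in the first variable with integration by parts against the Gaussian in the Wick integral. First I would write
\begin{equation*}
\op _{\mathfrak V}(a)F(z) = \pi ^{-d}\int _{\cc d} a(z,w)F(w)e^{(z-w,w)}\, d\lambda (w),
\end{equation*}
and, using that $z\mapsto a(z,w)$ is entire, apply the $d$-dimensional Taylor formula with integral remainder centered at $z=w$ (obtained by restricting to the segment $t\mapsto w+t(z-w)$ and using the one-variable formula together with the multinomial identity for iterated holomorphic derivatives). This gives
\begin{equation*}
a(z,w) = \sum _{|\alpha |\le N}\frac {(z-w)^\alpha}{\alpha !}(\partial _z^\alpha a)(w,w)
+\sum _{|\alpha |=N+1}\frac {(z-w)^\alpha}{\alpha !}\widetilde b_\alpha (z,w),
\end{equation*}
with $\widetilde b_\alpha (z,w) = |\alpha |\int _0^1 (1-t)^{|\alpha |-1}(\partial _z^\alpha a)(w+t(z-w),w)\,dt$.

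The second step is to substitute the expansion into the integral and exploit
\begin{equation*}
(z-w)^\alpha e^{(z-w,w)} = \overline \partial _w^\alpha e^{(z-w,w)},
\end{equation*}
which follows from the identity $\overline \partial _{w_j}e^{(z-w,w)} = (z_j-w_j)e^{(z-w,w)}$. Integrating by parts $|\alpha |$ times against $\overline \partial _w$ produces a sign $(-1)^{|\alpha |}$ and moves $\overline \partial _w^\alpha$ onto the remaining factor. Because $F\in A(\cc d)$ is holomorphic, $\overline \partial _w^\alpha F=0$, so by Leibniz the derivative only lands on the coefficient. Writing $a(z,w)=A(z,\overline w)$ with $A$ entire, a direct chain-rule computation (using $\overline \partial _w w = 0$ and $\overline \partial _w \overline w = 1$) yields
\begin{equation*}
\overline \partial _w^\alpha \bigl[(\partial _z^\alpha a)(w,w)\bigr]
= (\partial _z^\alpha \overline \partial _w^\alpha a)(w,w) = a_\alpha (w),
\end{equation*}
so the main terms reassemble into $(-1)^{|\alpha |}(\alpha !)^{-1}\op _{\mathfrak V}^{\aw}(a_\alpha )F(z)$. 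In the remainder, $\overline \partial _w^\alpha$ passes under the $t$-integral and again ignores the first slot because $w+t(z-w)=(1-t)w+tz$ is holomorphic in $w$; it acts purely on the second slot of $a$, so $\overline \partial _w^\alpha \widetilde b_\alpha (z,w) = b_\alpha (z,w)$ with $b_\alpha$ exactly as defined in the statement. Collecting all terms gives \eqref{Eq:WickToAntiWick}.

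The main technical obstacle is justifying the integration by parts and the interchange of $\overline \partial _w^\alpha$ with the $t$-integral. Both rest on the growth bound available for $a\in \wideparen \maclA _s'(\cc {2d})$, namely $|a(z,w)|\lesssim e^{(\tfrac 12+r)|z-w|^2}$ (cf.\ \eqref{Eq:SBGelfandShilovLargeDist}), which together with the linear-exponential bound on $F\in \maclA _{\flat _1}(\cc d)$ and the Gaussian damping $e^{(z-w,w)}=e^{(z,w)}e^{-|w|^2}$ ensures absolute integrability and vanishing boundary terms; a standard cutoff-and-pass-to-the-limit argument (together with Fubini) makes all manipulations rigorous. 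A final routine point is that each $b_\alpha$ is entire in $z$ (visible from the formula, since $(1-t)w+tz$ is holomorphic in $z$ and $\partial _z^\alpha \overline \partial _w^\alpha a$ is analytic in its first slot) and locally integrable, so $b_\alpha \in L_A(\cc {2d})$ and $\op _{\mathfrak V}(b_\alpha )$ is well defined by Proposition \ref{Prop:LAOpsIdent}.
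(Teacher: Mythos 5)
Your proof is correct and follows the same underlying strategy as the paper: Taylor-expand $a$ in the first variable about $z=w$, then trade the factor $(z-\cdot)^\alpha$ for $\overline\partial_{\cdot}^\alpha$ acting on the Gaussian and integrate by parts. The one structural difference is where the integration by parts is performed. You do it directly in the Wick operator integral $\pi^{-d}\int a(z,w)F(w)e^{(z-w,w)}\,d\lambda(w)$, using $(z-w)^\alpha e^{(z-w,w)}=\overline\partial_w^\alpha e^{(z-w,w)}$ and $\overline\partial_w F=0$, so $F$ is carried along. The paper stays entirely at the symbol level: it applies Proposition~\ref{Prop:LAOpsIdent} to replace each Taylor term and each $b_\alpha$ by its Bergman projection, and then integrates by parts inside the projection integral $\pi^{-d}\int(\cdot)e^{-(z-w_1,w-w_1)}\,d\lambda(w_1)$ using the analogous identity in the integration variable $w_1$. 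The two calculations are equivalent; your route is a bit more direct and concretely exhibits the operator identity on test functions, while the paper's route keeps everything in the Wick-symbol calculus and produces as a byproduct the $\wideparen A$-representative $c_{1,\alpha}$ of the remainder symbol, which is what the paper actually uses and estimates in Proposition~\ref{Prop:ShubinWickExpEst} and Remark~\ref{Rem:WickToAntiWick2}.

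One small imprecision: you cite \eqref{Eq:SBGelfandShilovLargeDist} for the bound $|a(z,w)|\lesssim e^{(\frac12+r)|z-w|^2}$, but that set is the image of $\mathsf S_{\mathfrak V}$ on $\maclS_{1/2}'(\rr{2d})$, not $\wideparen\maclA_s'(\cc{2d})$. The hypothesis $a\in\wideparen\maclA_s'(\cc{2d})$ gives $|a(z,w)|\lesssim e^{\frac12(|z|^2+|w|^2)+r(|z|^{1/s}+|w|^{1/s})}$ for every $r>0$, which is what should be quoted; the combination with $F\in\maclA_{\flat_1}(\cc d)$ and $e^{(z-w,w)}=e^{(z,w)-|w|^2}$ still gives absolute convergence and vanishing boundary terms, so your argument goes through with the corrected bound.
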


\par

\begin{proof}
Taylor expansion gives
\begin{align*}
a(z,w) & = \sum _{|\alpha |\le N} \frac {(-1)^{|\alpha |} c_{\alpha}(z,w)} {\alpha !}
+
\sum _{|\alpha | = N+1}\frac {(-1)^{|\alpha |}c_{0,\alpha}(z,w)} {\alpha !},
\intertext{where}
c_{\alpha} (z,w) &= (-1)^{|\alpha |} (z-w)^\alpha \partial _z^\alpha a (w,w)
\intertext{and}
c_{0,\alpha} (z,w) &= (-1)^{|\alpha |} |\alpha |(z-w)^\alpha 
\int _0^1 (1-t)^{|\alpha |-1} \partial _z^\alpha a (w+t(z-w),w)\, dt. 
\end{align*}
Hence
$$
\op _{\mathfrak V}(a)
=
\sum _{|\alpha |\le N}\frac {(-1)^{|\alpha |}\op _{\mathfrak V}(c_{\alpha} )}{\alpha !}
+
\sum _{|\alpha | = N+1}\frac {(-1)^{|\alpha |}\op _{\mathfrak V}(c_{0,\alpha} )}{\alpha !},
$$
and the result follows if we prove
\begin{equation}\label{Eq:WickAntiWickExpIdent}
\op _{\mathfrak V}(c_{\alpha} )
=
\op _{\mathfrak V}^{\aw}(a_\alpha ) 
\quad \text{and}\quad
\op _{\mathfrak V}(c_{0,\alpha} )
=
\op _{\mathfrak V}(b_\alpha ) .
\end{equation}

\par

It follows from \eqref{Eq:AntiWickAnalPseudoRel} that
$$
\op _{\mathfrak V}(b_\alpha ) = \op _{\mathfrak V}(c_{1,\alpha} )
\quad \text{and}\quad
\op _{\mathfrak V}(c_{0,\alpha} ) = \op _{\mathfrak V}(c_{2,\alpha} )
$$
where
\begin{align}
c_{j,\alpha}(z,w)
&=
(-1)^{|\alpha |} \pi ^{-d} |\alpha |
\int _0^1 (1-t)^{|\alpha |-1} h_{j,\alpha}(a;t,z,w)\, dt, \label{eq:cjdef}
\intertext{$j=1,2$, with}
h_{1,\alpha}(a;t,z,w)
&=
(-1)^{|\alpha |} \int _{\cc d}
\partial _z^\alpha \overline \partial _w^\alpha a
(w_1+t(z-w_1),w_1)
e^{-(z-w_1,w-w_1)}\, d\lambda (w_1) \label{eq:h1def}
\intertext{and}
h_{2,\alpha}(a;t,z,w)
&=
\int _{\cc d}(z-w_1)^\alpha
\partial _z^\alpha a
(w_1+t(z-w_1),w_1)
e^{-(z-w_1,w-w_1)}\, d\lambda (w_1). \nonumber
\end{align}
Since
$$
(z-w_1)^\alpha e^{-(z-w_1,w-w_1)}
=
\overline \partial _{w_1}^\alpha  e^{-(z-w_1,w-w_1)}
$$
integration by parts yields
\begin{multline*}
h_{2,\alpha}(a;t,z,w)
=
\int _{\cc d}
\partial _z^\alpha a
(w_1+t(z-w_1),w_1)
\overline \partial _{w_1}^\alpha e^{-(z-w_1,w-w_1)} \, d\lambda (w_1)
\\[1ex]
=
(-1)^{|\alpha |}
\int _{\cc d}
\partial _z^\alpha \overline \partial _{w}^\alpha a
(w_1+t(z-w_1),w_1)
e^{-(z-w_1,w-w_1)}\, d\lambda (w_1)
=
h_{1,\alpha}(a;t,z,w),
\end{multline*}
and the second equality in \eqref{Eq:WickAntiWickExpIdent}
follows. The first equality in \eqref{Eq:WickAntiWickExpIdent}
follows by similar arguments. The details are left for the reader.
\end{proof}

\par
%
%
%
%

\par

\begin{rem}\label{Rem:WickToAntiWick2}
Proposition \ref{Prop:WickToAntiWick} and its proof
show that
\begin{equation}\tag*{(\ref{Eq:WickToAntiWick})$'$}
\op _{\mathfrak V}(a)
=
\sum _{|\alpha |\le N}
\frac {(-1)^{|\alpha |}\op _{\mathfrak V}^{\aw}(a_{\alpha})}{\alpha !}
+
\sum _{|\alpha | = N+1}\frac {(-1)^{|\alpha |}\op _{\mathfrak V}(c_{1,\alpha})}{\alpha !}
\end{equation}
where $c_{1,\alpha}$ is defined by \eqref{eq:cjdef} and \eqref{eq:h1def}. 
\end{rem}

\par

In the following result we estimate $a_\alpha$ in Proposition \ref{Prop:WickToAntiWick}
and $c_{1,\alpha}$ in \eqref{eq:cjdef}
when $a =\mathsf S_{\mathfrak V} \fka$ satisfies
\eqref{eq:characShubinBargmann2} for every $N\ge 0$ and $\alpha,\beta \in \nn d$. 
By Theorem \ref{Thm:ShubinAnalChar} this means
that $\op _{\mathfrak V}(a)$
is the Bargmann transform of a Shubin type operator.

\par

\begin{prop}\label{Prop:ShubinWickExpEst}
Let $0 \le \rho \le 1$, $\omega \in \mascP _{\Sh ,\rho}(\rr {2d})$,
$a \in \wideparen \maclA _{\Sh ,\rho}^{(\omega )}(\cc {2d})$, 
and let $a_\alpha$ and $b_\alpha$
be as in Proposition \ref{Prop:WickToAntiWick} for $\alpha \in \nn d$. 
Then  $\op _{\mathfrak V}(b_\alpha)
= \op _{\mathfrak V}(c_{1,\alpha})$ for
a unique $c_{1,\alpha}\in \wideparen A(\cc {2d})$,
\begin{align}
|\partial _w^\beta \overline \partial _w^\gamma a_\alpha (w)|
&\lesssim 
\omega (\sqrt 2\overline {w}) \eabs {w}^{-\rho (2 |\alpha| +|\beta +\gamma |)},
\quad
\alpha ,\beta ,\gamma \in \nn d,
\label{Eq:ShubinWickExpEst1}
\intertext{and}
|\partial _z^\beta \overline \partial _w^\gamma  c_{1,\alpha} (z,w)|
&\lesssim
e^{\frac{1}{2} |z-w|^2}
\omega (\sqrt 2\overline {z}) \eabs {z+w}^{-\rho (2 |\alpha| + |\beta +\gamma|)}
\eabs {z-w}^{-N},
\quad
\alpha ,\beta ,\gamma \in \nn d.
\label{Eq:ShubinWickExpEst2}
\end{align}
\end{prop}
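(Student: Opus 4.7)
The proposition has three assertions: the estimate on $a_\alpha$, the existence of a unique $c_{1,\alpha}\in\wideparen A(\cc{2d})$ with $\op_{\mathfrak V}(b_\alpha)=\op_{\mathfrak V}(c_{1,\alpha})$, and the estimate on $c_{1,\alpha}$. The first is immediate: since $A_\alpha:=\partial_z^\alpha\overline\partial_w^\alpha a$ is again in $\wideparen A(\cc{2d})$, restriction to the diagonal obeys the chain rule $\partial_w^\beta\overline\partial_w^\gamma a_\alpha(w)=(\partial_z^{\alpha+\beta}\overline\partial_w^{\alpha+\gamma}a)(w,w)$, because $\partial_w$ only sees the analytic slot and $\overline\partial_w$ only the conjugate-analytic slot. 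Substituting $z=w$ into \eqref{Eq:ShubinWickEstimates} trivializes $e^{\frac12|z-w|^2}$ and $\eabs{z-w}^{-N}$, and $\eabs{z+w}=\eabs{2w}\asymp\eabs{w}$ yields \eqref{Eq:ShubinWickExpEst1}.

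For the existence of $c_{1,\alpha}$, I would start from the explicit formula derived in the proof of Proposition \ref{Prop:WickToAntiWick},
\begin{equation*}
c_{1,\alpha}(z,w)=(-1)^{|\alpha|}\pi^{-d}|\alpha|\int_0^1(1-t)^{|\alpha|-1}\int_{\cc d}(\partial_z^\alpha\overline\partial_w^\alpha a)(u,w_1)e^{-(z-w_1,w-w_1)}\,d\lambda(w_1)\,dt,
\end{equation*}
with $u=(1-t)w_1+tz$. The Shubin-type estimate on $a$ gives ample $w_1$-decay, so the double integral converges absolutely and may be differentiated under the integral. Since the integrand is entire in $z$ and conjugate entire in $w$, one has $c_{1,\alpha}\in\wideparen A(\cc{2d})$, and uniqueness is Proposition \ref{Prop:ContAnalPseudo}.

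The main work lies in the bound \eqref{Eq:ShubinWickExpEst2}. The key preliminary identity is
\begin{equation*}
\partial_z^\beta\overline\partial_w^\gamma c_{1,\alpha}(z,w)=\pi^{-d}|\alpha|\int_0^1(1-t)^{|\alpha|-1}\int_{\cc d}(\partial_z^{\alpha+\beta}\overline\partial_w^{\alpha+\gamma}a)(u,w_1)e^{-(z-w_1,w-w_1)}\,d\lambda(w_1)\,dt,
\end{equation*}
which I would derive from $\partial_z^\kappa e^{-(z-w_1,w-w_1)}=(-1)^{|\kappa|}\partial_{w_1}^\kappa e^{-(z-w_1,w-w_1)}$ and $\overline\partial_w^\gamma e^{-(z-w_1,w-w_1)}=(-1)^{|\gamma|}\overline\partial_{w_1}^\gamma e^{-(z-w_1,w-w_1)}$ together with a Leibniz expansion of $\partial_z^\beta$, integration by parts in $w_1$, and the multinomial collapse $\sum_{\delta\le\beta}\binom{\beta}{\delta}t^{|\beta-\delta|}(1-t)^{|\delta|}=1$. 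Plugging the Shubin hypothesis into the integrand, combining its $e^{\frac12|u-w_1|^2}$ factor with $|e^{-(z-w_1,w-w_1)}|=e^{-\frac12|z-w_1|^2-\frac12|w-w_1|^2+\frac12|z-w|^2}$ and completing the square in $w_1$ produces the Gaussian $-\frac{2-t^2}{2}|w_1-c|^2+\frac{|z-w|^2}{2(2-t^2)}$ with $c=\frac{(1-t^2)z+w}{2-t^2}$. After the substitution $\xi=w_1-c$, Peetre's inequality handles $\omega(\sqrt 2\,\overline u)\lesssim\omega(\sqrt 2\,\overline z)\eabs{\xi}^{N_0}\eabs{z-w}^{N_0}$ and $\eabs{u+w_1}^{-r}\lesssim\eabs{z+w}^{-r}\eabs{\xi}^r\eabs{z-w}^r$ with $r=\rho(2|\alpha|+|\beta+\gamma|)$, and the Gaussian in $\xi$ absorbs all polynomial growth in $\xi$.

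Extracting $\eabs{z-w}^{-N}$ for arbitrary $N$ is the crux, and this is where I split the $t$-integral at $t=1/2$. On $[0,1/2]$ the identity $\frac{1}{2(2-t^2)}=\frac12-\frac{1-t^2}{2(2-t^2)}$ combined with $\frac{1-t^2}{2(2-t^2)}\ge\frac{3}{16}$ produces a surplus Gaussian $e^{-\frac{3}{16}|z-w|^2}$ beyond the target $e^{\frac12|z-w|^2}$, which dominates any polynomial in $|z-w|$. On $[1/2,1]$ this surplus vanishes, and the $\eabs{z-w}^{-N}$ decay must come from the tempered factor $\eabs{u-w_1}^{-M}=\eabs{t(z-w_1)}^{-M}$ of the Shubin hypothesis: at $\xi=0$ this equals $\eabs{\frac{t}{2-t^2}(z-w)}^{-M}$ with $\frac{t}{2-t^2}\ge\frac27$, and Peetre's inequality yields $\eabs{u-w_1}^{-M}\lesssim\eabs{z-w}^{-M}\eabs{\xi}^M$. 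The integrability of $(1-t)^{|\alpha|-1}$ on $[0,1]$ closes the argument. The main obstacle is precisely the boundary regime $t\to1$, where all gain in $|z-w|$ from the Gaussian is exhausted and the decay must be supplied entirely by the Shubin tempered factor.
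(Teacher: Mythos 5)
Your proof is correct and follows essentially the same route as the paper's: the diagonal identity for $a_\alpha$, the integration-by-parts reduction that passes $\partial_z^\beta\overline\partial_w^\gamma$ through $c_{1,\alpha}$ onto $\partial_z^{\alpha+\beta}\overline\partial_w^{\alpha+\gamma}a$ (the paper phrases this as reducing to $\beta=\gamma=0$, you carry it out via Leibniz plus the multinomial collapse, which amounts to the same thing), and then the split of the $t$-integral at $t=\frac12$ with the surplus Gaussian in $|z-w|$ yielding the $\eabs{z-w}^{-N}$ decay for small $t$ and the Shubin factor $\eabs{t(z-w_1)}^{-N}$ supplying it for $t$ near $1$. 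The only cosmetic difference is that you complete the square directly in $w_1$ with center $c=\frac{(1-t^2)z+w}{2-t^2}$, whereas the paper introduces $z_1=\frac12(z-w)$, $z_2=\frac12(z+w)$ and shifts $w_1$ by $z_2$; both bookkeepings reach the same estimates.
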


\par

\begin{rem}\label{Rem:UniquenessIdentRemainderTerm}
The Wick symbol $c_{1,\alpha}$  in Proposition \ref{Prop:ShubinWickExpEst} is
uniquely defined and given by \eqref{eq:cjdef} in view of
Proposition \ref{Prop:LAOpsIdent}, when $h_{1,\alpha}$ is 
defined by \eqref{eq:h1def}. 
The conditions in Proposition \ref{Prop:ShubinWickExpEst}
imply that $c_{1,\alpha} \in \wideparen \maclA _{\Sh ,\rho}^{(\omega _\alpha )} (\cc {2d})$
where $\omega _\alpha =\eabs \cdo ^{-2\rho |\alpha |}\cdot \omega$.
\end{rem}

\par

\begin{proof}[Proof of Proposition \ref{Prop:ShubinWickExpEst}]
The estimate \eqref{Eq:ShubinWickExpEst1} is an immediate consequence of 
\begin{equation*}
\partial _w^\beta \overline \partial _w^\gamma a_\alpha (w) 
= \partial _w^{\alpha+\beta} \overline \partial _w^{\alpha +\gamma} a (w,w) 
\end{equation*}
and \eqref{eq:characShubinBargmann2}. 

\par

In order to prove \eqref{Eq:ShubinWickExpEst2} we first note that 
the uniqueness assertion for $c_{1,\alpha}$ is a consequence of
Remark \ref{Rem:UniquenessIdentRemainderTerm}.
Let $h_{1,\alpha}(a;z,w)$
be the same as in the proof of Proposition \ref{Prop:WickToAntiWick}.
Integration by parts gives
$$
\partial _z^\beta \overline \partial _w^\gamma h_{1,\alpha}(a;t,z,w)
=
h_{1,\alpha}(\partial _z^\beta \overline \partial _w^\gamma a;t,z,w),
$$
which reduce the problem to prove that \eqref{Eq:ShubinWickExpEst2}
holds for $\beta =\gamma =0$.

\par

The assumption $a \in \wideparen \maclA _{\Sh ,\rho}^{(\omega )}(\cc {2d})$
combined with $\omega$
and $\eabs \cdo ^{-|\alpha |}$ being moderate imply
$$
| \partial_z^\alpha \overline \partial_w^\beta a(z,w) |
\lesssim
e^{\frac1{2} |z-w|^2}
\omega (\sqrt 2\, \overline w)\eabs{w}^{- \rho |\alpha +\beta |}
\eabs{z-w}^{-N}
$$
for every $N\ge 0$. This gives
\begin{multline*}
e^{\repart (z,w)} |h_{1,\alpha} (a;t,z,w)|
\\[1ex]
\lesssim
\int _{\cc d}
\omega (\sqrt 2\overline {w_1})
e^{\frac{t^2}{2} |z-w_1|^2}\eabs {w_1}^{-2\rho |\alpha |}\eabs {t(z-w_1)}^{-N}
e^{\repart (z+w-w_1,w_1)}\, d\lambda (w_1),
\end{multline*}
that is
\begin{multline}\label{Eq:LongEsthalpha}
e^{-\frac 14|z-w|^2} |h_{1,\alpha} (a;t,z,w)|
\\[1ex]
\lesssim
\int _{\cc d}
\omega (\sqrt 2\overline {w_1})
e^{\frac{t^2}{2} |z-w_1|^2}\eabs {w_1}^{-2\rho |\alpha |}\eabs {t(z-w_1)}^{-N}
e^{-|w_1-z_2|^2}\, d\lambda (w_1)
\\[1ex]
=
\int _{\cc d}
\omega (\sqrt 2(\overline {z_2+w_1}))
e^{\frac{t^2}{2} |z_1-w_1|^2}\eabs {z_2+w_1}^{-2\rho |\alpha |}\eabs {t(z_1-w_1)}^{-N}
e^{-|w_1|^2}\, d\lambda (w_1)
\end{multline}
for every $N\ge 0$, where $z_1=\frac 12(z-w)$ and $z_2=\frac 12(z+w)$.

\par

If $t\in [0,\frac 12]$, then the last estimate together with the
moderateness of $\omega$ give
\begin{multline*}
e^{-|z_1|^2} |h_{1,\alpha} (a;t,z,w)|
\lesssim
\omega (\sqrt 2\overline {z_2})\eabs {z_2}^{-2\rho |\alpha |}
\int _{\cc d}
e^{\frac 18|w_1|^2}
e^{\frac{1}{8} |z_1-w_1|^2}
e^{-|w_1|^2}\, d\lambda (w_1)
\\[1ex]
\lesssim
\omega (\sqrt 2\overline {z_2})\eabs {z_2}^{-2\rho |\alpha |} e^{\frac{1}{4} |z_1|^2}\int _{\cc d}
e^{\frac{1}{4} |w_1|^2}
e^{-\frac 78|w_1|^2}\, d\lambda (w_1)
\\[1ex]
\lesssim
\omega (\sqrt 2\overline {z_2})\eabs {z_2}^{-2\rho |\alpha |} e^{\frac{1}{2} |z_1|^2}
\eabs {z_1}^{-N},
\end{multline*}
for every $N\ge 0$. 
The moderateness of $\omega$ again gives
\begin{equation}\label{Eq:halphaEst}
|h_{1,\alpha} (a;t,z,w)|\lesssim e^{\frac{1}{2} |z-w|^2}
\omega (\sqrt 2\overline {z}) \eabs {z+w}^{-2\rho |\alpha |}\eabs {z-w}^{-N}
\end{equation}
or every $N\ge 0$, when $t\in [0,\frac 12]$.

\par

Suppose instead $t\in [\frac 12,1]$. Then
$\eabs {t(z_1-w_1)}^{-N}\asymp \eabs {z_1-w_1}^{-N}$. 
Moderateness again gives
$$
\omega (\sqrt 2(\overline {z_2+w_1}))
\eabs {z_2+w_1}^{-2\rho |\alpha |}\eabs {z_1-w_1}^{-N_0}
\lesssim
\omega (\sqrt 2\overline z)
\eabs {z}^{-2\rho |\alpha |}
$$
for some $N_0$. Hence \eqref{Eq:LongEsthalpha} gives
\begin{multline*}
e^{-|z_1|^2} \omega (\sqrt 2\overline z)^{-1}\eabs {z}^{2\rho |\alpha |}
| h_{1,\alpha} (a;t,z,w)|
\\[1ex]
\lesssim
\int _{\cc d}
e^{\frac{1}{2} |z_1-w_1|^2}\eabs {z_1-w_1}^{-N}
e^{-|w_1|^2}\, d\lambda (w_1)
\\[1ex]
=
e^{|z_1|^2}\int _{\cc d}
\eabs {z_1-w_1}^{-N}
e^{- \frac{1}{2} |w_1+z_1|^2}\, d\lambda (w_1)
\asymp
e^{|z_1|^2}\eabs {z_1}^{-N}
\end{multline*}
for every $N\ge 0$. This gives \eqref{Eq:halphaEst} also for $t\in [\frac 12,1]$.

\par

The result now follows by using \eqref{Eq:halphaEst} when estimating
$|c_{1,\alpha}(z,w)|$ in \eqref{eq:cjdef}
and evaluating the arising integral.
\end{proof}

\par

The next result, analogous to Proposition \ref{Prop:ShubinWickExpEst},
will be useful in Section \ref{sec5} when we discuss hypoellipticity
for Shubin operators in the Wick setting.

\par

\begin{prop}\label{Prop:AsymptoticExpansion1}
Let $\rho \ge 0$, $\omega \in \mascP _{\Sh ,\rho}(\cc d)$,
$\omega _t=\omega \cdot \eabs \cdo ^{-2\rho t}$ when $t\ge 0$,
$a\in \wideparen \maclA _{\Sh ,\rho}^{(\omega )}(\cc {2d})$,
$\fka = \mathsf S_{\mathfrak V}^{-1}a$ and $N\ge 0$ be an integer.
Then
\begin{equation}\label{Eq:ExpansionWickTaylor}
\fka (x,-\xi ) = \sum _{|\alpha |\le N}
\frac {(-1)^{|\alpha |}
(\partial _z^\alpha \overline \partial _w^\alpha a)(2^{-\frac 12}z,2^{-\frac 12}z)}
{2^{|\alpha |}\alpha !} +c_N(z), \quad z = x + i \xi, 
\end{equation}
where 
\begin{equation}\label{Eq:EstExpansionWickTaylor}
\begin{alignedat}{2}
\partial _z^\alpha \overline \partial _w^\alpha a
\in
\wideparen \maclA _{\Sh ,\rho}^{(\omega _{|\alpha |})}(\cc {2d})
\quad &\text{and} &\quad
(x,\xi )\mapsto c_N(x-i\xi ) \in \Sh _{\rho}^{(\omega _{N+1})}(\rr {2d}). 
\end{alignedat}
\end{equation}
\end{prop}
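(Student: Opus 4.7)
The plan is to apply the inverse semi-conjugate Bargmann formula \eqref{eq:InvBargmannsymbolSTFT} with $z=x+i\xi $, expand the integrand in a Taylor series in $w$ and $\overline w$ about $w=0$, and exploit Gaussian orthogonality. Since $a$ is analytic in the first slot and conjugate analytic in the second, the chain rule yields $(\partial _w^\gamma \overline \partial _w^\delta )a(z/\sqrt 2 - w, z/\sqrt 2 + w)|_{w=0} = (-1)^{|\gamma |}(\partial _z^\gamma \overline \partial _w^\delta a)(z/\sqrt 2, z/\sqrt 2)$. A Taylor expansion of order $2N+1$ therefore produces the polynomial part plus a remainder $r(z,w)$ involving derivatives of $a$ of total order $2N+2$. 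The orthogonality relation
\begin{equation*}
\int _{\cc d} w^\gamma \overline w^\delta e^{-2|w|^2}\, d\lambda (w) = \delta _{\gamma ,\delta}\frac{\pi ^d\gamma !}{2^{|\gamma |+d}}
\end{equation*}
kills every off-diagonal term after integration. Surviving diagonal terms must satisfy $2|\gamma |\le 2N+1$, i.{\,}e.\ $|\gamma |\le N$, and the arithmetic coefficient collapses to $(-1)^{|\gamma |}/(2^{|\gamma |}\gamma !)$, producing \eqref{Eq:ExpansionWickTaylor} with $c_N(z) = (2/\pi )^d \int _{\cc d} r(z,w) e^{-2|w|^2}\, d\lambda (w)$.

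The first conclusion in \eqref{Eq:EstExpansionWickTaylor}, $\partial _z^\alpha \overline \partial _w^\alpha a \in \wideparen \maclA _{\Sh ,\rho}^{(\omega _{|\alpha |})}(\cc {2d})$, is immediate from \eqref{Eq:ShubinWickEstimates} applied to $\partial _z^{\alpha +\beta}\overline \partial _w^{\alpha +\gamma}a$, combined with the elementary inequality $\eabs z \lesssim \eabs {z+w}\eabs {z-w}$, which converts the factor $\eabs {z+w}^{-2\rho |\alpha |}$ into $\eabs z^{-2\rho |\alpha |}\eabs {z-w}^{2\rho |\alpha |}$; the extra $\eabs {z-w}^{2\rho |\alpha |}$ is then absorbed into $\eabs {z-w}^{-N}$ by enlarging $N$, producing $\omega _{|\alpha |}(\sqrt 2\, \overline z)$ in place of $\omega (\sqrt 2\, \overline z)$.

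The main obstacle is the second conclusion, $(x,\xi )\mapsto c_N(x-i\xi )\in \Sh _\rho ^{(\omega _{N+1})}(\rr {2d})$. I would use the integral form of the Taylor remainder: $r(z,w)$ is a finite sum, over $|\gamma +\delta |=2N+2$, of expressions $(-1)^{|\gamma |}(2N+2)(\gamma !\delta !)^{-1}w^\gamma \overline w^\delta \int _0^1 (1-t)^{2N+1} g_{\gamma ,\delta}(z,tw)\, dt$, where $g_{\gamma ,\delta}(z,tw)$ is a linear combination of $(\partial _z^{\gamma '}\overline \partial _w^{\delta '}a)(z/\sqrt 2 - tw, z/\sqrt 2 + tw)$ with $|\gamma '+\delta '|=2N+2$. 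Applying \eqref{Eq:ShubinWickEstimates} with $p = z/\sqrt 2 - tw$, $q = z/\sqrt 2 + tw$, so $p+q = \sqrt 2\, z$ and $|p-q|^2 = 4t^2|w|^2$, and using polynomial moderateness $\omega (\overline z - \sqrt 2\, t\overline w)\lesssim \omega (\overline z)\eabs w^{s_0}$ for some $s_0 \ge 0$, yields for every $M\ge 0$
\begin{equation*}
|r(z,w)|\lesssim \omega (\overline z)\eabs z^{-2\rho (N+1)} |w|^{2N+2}\eabs w^{s_0}\int _0^1(1-t)^{2N+1}e^{2t^2|w|^2}\eabs {tw}^{-M}\, dt.
\end{equation*}
Multiplying by $e^{-2|w|^2}$ and using Fubini, the exponents combine to $e^{-2(1-t^2)|w|^2}$. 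I would split the $t$-integral into $t\in [0,\frac 12]$, where $1-t^2\ge \frac 34$ and the full integral is bounded uniformly, and $t\in [\frac 12,1]$, where $\eabs {tw}^{-M}\asymp \eabs w^{-M}$ and the $w$-integral evaluates to a multiple of $(1-t^2)^{-\sigma}$ for some $\sigma = \sigma (M,N,s_0,d)$, rendered integrable against $(1-t)^{2N+1}$ once $M$ is chosen large enough. This yields $|c_N(z)|\lesssim \omega (\overline z)\eabs z^{-2\rho (N+1)}$. For derivatives, each application of $\partial _z$ or $\overline \partial _z$ passes under the integral sign and raises by one the order of the $\partial _z,\overline \partial _w$-derivatives of $a$ inside $g_{\gamma ,\delta}$, gaining a further $\eabs z^{-\rho}$ from the factor $\eabs {p+q}^{-\rho |\gamma '+\delta '|}$ in \eqref{Eq:ShubinWickEstimates}. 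Iterating and rerunning the same $(t,w)$-integral estimate with an enlarged $M$ produces $|\partial _x^\alpha \partial _\xi ^\beta c_N(x-i\xi )|\lesssim \omega (x,\xi )\eabs {(x,\xi )}^{-2\rho (N+1)-\rho |\alpha +\beta |}$, which is precisely the $\Sh _\rho ^{(\omega _{N+1})}(\rr {2d})$ bound.
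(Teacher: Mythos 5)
Your proposal is correct and follows essentially the same route as the paper's proof: both start from the inverse assignment formula \eqref{eq:InvBargmannsymbolSTFT}, Taylor expand the integrand in $w$ about $w=0$ to order $2N+1$, use the Gaussian orthogonality relation to kill the off-diagonal terms and collapse the coefficient to $(-1)^{|\gamma|}/(2^{|\gamma|}\gamma !)$, and then estimate the integral-form Taylor remainder via \eqref{Eq:ShubinWickEstimates} at the points $z/\sqrt 2 \mp tw$ after splitting $t\in[0,\tfrac12]$ from $t\in[\tfrac12,1]$. The only cosmetic difference is that on $[\tfrac12,1]$ you track a $(1-t^2)^{-\sigma}$ singularity whereas the paper simply takes $M$ large enough to make the $w$-integral uniformly bounded in $t$; both work.
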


\par

\begin{proof}
The first claim in \eqref{Eq:EstExpansionWickTaylor} $\partial _z^\alpha \overline \partial _w^\alpha a \in
\wideparen \maclA _{\Sh ,\rho}^{(\omega _{|\alpha |})}(\cc {2d})$ is an immediate consequence 
of the definition \eqref{Eq:ShubinWickEstimates} and Peetre's inequality. 

\par

By Taylor expanding the right-hand side of
\eqref{eq:InvBargmannsymbolSTFT} 
we obtain
\begin{equation}\label{Eq:ExpansionWickTaylor1}
\fka (x,-\xi ) = \sum _{|\alpha +\beta |\le 2N+1}
\frac {(2/\pi)^d I_{\alpha ,\beta}\cdot 
(\partial _z^\alpha \overline \partial _w^\beta a)(2^{-\frac 12}z,2^{-\frac 12}z)}
{\alpha !\beta !} +c_N(z),
\end{equation}
where
\begin{equation*}
I_{\alpha ,\beta}
=
\int _{\cc {d}}(-w)^\alpha \overline w^\beta e^{- 2 |w|^2}
\, d\lambda (w),
\end{equation*}
and
\begin{equation}\label{Eq:cNdef}
c_N(z)
=
2(N+1)
\sum _{|\alpha +\beta |= 2N+2}
\frac {(-1)^{|\beta |}}{\alpha !\beta !}
\int _0^1(1-\theta )^{2N+1}
H_{\alpha ,\beta } (z ,\theta )\, d\theta
\end{equation}
with 
\begin{equation}\label{Eq:Hdef}
H_{\alpha ,\beta } (z ,\theta ) =
\left ( \frac 2\pi \right )^d \int _{\cc {d}}
(\partial _z^\alpha \overline \partial _w^\beta a)
\left ( \frac z{\sqrt 2}- \theta w,\frac z{\sqrt 2}+\theta w \right )
w^\alpha \overline w^\beta 
e^{- 2 |w|^2 }\, d\lambda (w).
\end{equation}

\par

The orthonormality of $\{ e_\alpha \} _{\alpha \in \nn d}\subseteq A^2(\cc d)$
(cf. \eqref{Eq:BargmannHermiteMap})
yields $I_{\alpha,\beta} = 0$ if $\alpha \neq \beta$ and 
\begin{align*}
I_{\alpha,\alpha} 
& = \int _{\cc {d}}(-w)^\alpha \overline w^\alpha e^{- 2 |w|^2} \, d\lambda (w)
\\[1ex]
& = (-1)^{|\alpha|} 2^{-d -|\alpha|} \alpha! \pi^d \int _{\cc {d}} |e_\alpha(w) |^2
\, d\mu (w)
\\[1ex]
& = (-1)^{|\alpha|} 2^{-d -|\alpha|} \alpha! \pi^d. 
\end{align*}
Comparing \eqref{Eq:ExpansionWickTaylor1} with \eqref{Eq:ExpansionWickTaylor}
we see that the sum in the latter formula has been proven correct. 

It remains to study the remainder $c_N$.
We need to prove that $\fkc (x,\xi )=c_N(x-i\xi )$ belongs to
$\Sh _{\rho}^{(\omega _{N+1})}(\rr {2d})$. If
$$
h_{\alpha ,\beta } (z ,w,\theta ) =
(\partial _z^\alpha \overline \partial _w^\beta a)
\left ( \frac z{\sqrt 2}- \theta w,\frac z{\sqrt 2}+\theta w \right )
w^\alpha \overline w^\beta e^{- 2 |w|^2 }
$$
then
$$
H_{\alpha ,\beta } (z ,\theta )
=
\left ( \frac 2\pi \right )^d \int _{\cc {d}}
h_{\alpha ,\beta } (z ,w,\theta )
\, d\lambda (w).
$$

\par

First we notice that
\begin{align*}
\partial _z^\alpha \overline \partial _z^\beta c_N(z)
&=
2(N+1)
\sum _{|\gamma +\delta |= 2N+2}
\frac {(-1)^{|\delta |}}{\gamma !\delta !}
\int _0^1(1-\theta )^{2N+1}
\partial _z^{\alpha}\overline \partial _z^{\beta}
H_{\gamma ,\delta } (z ,\theta )\, d\theta, 
\\[1ex]
\partial _z^\alpha \overline \partial _z^\beta H_{\gamma ,\delta} (z,\theta )
&=
\left ( \frac 2\pi \right )^d
\int _{\cc {d}} \partial _z^\alpha \overline \partial _z^\beta h_{\gamma ,\delta}
(z,w,\theta ) \, d\lambda (w)
\intertext{and}
\partial _z^\alpha \overline \partial _z^\beta h_{\gamma ,\delta}
(z,w,\theta )
&=
2^{-\frac {|\alpha +\beta |}2}(\partial _z^{\alpha +\gamma}
\overline \partial _z^{\beta +\delta}
a)\left( \frac z{\sqrt 2} - \theta w,\frac z{\sqrt 2} + \theta w \right )w^\gamma
\overline w^\delta e^{- 2 |w|^2 }.
\end{align*}
From the definition \eqref{Eq:ShubinWickEstimates} this implies that
for every $M\ge 0$ and some $M_0\ge 0$ we have
\begin{multline*}
|\partial _z^\alpha \overline \partial _z^\beta h_{\gamma ,\delta}
(z,w,\theta )|
\\[1ex]
\lesssim
e^{-2(1-\theta ^2)|w|^2}\omega (\overline z-\sqrt 2\theta \overline w)
\eabs {z}^{-\rho (|\alpha +\beta |+2N+2)}\eabs {\theta w}^{-M-M_0}|w|^{2N+2}
\\[1ex]
\lesssim
e^{-2(1-\theta )|w|^2}\omega (\overline z)
\eabs {z}^{-\rho (|\alpha +\beta |+2N+2)}\eabs {\theta w}^{-M}|w|^{2N+2}.
\end{multline*}
This gives
$$
|\partial _z^\alpha \overline \partial _z^\beta H_{\gamma ,\delta} (z,\theta )|
\lesssim
\omega (\overline z)
\eabs {z}^{-\rho (|\alpha +\beta |+2N+2)}\cdot J(\theta ),
$$
where
$$
J(\theta ) = \int _{\cc d} e^{-2(1-\theta )|w|^2}\eabs {\theta w}^{-M}|w|^{2N+2}
\, d\lambda (w).
$$

\par

For $\theta \in [0,\frac 12]$ we get
$$
J(\theta ) = \int _{\cc d} e^{-|w|^2}|w|^{2N+2}\, d\lambda (w),
$$
which is finite and independent of $\theta$. If instead $\theta \in [\frac 12,1]$,
and choosing $M>2d+2N+2$, then
$$
J(\theta ) \le
\int _{\cc d} \eabs {\theta w}^{-M}|w|^{2N+2}\, d\lambda (w),
$$
which is again finite and independent of $\theta$.

\par

A combination of these estimates give
$$
|\partial _z^\alpha \overline \partial _z^\beta H_{\gamma ,\delta} (z,\theta )|
\lesssim
\omega (\overline z)
\eabs {z}^{-\rho (|\alpha +\beta |+2N+2)},
$$
which in turn implies
$$
|\partial _z^\alpha \overline \partial _z^\beta c_N(z)|
\lesssim
\omega (\overline z)
\eabs {z}^{-\rho (|\alpha +\beta |+2N+2)}.
$$
This means that $\fkc \in \Sh _{\rho}^{(\omega _{N+1})}(\rr {2d})$.
\end{proof}

\par

\subsection{Continuity of anti-Wick operators with exponentially
bounded symbols}\label{subsec3.2}

\par

Next we consider anti-Wick symbols that satisfy
exponential bounds of the form
\begin{align}
|a_0 (w)|
&\lesssim
e^{-r_0 |w|^{\frac 1s}},
\label{Eq:NonAWGSEst1}
\intertext{or}
|a_0 (w)|
&\lesssim
e^{r_0 |w|^{\frac 1s}}.
\label{Eq:NonAWGSEst2}
\end{align}

\par

In order to formulate our results we introduce new spaces of entire functions. 
Let $s > \frac{1}{2}$, $t_0, r > 0$, and 
let $\maclA _{s, t_0,r}(\cc d)$ be the Banach space of all $F \in A(\cc d)$ such that 
\begin{equation*}
\nm F{\maclA _{s, t_0,r}} \equiv
\nm {F\cdot e^{- t_0 |\cdo |^2 + r |\cdo|^{\frac{1}{s}}}}{L^\infty} 
< \infty.
\end{equation*}
Set
\begin{equation*}
\maclA _{0,(s, t_0)}(\cc d) = \bigcap_{r > 0} \maclA _{s, t_0,r} (\cc d)
\quad \text{and}\quad
\maclA _{(s, t_0)}'(\cc d) = \bigcap_{r > 0} \maclA _{s, t_0,-r} (\cc d)
\end{equation*}
equipped with the projective limit topology. 
Likewise we set 
\begin{equation*}
\maclA _{(s, t_0)}(\cc d) = \bigcup_{r > 0} \maclA _{s, t_0,r} (\cc d)
\quad \text{and}\quad
\maclA _{0,(s, t_0)}'(\cc d) = \bigcup_{r > 0} \maclA _{s, t_0,-r} (\cc d)
\end{equation*}
equipped with the inductive limit topology. 

\par

Referring to Section \ref{subsec1.3} it is clear that 
the spaces $\maclA _{0,(s, t_0)}(\cc d)$, $\maclA _{(s, t_0)}(\cc d)$, $\maclA _{(s, t_0)}'(\cc d)$
and $\maclA _{0,(s, t_0)}'(\cc d)$ are generalizations of 
\begin{align*}
\maclA _{0,(s, \frac{1}{2})}(\cc d) &= \mathfrak V_d ( \Sigma_s (\rr d) ) = \maclA _{0,s}(\cc d)
\\[1ex]
\maclA _{(s, \frac{1}{2})}(\cc d) &= \mathfrak V_d ( \maclS _s (\rr d) ) = \maclA _{s}(\cc d)
\\[1ex]
\maclA _{(s, \frac{1}{2})}'(\cc d) &= \mathfrak V_d ( \maclS _s '(\rr d) ) = \maclA _{s}'(\cc d)
\intertext{and}
\maclA _{0,(s, \frac{1}{2})}'(\cc d) &= \mathfrak V_d ( \Sigma_s' (\rr d) ) = \maclA _{0,s}'(\cc d),
\end{align*}
respectively. 

\par

\begin{prop}\label{prop:contAW}
Let $a_0\in L^\infty _{loc}(\cc d)$, $0 < t_0 < 1$ and
\begin{equation}\label{eq:t0t1}
t_1 = \frac{1}{4(1-t_0)}.
\end{equation}
Then the following is true:
\begin{enumerate}
\item if \eqref{Eq:NonAWGSEst2} holds for some $r_0 > 0$ then 
\begin{equation}\label{Eq:ContAW1}
\begin{alignedat}{2}
\op _{\mathfrak V}^{\aw}(a_0) &: & \maclA _{0,(s, t_0)}(\cc d) &\to \maclA _{0,(s, t_1)}(\cc d),
\\[1ex]
\op _{\mathfrak V}^{\aw}(a_0) &: & \maclA _{0,(s, t_0)}'(\cc d) &\to \maclA _{0,(s, t_1)}'(\cc d)
\end{alignedat}
\end{equation}
are continuous;

\vrum

\item if \eqref{Eq:NonAWGSEst2} holds for every $r_0 > 0$ then 
\begin{equation}\label{Eq:ContAW2}
\begin{alignedat}{2}
\op _{\mathfrak V}^{\aw}(a_0) &: &\maclA _{(s, t_0)}(\cc d) &\to \maclA _{(s, t_1)}(\cc d),
\\[1ex]
\op _{\mathfrak V}^{\aw}(a_0) &: &\maclA _{(s, t_0)}'(\cc d) &\to \maclA _{(s, t_1)}'(\cc d)
\end{alignedat}
\end{equation}
are continuous. 
\end{enumerate}
\end{prop}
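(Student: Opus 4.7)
The approach is a direct integral estimate on the defining formula \eqref{Eq:AntiWick} for $\op _{\mathfrak V}^{\aw}(a_0)$. A single computation handles all four continuity statements; only the book-keeping of the quantifiers on $r_0$, on the input seminorm index, and on the target seminorm index differs from case to case.

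The key step is to insert $|a_0(w)| \lesssim e^{r_0 |w|^{1/s}}$ and the growth/decay bound on $|F(w)|$ (of the form $e^{t_0|w|^2 \pm r |w|^{1/s}}$, according to which source space $F$ belongs to) into
\begin{equation*}
|\op _{\mathfrak V}^{\aw}(a_0) F(z)| \le \pi ^{-d} \int _{\cc d} |a_0(w)| \, |F(w)| \, |e^{(z,w)}| \, e^{-|w|^2}\, d\lambda (w),
\end{equation*}
and to complete the square in the quadratic part of the exponent:
\begin{equation*}
(t_0-1)|w|^2 + \operatorname{Re}(z,w) = -(1-t_0)\Big|w - \tfrac{z}{2(1-t_0)}\Big|^2 + t_1|z|^2,
\end{equation*}
which produces exactly the factor $e^{t_1|z|^2}$ prescribed by \eqref{eq:t0t1}. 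After shifting $w\mapsto w + z/(2(1-t_0))$, the quasi-triangle inequality $|a+b|^{1/s}\le C_s(|a|^{1/s}+|b|^{1/s})$, valid for any $s > 1/2$, decouples the $|z|^{1/s}$ and $|w|^{1/s}$ contributions. The remaining integral $\int e^{\kappa |w|^{1/s} - (1-t_0)|w|^2}\, d\lambda (w)$ is finite precisely because $s > 1/2$ gives $1/s < 2$. The outcome is a pointwise bound of the shape $|\op _{\mathfrak V}^{\aw}(a_0) F(z)| \lesssim e^{t_1 |z|^2 \pm r' |z|^{1/s}}$ with an explicit dependence $r' = r'(r_0, r)$.

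It remains to match the quantifiers in each case. In the $\maclA _{0,(s,t_0)}\to\maclA _{0,(s,t_1)}$ case of part (1), an arbitrary $r'>0$ in the target is attained by taking the input parameter $r$ large, since $r'$ scales like $(r-r_0)(2(1-t_0))^{-1/s}/C_s$. In the companion dual case $\maclA _{0,(s,t_0)}'\to\maclA _{0,(s,t_1)}'$ a single existential $r$ from $F$ produces a single existential $r' \asymp (r_0+r)(2(1-t_0))^{-1/s}$. Under the stronger hypothesis of part (2), where \eqref{Eq:NonAWGSEst2} holds for every $r_0 > 0$, the case $\maclA _{(s,t_0)}\to\maclA _{(s,t_1)}$ uses the freedom to take $r_0$ small (say $r_0 = r/2$) in order to preserve decay, while in the dual case $\maclA _{(s,t_0)}'\to\maclA _{(s,t_1)}'$ any prescribed $r' > 0$ is attained by taking both $r_0$ and the input parameter $r$ small enough. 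In each case the constants are continuous in the relevant seminorms, so continuity at the level of the Banach spaces $\maclA _{s,t_0,\pm r}$ transfers to continuity between the projective or inductive limits. The only real obstacle is keeping the quantifier structure straight across the four sub-cases; the analytical content is a single Gaussian integration whose convergence hinges on the assumption $s > 1/2$.
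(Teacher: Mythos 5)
Your proposal is correct and follows essentially the same route as the paper's proof: a direct estimate on the integral in \eqref{Eq:AntiWick}, completing the square to produce $e^{t_1|z|^2}$, shifting $w\mapsto w+z/(2(1-t_0))$, decoupling the $|z|^{1/s}$ and $|w|^{1/s}$ terms by power-type triangle inequalities, and evaluating the resulting Gaussian integral (finite since $1/s<2$), with the four sub-cases differing only in the interplay of the quantifiers on $r_0$, the source parameter, and the target parameter. One small note: for the maps with decaying targets (the first lines of \eqref{Eq:ContAW1} and \eqref{Eq:ContAW2}) it is the reverse inequality $|w+a|^{1/s}\ge c_1|a|^{1/s}-c_2|w|^{1/s}$ that is actually used, though it follows readily from the forward one you stated, so this is a cosmetic rather than substantive point.
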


\par

\begin{proof}
We only prove that the first map in \eqref{Eq:ContAW1} is continuous. The
other continuity assertions follow by similar arguments and are left
for the reader.

\par

Let $r_2 > 0$ be given, $r_1 > r_0$ and $F\in \maclA _{0,(s, t_0)}(\cc d)$. 
We have for $z \in \cc d$
\begin{align*}
& |\op _{\mathfrak V}^{\aw}(a_0)F(z)| e^{- t_1 |z |^2 + r_2 |z|^{\frac{1}{s}}}
\\[1ex]
& \lesssim e^{- t_1 |z |^2 + r_2 |z|^{\frac{1}{s}}} 
\int_{\cc d} |a_0 (w)| \, |F(w)| \, e^{\repart (z,w) - |w|^2}\,  d\lambda (w)
\\[1ex]
& \lesssim e^{- t_1 |z |^2 + r_2 |z|^{\frac{1}{s}}}  \nm F{\maclA _{s, t_0,r_1}}
\int_{\cc d} e^{r_0 |w|^{\frac{1}{s}}
+ t_0 |w|^2 - r_1 |w|^{\frac{1}{s}} + \repart (z,w) - |w|^2}\,  d\lambda (w)
\\[1ex]
& = e^{r_2 |z|^{\frac{1}{s}}}  \nm F{\maclA _{s, t_0,r_1}}
\int_{\cc d} e^{-(r_1-r_0) |w|^{\frac{1}{s}}  -(1- t_0) |w|^2 +
\repart (z,w) - t_1 |z |^2 }\,  d\lambda (w)
\\[1ex]
& = e^{r_2 |z|^{\frac{1}{s}}}  \nm F{\maclA _{s, t_0,r_1}}  
\int_{\cc d} e^{-(r_1-r_0)
|w|^{\frac{1}{s}}  - \left |  \sqrt{1- t_0} w - \frac{1}{2 \sqrt{1- t_0} } z \right|^2 }
\,  d\lambda (w)
\\[1ex]
& = e^{r_2 |z|^{\frac{1}{s}}}  \nm F{\maclA _{s, t_0,r_1}}  
\int_{\cc d} e^{-(r_1-r_0) \left| w + \frac{1}{2 (1- t_0) } z \right |
^{\frac{1}{s}}  - (1- t_0)\left|  w  \right|^2 }\,  d\lambda (w)
\\[1ex]
& \le
e^{(r_2 - c_1(r_1 - r_0)) |z|^{\frac{1}{s}}}  
\nm F{\maclA _{s, t_0,r_1}}  
\int_{\cc d} e^{ c_2 (r_1-r_0) | w |^{\frac{1}{s}} - (1- t_0)\left|  w  \right|^2 }\, d\lambda (w)
\\[1ex]
&\asymp 
\nm F{\maclA _{s, t_0,r_1}}  
e^{(r_2 - c_1(r_1 - r_0)) |z|^{\frac{1}{s}}}  
\end{align*}
for some constants $c_1, c_2>0$. By choosing $r_1$ sufficiently large we 
get
\begin{equation*}
\nm {\op _{\mathfrak V}^{\aw}(a_0) F} {\maclA _{s, t_1,r_2}}  
\lesssim
\nm F{\maclA _{s, t_0,r_1}}. 
\end{equation*}
The estimates and \eqref{Eq:AntiWick} imply $\op _{\mathfrak V}^{\aw}(a_0)F \in A(\cc d)$. 
\end{proof}

\par

\begin{rem}
Note that \eqref{eq:t0t1} implies $t_1 > \frac{1}{4}$ 
and $t_0 \le t_1$ with equality if and only if $t_0 = \frac{1}{2}$. 
Hence $\maclA _{0,(s, t_0)}(\cc d) \subseteq \maclA _{0,(s, t_1)}(\cc d)$,
and similarly for the other spaces.
\end{rem}

\par

The particular case $t_0 = \frac{1}{2}$ gives

\par

\begin{cor}\label{cor:contBargmannGS}
Let $a_0\in L^\infty _{loc}(\cc d)$.
If \eqref{Eq:NonAWGSEst2} holds for some (every) $r_0 > 0$ 
then $\op _{\mathfrak V}^{\aw}(a_0)$ is continuous on $\maclA _{0,s}(\cc d)$
(on $\maclA _{s}(\cc d)$). 
\end{cor}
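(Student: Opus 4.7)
The plan is to derive the corollary as an immediate specialization of Proposition \ref{prop:contAW} with $t_0 = \tfrac{1}{2}$. First I would observe that with this choice the formula \eqref{eq:t0t1} yields $t_1 = \tfrac{1}{4(1-1/2)} = \tfrac{1}{2}$, so the proposition produces continuity of $\op _{\mathfrak V}^{\aw}(a_0)$ from each of the spaces $\maclA _{0,(s,1/2)}(\cc d)$, $\maclA _{0,(s,1/2)}'(\cc d)$, $\maclA _{(s,1/2)}(\cc d)$, $\maclA _{(s,1/2)}'(\cc d)$ into itself.

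Next I would invoke the identifications recorded in Subsection \ref{subsec1.3}, namely
\begin{equation*}
\maclA _{0,(s,\frac 12)}(\cc d) = \maclA _{0,s}(\cc d)
\quad \text{and}\quad
\maclA _{(s,\frac 12)}(\cc d) = \maclA _{s}(\cc d),
\end{equation*}
so that Proposition \ref{prop:contAW}~(1) with $t_0 = \tfrac 12$ gives that \eqref{Eq:NonAWGSEst2} for some $r_0>0$ yields continuity of $\op _{\mathfrak V}^{\aw}(a_0)$ on $\maclA _{0,s}(\cc d)$, while Proposition \ref{prop:contAW}~(2) with $t_0 = \tfrac 12$ gives that \eqref{Eq:NonAWGSEst2} for every $r_0>0$ yields continuity on $\maclA _s(\cc d)$.

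There is essentially no obstacle here; the corollary is a direct reading of Proposition \ref{prop:contAW} at the self-dual value $t_0 = \tfrac 12$, which is precisely the value for which the Bargmann image of the Gelfand--Shilov spaces $\Sigma _s(\rr d)$ and $\maclS _s(\rr d)$ coincides with $\maclA _{0,(s,t_0)}(\cc d)$ and $\maclA _{(s,t_0)}(\cc d)$ respectively.
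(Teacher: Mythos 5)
Your proposal is correct and takes exactly the same route as the paper: specialize Proposition \ref{prop:contAW} to $t_0 = \tfrac 12$, observe that \eqref{eq:t0t1} gives $t_1 = \tfrac 12$, and use the identifications $\maclA _{0,(s,\frac 12)}(\cc d) = \maclA _{0,s}(\cc d)$, $\maclA _{(s,\frac 12)}(\cc d) = \maclA _s(\cc d)$ to read off the statement. The paper introduces the corollary with the words ``The particular case $t_0 = \frac 12$ gives,'' which is precisely your argument.
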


\par

With a technique similar to the proof of Proposition \ref{prop:contAW}
one shows the following result. 

\par

\begin{prop}\label{prop:contAW2}
Let $a_0\in L^\infty _{loc}(\cc d)$, $0 < t_0 < 1$ and suppose \eqref{eq:t0t1} holds. 
Then the following is true:
\begin{enumerate}
\item if \eqref{Eq:NonAWGSEst1} holds for all $r_0 > 0$ then 
\begin{equation}\label{Eq:RegAW1}
\op _{\mathfrak V}^{\aw}(a_0): \maclA _{0,(s, t_0)}'(\cc d) \to \maclA _{0,(s, t_1)}(\cc d)
\end{equation}
is continuous;

\vrum

\item if \eqref{Eq:NonAWGSEst1} holds for some $r_0 > 0$ then 
\begin{equation}\label{Eq:RegAW2}
\op _{\mathfrak V}^{\aw}(a_0): \maclA _{(s, t_0)}'(\cc d) \to \maclA _{(s, t_1)}(\cc d)
\end{equation}
is continuous. 
\end{enumerate}
\end{prop}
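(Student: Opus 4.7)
The plan is to imitate the proof of Proposition~\ref{prop:contAW} almost verbatim, the new ingredient being that the decay of $a_0$ (rather than the decay of $F$) produces the $e^{-r_2|z|^{1/s}}$ factor on the image side. First I would fix $z\in\cc d$ and start from the defining integral
\begin{equation*}
|\op _{\mathfrak V}^{\aw}(a_0)F(z)|
\le \pi^{-d}\int_{\cc d}|a_0(w)|\,|F(w)|\,e^{\repart(z,w)-|w|^2}\,d\lambda(w),
\end{equation*}
insert the hypotheses $|a_0(w)|\lesssim e^{-r_0|w|^{1/s}}$ and, for appropriate $r_1>0$, $|F(w)|\lesssim e^{t_0|w|^2+r_1|w|^{1/s}}$ coming from membership in $\maclA_{0,(s,t_0)}'(\cc d)$ or $\maclA_{(s,t_0)}'(\cc d)$, and multiply by $e^{-t_1|z|^2+r_2|z|^{1/s}}$ exactly as in the proof of Proposition~\ref{prop:contAW}.

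The algebraic heart of the proof is the same square-completion as before: the key identity
\begin{equation*}
-(1-t_0)|w|^2+\repart(z,w)-t_1|z|^2
=-\bigl|\sqrt{1-t_0}\,w-\tfrac{1}{2\sqrt{1-t_0}}z\bigr|^2
\end{equation*}
is valid precisely because of the relation $t_1=1/(4(1-t_0))$ in \eqref{eq:t0t1}. After translating the variable of integration to $w\mapsto w+z/(2(1-t_0))$ and collecting the exponents one arrives at
\begin{equation*}
|\op _{\mathfrak V}^{\aw}(a_0)F(z)|\,e^{-t_1|z|^2+r_2|z|^{1/s}}
\lesssim e^{r_2|z|^{1/s}}\int_{\cc d}
e^{-(r_0-r_1)\left|w+\tfrac{z}{2(1-t_0)}\right|^{1/s}-(1-t_0)|w|^2}\,d\lambda(w).
\end{equation*}
Applying the elementary inequality $|a+b|^{1/s}\ge c_1|a|^{1/s}-c_2|b|^{1/s}$ (which holds for any fixed $1/s>0$ with constants depending only on $s$) to the argument $a=z/(2(1-t_0))$, $b=w$, one extracts a factor $e^{-c_1(r_0-r_1)|z|^{1/s}}$ at the cost of adding a term $c_2(r_0-r_1)|w|^{1/s}$ to the integrand. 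The remaining integral in $w$ converges because $s>\tfrac12$ forces $1/s<2$, so the Gaussian $e^{-(1-t_0)|w|^2}$ absorbs $e^{c_2(r_0-r_1)|w|^{1/s}}$.

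Combining, we obtain an estimate of the form
\begin{equation*}
\nm{\op _{\mathfrak V}^{\aw}(a_0)F}{\maclA_{s,t_1,r_2}}
\lesssim \nm{F}{\maclA_{s,t_0,-r_1}}
\quad\text{whenever}\quad r_2\le c_1(r_0-r_1).
\end{equation*}
For case (1) the hypothesis supplies all $r_0>0$, so for given input parameter $r_1$ and required output parameter $r_2$ we simply choose $r_0$ large enough that $r_0\ge r_1+r_2/c_1$, which gives continuity \eqref{Eq:RegAW1}. For case (2) the constant $r_0$ is fixed, and the same inequality forces us to pick $r_1$ and $r_2$ small (e.g.\ $r_1=r_0/2$ and any $r_2\le c_1 r_0/2$), which is precisely the inductive-limit structure of $\maclA_{(s,t_0)}'(\cc d)$ and $\maclA_{(s,t_1)}(\cc d)$, yielding \eqref{Eq:RegAW2}. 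Entirety of $\op _{\mathfrak V}^{\aw}(a_0)F$ follows from \eqref{Eq:AntiWick} together with absolute convergence of the integral, as in Proposition~\ref{prop:contAW}.

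The only mildly technical point — and really the main obstacle — is the bookkeeping of the four constants $r_0,r_1,r_2$ and the universal $c_1,c_2$ from the sub-additivity inequality: one has to make sure that in case (1) every $r_2>0$ is attainable (using the freedom in $r_0$), while in case (2) at least one $r_2>0$ is attainable (using the freedom to shrink $r_1$). Once this is set up correctly, the argument is entirely parallel to the proof of Proposition~\ref{prop:contAW}.
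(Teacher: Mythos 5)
Your proof is correct and follows exactly the technique the paper indicates (the paper gives no proof beyond the remark that the argument mirrors Proposition \ref{prop:contAW}); the completion of the square built into \eqref{eq:t0t1}, the subadditivity inequality for $|\cdo|^{1/s}$ with $1/s<2$, and the bookkeeping of $r_0,r_1,r_2$ are all handled correctly. One small terminological slip: $\maclA_{(s,t_0)}'(\cc d) = \bigcap_{r>0}\maclA_{s,t_0,-r}(\cc d)$ is a projective limit, not an inductive one, but your argument (taking $r_1$ as small as you like, e.g.\ $r_1 = r_0/2$) correctly exploits precisely this intersection structure, so the logic is unaffected.
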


\par

Again the particular case $t_0 = \frac{1}{2}$ gives

\par

\begin{cor}\label{cor:regBargmannGS}
Let $a_0\in L^\infty _{loc}(\cc d)$. Then the following is true:
\begin{enumerate}
\item if \eqref{Eq:NonAWGSEst1} holds for all $r_0 > 0$
then 
\begin{equation*}
\op _{\mathfrak V}^{\aw}(a_0): \maclA _{0,s}'(\cc d) \to \maclA _{0,s}(\cc d)
\end{equation*}
is continuous;

\item if \eqref{Eq:NonAWGSEst1} holds for some $r_0 > 0$
then 
\begin{equation*}
\op _{\mathfrak V}^{\aw}(a_0): \maclA _{s}'(\cc d) \to \maclA _{s}(\cc d)
\end{equation*}
is continuous.  
\end{enumerate}
\end{cor}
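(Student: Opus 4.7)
The plan is to derive Corollary \ref{cor:regBargmannGS} as an immediate specialization of Proposition \ref{prop:contAW2} to the parameter value $t_0 = \frac{1}{2}$. From the formula \eqref{eq:t0t1}, one sees that
\begin{equation*}
t_0 = \frac{1}{2} \quad \Longrightarrow \quad t_1 = \frac{1}{4(1-\frac{1}{2})} = \frac{1}{2},
\end{equation*}
so both source and target space in \eqref{Eq:RegAW1} and \eqref{Eq:RegAW2} collapse to the same pair $(s,\frac{1}{2})$.

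Next I would invoke the identifications established just before Proposition \ref{prop:contAW}, which state that
\begin{equation*}
\maclA _{0,(s,\tfrac{1}{2})}(\cc d)=\maclA _{0,s}(\cc d), \quad
\maclA _{(s,\tfrac{1}{2})}(\cc d)=\maclA _{s}(\cc d),
\end{equation*}
and correspondingly $\maclA _{0,(s,\tfrac{1}{2})}'(\cc d)=\maclA _{0,s}'(\cc d)$, $\maclA _{(s,\tfrac{1}{2})}'(\cc d)=\maclA _{s}'(\cc d)$. These equalities hold as topological identifications, since in both cases the defining seminorms differ only by a harmless rewriting of the Gaussian weight $e^{-\frac{1}{2}|\cdot|^2}$ against the parameter $t_0 = \frac{1}{2}$, and the quasi-norm $|z|_{s,s}$ appearing in the definitions from Section \ref{subsec1.3} is equivalent (up to constants absorbed into $r$) to $|z|^{1/s}$ on $\cc d$.

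With these identifications in place, statement (1) of the corollary is exactly statement (1) of Proposition \ref{prop:contAW2} with $t_0 = t_1 = \frac{1}{2}$, and statement (2) is exactly statement (2) of the same proposition with the same parameter choice. There is no real obstacle to overcome here; the only substantive verification is the identification of the parametric spaces with the Bargmann images of the Gelfand--Shilov spaces, and that was already recorded after the definition of $\maclA _{0,(s,t_0)}(\cc d)$ and its companions in Subsection \ref{subsec3.2}. Hence the corollary follows at once.
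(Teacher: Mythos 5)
Your proposal is correct and matches the paper's own (very terse) proof, which simply observes that the corollary is the particular case $t_0 = \frac{1}{2}$ of Proposition \ref{prop:contAW2}, invoking the identifications $\maclA_{0,(s,1/2)}(\cc d)=\maclA_{0,s}(\cc d)$ and its companions recorded just before Proposition \ref{prop:contAW}. Your extra remark that $|z|_{s,s}\asymp |z|^{1/s}$ (constants absorbed into $r$) is a fair justification for why those identifications are topological, which the paper takes as evident.
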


\par

\subsection{Estimates of Wick symbols of anti-Wick operators with
exponentially bounded symbols}\label{subsec3.3}

\par

For anti-Wick operators in \cite[Eq.~(2.94)]{Fo} we have the following result.

\par

\begin{thm}\label{Thm:WickSymbolAntiWickOpFolland}
If $a_0\in L^\infty _{loc}(\cc d)$ satisfies 
\begin{equation}\label{Eq:ExpQuadratic}
|a_0(w)| \lesssim e^{r|w|^2}, \quad w \in \cc d, \quad \text{for some} \quad r < 1, 
\end{equation}
then $a_0\in L_{0,A}(\cc d)$ and \eqref{Eq:AntiWickAnalPseudoRel}$'$ holds for some
$a_0^{\aw}\in \wideparen A(\cc {2d})$ with
\begin{equation*}
|a_0^{\aw}(z,w)|\lesssim e^{r_0 |z+w|^2-\operatorname{Re}(z,w)},
\qquad
r_0=4^{-1}(1-r)^{-1}.
\end{equation*}
\end{thm}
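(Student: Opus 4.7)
The plan is to verify the three assertions in order: (i) $a_0 \in L_{0,A}(\cc d)$, (ii) the resulting $a_0^{\aw}$ defined by the integral formula \eqref{Eq:AntiWickAnalPseudoRel}$'$ lies in $\wideparen A(\cc {2d})$, and (iii) the claimed pointwise estimate. Step (i) is immediate from \eqref{Eq:AntiWickL1SymbClass}: under \eqref{Eq:ExpQuadratic} with $r<1$, the function $w\mapsto a_0(w)e^{r'|w|-|w|^2}$ is bounded by a constant times $e^{-(1-r)|w|^2+r'|w|}$, which lies in $L^1(\cc d)$ for every $r'>0$. By Proposition \ref{Prop:LAOpsIdent} (applied with $a(z,w)=a_0(w)$) we then have $\op _{\mathfrak V}^{\aw}(a_0)=\op _{\mathfrak V}(a_0^{\aw})$ for a uniquely determined $a_0^{\aw}\in \wideparen A(\cc{2d})$ given by \eqref{Eq:AntiWickAnalPseudoRel}$'$, settling (ii).

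For (iii), the main work is to estimate the integral
\begin{equation*}
a_0^{\aw}(z,w) = \pi ^{-d} \int _{\cc d} a_0(w_1)\,e^{-(z-w_1,w-w_1)}\, d\lambda (w_1).
\end{equation*}
Expanding the sesquilinear form gives
\begin{equation*}
-(z-w_1,w-w_1)=-(z,w)+(z,w_1)+(w_1,w)-|w_1|^{2},
\end{equation*}
and the identity $\operatorname{Re}[(z,w_1)+(w_1,w)] = \operatorname{Re}(z+w,w_1)$. Combining this with \eqref{Eq:ExpQuadratic} yields
\begin{equation*}
|a_0^{\aw}(z,w)| \lesssim e^{-\operatorname{Re}(z,w)}\,\pi^{-d}\int_{\cc d} e^{-(1-r)|w_1|^{2}+\operatorname{Re}(z+w,w_1)}\,d\lambda(w_1).
\end{equation*}

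Next I would complete the square in $w_1$, with $r_0=1/(4(1-r))$:
\begin{equation*}
-(1-r)|w_1|^{2}+\operatorname{Re}(z+w,w_1)
= -(1-r)\Bigl|w_1-\tfrac{z+w}{2(1-r)}\Bigr|^{2}+r_0\,|z+w|^{2}.
\end{equation*}
Pulling the $w_1$-independent factor out and evaluating the remaining centred Gaussian integral (which equals $\pi^{d}/(1-r)^{d}$ after translation) produces
\begin{equation*}
|a_0^{\aw}(z,w)|\lesssim (1-r)^{-d}\,e^{r_0|z+w|^{2}-\operatorname{Re}(z,w)},
\end{equation*}
which is the required bound. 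The main obstacle is purely bookkeeping: keeping track of the conjugate-linearity of $(\cdot,\cdot)$ in its second argument when reducing $(z,w_1)+(w_1,w)$ to a real inner product against $z+w$, and verifying the factor $1/(4(1-r))$ in the completion of the square. No additional argument is needed for the semi-conjugate analyticity of $a_0^{\aw}$: it is already guaranteed by Proposition \ref{Prop:LAOpsIdent}, but it can also be read off directly from the integral formula by differentiating under the integral sign, since the estimates above are locally uniform in $(z,w)$.
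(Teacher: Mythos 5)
Your proof is correct and follows essentially the same route as the paper: check membership in $L_{0,A}(\cc d)$ directly from the definition, then estimate the integral in \eqref{Eq:AntiWickAnalPseudoRel}$'$ by expanding $-(z-w_1,w-w_1)$, taking real parts, completing the square in $w_1$, and evaluating the resulting centred Gaussian integral. The small additional observation you make — invoking Proposition \ref{Prop:LAOpsIdent} to justify $a_0^{\aw}\in\wideparen A(\cc{2d})$ — is implicit in the paper's argument but is a sensible clarification, not a departure.
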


\par

\begin{proof}
The claim $a_0\in L_{0,A}(\cc d)$ is an immediate consequence of the assumption \eqref{Eq:ExpQuadratic}
and the definition \eqref{Eq:AntiWickL1SymbClass}. 
The integral in \eqref{Eq:AntiWickAnalPseudoRel}$'$ can be estimated as
\begin{multline*}
\left |
\int _{\cc d}a_0(w_1)e^{-(z-w_1,w-w_1)}\, d\lambda (w_1)
\right |
\\[1ex]
\lesssim
\int _{\cc d} e^{r|w_1|^2} \left | e^{-(z-w_1,w-w_1)} \right |
\, d\lambda (w_1)
\\[1ex]
=
e^{-\operatorname{Re}(z,w)}
\int _{\cc d} e^{-(1-r)|w_1|^2} e^{\operatorname{Re}(z+w,w_1)}
\, d\lambda (w_1)
\\[1ex]
=
e^{\frac 1{4(1-r)}|z+w|^2-\operatorname{Re}(z,w)}
\int _{\cc d} e^{-(1-r)|w_1-(z+w)/(2(1-r))|^2}
\, d\lambda (w_1)
\\[1ex]
\asymp e^{r_0|z+w|^2-\operatorname{Re}(z,w)} .\qedhere
\end{multline*}
\end{proof}

\par

\begin{rem}
The condition on $a_0^{\aw}$ in Theorem \ref{Thm:WickSymbolAntiWickOpFolland}
implies that $a_0^{\aw}$ belongs to
$\wideparen {\maclA} _{0,\frac 12}^\prime (\cc {2d})$
(see \cite{Teofanov2}).
In particular it follows that $\op _{\mathfrak V}^{\aw}(a_0)=\op _{\mathfrak V}(a_0^{\aw})$
is continuous from $\maclA _{0,\frac 12}(\cc d)$ to $\maclA _{0,\frac 12}'(\cc d)$
(cf. \cite[Theorem 2.10]{Teofanov2} and
Remark \ref{Rem:BargmannPilipovicSpaces}).
\end{rem}


\par

The following result concerns exponentially moderate weight functions. 

\par

\begin{thm}\label{Thm:WickSymbolAntiWickOpSOmega}
Let $a_0\in L_{0,A}(\cc d)$, $a_0^{\aw} \in \wideparen A(\cc {2d})$
is given by \eqref{Eq:AntiWickAnalPseudoRel}$'$
and $\omega \in \mascP _E(\cc d)$. If
\begin{equation*}
|a_0(w)|\lesssim  \omega (2w), \quad w \in \cc d, 
\end{equation*}
then
\begin{equation*}
|a_0^{\aw}(z,w)|\lesssim e^{\frac 14|z-w|^2} \omega (z+w), \quad z,w \in \cc d. 
\end{equation*}
\end{thm}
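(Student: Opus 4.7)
The plan is to insert the hypothesis $|a_0(w_1)|\lesssim \omega(2w_1)$ directly into the integral representation \eqref{Eq:AntiWickAnalPseudoRel}$'$ and then reduce the resulting weighted Gaussian integral via a completion of squares and the moderateness of $\omega$. First I would pass to absolute values in the integrand, using
$$
\bigl|e^{-(z-w_1,w-w_1)}\bigr|=e^{-\repart(z-w_1,w-w_1)},
$$
and expand
$$
\repart(z-w_1,w-w_1)=\repart(z,w)-\repart(z,w_1)-\repart(w_1,w)+|w_1|^2.
$$
Noting that $\repart(w_1,w)=\repart(w,w_1)$, the two cross terms combine into $\repart(z+w,w_1)$, and hence
$$
|a_0^{\aw}(z,w)|\lesssim \pi^{-d}e^{-\repart(z,w)}\int_{\cc d}\omega(2w_1)\,e^{\repart(z+w,w_1)-|w_1|^2}\,d\lambda(w_1).
$$

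Next I would complete the square in the exponent as
$$
\repart(z+w,w_1)-|w_1|^2=-\left|w_1-\tfrac{z+w}{2}\right|^2+\tfrac14|z+w|^2,
$$
and perform the translation $w_1=u+(z+w)/2$ so that $2w_1=2u+(z+w)$. Invoking the moderateness of $\omega\in\mascP_E(\cc d)$ together with \eqref{Eq:ModWeightProp}, there exists a submultiplicative weight $v$ with $v(u)\lesssim e^{r|u|}$ such that
$$
\omega(2u+(z+w))\lesssim \omega(z+w)\,v(2u).
$$
Since $v(2u)e^{-|u|^2}\in L^1(\cc d)$, the remaining $u$-integral is finite and independent of $z,w$, yielding
$$
|a_0^{\aw}(z,w)|\lesssim \omega(z+w)\,e^{\frac14|z+w|^2-\repart(z,w)}.
$$

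Finally I would verify the algebraic identity
$$
\tfrac14|z+w|^2-\repart(z,w)=\tfrac14\bigl(|z|^2+2\repart(z,w)+|w|^2\bigr)-\repart(z,w)=\tfrac14|z-w|^2,
$$
which completes the proof. The only step requiring any care is the application of moderateness after the translation; the rest is a routine Gaussian computation. I do not foresee a genuine obstacle, since the exponential structure of $\omega$ guaranteed by \eqref{Eq:ModWeightProp} is exactly what is needed to absorb the shift $2u$ while keeping $e^{-|u|^2}$ dominant.
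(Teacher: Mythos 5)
Your proof is correct and follows essentially the same route as the paper's: take absolute values, complete the square in the Gaussian, translate $w_1\mapsto w_1+(z+w)/2$, and use the moderateness bound $\omega(2u+(z+w))\lesssim\omega(z+w)e^{2r|u|}$ to kill the shift. The only cosmetic difference is that you make the algebraic identity $\tfrac14|z+w|^2-\repart(z,w)=\tfrac14|z-w|^2$ a separate explicit step, whereas the paper absorbs it silently into the translation.
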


\par

\begin{proof}
Let $r\ge 0$ be chosen such that $\omega (z+w)\lesssim \omega (z)e^{r|w|}$,
$z,w\in \cc d$. From \eqref{Eq:AntiWickAnalPseudoRel}$'$ we get
\begin{multline*}
|a_0^{\aw}(z,w)|
\lesssim
\int _{\cc d} \omega (2w_1)
e^{-\operatorname{Re}(z-w_1,w-w_1)}\, d\lambda (w_1)
\\[1ex]
=
e^{-\operatorname{Re}(z,w)}
\int _{\cc d} 
\omega (2w_1)
e^{\operatorname{Re}(z+w,w_1)-|w_1|^2}\, d\lambda (w_1)
\\[1ex]
=
e^{-\operatorname{Re}(z,w)+\frac 14|z+w|^2}
\int _{\cc d} 
\omega (2w_1)
e^{-|w_1-(z+w)/2|^2}\, d\lambda (w_1)
\\[1ex]
=
e^{\frac 14|z-w|^2}
\int _{\cc d} 
\omega (2w_1+z+w)
e^{-|w_1|^2}\, d\lambda (w_1)
\\[1ex]
\lesssim
e^{\frac 14|z-w|^2}
\omega (z+w)\int _{\cc d}  e^{2r|w_1|-|w_1|^2}\, d\lambda (w_1)
\asymp
e^{\frac 14|z-w|^2}
\omega (z+w). \qedhere
\end{multline*}
\end{proof}

\par

The anti-Wick operators in Propositions \ref{prop:contAW} and
\ref{prop:contAW2} can also be described as Wick operators
with symbols that have smaller growth bounds than 
$\wideparen \maclA _s(\cc {2d})$ and its dual.
The following result extends Theorem \ref{Thm:WickSymbolAntiWickOpSOmega}
for weights of the form $e^{c |z|^{\frac1s}}$ with $c \in \ro$
from $s \ge 1$ to $s  \ge \frac 12$. 

\par

\begin{thm}\label{Thm:WickSymbolAntiWickOpGS}
Let $s\ge \frac 12$ ($s> \frac 12$), $a_0\in L_{0,A}(\cc {d})$ and
let $a_0^{\aw}$ be given by \eqref{Eq:AntiWickAnalPseudoRel}$'$.
Then the following is true:
\begin{enumerate}
\item  if \eqref{Eq:NonAWGSEst1} holds for
some (every) $r_0 > 0$  then
\begin{equation}\label{Eq:AWGSEst1}
|a_0^{\aw} (z,w)|
\lesssim
e^{\frac 14|z-w|^2-r |z+w|^{\frac 1s}}
\end{equation}
for some (every) $r>0$;

\vrum

\item if \eqref{Eq:NonAWGSEst2} holds for every (some) $r_0>0$ then
\begin{equation}\label{Eq:AWGSEst2}
|a_0^{\aw} (z,w)|
\lesssim
e^{\frac 14|z-w|^2+r |z+w|^{\frac 1s}}
\end{equation}
for every (some) $r>0$. 
\end{enumerate}
\end{thm}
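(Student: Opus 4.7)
The plan is to follow the pattern established in the proofs of Theorems \ref{Thm:WickSymbolAntiWickOpFolland} and \ref{Thm:WickSymbolAntiWickOpSOmega}: start from the integral representation \eqref{Eq:AntiWickAnalPseudoRel}$'$, move absolute values inside, complete the square in the resulting Gaussian exponent, and then separate the factors depending on the integration variable from those depending on $z+w$. First I would compute $-\operatorname{Re}(z-w_1,w-w_1) = -|w_1-(z+w)/2|^2 + |z-w|^2/4$ and change variable $u = w_1-(z+w)/2$, reducing both parts to estimating
\begin{equation*}
|a_0^{\aw}(z,w)| \lesssim e^{\frac 14 |z-w|^2} \int_{\cc d} \left | a_0 \! \left ( u+\tfrac{z+w}{2} \right ) \right | e^{-|u|^2}\, d\lambda(u).
\end{equation*}

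For part (2) I would insert \eqref{Eq:NonAWGSEst2} and apply the elementary inequality $|u+v|^{1/s} \leqs C_s ( |u|^{1/s} + |v|^{1/s} )$, valid with $C_s = 1$ for $s\geqs 1$ and $C_s = 2^{1/s-1}$ for $\tfrac 12 \leqs s < 1$, with $v = (z+w)/2$. This yields
\begin{equation*}
|a_0^{\aw}(z,w)| \lesssim e^{\frac 14 |z-w|^2 + C_s r_0 2^{-1/s}|z+w|^{1/s}} \int_{\cc d} e^{C_s r_0 |u|^{1/s} - |u|^2}\, d\lambda(u).
\end{equation*}
When $s > \tfrac 12$ we have $1/s < 2$, so the integral is finite for every $r_0 > 0$ and setting $r = C_s r_0 2^{-1/s}$ proves the some/some implication. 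For the every/every version with $s = \tfrac 12$, the Peetre route gives a borderline integral, so I would instead complete the square directly in the quadratic $r_0 |u+v|^2 - |u|^2$, which for $r_0 < 1$ produces the factor $e^{r_0 (1-r_0)^{-1} |z+w|^2/4}$; since the hypothesis is assumed for all $r_0 > 0$, picking $r_0$ small enough matches any prescribed $r > 0$. Part (1) is handled analogously via the reverse form $|v|^{1/s} \leqs C_s ( |u+v|^{1/s} + |u|^{1/s} )$, giving $-r_0|u+v|^{1/s} \leqs - r_0 C_s^{-1}|v|^{1/s} + r_0|u|^{1/s}$ and hence
\begin{equation*}
|a_0^{\aw}(z,w)| \lesssim e^{\frac 14 |z-w|^2 - r_0 C_s^{-1} 2^{-1/s}|z+w|^{1/s}} \int_{\cc d} e^{r_0 |u|^{1/s} - |u|^2}\, d\lambda(u),
\end{equation*}
where for $s = \tfrac 12$ in the some/some case the bound on $a_0$ may be weakened by replacing $r_0$ with a smaller positive constant to secure convergence.

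The main obstacle is the delicate interplay at $s = \tfrac 12$, where $1/s = 2$ coincides with the Gaussian decay rate of the projection kernel. Here the Peetre-type reduction is borderline and must be replaced by direct quadratic completion of the square, which accounts for why the every/every alternative in (1) and the some/some alternative in (2) require the strict inequality $s > \tfrac 12$ (so that the integral in $u$ converges unconditionally), whereas the complementary cases survive at $s = \tfrac 12$ only because the freedom to shrink $r_0$ keeps the quadratic form $r_0|u|^2 - |u|^2$ negative definite.
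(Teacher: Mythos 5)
Your proof is correct and follows essentially the same route as the paper: complete the square in the Gaussian kernel to extract the $e^{\frac 14|z-w|^2}$ factor, translate the integration variable, and split $|u + (z+w)/2|^{1/s}$ via power-triangle inequalities, treating $s=\tfrac 12$ separately by a direct quadratic completion in the Gaussian. The only difference is that the paper routes through two intermediate Propositions (\ref{Prop:WickSymbolAntiWickOpGS1}, \ref{Prop:WickSymbolAntiWickOpGS2}) proved with $\ep$-tunable inequalities \eqref{Eq:TriangPowIneq1}--\eqref{Eq:TriangPowIneq3}, thereby tracking the sharp $r$--$r_0$ relationship, whereas your fixed-constant convexity inequality $C_s$ loses some precision in the constants while still yielding the qualitative ``some/every'' conclusions of the theorem.
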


\par

\begin{rem}
Thanks to the parameter $\frac 14$ in the factor $e^{\frac 14|z-w|^2}$
rather than $\frac 12$, the estimates \eqref{Eq:AWGSEst2} are much stronger 
than the estimates \eqref{Eq:BargmannSTFT} with $\sigma = s$. 
Corollary \ref{cor:contBargmannGS} can thus be seen as a
consequence of Theorems \ref{Thm:GevreySymbolsBargmTransfer}
and \ref{Thm:WickSymbolAntiWickOpGS}, and  \cite[Definition~2.4,
and Theorems~4.10 and 4.11]{CaTo}. 
\end{rem}

\par

\begin{rem}\label{Rem:WickSymbolAntiWickOpGS}
The estimates for $a_0^{\aw}$ in Theorem
\ref{Thm:WickSymbolAntiWickOpGS} may seem
weak since the dominating factor $e^{\frac 14|z-w|^2}$
is present in \eqref{Eq:AWGSEst1} and \eqref{Eq:AWGSEst2}
but absent in the original estimates \eqref{Eq:NonAWGSEst1}
and \eqref{Eq:NonAWGSEst2} for $a_0$.

\par

On the other hand, Wick symbols for operators with
continuity involving the spaces $\maclA _s(\cc d)$ and
$\maclA _s'(\cc d)$, as well as  $\maclA _{0,s}(\cc d)$ and
$\maclA _{0,s}'(\cc d)$, usually satisfies conditions of the
form
$$
|a(z,w)| \lesssim e^{\frac 12|z-w|^2 \pm r_1|z+w|^{\frac 12}\pm |z-w|^{\frac 1s}}
$$
in view of \cite[Theorems~2.9 and 2.10]{Teofanov2},
and Theorem \ref{Thm:GevreySymbolsBargmTransfer}. Here
the dominating factor is $e^{\frac 12|z-w|^2}$, which is larger
than the factor $e^{\frac 14|z-w|^2}$ in Theorem
\ref{Thm:WickSymbolAntiWickOpGS}.

\par

This factor has a large impact on functions on $\rr d$ that are transformed back 
by the inverse of the Bargmann transform. 
For instance, if $\ep >0$, then the Bargmann image of any
non-trivial Gelfand-Shilov space and its distribution space
contain
\begin{equation*}
\sets {F\in A(\cc d)}{|F(z)|\lesssim e^{(\frac 12-\ep )|z|^2}}
\end{equation*}
and are contained in 
\begin{equation*}
\sets {F\in A(\cc d)}{|F(z)|\lesssim e^{(\frac 12+\ep )|z|^2}}. 
\end{equation*}
The same holds true for the Bargmann images of
$\mascS (\rr d)$ and $\mascS '(\rr d)$.
\end{rem}

\par

Theorem \ref{Thm:WickSymbolAntiWickOpGS} is
a straight-forward consequence of the following two
propositions, which give more details on the relationships between
$r$ and $r_0$ in
\eqref{Eq:NonAWGSEst1}, \eqref{Eq:NonAWGSEst2},  \eqref{Eq:AWGSEst1}
and \eqref{Eq:AWGSEst2}.

\par

\begin{prop}\label{Prop:WickSymbolAntiWickOpGS1}
Let $s\ge \frac 12$ and let $r_0,r\in (0,\infty )$ be
such that
\begin{alignat}{5}
r_0&\in (0,\infty ) &
\quad &\text{and} & \quad
r&<\frac {r_0}{4(1+r_0)}, &
\quad
\quad &\text{when} & \quad
s &={\textstyle{\frac 12}},
\label{Eq:CondAntiWickOpGS1}
\intertext{and}
r_0&\in (0,\infty ) &
\quad &\text{and} & \quad
r &\le 2^{-\frac 1s}r_0, &
\quad
\quad &\text{when} & \quad
s&\in (\textstyle{\frac 12},\infty ),
\label{Eq:CondAntiWickOpGS2}
\end{alignat}
with strict inequality in \eqref{Eq:CondAntiWickOpGS2}
when $s<1$.
If $a_0\in L^\infty _{loc}(\cc {d})$ satisfies 
\eqref{Eq:NonAWGSEst1} and $a_0^{\aw} \in \wideparen A(\cc {2d})$
is given by \eqref{Eq:AntiWickAnalPseudoRel}$'$, then
\eqref{Eq:AWGSEst1} holds.
\end{prop}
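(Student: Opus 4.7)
The plan is to work directly with the integral representation \eqref{Eq:AntiWickAnalPseudoRel}$'$ of $a_0^{\aw}$ and insert the pointwise bound \eqref{Eq:NonAWGSEst1}. First I would compute
$$|e^{-(z-w_1,w-w_1)}| = e^{-\repart(z,w)+\repart(z+w,w_1)-|w_1|^2}$$
and complete the square in $w_1$; using the identity $-\repart(z,w)+\tfrac14|z+w|^2=\tfrac14|z-w|^2$, this extracts the factor $e^{\frac14|z-w|^2}$. After the translation $w_2=w_1-(z+w)/2$, the problem is reduced to establishing
$$I(\zeta) := \int_{\cc d} e^{-r_0|w_2+\zeta|^{1/s}-|w_2|^2}\,d\lambda(w_2)
\;\lesssim\; e^{-2^{1/s}r|\zeta|^{1/s}},
\qquad \zeta=\tfrac{z+w}{2},$$
with the prescribed $r$, so that $2^{1/s}r|\zeta|^{1/s}=r|z+w|^{1/s}$.

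For $s=\tfrac12$ one has $|\cdot|^{1/s}=|\cdot|^2$, so I would expand $|w_2+\zeta|^2$ and complete the square a second time in $w_2$: the identity
$$-|w_2|^2-r_0|w_2+\zeta|^2 = -(1+r_0)\Bigl|w_2+\tfrac{r_0\zeta}{1+r_0}\Bigr|^2 - \tfrac{r_0}{1+r_0}|\zeta|^2$$
yields $I(\zeta)\asymp e^{-r_0|\zeta|^2/(1+r_0)}= e^{-r_0|z+w|^2/(4(1+r_0))}$, which is compatible with every $r$ in \eqref{Eq:CondAntiWickOpGS1}. For $s\ge 1$, so that $1/s\le 1$, the map $x\mapsto x^{1/s}$ is subadditive, whence $|w_2+\zeta|^{1/s}\ge|\zeta|^{1/s}-|w_2|^{1/s}$ and
$$I(\zeta)\le e^{-r_0|\zeta|^{1/s}}\int_{\cc d}e^{r_0|w_2|^{1/s}-|w_2|^2}\,d\lambda(w_2),$$
where the last integral converges because $1/s<2$. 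This delivers \eqref{Eq:AWGSEst1} with the sharp rate $r=2^{-1/s}r_0$.

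The delicate case, and the main obstacle, is $\tfrac12<s<1$, where $1<1/s<2$ and subadditivity of $x\mapsto x^{1/s}$ fails. I would remedy this by a near-subadditive inequality obtained from Jensen: for $p>1$ and $t\in(0,1)$, convexity of $x\mapsto x^p$ gives $(x+y)^p\le t^{1-p}x^p+(1-t)^{1-p}y^p$. Applying this to $x=|w_2|$, $y=|w_2+\zeta|$, using $|w_2|+|w_2+\zeta|\ge|\zeta|$, and choosing $t$ so that $(1-t)^{p-1}=1-\varepsilon$ for a small $\varepsilon>0$, yields
$$|w_2+\zeta|^{1/s}\ge(1-\varepsilon)|\zeta|^{1/s}-C_\varepsilon|w_2|^{1/s}$$
for some $C_\varepsilon>0$. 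Repeating the previous argument then gives $I(\zeta)\lesssim e^{-(1-\varepsilon)r_0|\zeta|^{1/s}}$; the enlarged constant $C_\varepsilon$ is harmless because $1/s<2$ keeps the associated Gaussian integrable no matter how large $C_\varepsilon$ is, and the freedom to let $\varepsilon\to 0^+$ precisely accounts for the strict inequality in \eqref{Eq:CondAntiWickOpGS2}.
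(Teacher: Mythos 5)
Your argument mirrors the paper's proof: both complete the square in $w_1$ in \eqref{Eq:AntiWickAnalPseudoRel}$'$ to extract the factor $e^{\frac14|z-w|^2}$, shift variables, and then control the remaining Gaussian-weighted integral by a near-subadditivity estimate for $|\cdot|^{1/s}$ (the paper's inequalities \eqref{Eq:TriangPowIneq1} and \eqref{Eq:TriangPowIneq3}, which you re-derive from convexity of $x\mapsto x^p$). Your treatment of $s=\frac12$ by a second exact completion of the square is slightly cleaner and even captures the endpoint $r=r_0/(4(1+r_0))$, whereas the paper reaches only the strict inequality via the same $\varepsilon$-argument it uses for $\frac12<s<1$.
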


\par

\begin{prop}\label{Prop:WickSymbolAntiWickOpGS2}
Let $s\ge \frac 12$ and $r_0,r\in (0,\infty )$ be
such that
\begin{alignat}{5}
r_0&\in (0,1) &
\quad &\text{and} & \quad
r&>\frac {r_0}{4(1-r_0)}, &
\quad
\quad &\text{when} & \quad
s &={\textstyle{\frac 12}},
\tag*{(\ref{Eq:CondAntiWickOpGS1})$'$}
\intertext{and}
r_0&\in (0,\infty ) &
\quad &\text{and} & \quad
r &\ge 2^{-\frac 1s}r_0, &
\quad
\quad &\text{when} & \quad
s&\in (\textstyle{\frac 12},\infty ),
\tag*{(\ref{Eq:CondAntiWickOpGS2})$'$}
\end{alignat}
with strict inequality in \eqref{Eq:CondAntiWickOpGS2}$'$
when $s<1$.
If $a_0\in L^\infty _{loc}(\cc {d})$ satisfies 
\eqref{Eq:NonAWGSEst2} and $a_0^{\aw} \in \wideparen A(\cc {2d})$
is given by \eqref{Eq:AntiWickAnalPseudoRel}$'$, then
\eqref{Eq:AWGSEst2} holds.
\end{prop}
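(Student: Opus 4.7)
The plan is to follow the same blueprint as the proofs of Theorem~\ref{Thm:WickSymbolAntiWickOpFolland} and Theorem~\ref{Thm:WickSymbolAntiWickOpSOmega}: substitute the bound \eqref{Eq:NonAWGSEst2} into the integral formula \eqref{Eq:AntiWickAnalPseudoRel}$'$ for $a_0^{\aw}$, complete the square in the Gaussian in the $w_1$ variable to extract the factor $e^{\frac 14|z-w|^2}$, and then bound the remaining integral using subadditivity-type inequalities for $|\cdot|^{1/s}$.

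More precisely, starting from
\begin{equation*}
|a_0^{\aw}(z,w)|\lesssim \int_{\cc d} e^{r_0|w_1|^{1/s}}\,e^{-\repart(z-w_1,w-w_1)}\,d\lambda(w_1),
\end{equation*}
I will use the identity
\begin{equation*}
-\repart(z-w_1,w-w_1)=-\repart(z,w)+\repart(z+w,w_1)-|w_1|^2
\end{equation*}
together with $|z+w|^2-|z-w|^2=4\repart(z,w)$ to rewrite the Gaussian as $-|w_1-(z+w)/2|^2+|z+w|^2/4$. After substituting $w_1=u+(z+w)/2$, this reduces the claim to bounding
\begin{equation*}
I(z,w)=e^{\frac 14|z-w|^2}\int_{\cc d}e^{r_0|u+(z+w)/2|^{1/s}}e^{-|u|^2}\,d\lambda(u).
\end{equation*}

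For $s\in[1,\infty)$, $1/s\le 1$ is subadditive, so $|u+(z+w)/2|^{1/s}\le|u|^{1/s}+2^{-1/s}|z+w|^{1/s}$, and the $u$-integral is finite since $|u|^{1/s}$ is dominated by $|u|^2$; this yields \eqref{Eq:AWGSEst2} with the non-strict condition $r\ge 2^{-1/s}r_0$. For $s\in(\frac 12,1)$, $1/s>1$, and I will use the auxiliary inequality
\begin{equation*}
(a+b)^{1/s}\le (1+\ep)\,b^{1/s}+C_\ep\,a^{1/s},\qquad a,b\ge 0,
\end{equation*}
valid for any $\ep>0$ with a constant $C_\ep$ depending only on $\ep$ and $s$ (this is immediate after setting $a=\lambda b$). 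Choosing $\ep>0$ so small that $r_0(1+\ep)2^{-1/s}\le r$, which is possible since $r>2^{-1/s}r_0$, again absorbs the $u$-dependent factor into a finite integral and yields \eqref{Eq:AWGSEst2}.

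The main issue is the endpoint case $s=\frac 12$, where $1/s=2$ and no subadditive splitting works. Here I will expand $|u+(z+w)/2|^2$ directly, combine with $-|u|^2$ to get $-(1-r_0)|u|^2+r_0\repart(u,z+w)+r_0|z+w|^2/4$, which is integrable in $u$ precisely because $r_0<1$, and complete the square in $u$ to evaluate the Gaussian integral. This produces a factor $e^{c(r_0)|z+w|^2}$ with
\begin{equation*}
c(r_0)=\frac{r_0}{4}+\frac{r_0^{\,2}}{4(1-r_0)}=\frac{r_0}{4(1-r_0)},
\end{equation*}
so \eqref{Eq:AWGSEst2} follows as soon as $r>r_0/(4(1-r_0))$, which is exactly \eqref{Eq:CondAntiWickOpGS1}$'$. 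The arithmetic of this final step, together with verifying that the $s<1$ subadditive-type inequality produces sharp enough constants to match the stated hypothesis, is the only delicate point; the rest is a straightforward rerun of the argument in Theorem~\ref{Thm:WickSymbolAntiWickOpSOmega}.
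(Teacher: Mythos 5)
Your proposal is correct, and for $s>\frac 12$ it follows essentially the same route as the paper: substitute \eqref{Eq:NonAWGSEst2} into \eqref{Eq:AntiWickAnalPseudoRel}$'$, complete the square to pull out $e^{\frac 14|z-w|^2}$, translate in the integration variable, and split $|u+(z+w)/2|^{1/s}$ with subadditivity (for $s\ge 1$) or a standard $\varepsilon$-weighted convexity inequality (for $\frac 12<s<1$); the paper's \eqref{Eq:TriangPowIneq1}--\eqref{Eq:TriangPowIneq2} with $\ep_1\ep_2=1$ play exactly the role of your $(a+b)^{1/s}\le(1+\ep)b^{1/s}+C_\ep a^{1/s}$, so the constants match.

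Where you genuinely diverge is the endpoint $s=\frac 12$. The paper reuses the same $\ep$-parametrized inequality, obtaining the bound with $r=\frac{r_0(1+\ep_1)}{4}$ for any admissible pair $(\ep_1,\ep_2)$ with $\ep_1\ep_2=1$ and $\ep_2<\frac{1-r_0}{r_0}$, and then reads off that any $r>\frac{r_0}{4(1-r_0)}$ is attainable by pushing $\ep_2\nearrow\frac{1-r_0}{r_0}$. You instead expand $r_0|u+(z+w)/2|^2-|u|^2$ directly, observe that the quadratic form in $u$ has negative leading coefficient $-(1-r_0)$ precisely because $r_0<1$, and complete the square in $u$ once and for all; this evaluates the Gaussian integral in closed form and yields the exponent $c(r_0)=\frac{r_0}{4(1-r_0)}$ exactly, so \eqref{Eq:AWGSEst2} in fact holds with $r=\frac{r_0}{4(1-r_0)}$ (hence a fortiori for all $r>\frac{r_0}{4(1-r_0)}$ as claimed). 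Your route is more direct, avoids the limiting argument, and incidentally shows that the strict inequality in (\ref{Eq:CondAntiWickOpGS1})$'$ is not needed for the conclusion; the paper's version has the advantage of uniform notation across all $s$. Both are sound.
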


\par

For the proofs of Propositions \ref{Prop:WickSymbolAntiWickOpGS1}
and \ref{Prop:WickSymbolAntiWickOpGS2}
we use the inequalities
\begin{alignat}{3}
|z|^\theta -|w|^\theta
&\le 
|z+w|^\theta
\le
|z|^\theta +|w|^\theta ,&
\qquad
\theta &\in (0,1], &\ z,w &\in \cc d
\label{Eq:TriangPowIneq1}
\\[1ex]
|z+w|^\theta
&\le
(1+\ep )|z|^\theta +(1+\ep ^{-1})|w|^\theta ,&
\qquad
\theta &\in [1,2], &\ z,w &\in \cc d,
\label{Eq:TriangPowIneq2}
\intertext{and}
|z+w|^\theta
&\ge
(1-\ep )|z|^\theta +(1-\ep ^{-1})|w|^\theta ,&
\qquad
\theta &\in [1,2], &\ z,w &\in \cc d,
\label{Eq:TriangPowIneq3}
\end{alignat}
for every $\ep >0$.

\par

\begin{proof}[Proof of Proposition \ref{Prop:WickSymbolAntiWickOpGS1}]
Suppose that $a_0$ satisfies
\eqref{Eq:NonAWGSEst1} for some $r_0 >0$.
First we consider the case $s>\frac 12$.
If $s < 1$ let $\ep _1 >  0$ and $\ep _2 = \ep_1^{-1}$, 
and if $s \ge 1$ let $\ep _1 = 0$ and $\ep _2 = 2$, 
and let $c=2^{-\frac 1s}$.
Then \eqref{Eq:AntiWickAnalPseudoRel}$'$,
\eqref{Eq:TriangPowIneq1}
and \eqref{Eq:TriangPowIneq3} give
\begin{multline}\label{Eq:WickAWickComp1}
|a_0^{\aw}(z,w)|
\lesssim
\int _{\cc d} e^{-r_0 |w_1|^{\frac 1s}} 
e^{-\operatorname{Re}(z-w_1,w-w_1)}\, d\lambda (w_1)
\\[1ex]
=
e^{\frac 14|z+w|^2-\operatorname{Re}(z,w)}
\int _{\cc d} e^{-r_0 |w_1|^{\frac 1s} -|w_1-(z+w)/2|^2}
\, d\lambda (w_1)
\\[1ex]
=
e^{\frac 14|z-w|^2}
\int _{\cc d} e^{-r_0 |w_1+(z+w)/2|^{\frac 1s} -|w_1|^2}
\, d\lambda (w_1)
\\[1ex]
\le
e^{\frac 14|z-w|^2}e^{-cr_0(1-\ep _1) |z+w|^{\frac 1s}}
\int _{\cc d} e^{-r_0(1-\ep _2) |w_1|^{\frac 1s} -|w_1|^2}
\, d\lambda (w_1)
\\[1ex]
\asymp
e^{\frac 14|z-w|^2}e^{-cr_0(1-\ep _1) |z+w|^{\frac 1s}}.
\end{multline}
If $s\ge 1$, then $\ep _1=0$ and $\ep _2=2$,
and the result follows from \eqref{Eq:WickAWickComp1}.
If instead $s<1$, then
the result follows by choosing $\ep _1>0$ small enough,
and we have proved the result in the case $s>\frac 12$.

\par

Next suppose that $s=\frac 12$. 
For $\ep _1 >  0$ and $\ep _2 = \ep_1^{-1}$
\eqref{Eq:WickAWickComp1} gives
\begin{equation*}
|a_0^{\aw}(z,w)|
\lesssim
e^{\frac 14|z-w|^2}
e^{-\frac 14r_0(1-\ep _1) |z+w|^2}
\int _{\cc d} e^{-(r_0(1-\ep _2)+1) |w_1|^2}
\, d\lambda (w_1).
\end{equation*}
For any $\ep _2<\frac {1+r_0}{r_0}$ it follows that the integral 
converges, and
$$
1-\ep _1=1-\ep _2^{-1}<(1+r_0)^{-1}. 
$$
By the assumptions there is $\delta > 0$ such that 
\begin{equation*}
r = \frac{r_0(1-\delta)}{4 (1+r_0)}. 
\end{equation*}
Since
$$
1-\ep _1 \nearrow (1+r_0)^{-1}
\quad \text{as}\quad
\ep _2 \nearrow \frac {1+r_0}{r_0}
$$
we may pick $0 < \ep _2<\frac {1+r_0}{r_0}$ such that 
\begin{equation*}
\frac{1-\delta}{1+r_0} \le 1-\ep _1 
\end{equation*}
and the result follows in the case $s=\frac 12$. 
\end{proof}

\par

\begin{proof}[Proof of Proposition \ref{Prop:WickSymbolAntiWickOpGS2}]
First we consider the case when
$s>\frac 12$.
Suppose that $a_0$ satisfies \eqref{Eq:NonAWGSEst2}
for some $r_0 >0$, let $\ep _1,\ep _2\ge 0$ be
such that $\ep _1=\ep _2=0$ when $s\ge 1$ and
$\ep_1\ep _2=1$ when $s<1$, and let $c=2^{-\frac 1s}$. Then
\eqref{Eq:AntiWickAnalPseudoRel}$'$, \eqref{Eq:TriangPowIneq1}
and \eqref{Eq:TriangPowIneq2} give
\begin{multline}\label{Eq:WickAWickComp3}
|a_0^{\aw}(z,w)|
\lesssim
\int _{\cc d} e^{r_0 |w_1|^{\frac 1s}} 
e^{-\operatorname{Re}(z-w_1,w-w_1)}\, d\lambda (w_1)
\\[1ex]
=
e^{\frac 14|z+w|^2-\operatorname{Re}(z,w)}
\int _{\cc d} e^{r_0 |w_1|^{\frac 1s} -|w_1-(z+w)/2|^2}
\, d\lambda (w_1)
\\[1ex]
=
e^{\frac 14|z-w|^2}
\int _{\cc d} e^{r_0 |w_1+(z+w)/2|^{\frac 1s} -|w_1|^2}
\, d\lambda (w_1)
\\[1ex]
\le
e^{\frac 14|z-w|^2}e^{cr_0(1+\ep _1) |z+w|^{\frac 1s}}
\int _{\cc d} e^{r_0(1+\ep _2) |w_1|^{\frac 1s} -|w_1|^2}
\, d\lambda (w_1)
\\[1ex]
\asymp
e^{\frac 14|z-w|^2}e^{cr_0(1+\ep _1) |z+w|^{\frac 1s}}.
\end{multline}
If $s\ge 1$, then $\ep _1=\ep _2=0$, and the result follows
from \eqref{Eq:WickAWickComp3}. If instead $s<1$, then
the result follows by choosing $\ep _1>0$ small enough,
and the result follows in the case $s>\frac 12$.

\par

Next suppose that $s=\frac 12$. Then
\eqref{Eq:WickAWickComp3} gives
\begin{equation*}
|a_0^{\aw}(z,w)|
\lesssim
e^{\frac 14|z-w|^2}
e^{\frac 14r_0(1+\ep _1) |z+w|^2}
\int _{\cc d} e^{r_0(1+\ep _2) |w_1|^2 -|w_1|^2}
\, d\lambda (w_1).
\end{equation*}
For any $\ep _2<\frac {1-r_0}{r_0}$ the integral converges, and 
$$
1+\ep _1=1+\ep _2^{-1}>(1-r_0)^{-1}. 
$$
Since
$$
1+\ep _1 \searrow (1-r_0)^{-1}
\quad \text{as}\quad
\ep _2 \nearrow \frac {1-r_0}{r_0},
$$
the result follows in the case $s=\frac 12$
by letting $r=\frac {r_0(1+\ep _1)}4$. 
\end{proof}

\par

\section{A lower bound for Wick operators}\label{sec4}

\par

In this section we apply the asymptotic expansions in the previous section
for Shubin-Wick operators to deduce a sharp G{\aa}rding inequality.

\par

First we have the following result. We put
$\wideparen \maclA _{\Sh ,\rho}(\cc {2d})
=
\wideparen \maclA _{\Sh ,\rho}^{(\omega )}(\cc {2d})$ when $\omega =1$.

\par

\begin{prop}\label{Prop:ContApSpaces}
Let $\omega \in \mascP (\cc d)$, $p\in [1,\infty]$,
$a\in \wideparen \maclA _{\Sh ,0}(\cc {2d})$
and $a_0\in L^\infty (\cc d)$. Then $\op _{\mathfrak V}(a)$ and
$\op _{\mathfrak V}^{\aw}(a_0)$ are both continuous on
$A^p_{(\omega )}(\cc d)$.
\end{prop}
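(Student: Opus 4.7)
The plan is to reduce both continuity statements to convolution estimates on $\cc d \simeq \rr{2d}$, after extracting the Gaussian weight built into the $A^p_{(\omega)}$ norm. The crucial algebraic identity I will use repeatedly is
\begin{equation*}
\operatorname{Re}(z,w) - \tfrac 12|z|^2 - |w|^2 = -\tfrac 12|z-w|^2 - \tfrac 12|w|^2, \qquad z,w\in \cc d,
\end{equation*}
which turns the Bargmann kernel $e^{(z,w)}$ into a factor of modulus $e^{-|z-w|^2/2 - |w|^2/2}$ once the weight $e^{-|z|^2/2}$ is extracted from the output.

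For the Wick operator, I write
\begin{equation*}
|\op _{\mathfrak V}(a)F(z)|\, e^{-|z|^2/2}\omega _0(z) \le \pi ^{-d}\omega _0(z) \int _{\cc d} |a(z,w)|\, |F(w)|\, e^{\operatorname{Re}(z,w)-|z|^2/2-|w|^2}\, d\lambda (w)
\end{equation*}
and then use (i) the bound $|a(z,w)|\lesssim e^{|z-w|^2/2}\eabs{z-w}^{-N}$ for all $N\ge 0$, which follows from $a\in \wideparen \maclA _{\Sh ,0}(\cc {2d})$ with $\omega =1$ in \eqref{Eq:ShubinWickEstimates}, and (ii) the moderation inequality $\omega _0(z)\lesssim \omega _0(w)\eabs{z-w}^{s}$ for some $s\ge 0$, which holds because $\omega \in \mascP (\cc d)$ is $v$-moderate for a polynomially bounded $v$. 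The exponential $e^{|z-w|^2/2}$ exactly cancels the Gaussian $e^{-|z-w|^2/2}$, yielding
\begin{equation*}
|\op _{\mathfrak V}(a)F(z)|\, e^{-|z|^2/2}\omega _0(z) \lesssim \int _{\cc d} \eabs{z-w}^{-N+s}\, |F(w)|\, e^{-|w|^2/2}\omega _0(w)\, d\lambda (w).
\end{equation*}
Choosing $N$ so large that $\eabs{\cdot}^{-N+s}\in L^1(\cc d)$ (i.e.\ $N>s+2d$), Young's convolution inequality gives the required bound $\nm {\op _{\mathfrak V}(a)F}{A^p_{(\omega )}}\lesssim \nm F{A^p_{(\omega )}}$.

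For the anti-Wick operator the argument is even simpler: the kernel $|a_0(w)|\le \nm {a_0}{L^\infty}$ carries no extra growth, so after extracting $e^{-|z|^2/2}$ the same identity leaves the decaying Gaussian $e^{-|z-w|^2/2}$ intact. Together with $\omega _0(z)\lesssim \omega _0(w)\eabs{z-w}^s$ this gives
\begin{equation*}
|\op _{\mathfrak V}^{\aw}(a_0)F(z)|\, e^{-|z|^2/2}\omega _0(z) \lesssim \int _{\cc d} \eabs{z-w}^{s}e^{-|z-w|^2/2}\, |F(w)|\, e^{-|w|^2/2}\omega _0(w)\, d\lambda (w),
\end{equation*}
and since $\eabs{\cdot}^s e^{-|\cdot|^2/2}\in L^1(\cc d)$, Young's inequality again yields continuity on $A^p_{(\omega )}(\cc d)$. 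Analyticity of the image is automatic from the existing discussion in Section \ref{subsec1.4}: the integrand is dominated by an $L^1$ function locally uniformly in $z$, so differentiation under the integral sign is permissible.

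There is no serious obstacle; the content is essentially that the Shubin Wick condition with $\rho =0$ matches the Bargmann kernel exactly, leaving only polynomial decay against which Young's inequality is applied. The only small point of care is the polynomial-moderate property of $\omega _0$, which follows directly because $\omega \in \mascP (\cc d)$ (not just $\mascP _E$) forces the controlling weight $v$ to be polynomially bounded, so the factor $\eabs{z-w}^s$ can be absorbed into $\eabs{z-w}^{-N+s}$ (Wick case) or into the Gaussian (anti-Wick case).
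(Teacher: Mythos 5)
Your proof is correct and follows essentially the same route as the paper's: extract the Gaussian weight, invoke the $\rho=0$ Wick-symbol bound $|a(z,w)|\lesssim e^{\frac 12|z-w|^2}\eabs{z-w}^{-N}$, use polynomial moderation of $\omega_0$, observe that the exponentials collapse to $e^{-\frac 12|w|^2}$, and finish with Young's convolution inequality (with $N > s+2d$ to ensure integrability of the kernel). The only cosmetic difference is that the paper names the auxiliary functions $G$, $H_1$, $H_2$ and writes the convolution explicitly as $(\eabs\cdo^{N_0-N}*G)$, but the computation is identical.
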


\par

The claimed continuity of $\op _{\mathfrak V}(a)$ 
is a straight-forward consequence of
\cite[Theorem 3.3]{Teofanov2}, in combination with
Proposition \ref{prop:symbchar1} and the relationship
$ K(z,w)=a(z,w) e^{(z,w)} $ between the kernel and symbol of a
Wick operator (cf. \eqref{Eq:AnalPseudoIntro}). In order to be
self-contained we include an alternative and shorter proof.

\par

\begin{proof}
Let $F\in A^p_{(\omega )}(\cc d)$, $G(z)
=
e^{-\frac 12|z|^2}|F(z)\omega (\sqrt 2\overline z)|$,
\begin{align*}
H_1(z) & = e^{-\frac 12|z|^2}|\op _{\mathfrak V}(a)F(z)\omega (\sqrt 2\overline z)|
\quad \text{and}\quad \\
H_2(z) & = e^{-\frac 12|z|^2}|\op _{\mathfrak V}^{\aw}(a_0)F(z)\omega (\sqrt 2\overline z)|.
\end{align*}
We have
$$
\omega (\sqrt 2\overline z) \lesssim \omega (\sqrt 2\overline w)\eabs {z-w}^{N_0}
$$
for some $N_0\ge 0$. By Theorem \ref{Thm:ShubinAnalChar} and \eqref{eq:characShubinBargmann0} we get
\begin{multline*}
H_1(z) \lesssim e^{-\frac 12|z|^2}\int _{\cc d} e^{\frac 12|z-w|^2}
\eabs {z-w}^{-N}|F(w)\omega (\sqrt 2\overline z)|
e^{\repart (z,w)-|w|^2}\, d\lambda (w)
\\[1ex]
= (\eabs \cdo ^{N_0-N}*G)(z),
\end{multline*}
for every $N\ge 0$. By choosing $N>2d+N_0$ and using Young's inequality we get
$\nm {H_1}{L^p}\lesssim \nm {G}{L^p}$ which means
$\nm {\op _{\mathfrak V}(a)F}{A^p_{(\omega )}}\lesssim \nm F{A^p_{(\omega )}}$,
and the asserted continuity for $\op _{\mathfrak V}(a)$ follows.

\par

In the same way we get
\begin{multline*}
H_2(z) \lesssim \nm {a_0} {L^\infty} e^{-\frac 12|z|^2}
\int _{\cc d} |F(w)\omega (\sqrt 2\overline w)|\eabs {z-w}^{N_0}
e^{\repart (z,w)-|w|^2}\, d\lambda (w)
\\[1ex]
\asymp ((\eabs \cdo ^{N_0}e^{-\frac 12|\cdo |^2})*G)(z),
\end{multline*}
and another application of Young's inequality shows that 
$\nm {H_2}{L^p_{(\omega )}}\lesssim \nm {G}{L^p_{(\omega )}}$ that is
$\nm {\op _{\mathfrak V}^{\aw}(a_0)F}{A^p_{(\omega )}}\lesssim \nm F{A^p_{(\omega )}}$.
\end{proof}

\par

We have finally a version of the sharp G{\aa}rding inequality.

\par

\begin{thm}\label{Thm:ShGarding}
Let $\rho >0$, $\omega (z) = \eabs z ^{2\rho}$ and let
$a\in \wideparen \maclA _{\Sh ,\rho}^{(\omega )}(\cc {2d})$ be such that
$a(w,w)\ge -C_0$ for all $w \in \cc d$, for some constant $C_0\ge 0$. Then
\begin{alignat}{2}
\repart \big ( (\op _{\mathfrak V}(a)F,F)_{A^2} \big )
&\ge
-C\nm F{A^2}^2, &
\qquad
F &\in \maclA _{\mascS}(\cc d)
\label{Eq:ShGarding1}
\intertext{and}
\big | \operatorname{Im} \big ( (\op _{\mathfrak V}(a)F,F)_{A^2} \big ) \big |
&\le
C\nm F{A^2}^2, &
\qquad
F &\in \maclA _{\mascS}(\cc d)
\label{Eq:ShGarding2}
\end{alignat}
for some constant $C\ge 0$.
\end{thm}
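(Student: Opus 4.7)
The strategy is to apply the expansion \eqref{Eq:WickToAntiWick}$'$ from Remark \ref{Rem:WickToAntiWick2} with $N=0$, giving
\[
\op _{\mathfrak V}(a)=\op _{\mathfrak V}^{\aw}(a_0)-\sum _{|\alpha |=1}\op _{\mathfrak V}(c_{1,\alpha}),
\]
where $a_0(w)=a(w,w)$. The plan is that the leading anti-Wick term carries all sign information, while the remaining sum will be absorbed into a bounded perturbation whose size defines $C$.

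To handle the leading term, I would first interchange orders of integration and use the reproducing formula \eqref{eq:projection}, which yields $\overline{F(w)}=\int_{\cc d}\overline{F(z)}e^{(z,w)}\,d\mu(z)$, to obtain the key identity
\[
(\op _{\mathfrak V}^{\aw}(a_0)F,F)_{A^2}=\int _{\cc d}a_0(w)|F(w)|^2\,d\mu (w).
\]
Since the hypothesis $a(w,w)\ge -C_0$ forces $a_0$ to be real-valued and bounded below by $-C_0$, this sesquilinear form is automatically real and bounded below by $-C_0\nm F{A^2}^2$. This single step gives both the sign in \eqref{Eq:ShGarding1} and the vanishing imaginary part of the leading contribution to \eqref{Eq:ShGarding2}.

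For the remainder, Proposition \ref{Prop:ShubinWickExpEst} applied with $|\alpha|=1$ and $\omega(z)=\eabs z^{2\rho}$ gives
\[
|\partial _z^\beta \overline\partial _w^\gamma c_{1,\alpha}(z,w)|\lesssim e^{\frac 12|z-w|^2}\eabs z^{2\rho}\eabs {z+w}^{-\rho (2+|\beta +\gamma |)}\eabs {z-w}^{-N}.
\]
Using Peetre's inequality $\eabs z\lesssim \eabs{z+w}\eabs{z-w}$ to trade $\eabs z^{2\rho}\eabs{z+w}^{-2\rho}$ against $\eabs{z-w}^{2\rho}$ places $c_{1,\alpha}$ in $\wideparen{\maclA }_{\Sh ,\rho}(\cc {2d})\subseteq \wideparen{\maclA }_{\Sh ,0}(\cc {2d})$, and Proposition \ref{Prop:ContApSpaces} then yields boundedness of $\op _{\mathfrak V}(c_{1,\alpha})$ on $A^2(\cc d)$. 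Cauchy--Schwarz supplies
\[
\left|\sum _{|\alpha |=1}(\op _{\mathfrak V}(c_{1,\alpha})F,F)_{A^2}\right|\le C_1\nm F{A^2}^2
\]
for some $C_1\ge 0$, so taking real and imaginary parts of the decomposition yields \eqref{Eq:ShGarding1} with $C=C_0+C_1$ and \eqref{Eq:ShGarding2} with $C=C_1$.

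The main obstacle is the absorption of the Shubin weight $\omega(z)=\eabs z^{2\rho}$ by the improved decay $\eabs{z+w}^{-2\rho}$ supplied by the expansion; this is precisely why the hypothesis requires $\rho>0$ together with $\omega=\eabs z^{2\rho}$, so that the effective weight on $c_{1,\alpha}$ collapses to $1$ and the remainder becomes a Wick symbol whose associated operator is bounded on $A^2(\cc d)$.
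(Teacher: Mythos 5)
Your proposal is correct and follows essentially the same route as the paper's proof: split $\op_{\mathfrak V}(a)$ via the $N=0$ case of the expansion into the anti-Wick piece $\op^{\aw}_{\mathfrak V}(a_0)$ with $a_0(w)=a(w,w)$ plus a remainder in $\wideparen{\maclA}_{\Sh,\rho}(\cc{2d})\subseteq \wideparen{\maclA}_{\Sh,0}(\cc{2d})$, use $\Pi_A F = F$ to get the lower bound and reality of $(\op^{\aw}_{\mathfrak V}(a_0)F,F)_{A^2}$, and invoke Proposition \ref{Prop:ContApSpaces} for $A^2$-boundedness of the remainder. The only difference is cosmetic: you spell out the reproducing-kernel computation and the Peetre-inequality absorption (the latter is already recorded in Remark \ref{Rem:UniquenessIdentRemainderTerm}), whereas the paper cites Proposition \ref{Prop:ShubinWickExpEst} directly.
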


\par

\begin{proof}
Let $b_0(w)=a(w,w)$. Then
$\op _{\mathfrak V}(a) = \op _{\mathfrak V}^{\aw}(b_0)+\op _{\mathfrak V}(a_1)$
for some $a_1\in \wideparen \maclA _{\Sh ,\rho}(\cc {2d})
\subseteq \wideparen \maclA _{\Sh ,0}(\cc {2d})$, in view of Proposition
\ref{Prop:ShubinWickExpEst}. 
Since $\Pi_A F = F$ for $F \in A^2(\cc d)$ (cf. \eqref{eq:projection}),
the assumption $b_0\ge - C_0$ implies
$(\op _{\mathfrak V}^{\aw}(b_0)F,F)_{A^2}\ge - C_0
\| F \|_{A^2}^2 $ for every $F\in \maclA _{\mascS}(\cc d)$.
The operator $\op _{\mathfrak V}(a_1)$ is continuous on $A^2(\cc d)$
in view of Proposition \ref{Prop:ContApSpaces}. A combination of these facts
gives the result.
\end{proof}

\par

\section{Ellipticity and hypoellipticity for Shubin and
Wick operators}\label{sec5}

\par

In this section we show that the Bargmann assignment 
$\mathsf S_{\mathfrak V}$ maps the sets of hypoelliptic symbols and
weakly elliptic symbols
in the Shubin class $\Sh _\rho ^{(\omega )}(\rr {2d})$
bijectively into the sets of hypoelliptic symbols and 
weakly elliptic Wick symbols
in $\wideparen \maclA _{\Sh ,\rho}^{(\omega )}(\cc {2d})$, respectively.
Then we explain some consequences for polynomial symbols.

\par

\subsection{Transition of weakly elliptic symbols}\label{subsec4.1}

For symbols in
$\wideparen \maclA _{\Sh ,\rho} ^{(\omega )}(\cc {2d})$
we define ellipticity and weak ellipticity as follows.

\par

\begin{defn}\label{Def:WickShubinEllipticity}
Let $\rho >0$, $\omega \in \mascP _{\Sh ,\rho}(\cc d)$ and 
$a\in \wideparen \maclA _{\Sh ,\rho} ^{(\omega)}(\cc {2d})$.
Then $a$ is called \emph{weakly elliptic}
of order $\rho _0\ge 0$, or \emph{$\rho _0$-weakly elliptic}, if for some $R > 0$
$$
|a(z,z)|\gtrsim \eabs z^{-\rho _0}\omega (\sqrt 2\overline z), \quad |z| \ge R.
$$
If $a$ is weakly elliptic of order $0$ then $a$ is called \emph{elliptic}.
\end{defn}

\par


\begin{thm}\label{Thm:ElliptEquiv}
Let $\omega \in \mascP (\rr {2d})\simeq \mascP (\cc d)$,
$\rho >0$ and $\fka \in \Sh _\rho ^{(\omega )}(\rr {2d})$.
Then the following is true:
\begin{enumerate}
\item if $z=x+i\xi$, $x,\xi \in \rr d$, then
\begin{equation}\label{Eq:RelAssignDiag}
| \mathsf S_{\mathfrak V} \fka (z ,z) -\fka (\sqrt 2\, x,-\sqrt 2\, \xi )|
\lesssim
\omega (\sqrt 2\, \overline z)
\eabs z^{-2\rho}\text ;
\end{equation}

\vrum

\item if $\rho _0\in [0,2\rho )$, then $\mathsf S_{\mathfrak V}$ is
bijective from the set of weakly elliptic symbols in
$\Sh _\rho ^{(\omega )}(\rr {2d})$ of order $\rho _0$ to the set of
weakly elliptic symbols in
$\wideparen A _{\Sh ,\rho}^{(\omega )}(\cc {2d})$ of order $\rho _0$. 
\end{enumerate}
\end{thm}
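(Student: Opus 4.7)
Part (1) is essentially the $N=0$ case of Proposition~\ref{Prop:AsymptoticExpansion1} after a rescaling; I will extract it directly. Setting $a=\mathsf S_{\mathfrak V}\fka$ and applying that proposition with $N=0$ gives
$$
\fka(x,-\xi) = a(2^{-1/2}z,2^{-1/2}z) + c_0(z), \qquad z=x+i\xi,
$$
where $(x,\xi)\mapsto c_0(x-i\xi)\in \Sh_\rho^{(\omega_1)}(\rr{2d})$ with $\omega_1=\omega\cdot\eabs{\cdot}^{-2\rho}$. Substituting $(\sqrt 2\, x,\sqrt 2\,\xi)$ for $(x,\xi)$, equivalently replacing $z$ by $\sqrt 2\, z$, I obtain
$$
\fka(\sqrt 2\, x,-\sqrt 2\,\xi) = a(z,z) + c_0(\sqrt 2\, z),
$$
so that $\mathsf S_{\mathfrak V}\fka(z,z) - \fka(\sqrt 2\, x,-\sqrt 2\,\xi) = -c_0(\sqrt 2\, z)$. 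The estimate \eqref{Eq:RelAssignDiag} then follows from the pointwise $\Sh_\rho^{(\omega_1)}$ bound on $c_0(\cdot-i\,\cdot)$, combined with the moderateness of $\omega$ to absorb the $\sqrt 2$ factor and to pass between $\eabs{\sqrt 2\, z}$ and $\eabs z$.

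For part (2) I will use that $\mathsf S_{\mathfrak V}$ is already a bijection from $\Sh_\rho^{(\omega)}(\rr{2d})$ onto $\wideparen \maclA_{\Sh,\rho}^{(\omega)}(\cc{2d})$ by the homeomorphism proposition at the end of Subsection~\ref{subsec2.1}, so only the preservation of weak ellipticity of order $\rho_0$ in both directions has to be checked. The key observation is that $(x,\xi)\mapsto(\sqrt 2\, x,-\sqrt 2\,\xi)$ is a bijection on $\rr{2d}$ with $|(\sqrt 2\, x,-\sqrt 2\,\xi)|=\sqrt 2\,|z|$, and $\omega(\sqrt 2\,\overline z)$ corresponds under $\cc d\simeq\rr{2d}$ to $\omega$ evaluated at $(\sqrt 2\, x,-\sqrt 2\,\xi)$.

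If $\fka$ is weakly elliptic of order $\rho_0$, then for $|z|$ sufficiently large
$$
|\fka(\sqrt 2\, x,-\sqrt 2\,\xi)|\gtrsim \omega(\sqrt 2\,\overline z)\,\eabs z^{-\rho_0}.
$$
Combining this with (1), the reverse triangle inequality, and the strict inequality $\rho_0<2\rho$ yields
$$
|a(z,z)|\ge |\fka(\sqrt 2\, x,-\sqrt 2\,\xi)| - C\,\omega(\sqrt 2\,\overline z)\,\eabs z^{-2\rho} \gtrsim \omega(\sqrt 2\,\overline z)\,\eabs z^{-\rho_0}
$$
for $|z|$ large, so $a$ is weakly elliptic of order $\rho_0$ in the sense of Definition~\ref{Def:WickShubinEllipticity}. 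The converse direction is symmetric: starting from the lower bound on $|a(z,z)|$ and invoking \eqref{Eq:RelAssignDiag} once more, the same reverse triangle inequality produces the required lower bound on $\fka$.

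The main obstacle will be the book-keeping around the identification $\cc d\simeq\rr{2d}$ and the appearance of $\overline z$ in the Wick-side weight $\omega(\sqrt 2\,\overline z)$, which has to be matched correctly with the real-side weight at $(\sqrt 2\, x,-\sqrt 2\,\xi)$; beyond this the hypothesis $\rho_0<2\rho$ is precisely what allows the remainder $c_0(\sqrt 2\, z)$ to be absorbed into the leading ellipticity bound, and no further analytical input is needed.
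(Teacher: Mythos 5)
Your proof is correct and follows essentially the same route as the paper: the paper obtains the estimate in (1) from Proposition~\ref{Prop:WickShubinTaylorExp} with $N=0$, whereas you obtain it from its companion Proposition~\ref{Prop:AsymptoticExpansion1} with $N=0$, but for $N=0$ these two propositions yield precisely the same identity (after invoking the homeomorphism proposition to pass between $\Sh_\rho^{(\omega)}$ and $\wideparen{\maclA}_{\Sh,\rho}^{(\omega)}$), and the deduction of (2) from (1), bijectivity and $\rho_0<2\rho$ matches the paper's.
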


\par

As a consequence of (2) in the previous theorem we get the following.

\par

\begin{cor}\label{Cor:ElliptEquiv}
Let $\fka$ be as in Theorem \ref{Thm:ElliptEquiv}. Then
the following is true:
\begin{enumerate}
\item if $\rho _0\in [0,2\rho )$, then $\fka \in \Sh _\rho ^{(\omega )}(\rr {2d})$ is weakly elliptic of order
$\rho _0$, if and only if
$\mathsf S_{\mathfrak V}\fka \in \wideparen A _{\Sh ,\rho}^{(\omega )}(\cc {2d})$ is weakly elliptic of order $\rho _0$;

\vrum

\item $\fka \in \Sh _\rho ^{(\omega )}(\rr {2d})$ is elliptic if and only if
$\mathsf S_{\mathfrak V}\fka \in \wideparen A _{\Sh ,\rho}^{(\omega )}(\cc {2d})$ is elliptic.
\end{enumerate}
\end{cor}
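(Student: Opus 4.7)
My plan is to derive Corollary~\ref{Cor:ElliptEquiv} as an immediate reformulation of Theorem~\ref{Thm:ElliptEquiv}(2), together with the already-established homeomorphism $\mathsf S_{\mathfrak V}\colon \Sh_\rho ^{(\omega )}(\rr {2d})\to \wideparen \maclA _{\Sh ,\rho}^{(\omega )}(\cc {2d})$. Nothing new needs to be estimated; the content of the corollary is exactly the unpacking of ``bijection between the weakly elliptic subsets'' into an ``if and only if'' for a single symbol.

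For part~(1), I would fix $\rho_0\in[0,2\rho)$ and argue as follows. Since $\mathsf S_{\mathfrak V}$ is a bijection between the full symbol classes, each $\fka\in\Sh_\rho^{(\omega)}(\rr{2d})$ corresponds to a unique $a=\mathsf S_{\mathfrak V}\fka\in\wideparen\maclA_{\Sh,\rho}^{(\omega)}(\cc{2d})$. By Theorem~\ref{Thm:ElliptEquiv}(2), this bijection restricts to a bijection between $\rho_0$-weakly elliptic Shubin symbols and $\rho_0$-weakly elliptic Wick symbols. The forward implication of the ``iff'' is the statement that the image of a weakly elliptic $\fka$ is weakly elliptic, and the reverse implication is the statement that the preimage of a weakly elliptic $a$ is weakly elliptic; both are contained in Theorem~\ref{Thm:ElliptEquiv}(2).

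For part~(2), I would simply observe that, by Definition~\ref{Def:WickShubinEllipticity} and its Shubin analogue in Subsection~1.5, ellipticity of a symbol (Shubin or Wick) is precisely weak ellipticity of order $\rho_0=0$. Since $\rho>0$ we have $0\in[0,2\rho)$, so part~(2) is the special case $\rho_0=0$ of part~(1).

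The only step that requires any substantive work is Theorem~\ref{Thm:ElliptEquiv} itself, specifically the estimate \eqref{Eq:RelAssignDiag} comparing $\mathsf S_{\mathfrak V}\fka(z,z)$ with $\fka(\sqrt2\,x,-\sqrt2\,\xi)$; once that estimate is available, the triangle inequality together with $\eabs z^{-2\rho}\omega(\sqrt 2\,\overline z)=o(\eabs z^{-\rho_0}\omega(\sqrt 2\,\overline z))$ for $\rho_0<2\rho$ forces $|a(z,z)|$ and $|\fka(\sqrt2\,x,-\sqrt2\,\xi)|$ to have identical weighted lower-bound asymptotics, which is what makes Theorem~\ref{Thm:ElliptEquiv}(2) work and, hence, what makes the corollary fall out at no further cost. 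Thus the only expected obstacle, already overcome upstream, is the diagonal comparison \eqref{Eq:RelAssignDiag}.
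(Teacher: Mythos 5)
Your proposal is correct and takes exactly the route the paper intends: the corollary is simply the unpacking of the bijection statement in Theorem~\ref{Thm:ElliptEquiv}(2), using that $\mathsf S_{\mathfrak V}$ is already known to be a bijection between the full symbol classes $\Sh_\rho^{(\omega)}(\rr{2d})$ and $\wideparen\maclA_{\Sh,\rho}^{(\omega)}(\cc{2d})$ so that the surjectivity onto weakly elliptic Wick symbols gives the reverse implication via injectivity on preimages, and part~(2) is the case $\rho_0=0$. Your closing remark correctly locates the real work in \eqref{Eq:RelAssignDiag}, which is indeed all that is needed upstream.
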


\par

For the proof of Theorem \ref{Thm:ElliptEquiv} we need the following proposition,
related to Propositions \ref{Prop:WickToAntiWick} and
\ref{Prop:AsymptoticExpansion1}.

\par

\begin{prop}\label{Prop:WickShubinTaylorExp}
Let $N\ge 0$ be an integer, $\rho \ge 0$, $\omega \in \mascP _{\Sh ,\rho}(\rr {2d})\simeq
\mascP  _{\Sh ,\rho}(\cc d)$, $\omega _k(x,\xi )=\omega (x,\xi )\eabs {(x,\xi )}^{-2\rho k}$
and $\fka \in \Sh _\rho ^{(\omega )}(\rr {2d})$. Then for some
$\fkc _N\in \Sh _\rho ^{(\omega _{N+1})}(\rr {2d})$ and
constants $\{ c_{\alpha} \}_{|\alpha| \le 2N}$ with $c_{0}=1$, it holds
\begin{equation}\label{Eq:WickShubinTaylorExp}
\mathsf S_{\mathfrak V}\fka  (2^{-\frac 12}z ,2^{-\frac 12}z)
=
\sum _{k=0}^N\fka _k(x,-\xi ) + \fkc _N(x,-\xi ),
\qquad
\fka _k = \sum _{|\alpha |=2k} c_{\alpha} \partial ^\alpha \fka .
\end{equation}
\end{prop}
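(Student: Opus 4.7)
The plan is to express $\mathsf S_{\mathfrak V}\fka(2^{-\frac12}z, 2^{-\frac12}z)$ as a Gaussian convolution of $\fka$, then Taylor expand. Setting $z = w$ in \eqref{eq:BargmannsymbolSTFT} kills the factor $e^{|z-w|^2/2}$ and the last two arguments of $\cT_\psi \fka$; combined with $\cT_\psi \fka(X,0) = (2\pi)^{-d}\int \fka(X+Y)\psi(Y)\,dY$ and the substitution $z \mapsto z/\sqrt 2$, one obtains
\begin{equation*}
\mathsf S_{\mathfrak V}\fka(2^{-\frac12}z, 2^{-\frac12}z) = (2\pi)^{-\frac d2} \int_{\rr{2d}} \fka((x,-\xi) + Y)\,\psi(Y)\,dY,
\end{equation*}
where $\psi(Y) = (2/\pi)^{d/2} e^{-|Y|^2}$ and $z = x + i\xi$.

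Next I would apply Taylor's formula with integral remainder of order $2N+2$ to $\fka$ around $X_0 := (x,-\xi)$,
\begin{equation*}
\fka(X_0 + Y) = \sum_{|\alpha| \le 2N+1} \frac{Y^\alpha}{\alpha!}\partial^\alpha \fka(X_0) + (2N+2)\sum_{|\alpha| = 2N+2} \frac{Y^\alpha}{\alpha!} \int_0^1 (1-t)^{2N+1}\partial^\alpha \fka(X_0 + tY)\,dt,
\end{equation*}
and integrate against $\psi$. Since $\psi$ is a product of one-dimensional even Gaussians, the moment $\int Y^\alpha \psi(Y)\,dY$ vanishes whenever any component $\alpha_j$ is odd, and in particular whenever $|\alpha|$ is odd. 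Defining $c_\alpha := (2\pi)^{-d/2}(\alpha!)^{-1}\int Y^\alpha \psi(Y)\,dY$ (zero unless every $\alpha_j$ is even), the normalization $\int \psi = (2\pi)^{d/2}$ gives $c_0 = 1$, and the polynomial part of the expansion becomes $\sum_{k=0}^N \fka_k(x,-\xi)$ with $\fka_k = \sum_{|\alpha|=2k} c_\alpha \partial^\alpha \fka$. Let $\fkc_N(x,-\xi)$ be $(2\pi)^{-d/2}$ times the integral of the Taylor-Lagrange remainder against $\psi$.

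The final step is to show $\fkc_N \in \Sh_\rho^{(\omega_{N+1})}(\rr{2d})$ (the reflection $\xi \mapsto -\xi$ preserves this class since $\eabs{(x,\xi)} = \eabs{(x,-\xi)}$ and moderateness is unaffected by this linear isomorphism). Differentiating under the integral replaces $\partial^\alpha \fka$ by $\partial^{\alpha+\beta}\fka$ for any $\beta \in \nn{2d}$. For $|\alpha| = 2N+2$ the hypothesis $\fka \in \Sh_\rho^{(\omega)}(\rr{2d})$ yields
\begin{equation*}
|\partial^{\alpha+\beta}\fka(X_0 + tY)| \lesssim \omega(X_0 + tY)\,\eabs{X_0 + tY}^{-\rho(2N+2 + |\beta|)},
\end{equation*}
and by moderateness of $\omega$ together with Peetre's inequality \eqref{eq:Peetre}, one can factor this as $\omega(X_0)\,\eabs{X_0}^{-\rho(2N+2+|\beta|)}\cdot \eabs{tY}^{N_0 + \rho(2N+2+|\beta|)}$ for some $N_0 \ge 0$. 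The resulting $Y$-integral converges uniformly in $t \in [0,1]$ thanks to the Gaussian decay of $\psi$, giving $|\partial^\beta \fkc_N(X_0)| \lesssim \omega_{N+1}(X_0)\,\eabs{X_0}^{-\rho|\beta|}$, as required.

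The conceptual core is the even-Gaussian parity argument that kills odd-order Taylor terms and forces the structure $\fka_k = \sum_{|\alpha|=2k} c_\alpha \partial^\alpha \fka$; the main technical obstacle is the remainder bookkeeping in the last step, which parallels the argument in the proof of Proposition \ref{Prop:AsymptoticExpansion1}.
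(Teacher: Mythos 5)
Your proposal is correct and follows essentially the same path as the paper: set $z=w$ in \eqref{eq:BargmannsymbolSTFT} to reduce the diagonal Bargmann assignment to a Gaussian convolution of $\fka$, Taylor-expand to order $2N+2$, kill the odd-order terms by the evenness of the Gaussian, and bound the integral remainder by the Shubin estimates for $\partial^\alpha\fka$ together with polynomial moderateness of $\omega$ and Peetre's inequality. The only cosmetic difference is that you use the multi-index form of Taylor's theorem (giving the slightly sharper observation that $c_\alpha=0$ unless every $\alpha_j$ is even), whereas the paper phrases the expansion through the $k$-linear forms $\fka^{(k)}$; these are equivalent and lead to the same remainder estimate.
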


\par

\begin{proof}
Let $\psi$ be as in Proposition \ref{Prop:SBaTaRel}.
If we put $z=w$, then \eqref{eq:BargmannsymbolSTFT} and
Taylor's formula give
\begin{multline}\label{Eq:TaylorNonWick}
(2\pi )^{d} \mathsf S_{\mathfrak V}\fka  (2^{-\frac 12}z ,2^{-\frac 12}z)
=
(2\pi )^{\frac {3d}2} \cT _\psi \fka  (x,-\xi ,0,0)
\\[1ex]
=
2^d\iint _{\rr {2d}} \fka (t+x,\tau -\xi )e^{-(|t|^2+|\tau |^2)}\, dtd\tau
=
\sum _{k=0}^{2N+1} \fkb _k(x,-\xi ) +\fkc (x,-\xi )
\end{multline}
where
\begin{align*}
\fkb _k(x,\xi ) &= \frac {2^d}{k!}\iint _{\rr {2d}}
\eabs {\fka ^{(k)}(x,\xi);(t,\tau ),\dots ,(t,\tau )}
e^{-(|t|^2+|\tau |^2)}\, dtd\tau 
\intertext{and}
\fkc (x,\xi )&= 
\frac 1{(2N+1)!}\int _0^1 (1-\theta )^{2N+1} \fkc _\theta (x,\xi )\, d\theta ,
\intertext{with}
\fkc _\theta (x,\xi ) &= 2^d\iint _{\rr {2d}}
\eabs {\fka ^{(2N+2)}(x+\theta t,\xi +\theta \tau  );(t,\tau ),\dots ,(t,\tau )}
e^{-(|t|^2+|\tau |^2)}\, dtd\tau .
\end{align*}

\par

If $k$ is odd, then
$$
(t,\tau )\mapsto \eabs {\fka ^{(k)}(x,\xi);(t,\tau ),\dots ,(t,\tau )}
e^{-(|t|^2+|\tau |^2)} 
$$
is odd which implies that the integral is zero. Hence $\fkb _k(x,\xi )=0$ when
$k$ is odd. For $k=0$ we observe that the integral for $\fkb _0$ becomes
$$
2^d\iint _{\rr {2d}} e^{-(|t|^2+|\tau |^2)}\, dtd\tau
=
(2\pi )^d,
$$
and it follows from these relations that
$$
(2\pi )^{-d}\sum _{k=0}^{2N+1}\fkb _k = \sum _{k=0}^N\fka _k,
$$
with $\fka _k$ as in \eqref{Eq:WickShubinTaylorExp} and $c_{0}=1$. Hence
the result follows if we prove that the last term in \eqref{Eq:TaylorNonWick} satisfies $\fkc _N \in \Sh _\rho ^{(\omega _{N+1})}(\rr {2d})$. 

\par

For $\theta \in [0,1]$ and $\alpha \in \nn {2d}$ we have
\begin{multline*}
|\partial ^\alpha \fkc _\theta (x,\xi )|
\lesssim
\iint _{\rr {2d}} |\partial^\alpha \fka ^{(2N+2)}(x+\theta t,\xi +\theta \tau  )|
\eabs {(t,\tau )}^{2N+2} e^{-(|t|^2+|\tau |^2)}\, dtd\tau 
\\[1ex]
\lesssim
\iint _{\rr {2d}} \omega (x+\theta t,\xi +\theta \tau  )
\eabs {(x+\theta t,\xi +\theta \tau  )}^{-(2N+2+|\alpha |)\rho}
\eabs {(t,\tau )}^{2N+2}
e^{-(|t|^2+|\tau |^2)}\, dtd\tau 
\\[1ex]
\lesssim
\omega (x,\xi)
\eabs {(x,\xi )}^{-(2N+2+|\alpha |)\rho}
\iint _{\rr {2d}}
\eabs {(t,\tau )}^{N_0}
e^{-(|t|^2+|\tau |^2)}\, dtd\tau 
\\[1ex]
\asymp
\omega (x,\xi)
\eabs {(x,\xi )}^{-(2N+2+|\alpha |)\rho}
\end{multline*}
for some $N_0>0$. In the last inequality we have used the fact that $\omega$
is polynomially moderate.

\par

This implies
$$
|\partial ^\alpha \fkc  (x,\xi )|\lesssim \int _0^1 |\partial ^\alpha \fkc _\theta (x,\xi )|
\, d\theta
\lesssim
\omega (x,\xi)
\eabs {(x,\xi )}^{-(2N+2+|\alpha |)\rho},
$$
which shows that $\fkc, \fkc _N \in \Sh _\rho ^{(\omega _{N+1})}(\rr {2d})$.
\end{proof}

\par

\begin{proof}[Proof of Theorem \ref{Thm:ElliptEquiv}]
Let $\psi$ be as in Proposition \ref{Prop:SBaTaRel} and $N=0$
in Proposition \ref{Prop:WickShubinTaylorExp}. Then
\begin{equation}\tag*{(\ref{Eq:RelAssignDiag})$'$}
| \mathsf S_{\mathfrak V}\fka (2^{-\frac 12}z ,2^{-\frac 12}z) -\fka (x,-\xi )|
\lesssim
\omega (x,-\xi)
\eabs {(x,-\xi )}^{-2\rho},
\end{equation}
and (1) follows.

\par

Suppose $\rho _0\in [0,2\rho)$. Then it follows from the latter inequality that
$$
| \mathsf S_{\mathfrak V}\fka  (z ,z)|
\gtrsim \eabs z^{-\rho _0}\omega (\sqrt 2\, \overline z),
\qquad |z|\ge R
$$
for some $R>0$, if and only if
$$
|\fka (x,\xi )|\gtrsim \eabs {(x,\xi )}^{-\rho _0}\omega (x,\xi ), \qquad |z|\ge R
$$
for some $R>0$, and the asserted equivalence in (2) follows.
\end{proof}

\par

\subsection{Shubin hypoellipticity in a Wick setting}

\par




\par

\begin{defn}\label{Def:ShubinWickHypoelliptic}
Let $\rho >0$, $\rho _0\ge 0$,
$\omega \in \mascP _{\Sh ,\rho}(\cc d)$ and
$a\in \wideparen \maclA _{\Sh ,\rho}^{(\omega )}(\cc {2d})$.
Then $a$ is called \emph{hypoelliptic} (in the \emph{Shubin-Wick sense}
in $\wideparen \maclA _{\Sh ,\rho}^{(\omega )}(\cc {2d})$) of order $\rho _0$,
if there is an $R>0$
such that for every $\alpha ,\beta \in \nn d$, it holds
\begin{alignat}{2}
|\partial _z^\alpha \overline \partial _w^\beta  a(z,z)|
&\lesssim
|a(z,z)|\eabs z^{-\rho |\alpha +\beta |}, &
\qquad |z| &\ge R \text .
\label{Eq:ShubinWickHypoelliptic1}
\intertext{and}
|a(z,z)| &\gtrsim \omega _0(\sqrt 2 \overline z)
\eabs z^{-\rho _0}, &
\qquad |z| &\ge R.
\label{Eq:ShubinWickHypoelliptic2}
\end{alignat}
\end{defn}

\par

According to Definition \ref{Def:Hypoelliptic}, 
if $\omega$, $\rho$ and $\rho _0$ are as in
the definition, then $\fka \in \Sh _\rho ^{(\omega )}(\rr {2d})$
is hypoelliptic
of order $\rho _0$
means that
there is an $R>0$ such that for every $\alpha  \in \nn {2d}$, it holds
\begin{alignat}{2}
|\partial ^\alpha \fka (x,\xi )|
&\lesssim
|\fka (x,\xi )|\eabs {(x,\xi )}^{-\rho |\alpha |}, &
\qquad |(x,\xi )| &\ge R \text .
\label{Eq:ShubinWickHypoellipticReal1}
\intertext{and}
|\fka (x,\xi )| &\gtrsim \omega (x,\xi )
\eabs {(x,\xi )}^{-\rho _0}, &
\qquad |(x,\xi )| &\ge R.
\label{Eq:ShubinWickHypoellipticReal2}
\end{alignat}

\par

Similar to Theorem \ref{Thm:ElliptEquiv} we have the following.

\par

%
%

\begin{thm}\label{Thm:HypoelliptShubinEquiv}
Let $\rho >0$, $\rho _0\ge 0$,
$\omega \in \mascP _{\Sh ,\rho}(\rr {2d})\simeq
\mascP _{\Sh ,\rho}(\cc d)$,
$\fka \in \Sh _\rho ^{(\omega )}(\rr {2d})$ and $a=\mathsf S_{\mathfrak V}\fka$.
Then $\fka$ is hypoelliptic of order $\rho _0$ in
$\Sh _\rho ^{(\omega )}(\rr {2d})$, if and only if
$a$ is \emph{hypoelliptic} of order $\rho _0$
in $\wideparen \maclA _{\Sh ,\rho}^{(\omega )}(\cc {2d})$.
\end{thm}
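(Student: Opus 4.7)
The plan is to transport hypoellipticity between the Weyl and Wick sides by means of the Taylor-type expansion of Proposition \ref{Prop:WickShubinTaylorExp}, together with an analogous expansion applied to the mixed diagonal derivatives $(\partial _z^\alpha \overline \partial _w^\beta a)(2^{-\frac 12}z,2^{-\frac 12}z)$. Both expansions express a quantity on the Wick diagonal as an order-matched Weyl derivative of $\fka$ at $(x,-\xi)$ (with $z=x+i\xi$), plus a Shubin remainder lying in the weight class $\Sh _\rho ^{(\omega _{N+1})}$ with $\omega _{N+1}=\omega \eabs {(x,\xi )}^{-2\rho (N+1)}$. The essential trick is to take the order $N$ large enough that $2\rho (N+1)>\rho _0$, which absorbs the hypoellipticity loss.

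\textbf{Forward direction.} Assume $\fka$ satisfies \eqref{Eq:ShubinWickHypoellipticReal1} and \eqref{Eq:ShubinWickHypoellipticReal2}. Choose $N$ with $2\rho (N+1)>\rho _0$ and apply \eqref{Eq:WickShubinTaylorExp}. By \eqref{Eq:ShubinWickHypoellipticReal1} each $\fka _k$ with $1\le k\le N$ satisfies $|\fka _k(x,-\xi )|\lesssim |\fka (x,-\xi )|\eabs {(x,\xi )}^{-2\rho k}$, while $\fkc _N\in \Sh _\rho ^{(\omega _{N+1})}$ combined with \eqref{Eq:ShubinWickHypoellipticReal2} gives $|\fkc _N(x,-\xi )|\lesssim |\fka (x,-\xi )|\eabs {(x,\xi )}^{\rho _0-2\rho (N+1)}$; both are $o(|\fka |)$ for large $|(x,\xi )|$, hence $|a(2^{-\frac 12}z,2^{-\frac 12}z)|\asymp |\fka (x,-\xi )|\gtrsim \omega (x,-\xi )\eabs {(x,\xi )}^{-\rho _0}$, which upon rescaling $z\mapsto \sqrt 2\, z$ yields \eqref{Eq:ShubinWickHypoelliptic2}. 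For \eqref{Eq:ShubinWickHypoelliptic1}, note that \eqref{Eq:ShubinWickEstimates} and Peetre's inequality place $\partial _z^\alpha \overline \partial _w^\beta a\in \wideparen \maclA _{\Sh ,\rho }^{(\omega \eabs {\cdot }^{-\rho |\alpha +\beta |})}(\cc {2d})$, so its Weyl preimage $\fka _{\alpha ,\beta }:=\mathsf S_{\mathfrak V}^{-1}(\partial _z^\alpha \overline \partial _w^\beta a)$ lies in $\Sh _\rho ^{(\omega \eabs {\cdot }^{-\rho |\alpha +\beta |})}(\rr {2d})$. As explained below, the leading part of $\fka _{\alpha ,\beta }$ is a linear combination of $\partial ^\gamma \fka$ with $|\gamma |=|\alpha +\beta |$ modulo Shubin corrections of order $\rho$ smaller; by \eqref{Eq:ShubinWickHypoellipticReal1} this leading combination is $O(|\fka |\eabs {(x,\xi )}^{-\rho |\alpha +\beta |})$. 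Applying \eqref{Eq:WickShubinTaylorExp} to $\fka _{\alpha ,\beta }$ and using $|a(z,z)|\asymp |\fka (x,-\xi )|$ from the previous step gives \eqref{Eq:ShubinWickHypoelliptic1}.

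\textbf{Converse and main obstacle.} Assume instead that $a$ is Shubin--Wick hypoelliptic. Reading \eqref{Eq:WickShubinTaylorExp} as an expression of $\fka (x,-\xi )$ in terms of $a(2^{-\frac 12}z,2^{-\frac 12}z)$ minus the $\fka _k$ and minus $\fkc _N$, we control the $\fka _k$ via the Wick hypoellipticity estimates \eqref{Eq:ShubinWickHypoelliptic1} (whose leading content matches linear combinations of $\partial _z^\mu \overline \partial _w^\nu a(z,z)$ with $|\mu +\nu |=2k$), and we absorb the remainder $\fkc _N$ by choosing $N$ with $2\rho (N+1)>\rho _0$ and invoking the lower bound \eqref{Eq:ShubinWickHypoelliptic2}; this yields \eqref{Eq:ShubinWickHypoellipticReal2}, and a derivative version of the same argument, applied to $\fka _{\alpha ,\beta }$, yields \eqref{Eq:ShubinWickHypoellipticReal1}. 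The delicate point in both directions is the explicit identification of the leading-order Weyl symbol of $\partial _z^\alpha \overline \partial _w^\beta a$ as a linear combination of $\partial ^\gamma \fka$ with $|\gamma |=|\alpha +\beta |$. This can be achieved by iterating the identities $\partial _{z_j}a=\overline w_j\wpr _{\mathfrak V}a-\overline w_j\cdot a$ and $\overline \partial _{w_j}a=a\wpr _{\mathfrak V}z_j-z_j\cdot a$ (consequences of Lemma \ref{Lemma:DiffCompTwistProd} and the elementary Wick product computations behind Example \ref{Ex:BargAssignment}), and translating each $\wpr _{\mathfrak V}$ to the Weyl side through the standard asymptotic expansion of the Weyl product $\wpr$, whose leading term is pointwise multiplication by the linear Weyl symbols $2^{-\frac 12}(x_j\mp i\xi _j)$ from \eqref{Eq:BargmannAssignBasicMaps} and whose subleading term is a first-order differential operator in $(x,\xi )$ producing a Shubin correction of order $\rho$ smaller. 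The careful bookkeeping of these subleading terms, so that their cumulative effect at depth $|\alpha +\beta |$ remains inside the Shubin class $\Sh _\rho ^{(\omega \eabs {\cdot }^{-\rho |\alpha +\beta |-2\rho })}$, is what makes this step the main obstacle.
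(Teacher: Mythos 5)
Your skeleton matches the paper's: expand via Proposition \ref{Prop:WickShubinTaylorExp}, choose $N$ so that $2\rho(N+1)>\rho_0$, show $|a(z,z)|\asymp|\fka(x,-\xi)|$, and control derivatives. But the step you flag as ``the main obstacle'' is substantially simpler than you think, and your proposed route through Weyl-product asymptotics misses the clean mechanism the paper relies on, while your converse is sketched in a way that risks circularity.

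The key identity you want --- relating $\mathsf S_{\mathfrak V}^{-1}(\partial_z^\alpha\overline\partial_w^\beta a)$ to $|\alpha+\beta|$-th order derivatives of $\fka$ --- is \emph{exact}, with no ``Shubin corrections of order $\rho$ smaller.'' It drops out immediately from the inversion formula \eqref{eq:InvBargmannsymbolSTFT} of Proposition \ref{Prop:SBaTaRel}: since $a$ is analytic in the first slot and conjugate-analytic in the second, differentiating under the Gaussian integral by $\partial_z^\alpha\overline\partial_z^\beta$ (Wirtinger derivatives in $z=x+i\xi$ acting on $\fka(x,-\xi)$) hits only the first slot of $a$ with $\partial_{z_1}^\alpha$ and only the second with $\overline\partial_{z_2}^\beta$, yielding $\mathsf S_{\mathfrak V}^{-1}(\partial_z^\alpha\overline\partial_w^\beta a)=2^{|\alpha+\beta|/2}\,\partial_z^\alpha\overline\partial_z^\beta\fka$, a pure degree-$|\alpha+\beta|$ linear combination of $\partial^\gamma\fka$ with $|\gamma|=|\alpha+\beta|$. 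The paper uses exactly this (implicitly, when it invokes Proposition \ref{Prop:WickShubinTaylorExp} for $\partial_z^\alpha\overline\partial_w^\beta a$ and produces $\partial^{\gamma+\delta}\fka$ with $|\delta|=|\alpha+\beta|$). Your commutator route via $\partial_{z_j}a=\overline w_j\wpr_{\mathfrak V}a-\overline w_j\cdot a$ would in fact also close up exactly --- $\mathsf S_{\mathfrak V}^{-1}(\overline w_j\cdot a)=\fka\wpr\,2^{-\frac12}(x_j+i\xi_j)$, and the Weyl commutator with a linear symbol is precisely a first-order differential operator with no remainder --- so the ``careful bookkeeping of subleading terms'' you anticipate is a phantom problem. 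Your plan isn't wrong, but it dresses up an exact one-line identity as an asymptotic expansion.

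For the converse you write $\fka=a(z,z)-\sum_k\fka_k-\fkc_N$ via \eqref{Eq:WickShubinTaylorExp} and propose to control the $\fka_k$ by the Wick hypoellipticity estimates. As written this is circular: the $\fka_k$ are built from $\partial^\alpha\fka$, which is what you are trying to estimate, so you need an independent expression of $\fka$ in terms of diagonal Wick derivatives before you can start. That is exactly what Proposition \ref{Prop:AsymptoticExpansion1} supplies --- it directly expands $\fka(x,-\xi)$ in $\{(\partial_z^\alpha\overline\partial_w^\alpha a)(2^{-\frac12}z,2^{-\frac12}z)\}_{|\alpha|\le N}$ with a controlled remainder --- and the paper uses it for precisely this reason, plugging in the Wick hypoellipticity bounds term by term. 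Your sketch would eventually re-derive that proposition by iterating the forward expansion, but you should invoke it directly rather than read \eqref{Eq:WickShubinTaylorExp} ``backwards.''
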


\par

\begin{proof}
Suppose that $\fka \in \Sh _\rho ^{(\omega )}(\rr {2d})$ is hypoelliptic of order $\rho _0$, and choose $N\ge 0$ such that
$2N\rho >\rho _0$. Suppose that
$R>0$ is chosen such that \eqref{Eq:ShubinWickHypoellipticReal1}
and \eqref{Eq:ShubinWickHypoellipticReal2} are fulfilled.
Then Proposition \ref{Prop:WickShubinTaylorExp} gives for $z=x+i\xi$ with
$|z|\ge R$ where $R > 0$ is sufficiently large
%
%
\begin{multline*}
|a(2^{-\frac 12}z,2^{-\frac 12}z)|
\gtrsim
|\fka (x,-\xi )|-
\sum _{k=1}^N\sum _{|\alpha |=2k}
(|\partial ^\alpha \fka (x,-\xi )| +
|\fkc (x,-\xi )| )
\\[1ex]
\gtrsim
|\fka (x,-\xi )| -|\fka (x,-\xi )|\eabs {(x,-\xi )}^{-2\rho}-
\omega (x,-\xi )\eabs {(x,-\xi )}^{-\rho (2N+2)}
\\[1ex]
\gtrsim
|\fka (x,-\xi )| -|\fka (x,-\xi )|\eabs {(x,-\xi )}^{-2\rho}
\\[1ex]
\gtrsim 
|\fka (x,-\xi )| \gtrsim \omega (x,-\xi )\eabs {(x,-\xi )}^{-\rho _0},
\end{multline*}
and \eqref{Eq:ShubinWickHypoelliptic2} follows. In particular
it follows from the previous estimates that
\begin{equation}\label{Eq:IneqHypoShubinHypoWick}
|a(2^{-\frac 12}z,2^{-\frac 12}z)|
\gtrsim
|\fka (x,-\xi )|,
\qquad |z|\ge R.
\end{equation}

\par

For fixed $\alpha ,\beta \in \nn d$, let
$\Omega _k$ be the set of all $(\gamma ,\delta )\in \nn {2d}\times \nn {2d}$
such that $|\gamma |=2k$ and $|\delta |=|\alpha +\beta |$. By Proposition \ref{Prop:WickShubinTaylorExp} and
\eqref{Eq:IneqHypoShubinHypoWick} we have for
some $R$ large enough and $|z|\ge R$,
\begin{multline*}
|(\partial _z^\alpha \overline \partial _w^\beta a)(2^{-\frac 12}z,2^{-\frac 12}z)|
\lesssim
\sum _{k=0}^N\sum _{(\gamma ,\delta )\in \Omega _k}
(|\partial ^{\gamma +\delta}\fka (x,-\xi )| +
|\partial ^\delta \fkc (x,-\xi )| )
\\[1ex]
\lesssim
\sum _{k=0}^N\sum _{(\gamma ,\delta )\in \Omega _k}
\big ( |\fka (x,-\xi )|\eabs {(x,-\xi )}^{-\rho (2k+|\alpha +\beta |)} +
\omega (x,-\xi )\eabs {(x,-\xi )}^{-\rho (2N+|\alpha +\beta |)} \big )
\\[1ex]
\asymp
|\fka (x,-\xi )|\eabs {(x,-\xi )}^{-\rho |\alpha +\beta |} +
\omega (x,-\xi )\eabs {(x,-\xi )}^{-\rho (2N+|\alpha +\beta |)}
\\[1ex]
\lesssim
|\fka (x,-\xi )|\eabs {(x,-\xi )}^{-\rho |\alpha +\beta |} +
|\fka (x,-\xi )|\eabs {(x,-\xi )}^{\rho _0-\rho (2N+|\alpha +\beta |)}
\\[1ex]
\asymp
|\fka (x,-\xi )|\eabs {(x,-\xi )}^{-\rho |\alpha +\beta |}
\lesssim
|a(2^{-\frac 12}z,2^{-\frac 12}z)|\eabs {(x,-\xi )}^{-\rho |\alpha +\beta |},
\end{multline*}
which implies that \eqref{Eq:ShubinWickHypoelliptic1} holds.

\par

This shows that
$a$ is hypoelliptic of order $\rho _0$ in
$\wideparen \maclA _{\Sh ,\rho}^{(\omega )}(\cc {2d})$
when $\fka$ is hypoelliptic of order $\rho _0$ in
$\Sh _\rho ^{(\omega )}(\rr {2d})$.

\par

Suppose instead that $a$ is hypoelliptic of order $\rho _0$ in
$\wideparen \maclA _{\Sh ,\rho}^{(\omega )}(\cc {2d})$. By using
Proposition \ref{Prop:AsymptoticExpansion1},
\eqref{Eq:cNdef} and \eqref{Eq:Hdef}
instead of Proposition
\ref{Prop:WickShubinTaylorExp}, similar
computations as in the first part of the proof shows that
\eqref{Eq:ShubinWickHypoellipticReal1} and \eqref{Eq:ShubinWickHypoellipticReal2}
hold for some $R>0$. This shows that
$\fka$ is hypoelliptic of order $\rho _0$ in
$\Sh _\rho ^{(\omega )}(\rr {2d})$ when
$a$ is hypoelliptic of order $\rho _0$ in
$\wideparen \maclA _{\Sh ,\rho}^{(\omega )}(\cc {2d})$, and the result follows.
\end{proof}

\par

\subsection{Ellipticity in the case of polynomial symbols}

\par

Next we discuss ellipticity for polynomial symbols, i.{\,}e. 
\begin{alignat}{2}
\fka (x,\xi ) &= \sum _{|\alpha +\beta | \le N}c(\alpha ,\beta )
x^\alpha \xi ^\beta , &
\quad
x,\xi &\in \rr d,
\label{Eq:RealPolSymbol}
\intertext{and}
a(z,w) &= \sum _{|\alpha +\beta | \le N}c(\alpha ,\beta )z^\alpha {\overline w}^\beta ,
&
\quad
z,w &\in \cc d.
\label{Eq:ComplexPolSymbol}
\intertext{The corresponding principal symbols are}
\fka _p (x,\xi ) &= \sum _{|\alpha +\beta | = N}c(\alpha ,\beta )x^\alpha \xi ^\beta , &
\quad
x,\xi &\in \rr d,
\label{Eq:RealPrincipalPolSymbol}
\intertext{and}
a_p (z,w ) &= \sum _{|\alpha +\beta | = N}c(\alpha ,\beta )
z^\alpha {\overline w} ^\beta , &
\quad
z,w &\in \cc d,
\label{Eq:ComplexPrincipalPolSymbol}
\end{alignat}
respectively.

\par

First we relate polynomials on $\rr {2d}$ to Shubin classes.

\par

\begin{prop}\label{Prop:DiffOpShubin}
Let $\fka$ and $\fka _p$ be as in \eqref{Eq:RealPolSymbol}
and \eqref{Eq:RealPrincipalPolSymbol}
for some $c(\alpha ,\beta )\in \mathbf C$, $\alpha ,\beta \in \nn d$
and $N\ge 0$, and let $\omega _{N}(x,\xi )=\eabs {(x,\xi )}^{N}$,
$x,\xi \in \rr d$. Then the following is true:
\begin{enumerate}
\item $\fka \in \Sh _1^{(\omega _{N})}(\rr {2d})$;

\vrum

\item $\fka$ is elliptic with respect to $\omega _{N}$, if and only if
$\fka _p(x,\xi )\neq 0$ when $(x,\xi )\neq 0$.
\end{enumerate}
\end{prop}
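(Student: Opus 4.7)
\medspace

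\textbf{Plan of proof.} For (1), I would observe that any polynomial $\fka$ of total degree at most $N$ satisfies $|\fka(x,\xi)| \lesssim \eabs{(x,\xi)}^N = \omega_N(x,\xi)$, and that for $\alpha\in\nn{2d}$ with $|\alpha|\le N$, the derivative $\partial^\alpha \fka$ is a polynomial of total degree at most $N-|\alpha|$, hence $|\partial^\alpha \fka(x,\xi)|\lesssim \eabs{(x,\xi)}^{N-|\alpha|} = \omega_N(x,\xi)\eabs{(x,\xi)}^{-|\alpha|}$. For $|\alpha|>N$ the derivative vanishes, so the required bound is trivial. This gives $\fka\in \Sh_1^{(\omega_N)}(\rr{2d})$ by the definition in Subsection \ref{subsec1.5}.

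For (2), in the implication that ellipticity forces $\fka_p$ to be nonvanishing on $\rr{2d}\setminus 0$, I would use a dilation argument. Fix $(x_0,\xi_0)\neq 0$ and consider $t\mapsto t^{-N}\fka(tx_0,t\xi_0)$ as $t\to +\infty$. Writing out \eqref{Eq:RealPolSymbol} and collecting powers of $t$, this quantity converges to $\fka_p(x_0,\xi_0)$. On the other hand, by ellipticity $|\fka(tx_0,t\xi_0)|\gtrsim \eabs{(tx_0,t\xi_0)}^N$ for $t$ sufficiently large, so dividing by $t^N$ and passing to the limit yields $|\fka_p(x_0,\xi_0)|\gtrsim |(x_0,\xi_0)|^N>0$.

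For the converse implication, if $\fka_p(x,\xi)\neq 0$ for all $(x,\xi)\neq 0$, I would first use the fact that $\fka_p$ is homogeneous of degree $N$ and continuous on the compact unit sphere of $\rr{2d}$ to conclude $|\fka_p(x,\xi)|\ge c|(x,\xi)|^N$ for some $c>0$ and all $(x,\xi)\in\rr{2d}$. Then I would split $\fka = \fka_p + (\fka-\fka_p)$, where $\fka-\fka_p$ is a polynomial of degree at most $N-1$ and hence satisfies $|\fka(x,\xi)-\fka_p(x,\xi)|\lesssim \eabs{(x,\xi)}^{N-1}$. A triangle inequality and choosing $R$ large enough so that $c|(x,\xi)|^N$ dominates the remainder gives $|\fka(x,\xi)|\gtrsim |(x,\xi)|^N\asymp \eabs{(x,\xi)}^N = \omega_N(x,\xi)$ for $|(x,\xi)|\ge R$, which is ellipticity with respect to $\omega_N$.

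The result is essentially elementary, and there is no substantial obstacle; the only point requiring a little care is the scaling/compactness argument linking the polynomial growth of $\fka$ at infinity to the behavior of its principal part $\fka_p$ on the sphere, and ensuring that the lower-order remainder $\fka-\fka_p$ is absorbed by $\fka_p$ for large $|(x,\xi)|$.
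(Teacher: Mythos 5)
Your proof is correct and follows essentially the same route as the paper: for (1) the degree-dropping argument under differentiation, and for (2) the compactness-plus-homogeneity lower bound on $\fka_p$ combined with absorbing the degree-$(N-1)$ remainder $\fka-\fka_p$, and conversely a dilation along a ray $(tx_0,t\xi_0)$ to isolate $\fka_p(x_0,\xi_0)$. The only stylistic difference is that you phrase the forward implication as a limit $t^{-N}\fka(tx_0,t\xi_0)\to\fka_p(x_0,\xi_0)$, while the paper argues by contraposition (if $\fka_p(x_0,\xi_0)=0$ then $|\fka|\lesssim\eabs{(x,\xi)}^{N-1}$ along that ray), but these are the same scaling idea.
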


\par

The result can be considered folklore. In order to be self-contained we present
the arguments.

\par

\begin{proof}
First we prove (1). Let $t=\max (|x_1|,\dots ,|x_d|,|\xi _1|,\dots ,|\xi _d|)$ when
$x=(x_1,\dots ,x_d)\in \rr d$ and $\xi =(\xi _1,\dots ,\xi _d)\in \rr d$. Then
$$
|\fka (x,\xi )| \le  \sum _{|\alpha +\beta | \le N_0}|c(\alpha ,\beta )| t^{|\alpha +\beta |}
\lesssim  1+t^{N}
\le \eabs {(x,\xi )}^{N},
$$
which gives the desired bound for $|\fka (x,\xi)|$. Since the degree of a polynomial
is lowered by at least one for every differentiation we get
$$
|\partial^\alpha \fka (x,\xi )| \lesssim \eabs {(x,\xi )}^{N-|\alpha |}
$$
for every $\alpha \in \nn {2d}$, which gives (1).

\par

In order to prove (2) we let $\fka _p$ be as in \eqref{Eq:RealPrincipalPolSymbol}.
First suppose that $\fka _p(x,\xi )\neq 0$ when $(x,\xi )\neq (0,0)$, and let $g$ be the continuous
function on $\rr {2d}\setminus 0$ given by
$$
g(x,\xi ) =\frac {|\fka _p(x,\xi )|}{|(x,\xi )|^{N}},\qquad (x,\xi )\neq (0,0).
$$
Since $g$ is continuous and positive, and the sphere
$$
\mathbf S^{2d-1}=\sets {(x,\xi )\in \rr {2d}}{|x|^2+|\xi |^2 = 1}
$$
is compact, it follows that there are constants $c_1,c_2>0$ such that
$$
c_1\le g(x,\xi )\le c_2,\qquad (x,\xi )\in \mathbf S^{2d-1}.
$$ 
By homogeneity it now follows
$$
c_1|(x,\xi )|^{N} \le |\fka _p(x,\xi )| \le c_2|(x,\xi )|^{N},\qquad x,\xi \in \rr d.
$$

Hence, if
$$
\fkb (x,\xi )= \fka (x,\xi )-\fka _p(x,\xi )
= \sum _{|\alpha +\beta | \le N-1}c(\alpha ,\beta )x^\alpha \xi ^\beta ,
$$
then the first part of the proof implies that for some constants $C>0$ and $R>0$
we have
\begin{equation*}
|\fka (x,\xi )| \ge |\fka _p(x,\xi )| -|\fkb (x,\xi )| \ge c_1|(x,\xi )|^{N_0} - C\eabs {(x,\xi )}^{N-1}
\gtrsim \eabs {(x,\xi )}^{N}
\end{equation*}
when $|(x,\xi )|\ge R$. Hence $\fka$ is elliptic with respect to $\omega _{N_0}$.

\par

Suppose instead $\fka _p(x_0,\xi _0)=0$ for some $(x_0,\xi _0)\neq (0,0)$. For any
$(x,\xi )=(tx_0,t\xi _0)$ we have
\begin{multline*}
|\fka (x,\xi )| \le |\fka _p(x,\xi )| +|\fkb (x,\xi )|
=
|t^N\fka _p(x_0,\xi _0)| +|\fkb (x,\xi )|
\\[1ex]
=
|\fkb (x,\xi )| \lesssim \eabs {(x,\xi )}^{N-1},
\end{multline*}
giving that $|\fka (x,\xi )|\gtrsim \eabs {(x,\xi )}^{N}$, $|(x,\xi )|\ge R$,
cannot hold for any $R>0$.
\end{proof}

\par

By Theorems \ref{Thm:ElliptEquiv}, \ref{Thm:HypoelliptShubinEquiv}
and Proposition \ref{Prop:DiffOpShubin} we get the following.
The details are left for the reader.

\par

\begin{prop}\label{Prop:DiffOpBargmannShubin}
Let $a$ and $a_p$ be as in \eqref{Eq:ComplexPolSymbol}
and \eqref{Eq:ComplexPrincipalPolSymbol}
for some $c(\alpha ,\beta )\in \mathbf C$, $\alpha ,\beta \in \nn d$ 
and $N\ge 0$, and let $\omega _{N}(x,\xi )=\eabs {(x,\xi )}^{N}$,
$x,\xi \in \rr d$.
Then the following is true:
\begin{enumerate}
\item $a\in \maclA _{\Sh ,1}^{(\omega _{N})}(\cc {2d})$;

\vrum

\item $a$ is elliptic in $\maclA _{\Sh ,1}^{(\omega _{N})}(\cc {2d})$ if and only if $a_p(z,z)\neq 0$
when $z\neq 0$.
\end{enumerate}
\end{prop}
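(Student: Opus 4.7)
The plan is to reduce the statement to its real-variable analogue Proposition \ref{Prop:DiffOpShubin} via the Bargmann assignment $\mathsf S_{\mathfrak V}$. The two steps in (1) and (2) will be handled separately: (1) by a direct estimate on derivatives of the polynomial $a$, and (2) by transferring ellipticity through $\mathsf S_{\mathfrak V}^{-1}$ together with an identification of principal parts.

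For (1) I would verify the defining estimates \eqref{Eq:ShubinWickEstimates} of $\wideparen \maclA _{\Sh ,1}^{(\omega _N)}(\cc {2d})$ directly. Each derivative $\partial _z^\gamma \overline \partial _w^\delta a(z,w)$ is a polynomial in $(z,\overline w)$ of degree at most $N-|\gamma+\delta|$, hence pointwise bounded by $\eabs {(z,w)}^{N-|\gamma+\delta|}$. Using $\eabs{(z,w)} \lesssim \eabs{z+w}\eabs{z-w}$ together with $\eabs{z+w} \lesssim \eabs z \eabs{z-w}$, any polynomial growth in $\eabs{z-w}$ is absorbed by $e^{|z-w|^2/2}\eabs{z-w}^{-M}$ for arbitrarily large $M$, which gives the claim.

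For (2) set $\fka = \mathsf S_{\mathfrak V}^{-1}a$. By (1) and Theorem \ref{Thm:ShubinAnalChar} we have $\fka \in \Sh _1^{(\omega _N)}(\rr {2d})$, and Corollary \ref{Cor:ElliptEquiv}(2) then tells us that $a$ is elliptic in $\wideparen \maclA _{\Sh ,1}^{(\omega _N)}(\cc {2d})$ iff $\fka$ is elliptic in $\Sh _1^{(\omega _N)}(\rr {2d})$. The main substantive step is to show that $\fka$ is a polynomial of degree at most $N$. I would do this via the inverse formula \eqref{eq:InvBargmannsymbolSTFT}: substituting the polynomial $a$ into the integrand yields a polynomial in $(z,\overline z, w,\overline w)$ of total degree at most $N$, and the Gaussian moments $\int _{\cc d} w^\alpha \overline w^\beta e^{-2|w|^2}\, d\lambda (w)$ annihilate monomials with $\alpha \neq \beta$, leaving a polynomial in $(z,\overline z)$, equivalently in $(x,\xi)$, of degree at most $N$. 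Proposition \ref{Prop:DiffOpShubin}(2) then applies: $\fka$ is elliptic iff $\fka _p(x,\xi) \neq 0$ for all $(x,\xi)\neq 0$.

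It remains to identify the principal parts. By Theorem \ref{Thm:ElliptEquiv}(1) we have $|a(z,z) - \fka (\sqrt 2 x,-\sqrt 2 \xi )|\lesssim \eabs z^{N-2}$, and since both $a(z,z)$ and $\fka (\sqrt 2 x,-\sqrt 2 \xi )$ are polynomials in $(x,\xi)$ of degree at most $N$ whose difference has degree at most $N-2$, the homogeneous parts of degree exactly $N$ must agree, giving $a_p(z,z) = \fka _p(\sqrt 2 x,-\sqrt 2 \xi )$. Consequently $a_p(z,z) \neq 0$ for $z\neq 0$ iff $\fka _p(x,\xi) \neq 0$ for $(x,\xi) \neq 0$, closing the chain of equivalences. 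The principal obstacle is the verification that $\fka$ is a polynomial of degree at most $N$ and the concurrent matching of principal parts; both are controlled algebraic consequences of the explicit Gaussian moments in \eqref{eq:InvBargmannsymbolSTFT} together with the decay of the remainder in \eqref{Eq:RelAssignDiag}.
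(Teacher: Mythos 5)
Your proof is correct and follows the same route the paper indicates, which is a reduction to Proposition \ref{Prop:DiffOpShubin} via Theorem \ref{Thm:ElliptEquiv} and Corollary \ref{Cor:ElliptEquiv}; the paper simply cites these results and leaves the details to the reader. Your direct verification of the estimates in (1), the Gaussian-moment computation from \eqref{eq:InvBargmannsymbolSTFT} showing that $\mathsf S_{\mathfrak V}^{-1}a$ is again a polynomial of degree at most $N$, and the matching of principal parts via the degree-$(N-2)$ bound from \eqref{Eq:RelAssignDiag} are exactly the omitted details, and they are all carried out correctly.
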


\par

\begin{rem}\label{Rem:Elliptic}
Let $\fka$, $\fka _p$, $a$ and $a_p$ be as in
\eqref{Eq:RealPolSymbol}--\eqref{Eq:ComplexPrincipalPolSymbol}.
Then it follows from Propositions \ref{Prop:DiffOpShubin} and
Proposition \ref{Prop:DiffOpBargmannShubin} that $\fka$ is elliptic, if
and only if $\fka _p$ is elliptic, and that $a$ is elliptic, if and only if $a_p$
is elliptic.
\end{rem}

\par

We have now the following.

\par

\begin{thm}\label{Thm:Ellipticity}
Let $\fka \in \Sh _1^{(\omega _{N})}(\rr {2d})$ and $\fka _p$ be as in \eqref{Eq:RealPolSymbol}
and \eqref{Eq:RealPrincipalPolSymbol}
for some $c(\alpha ,\beta )\in \mathbf C$, $\alpha ,\beta \in \nn d$
and $N\ge 0$.
Then the following is true:
\begin{enumerate}
\item the principal symbol $a_p(z,w)$ of $\mathsf S_{\mathfrak V}\fka$
is given by
\begin{equation}
\label{Eq:ComplexPrincipalSymb}
a_p(z,w) = 2^{-\frac N2}
\sum _{|\alpha +\beta |= N}
c(\alpha ,\beta )i^{|\beta |}
(z+\overline w)^\alpha (z-\overline w)^\beta 
\text ;
\end{equation}

\vrum

\item $\fka$ is elliptic in $\Sh _1^{(\omega _{N})}(\rr {2d})$ if and only if
$a_p$ is elliptic in $\maclA _{\Sh ,1}^{(\omega _{N})}(\cc {2d})$;

\vrum

\item $\fka _p(x,\xi )> 0$ for every $(x,\xi )\neq (0,0)$,
if and only if
$a_p(z,z)> 0$ for every $z\neq 0$.
\end{enumerate}
\end{thm}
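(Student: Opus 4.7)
The plan is that part (1) does the real work, while parts (2) and (3) follow essentially by assembling prior results; I will treat them in order.

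For (1), the approach is to exploit the fact that $\mathsf S_{\mathfrak V}$ is a $*$-algebra isomorphism: from \eqref{Eq:BargmannAssignment} together with the uniqueness in Proposition \ref{Prop:ContAnalPseudo}, one gets $\mathsf S_{\mathfrak V}(\fka _1\wpr \fka _2)=\mathsf S_{\mathfrak V}(\fka _1)\wpr _{\mathfrak V}\mathsf S_{\mathfrak V}(\fka _2)$. I would then use two parallel principal-symbol facts. First, for polynomial Weyl symbols of degrees $M_1$ and $M_2$, the product $\fka _1\wpr \fka _2$ is again polynomial of degree $\le M_1+M_2$ with degree $M_1+M_2$ part equal to the pointwise product $\fka _{1,p}\fka _{2,p}$; this is the Moyal expansion, or equivalently \eqref{Eq:DifferencesShubinSymbols} comparing Weyl and standard quantization. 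Second, the analogous statement holds for $\wpr _{\mathfrak V}$ on polynomial Wick symbols: from the integral formula \eqref{Eq:DefCompTwistProd}, by a Taylor expansion in $(u-z,u-w)$ analogous to Proposition \ref{Prop:WickToAntiWick}, the difference $a_1\wpr _{\mathfrak V}a_2-a_1a_2$ lowers the bi-degree in $(z,\overline w)$ by at least two. Combining these with the base cases \eqref{Eq:BargmannAssignBasicMaps}, namely $\mathsf S_{\mathfrak V}(x_j)=2^{-1/2}(z_j+\overline w_j)$ and $\mathsf S_{\mathfrak V}(\xi _j)=2^{-1/2}i(z_j-\overline w_j)$, an induction on $|\alpha +\beta |$ gives that $\mathsf S_{\mathfrak V}(x^\alpha \xi ^\beta )$ is a polynomial in $(z,\overline w)$ of degree $\le |\alpha +\beta |$ whose principal part equals
\begin{equation*}
\bigl(\mathsf S_{\mathfrak V}(x_1)\bigr)^{\alpha _1}\cdots \bigl(\mathsf S_{\mathfrak V}(\xi _d)\bigr)^{\beta _d}
=2^{-|\alpha +\beta |/2}\,i^{|\beta |}(z+\overline w)^{\alpha }(z-\overline w)^{\beta }.
\end{equation*}
Summing over $|\alpha +\beta |=N$ with weights $c(\alpha ,\beta )$ produces \eqref{Eq:ComplexPrincipalSymb}.

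For (2), the plan is to chain Corollary \ref{Cor:ElliptEquiv}(2) with Remark \ref{Rem:Elliptic}. Corollary \ref{Cor:ElliptEquiv}(2) yields that $\fka $ is elliptic in $\Sh _1^{(\omega _N)}(\rr {2d})$ if and only if $a=\mathsf S_{\mathfrak V}\fka $ is elliptic in $\wideparen \maclA _{\Sh ,1}^{(\omega _N)}(\cc {2d})$, while Remark \ref{Rem:Elliptic} (the complex polynomial version, coming from Proposition \ref{Prop:DiffOpBargmannShubin}(2)) gives that $a$ is elliptic if and only if its principal part $a_p$ is elliptic. Concatenating, $\fka $ is elliptic precisely when $a_p$ is elliptic. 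For (3), the plan is to evaluate the formula of (1) on the diagonal $w=z$. With $z=x+i\xi $ one has $z+\overline z=2x$ and $z-\overline z=2i\xi $, so
\begin{equation*}
a_p(z,z)=2^{-N/2}\sum _{|\alpha +\beta |=N}c(\alpha ,\beta )\,i^{|\beta |}(2x)^{\alpha }(2i\xi )^{\beta }
=2^{N/2}\sum _{|\alpha +\beta |=N}c(\alpha ,\beta )(-1)^{|\beta |}x^{\alpha }\xi ^{\beta }
=2^{N/2}\fka _p(x,-\xi ).
\end{equation*}
Hence $a_p(z,z)>0$ for every $z\neq 0$ is equivalent to $\fka _p(x,-\xi )>0$ for every $(x,\xi )\neq (0,0)$, and the change of variable $\xi \mapsto -\xi $ turns this into $\fka _p(x,\xi )>0$ for every $(x,\xi )\neq (0,0)$.

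The main obstacle is the second principal-symbol fact used in step (1): establishing that the twisted product $\wpr _{\mathfrak V}$ is asymptotically commutative on polynomial Wick symbols, in the sense that it differs from pointwise multiplication by terms of lower bi-degree in $(z,\overline w)$. This requires carefully Taylor expanding the integrand in \eqref{Eq:DefCompTwistProd} around $u=(z+w)/2$ (or an equivalent base point), using that the Gaussian factor $e^{-(z-u,w-u)}$ kills the lowest-order correction upon Gaussian integration, in direct analogy with the Moyal argument on the real side that underlies \eqref{Eq:DifferencesShubinSymbols}.
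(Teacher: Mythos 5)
Your proof is correct, and part (1) takes a genuinely different route from the paper. The paper derives the diagonal identity $a_p(2^{-\frac12}z,2^{-\frac12}z)=\fka_p(x,-\xi)$ from Theorem \ref{Thm:ElliptEquiv}(1), substitutes $x=\tfrac12(z+\overline z)$, $\xi=\tfrac1{2i}(z-\overline z)$ to get a formula for $a_p(z,z)$, and then recovers the full $a_p(z,w)$ by polarization (analytic continuation off the diagonal using that $a_p$ is analytic in $z$ and conjugate analytic in $w$). You instead treat $\mathsf S_{\mathfrak V}$ as an algebra isomorphism intertwining $\wpr$ and $\wpr_{\mathfrak V}$, establish that both products are asymptotically multiplicative on polynomials (the degree-$(M_1+M_2)$ part of $a_1\wpr a_2$, respectively $a_1\wpr_{\mathfrak V}a_2$, is $a_{1,p}a_{2,p}$), and induct from the degree-one base cases \eqref{Eq:BargmannAssignBasicMaps}. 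The Weyl side of this is the Moyal expansion; the Wick side is the normal-ordered product identity $a_1\wpr_{\mathfrak V}a_2=\sum_\gamma\frac1{\gamma!}\,\overline\partial_w^\gamma a_1\cdot\partial_z^\gamma a_2$, which does not appear in the paper and which you correctly flag as the one extra lemma your route requires. It is established most cleanly not by expansion about $u=(z+w)/2$ but by Taylor expanding $a_1(z,u)$ in $\overline u$ and using the reproducing-kernel identity $\int\overline u^{\,\gamma}G(u)e^{(z,u)}\,d\mu(u)=\partial_z^\gamma G(z)$ with $G(u)=a_2(u,w)e^{(u,w)}$; the end result, that the correction terms drop the bi-degree by $2|\gamma|$, is exactly what you need. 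A small virtue of your route is that it makes explicit that $\mathsf S_{\mathfrak V}\fka$ is itself a polynomial in $(z,\overline w)$, a fact the paper's proof takes for granted. Parts (2) and (3) are essentially identical to the paper's; for (3) you simply evaluate the formula of (1) on the diagonal, which is the paper's derivation of (1) run in reverse.
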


\par

\begin{proof}
Let $z=x+i\xi$, $x,\xi \in \rr d$, i.{\,} e. $x=\frac 12(z+\overline z)$ and
$\xi =\frac 1{2i}(z-\overline z)$. By Theorem \ref{Thm:ElliptEquiv} we
get
\begin{equation}\label{Eq:PrincipalSymbolShubinWickIdent}
a_p(2^{-\frac 12}z,2^{-\frac 12}z) = \fka _p(x,-\xi ).
\end{equation}
This implies
\begin{multline*}
a_p(z,z) = 2^\frac N2\sum _{|\alpha +\beta | = N}c(\alpha ,\beta )x^\alpha (-\xi )^\beta
\\[1ex]
= 2^\frac N2\sum _{|\alpha +\beta | = N}c(\alpha ,\beta )2^{-|\alpha |}
(z+\overline z)^\alpha (2i)^{-|\beta |}(-(z-\overline z) )^\beta ,
\end{multline*}
which gives
\begin{equation}
\tag*{(\ref{Eq:ComplexPrincipalSymb})$'$}
a_p(z,z) = 2^{-\frac N2}
\sum _{|\alpha +\beta |= N}
c(\alpha ,\beta )i^{|\beta |}
(z+\overline z)^\alpha (z-\overline z)^\beta .
\end{equation}
The formula \eqref{Eq:ComplexPrincipalSymb} now follows from
\eqref{Eq:ComplexPrincipalSymb}$'$ and analytic continuation, using the
fact that $a_p(z,w)$ is analytic in $z$ and conjugate analytic in $w$.


\par

The assertion (2) follows by a combination of Corollary \ref{Cor:ElliptEquiv},
Propositions \ref{Prop:DiffOpShubin} and \ref{Prop:DiffOpBargmannShubin},
and the assertion (3) is a direct consequence of
\eqref{Eq:PrincipalSymbolShubinWickIdent}. 
%
%
\end{proof}

\par

\section{A necessary condition for polynomially bounded
Wick symbols}\label{sec6}

\par

In \cite[Section~2.7]{Fo} Folland shows that polynomial symbols for
pseudo-differential operators correspond
to polynomial Wick and anti-Wick symbols. 
Thus partial differential operators with polynomial coefficients
corresponds to polynomial Wick symbols. 

Here we show that a Wick symbol that is polynomially bounded
must be a polynomial. This gives a characterization of Wick
symbols corresponding to polynomial symbols for pseudo-differential
operators.

\par

Cauchy's integral formula implies that an entire function
which is polynomially bounded must
be a polynomial:

\par

\begin{prop}\label{Prop:PolynomialSymbol}
Let $F\in A(\cc d)$ have Maclaurin series
$$
F(z) = \sum _{\alpha \in \nn d} c(\alpha )e_\alpha (z),\quad z\in \cc d.
$$
Suppose that for some $j\in \{ 1,\dots ,d\}$, $C>0$, $N \ge 0$,  and an open neighbourhood
$I\subseteq \mathbf C$ of the origin we have
$$
|F(z)|\le C \eabs {z_j}^N,\quad z_j\in \mathbf C,
$$
provided $z_k\in I$, $k\in \{ 1,\dots ,d\} \setminus \{ j\}$. Then $c(\alpha ) = 0$
when $\alpha _j > N$.
\end{prop}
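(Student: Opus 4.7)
The plan is to reduce to a one-variable Cauchy estimate in the $z_j$ direction, applied pointwise in the other variables, and then read off the vanishing of the relevant Maclaurin coefficients.

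Without loss of generality assume $j=d$, write $z=(z',z_d)$ with $z'=(z_1,\dots ,z_{d-1})$, and expand $F$ as a power series in $z_d$,
\begin{equation*}
F(z',z_d) = \sum _{k=0}^\infty F_k(z')z_d^k ,
\end{equation*}
where, by standard facts on iterated power series of entire functions, each $F_k$ is entire on $\cc {d-1}$. Grouping the Maclaurin series of $F$ by powers of $z_d$ and comparing with the above expansion yields
\begin{equation*}
F_k(z') = \sum _{\alpha '\in \nn {d-1}}
\frac {c(\alpha ',k)}{\sqrt{\alpha '!\, k!}}\, (z')^{\alpha '},
\end{equation*}
so the statement $c(\alpha ) = 0$ for $\alpha _d>N$ is equivalent to $F_k\equiv 0$ for $k>N$.

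Fix $z'\in I^{d-1}$. Then $z_d\mapsto F(z',z_d)$ is entire and, by assumption, satisfies $|F(z',z_d)|\le C\eabs {z_d}^N$ for all $z_d\in \mathbf C$. Cauchy's integral formula gives
\begin{equation*}
F_k(z') = \frac 1{2\pi i}\oint _{|z_d|=R}\frac {F(z',z_d)}{z_d^{k+1}}\, dz_d ,
\end{equation*}
and hence $|F_k(z')|\le C\eabs R^N R^{-k}$ for every $R>0$. For $k>N$ the right-hand side tends to $0$ as $R\to \infty$, so $F_k(z')=0$.

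Thus $F_k$ vanishes on $I^{d-1}$ for every $k>N$. Since $F_k$ is entire on $\cc {d-1}$ and $I^{d-1}$ is a nonempty open subset of $\cc {d-1}$, the identity theorem for several complex variables forces $F_k\equiv 0$ on $\cc {d-1}$ for all $k>N$. Reading this back through the relation between $F_k$ and the Maclaurin coefficients of $F$ gives $c(\alpha ',k)=0$ for every $\alpha '\in \nn {d-1}$ and every $k>N$, i.e.\ $c(\alpha )=0$ whenever $\alpha _j>N$. There is no real obstacle here; the only point that requires a line is the passage from vanishing on $I^{d-1}$ to vanishing on all of $\cc {d-1}$, which is immediate from the entireness of each $F_k$.
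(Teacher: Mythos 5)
Your argument is correct. It shares the central idea with the paper's proof---a Cauchy estimate in the $z_j$ variable with the radius $R\to\infty$---but organizes it differently. The paper works directly with the individual Maclaurin coefficients $c(\alpha)$ via a single $d$-dimensional polydisc Cauchy integral: fixing a small radius $\ep$ in the $d-1$ variables lying in $I$ and a large radius $R$ in $z_j$, one bounds $|c(\alpha)|/\alpha!^{1/2}$ by $R^{-\alpha_j}\eabs{R}^N\ep^{-(\alpha_1+\cdots+\alpha_{d-1})}$ and lets $R\to\infty$; no appeal to the identity theorem is needed. You instead pass through the coefficient functions $F_k(z')$, apply the one-variable Cauchy estimate pointwise in $z'\in I^{d-1}$, and then invoke the identity theorem (trivial here, since each $F_k$ is entire and vanishes on an open set) to conclude $F_k\equiv 0$. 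Both are clean; the paper's version is marginally more self-contained, while yours makes the vanishing of the coefficient functions $F_k$ as entire objects more explicit before reading off the $c(\alpha)$.
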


\par

\begin{proof}
By interchanging the variables, we may assume that $j=d$. Let
$R\ge 1$ and $\ep >0$ be chosen such that
$$
D_\ep \equiv \sets {z_0\in \mathbf C}{|z_0|\le \ep} \subseteq I.
$$
Take $\alpha \in \nn d$ such that $\alpha _d>N$, let
$\beta =(\alpha _1+1,\dots ,\alpha _d+1)\in \nn d$ and
$\gamma _\ep \subseteq \mathbf C$ be the
boundary circle of $D_\ep$. Then Cauchy's integral formula gives
\begin{multline*}
\frac {|c(\alpha )|}{\alpha !^{\frac 12}}
=
\left |
\frac { \partial^\alpha F (0) }{\alpha !}
\right |
= (2\pi )^{-d}
\left |
\idotsint _{\gamma_\ep ^{d-1}}
\left (
\int _{|z_d|=R}
\frac { F(z)}{z^\beta}\, dz_d
\right )
\, dz_1\cdots dz_{d-1}
\right | 
\\[1ex]
\le
(2\pi )^{-d}
\idotsint _{\gamma_\ep ^{d-1}}
\left (
\int _{|z_d|=R}
\frac {| F(z)|}{|z^\beta |}\, |dz_d |
\right )
\, |dz_1|\cdots |dz_{d-1}|
\\[1ex]
\lesssim
R^{-\alpha _d} \eabs R^N\ep ^{-(\alpha _1+\dots +\alpha _{d-1})}
\to 0 
\end{multline*}
as $R\to \infty$.
\end{proof}

\par

\begin{cor}
Let $a\in \wideparen A(\cc {2d})$ and suppose 
\begin{equation}\label{Eq:GlobPolEst}
|a(z,w)|\lesssim \eabs{(z,w)} ^N
\end{equation}
for some $N\ge 0$. Then $a$ is a polynomial in $z\in \cc d$ and $\overline w\in \cc d$
of degree at most $N$.
\end{cor}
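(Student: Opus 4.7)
The plan is to pass from $a$ to an associated entire function on $\cc{2d}$ and then apply Cauchy's integral formula on a polydisc of arbitrarily large radius.

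Since $a\in \wideparen A(\cc {2d})$, by the definition of $\Theta$ in \eqref{Eq:ThetaOpDef} there is a unique $K\in A(\cc {2d})$ with
\[
a(z,w)=K(z,\overline w),\qquad z,w\in \cc d.
\]
Because $|\overline u|=|u|$, the hypothesis \eqref{Eq:GlobPolEst} becomes a global polynomial bound
\[
|K(z,u)| \lesssim \eabs{(z,u)}^N, \qquad (z,u)\in \cc {2d}.
\]

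Expand $K$ as a Maclaurin series on $\cc {2d}$:
\[
K(\zeta ) = \sum _{\gamma \in \nn {2d}} c(\gamma)\zeta ^{\gamma},\qquad \zeta \in \cc {2d}.
\]
For any $R>0$, Cauchy's integral formula on the polydisc $\{ \zeta \in \cc {2d}\, ;\, |\zeta _j|=R,\ j=1,\dots ,2d\}$ gives
\[
|c(\gamma)|
=
\left | \frac{\partial ^{\gamma} K(0)}{\gamma !} \right |
\le
R^{-|\gamma |}\sup _{|\zeta _j|=R}|K(\zeta )|
\lesssim
R^{-|\gamma |}\bigl(1+R\sqrt {2d}\bigr)^{N}.
\]
For any fixed $\gamma$ with $|\gamma |>N$, the right-hand side tends to $0$ as $R\to \infty$, so $c(\gamma)=0$. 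Hence $K$ is a polynomial of total degree at most $N$, and
\[
a(z,w)=K(z,\overline w)
\]
is a polynomial in $z\in \cc d$ and $\overline w\in \cc d$ of total degree at most $N$, as asserted.

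The only point requiring care is the \emph{total} degree bound: this uses that \eqref{Eq:GlobPolEst} is an isotropic estimate on $\cc {2d}$, which on the polydisc $\{|\zeta _j|=R\}$ is of size $\asymp R^{N}$. (Proposition \ref{Prop:PolynomialSymbol} alone would yield only the weaker per-variable degree bound, so Cauchy on the full polydisc is what gives total degree $\le N$.) No deeper obstacle arises.
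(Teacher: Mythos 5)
Your proof is correct, and it takes a genuinely different (and more direct) route than the paper. The paper first invokes Proposition \ref{Prop:PolynomialSymbol} to conclude that $a$ is a polynomial of total degree at most $2dN$ (since that proposition only controls the degree in one variable at a time, holding the others near the origin), and then runs a separate homogeneity argument to sharpen the total degree to $N$: it splits $a=a_M+a_{M-1}$ with $a_M$ the top homogeneous part, picks a unit direction where $a_M\neq 0$, and derives a contradiction from $|a_M(tz_0,tw_0)|=c_0|t|^M$ outgrowing $\eabs{(tz_0,tw_0)}^N$ when $M>N$. You bypass both steps by estimating the Taylor coefficient $c(\gamma)$ of $K=\Theta^{-1}a$ via Cauchy's formula on the full polydisc of radius $R$, where the isotropic bound gives $\sup_{|\zeta_j|=R}|K(\zeta)|\lesssim (1+R\sqrt{2d})^{N}$, so $|c(\gamma)|\lesssim R^{N-|\gamma|}\to 0$ for $|\gamma|>N$. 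This yields the total degree bound in a single application of the multivariable Cauchy estimate. The trade-off is that the paper's Proposition \ref{Prop:PolynomialSymbol} is a reusable statement with a weaker, single-variable hypothesis (useful if the polynomial bound is only available variable-by-variable), whereas your argument needs the isotropic estimate on all of $\cc{2d}$ at once; but for this corollary, whose hypothesis is exactly that isotropic estimate, your route is shorter and cleaner. Your remark at the end correctly identifies why the polydisc estimate, rather than Proposition \ref{Prop:PolynomialSymbol}, is what yields the sharp total degree.
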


\par

\begin{proof}
By Proposition \ref{Prop:PolynomialSymbol} it follows that $a$ is a
polynomial of degree at most $2dN$. We need to prove that the degree
is at most $N$. In order to do this we may assume that $a$ has degree
at least one.

\par

For some integer $M\ge 1$ we have
$$
a(z,w) = a_M(z,w) +a_{M-1}(z,w),
$$
where
\begin{align*}
a_M(z,w)
&=
\sum _{|\alpha +\beta |=M}c(\alpha ,\beta )z^\alpha \overline w^\beta
\intertext{is non-trivial and}
a_{M-1}(z,w)
&=
\sum _{|\alpha +\beta |\le M-1}c(\alpha ,\beta )z^\alpha \overline w^\beta .
\end{align*}
Since $a_M$ is non-trivial, there are $z_0,w_0\in \cc d$ such that
$|z_0|^2+|w_0|^2=1$ and $|a_M(z_0,w_0)|=c_0\neq 0$. By
homogeneity we get
$$
|a_M(tz_0,tw_0)| = c_0|t|^M,\qquad t \in \ro.
$$
In the same way we get
$$
|a_{M-1}(tz_0,tw_0)| \le C(1+|t|)^{M-1},\qquad t\in \ro
$$
for some constant $C$ which is independent of $t$.

\par

Suppose contrary to the assumption that $M>N$. For $t\in \ro$ with
$|t|\ge 1$ we have
\begin{multline*}
\left |
\frac {a(tz_0,tw_0)}{\eabs {(tz_0,tw_0)}^N}
\right |
\gtrsim
|t|^{-N}\left (|a_M(tz_0,tw_0)| - |a_{M-1}(tz_0,tw_0)|    \right )
\\[1ex]
\ge
|t|^{-N}\left (c_0|t|^M - C(1+|t|)^{M-1} \right ) \to \infty
\quad \text{as}\quad
|t|\to \infty .
\end{multline*}
This contradicts \eqref{Eq:GlobPolEst}, and the hence our assumption
that $M>N$ must be false.
\end{proof}

\end{document}